\documentclass[a4paper, 12pt]{amsart}
\usepackage{amssymb}
\usepackage{dsfont}
\usepackage{mathrsfs} %use \mathscr\
\usepackage[all,cmtip]{xy}
\usepackage{patchcmd}
\input{diagxy}
\usepackage{url}
%----------------------------------------
%\newcommand\hmmax{0}
%\newcommand\bmmax{3}
\usepackage{bm}
%-----------------------------------------
\usepackage{mathtools}
\usepackage{stmaryrd}
\usepackage{enumerate}
\usepackage[usenames,dvipsnames,svgnames,table]{xcolor}
\usepackage[top=2.4cm, bottom=2.4cm, left=2cm, right=2cm]{geometry}

%table of contents------------------------------------------------------------------
\makeatletter
\patchcommand\@starttoc{\begin{quote}}{\end{quote}}
\def\@tocline#1#2#3#4#5#6#7{\relax
  \ifnum #1>\c@tocdepth % then omit
  \else
    \par \addpenalty\@secpenalty\addvspace{#2}%
    \begingroup \hyphenpenalty\@M
    \@ifempty{#4}{%
      \@tempdima\csname r@tocindent\number#1\endcsname\relax
    }{%
      \@tempdima#4\relax
    }%
    \parindent\z@ \leftskip#3\relax \advance\leftskip\@tempdima\relax
    \rightskip\@pnumwidth plus4em \parfillskip-\@pnumwidth
    #5\leavevmode\hskip-\@tempdima
      \ifcase #1
       \or\or \hskip 1em \or \hskip 2em \else \hskip 3em \fi%
      #6\nobreak\relax
    \dotfill\hbox to\@pnumwidth{\@tocpagenum{#7}}\par
    \nobreak
    \endgroup
  \fi}
\makeatother

% THEOREM Environments ---------------------------------------------------
 \theoremstyle{plain}
 \newtheorem{thm}{Theorem}[section]
 \newtheorem{cor}[thm]{Corollary}
 \newtheorem{lem}[thm]{Lemma}
 
 \newtheorem{prop}[thm]{Proposition}

\theoremstyle{definition}
 \newtheorem{defn}[thm]{Definition}
 
 \newtheorem{hyp}[thm]{Hypothesis}

\theoremstyle{remark}
 \newtheorem{rem}[thm]{Remark}
 \newtheorem{ques}[thm]{Question}

 \newtheorem{nota}[thm]{Notation}
 \newtheorem{conv}[thm]{Convention}
 \newtheorem{exam}[thm]{Example}

 \numberwithin{equation}{section}

\theoremstyle{plain}

% MATH operators---------------------------------------------------
\DeclareMathOperator{\VF}{VF}

\DeclareMathOperator{\RV}{RV}
\DeclareMathOperator{\DC}{DC}
\DeclareMathOperator{\MM}{\gM}

\DeclareMathOperator{\OO}{\gO}

 \DeclareMathOperator{\ran}{ran}
 \DeclareMathOperator{\dom}{dom}

 \DeclareMathOperator{\id}{id}

 \DeclareMathOperator{\lh}{lh}

 \DeclareMathOperator{\aut}{Aut}

 \DeclareMathOperator{\supp}{supp}

 \DeclareMathOperator{\dcl}{dcl}
 \DeclareMathOperator{\pr}{pr}

 \DeclareMathOperator{\mgl}{GL}

\DeclareMathOperator{\jcb}{Jcb}

\DeclareMathOperator{\K}{\Bbbk}

\DeclareMathOperator{\res}{res}  % map into the residue field

%---integrals-----------------------------------------------------------
\def\Xint#1{\mathchoice
{\XXint\displaystyle\textstyle{#1}}%
{\XXint\textstyle\scriptstyle{#1}}%
{\XXint\scriptstyle\scriptscriptstyle{#1}}%
{\XXint\scriptscriptstyle\scriptscriptstyle{#1}}%
\!\int}
\def\XXint#1#2#3{{\setbox0=\hbox{$#1{#2#3}{\int}$}
\vcenter{\hbox{$#2#3$}}\kern-.5\wd0}}

% shorthand for fonts and compounds-------------------------------------

\newcommand{\Z}{\mathds{Z}}

\newcommand{\KKK}{\mathds{K}}
\newcommand{\Q}{\mathds{Q}}
\newcommand{\N}{\mathds{N}}

\newcommand{\R}{\mathds{R}}

\newcommand{\omin}{$o$\nobreakdash}

\newcommand{\cmin}{$C$\nobreakdash}

\newcommand{\T}{$T$\nobreakdash}
%\newcommand{\M}{$M$\nobreakdash}

%----fonts---------------------------------------------------------------

\newcommand{\gB}{\mathfrak{B}}

\newcommand{\gF}{\mathfrak{F}}

\newcommand{\gM}{\mathfrak{M}}

\newcommand{\gO}{\mathfrak{O}}

\newcommand{\ga}{\mathfrak{a}}
\newcommand{\gb}{\mathfrak{b}}
\newcommand{\gc}{\mathfrak{c}}
\newcommand{\gd}{\mathfrak{d}}

\newcommand{\gi}{\mathfrak{i}}

\newcommand{\gm}{\mathfrak{m}}
\newcommand{\gn}{\mathfrak{n}}
\newcommand{\go}{\mathfrak{o}}
\newcommand{\gp}{\mathfrak{p}}
\newcommand{\gq}{\mathfrak{q}}

\newcommand{\0}{\emptyset}

%-------------------------------

\DeclareMathAlphabet{\mathpzc}{OT1}{pzc}{m}{it}

% double bars etc.-----------------------

\newcommand{\dpar}[1]{% \llrrparen{..}
   (\mkern-4mu (#1 )\mkern-4mu )}
%-------------------------------------------------------------------

 \DeclarePairedDelimiter\abs{\lvert}{\rvert}

 \newcommand{\set}[1]{\left\{#1\right\}}

 \newcommand{\usub}[2]{#1_{\textup{#2}}}

 \newcommand{\lan}[3]{\mathcal{L}_{#1 \textup{#2} #3}}

% special fonts for theory names, functions, axioms, etc.
\newcommand{\mdl}[1]{\mathcal{#1}}  % model; e.g. M
    % boldface;
\newcommand{\bb}[1]{\mathbb{#1}}

% logical connectives

\newcommand{\ex}[1]{\exists #1 \;} % exists x ...
 % forall x ...

 % lambda x ...

% useful symbols

\newcommand{\rest}{\upharpoonright}

\newcommand{\fun}{\longrightarrow}
\newcommand{\efun}{\longmapsto}
\newcommand{\sub}{\subseteq}

\newcommand{\mi}{\smallsetminus}

  %inverse limit or projective limit or limit
  %direct limit or colimit

\newcommand{\colim}[1]{\underset{#1}{\text{colim}}\,}  %direct limit or colimit

 % e.g. for Sigma^1_1-AC
\newcommand{\goedel}[1]{\ulcorner #1 \urcorner}

\newcommand{\la}{\langle}
\newcommand{\ra}{\rangle}

 % x such that ...
%\newcommand{\res}{\upharpoonright}

%-------------------------------

%\DeclareMathOperator{\vVF}{_1 \negthickspace \VF}

\DeclareMathOperator{\rv}{rv}

\DeclareMathOperator{\vv}{val}

\DeclareMathOperator{\RES}{RES}

\DeclareMathOperator{\gsk}{\mathbf{K}_+}
\DeclareMathOperator{\ggk}{\mathbf{K}}

\DeclareMathOperator{\fn}{FN}
\DeclareMathOperator{\fib}{fib}

\DeclareMathOperator{\isp}{I_{sp}}

\DeclareMathOperator{\rad}{rad}

\DeclareMathOperator{\vrv}{vrv}

\DeclareMathOperator{\RVH}{RVH}

\DeclareMathOperator{\can}{\mathbf{c}}

\DeclareMathOperator{\ito}{int}
\DeclareMathOperator{\cl}{cl}

\DeclareMathOperator{\tor}{Tor}
\DeclareMathOperator{\aff}{Aff}
\DeclareMathOperator{\reg}{reg}

%-------------------------------------------------------------------
%+++ The following code is (essentially) from Sam Buss +++
% It computes the height of the ulcorner and urcorner correctly.

\newbox\gnBoxA
\newdimen\gnCornerHgt
\setbox\gnBoxA=\hbox{$\ulcorner$}
\global\gnCornerHgt=\ht\gnBoxA
\newdimen\gnArgHgt

\def\code #1{%
        \setbox\gnBoxA=\hbox{$#1$}%
        \gnArgHgt=\ht\gnBoxA%
        \ifnum \gnArgHgt<\gnCornerHgt
                \gnArgHgt=0pt%
        \else
                \advance \gnArgHgt by -\gnCornerHgt%
        \fi
        \raise\gnArgHgt\hbox{$\ulcorner$} \box\gnBoxA %
                \raise\gnArgHgt\hbox{$\urcorner$}}

%+++ End{Sam Buss} +++
%local-------------------------------------------------------------------
\newcommand{\ol}[1]{\overline{#1}}

\DeclareMathOperator{\TCVF}{TCVF}

\newcommand{\dand}{\quad \text{and} \quad}
\newcommand{\LT}{$\lan{T}{}{}$\nobreakdash}

\newcommand{\ddx}{\tfrac{d}{d x}}

%-------------------------------------------------------------------

\DeclareMathOperator{\sgn}{sgn}

\DeclareMathOperator{\abval}{\abs{\vv}}
\DeclareMathOperator{\abvrv}{\abs{\vrv}}
\DeclareMathOperator{\RVV}{RV_0}
\DeclareMathOperator{\GAA}{\Gamma_0}
\DeclareMathOperator{\absG}{\abs{\Gamma}}
\DeclareMathOperator{\mmdl}{\mdl R_{\rv}}
\DeclareMathOperator{\xmdl}{\mdl R_{\rv}^{\bullet}}

%-------------------------------------------------------------------------

\author[Yimu Yin]{Yimu Yin}
\address{Department of Philosophy \\ Sun Yat-Sen University \\ 135 Xingang Road West \\
Guangzhou, China \\ 510275 }
\email{yimu.yin@hotmail.com}
\title[Euler characteristic in $\TCVF$]{Generalized Euler characteristic in power-bounded \T-convex valued fields}

\begin{document}

\begin{abstract}
  We lay the groundwork in this first installment of a series of papers aimed at developing a theory of Hrushovski-Kazhdan style motivic integration for certain type of non-archimedean \omin-minimal fields, namely power-bounded $T$-convex valued fields, and closely related structures. The main result of the present paper is a canonical homomorphism between the Grothendieck semirings of certain categories of definable sets that are associated with the $\VF$-sort and the $\RV$-sort of the language $\lan{T}{RV}{}$. Many aspects of this homomorphism can be described explicitly. Since these categories do not carry volume forms, the formal groupification of the said homomorphism is understood as a universal additive invariant or a generalized Euler characteristic. It admits, not just one, but two specializations to $\Z$. The overall structure of the construction is modeled on that of the original Hrushovski-Kazhdan construction.
\end{abstract}

\subjclass[2010]{12J25, 03C64, 14E18, 03C98}
%\keywords{motivic integration, Euler characteristic, \omin-minimal valued field, $T$-convexity, Milnor fiber, topological zeta function}
\thanks{The research leading to the true claims in this paper has been partially supported by the ERC Advanced Grant NMNAG, the grant ANR-15-CE40-0008 (D\'efig\'eo), the SYSU grant 11300-18821101, and the NSSFC Grant 14ZDB015.}

\maketitle

\tableofcontents

\vskip 15mm

\section{Introduction}\label{intro}

Towards the end of the introduction of \cite{hrushovski:kazhdan:integration:vf} three hopes for the future of the theory of motivic integration are mentioned. We propose to investigate one of them in a series of papers: additive invariants and integration in \omin-minimal valued fields. A prototype of such valued fields is $\R \dpar{ t^{\Q}}$, the generalized power series field over $\R$ with exponents in $\Q$. One of the cornerstones of the methodology of \cite{hrushovski:kazhdan:integration:vf} is \cmin-minimality, which is the right analogue of \omin-minimality for algebraically closed valued fields and other closely related structures that epitomizes the behavior of definable subsets of the affine line. It, of course, fails in an \omin-minimal valued field, mainly due to the presence of a total ordering. Thus the construction we seek has to be carried out in a different framework, which affords a similar type of normal forms for definable subsets of the affine line, a special kind of weak \omin-minimality; this framework is van den Dries and Lewenberg's theory of $T$-convex valued fields \cite{DriesLew95, Dries:tcon:97}.

The reader is referred to the opening discussions in \cite{DriesLew95, Dries:tcon:97} for a more detailed introduction to \T-convexity and a summary of fundamental results. In those papers, how the valuation is expressed is somewhat inconsequential. In contrast, we shall work exclusively with a fixed two-sorted language $\lan{T}{RV}{}$ --- see \S~\ref{defn:lan} and Example~\ref{exam:RtQ} for a quick grasp of the central features of this language --- since such a language is a part of the preliminary setup of any Hrushovski-Kazhdan style integration.

Throughout this paper, let $T$ be a complete power-bounded \omin-minimal \LT-theory extending the theory $\usub{\textup{RCF}}{}$ of real closed fields. For the real field $\R$, the condition of being power-bounded is the same as that of being polynomially bounded. However, for nonarchimedean real closed fields, the former condition is more general and is indeed more natural.

The language $\lan{T}{}{}$ extends the language $\{<, 0, 1, +, -, \times\}$ of ordered rings. Let $\mdl R \coloneqq (R, <, \ldots)$ be a model of $T$. By definition, a \T-convex subring $\OO$ of $\mdl R$ is a convex subring of $\mdl R$ such that, for every definable (no parameters allowed) continuous function $f : R \fun R$, we have $f(\OO) \sub \OO$. The convexity of $\OO$ implies that it is a valuation ring of $\mdl R$. For instance, if $\mdl R$ is nonarchimedean and $\R \sub R$ then the convex hull of $\R$ forms a valuation ring of $\mdl R$ and, accordingly, the infinitesimals form its maximal ideal. Such a convex hull is \T-convex if no definable continuous function can grow so fast as to stretch the standard real numbers into infinity.

Let $\OO$ be a \emph{proper} \T-convex subring of $\mdl R$. The theory $T_{\textup{convex}}$ of the pair $(\mdl R, \OO)$, suitably axiomatized in the language $\lan{}{convex}{}$ that expands $\lan{T}{}{}$ with a new unary relation symbol, is complete, and if $T$ admits quantifier elimination and is universally axiomatizable then $T_{\textup{convex}}$ admits quantifier elimination as well.

Since $T$ is power-bounded, the definable subsets of $R$ afford a type of normal form, a special kind of weak \omin-minimality (see \cite{mac:mar:ste:weako}), which we dub Holly normal form (since it was first studied by Holly in \cite{holly:can:1995}); in a nutshell, every definable subset of $R$ is a boolean combination of intervals and (valuative) discs. Clearly this is a natural generalization of \omin-minimality in the presence of valuation. A number of desirable properties of definable sets in $R$ depends on the existence of such a normal form. For instance, every subset of $R$ defined by a principal type assumes one of the following four forms: a point, an open disc, a closed disc, and a half thin annulus, and, furthermore, these four forms are distinct in the sense that no definable bijection between any two of them is possible.

Let $\vv : R^{\times} \fun \Gamma$ be the valuation map induced by $\OO$, $\K$ the corresponding residue field, and $\res : \OO \fun \K$ the residue map. There is a canonical way of turning $\K$ into a model of $T$ as well, see \cite[Remark~2.16]{DriesLew95}. Let $\MM$ be the maximal ideal of $\OO$. Let $\RV = R^{\times} / (1 + \MM)$ and $\rv : R^{\times} \fun \RV$ be the quotient map. Note that, for each $a \in R$, the map $\vv$ is constant on the set $a + a\MM$, and hence there is an induced map $\vrv : \RV \fun \Gamma$.
The situation is illustrated in the following commutative diagram
\begin{equation*}
\bfig
 \square(0,0)/^{ (}->`->>`->>`^{ (}->/<600, 400>[\OO \mi \MM`R^{\times}`\K^{\times}`
\RV;`\res`\rv`]
 \morphism(600,0)/->>/<600,0>[\RV`\Gamma;\vrv]
 \morphism(600,400)/->>/<600,-400>[R^{\times}`\Gamma;\vv]
\efig
\end{equation*}
where the bottom sequence is exact.
This structure may be expressed and axiomatized in a natural two-sorted first-order language $\lan{T}{RV}{}$, in which $R$ is referred to as the $\VF$-sort and $\RV$ is taken as a new sort. Informally, $\lan{T}{RV}{}$ is viewed as an extension of $\lan{}{convex}{}$.

We expand $(\mdl R, \OO)$ to an $\lan{T}{RV}{}$-structure. The main construction in this paper is carried out in such a setting. For concreteness, the reader is welcome to take $R = \R \dpar{ t^{\Q} }$ and $\OO = \R \llbracket t^{\Q} \rrbracket$ in the remainder of this introduction (see Example~\ref{exam:RtQ} below for more on this generalized power series field).

For a description of the ideas and the main results of the Hrushovski-Kazhdan style integration theory, we refer the reader to the original introduction in \cite{hrushovski:kazhdan:integration:vf} and also the introductions in \cite{Yin:int:acvf, Yin:int:expan:acvf}. There is also a quite comprehensive introduction to the same materials in \cite{hru:loe:lef} and, more importantly, a specialized version that relates the Hrushovski-Kazhdan style integration to the geometry and topology of Milnor fibers over the complex field. The method expounded there will be featured in a sequel to this paper as well. In fact, since much of the work below is closely modeled on that in \cite{hrushovski:kazhdan:integration:vf, Yin:special:trans, Yin:int:acvf, hru:loe:lef}, the reader may simply substitute the term ``theory of power-bounded $T$-convex valued fields'' for ``theory of algebraically closed valued fields'' or more generally ``$V$-minimal theories'' in those introductions and thereby acquire a quite good grip on what the results of this paper look like. For the reader's convenience, however, we shall repeat some of the key points, perhaps with minor changes here and there.

Let $\VF_*$ and $\RV[*]$ be two categories of definable sets that are respectively associated with the $\VF$-sort and the $\RV$-sort as follows. In $\VF_*$, the objects are the definable subsets of cartesian products of the form $\VF^n \times \RV^m$ and the morphisms are the definable bijections. On the other hand, for technical reasons (particularly for keeping track of ambient dimensions), $\RV[*]$ is formulated in a somewhat complicated way and is hence equipped with a gradation by ambient dimensions (see Definition~\ref{defn:c:RV:cat}).

The Grothendieck semigroup of a category $\mdl C$, denoted by $\gsk \mdl C$, is the free semigroup generated by the isomorphism classes of $\mdl C$, subject to the usual scissor relation $[A \mi B] + [B] = [A]$, where $[A]$, $[B]$ denote the isomorphism classes of the objects $A$, $B$ and ``$\mi$'' is certain binary operation, usually just set subtraction. Sometimes $\mdl C$ is also equipped with a binary operation --- for example, cartesian product --- that induces multiplication in $\gsk \mdl C$, in which case $\gsk \mdl C$ becomes a (commutative) semiring. The formal groupification of  $\gsk \mdl C$, which is then a ring, is denoted by $\ggk \mdl C$.

The main construction of the Hrushovski-Kazhdan integration theory is a canonical --- that is, functorial in a suitable way --- homomorphism from the Grothendieck semiring $\gsk \VF_*$ of $\VF_*$ to the Grothendieck semiring $\gsk \RV[*]$ of $\RV[*]$ modulo a semiring congruence relation $\isp$ on the latter. In fact, it turns out to be an isomorphism. This construction has three main steps.
\begin{enumerate}[{Step} 1.]
 \item First we define a lifting map $\bb L$ from the set of objects of $\RV[*]$ into the set of objects of $\VF_*$ (Definition~\ref{def:L}). Next we single out a subclass of isomorphisms in $\VF_*$, which are called special bijections (Definition~\ref{defn:special:bijection}), and show that for any object $A$ in $\VF_*$ there is a special bijection $T$ on $A$ and an object $\bm U$ in $\RV[*]$ such that $T(A)$ is isomorphic to $\bb L \bm U$ (Corollary~\ref{all:subsets:rvproduct}). This implies that $\bb L$
is ``essentially surjective'' on objects, meaning that it is surjective on isomorphism classes of $\VF_*$. For this result alone we do not have to limit our means to special
bijections. However, in Step~3 below, special bijections become an essential ingredient in computing the semiring congruence
relation $\isp$.

 \item We show that, for any two isomorphic objects $\bm U_1$, $\bm U_2$ of $\RV[*]$, their lifts $\bb L \bm U_1, \bb L \bm U_2$ in $\VF_*$ are isomorphic as well (Corollary~\ref{RV:lift}). This implies that $\bb L$ induces a semiring homomorphism
  $
  \gsk \RV[*] \fun \gsk \VF_*,
   $
   which is also denoted by $\bb L$. This homomorphism is surjective by Step~1  and hence, modulo  the semiring congruence relation $\isp$ --- that is, the kernel of $\bb L$ --- the inversion $\int_+$ of the homomorphism $\bb L$ is an isomorphism of semirings.

 \item A number of important properties of the classical integration can already be verified for $\int_+$ and hence, morally, this third step is not necessary. For applications, however, it is much more satisfying to have a precise description of the semiring congruence relation $\isp$. The basic notion used in the description is that of a blowup of an object in $\RV[*]$, which is essentially a restatement of the trivial fact that there is an additive translation from $1 + \MM$ onto $\MM$ (Definition~\ref{defn:blowup:coa}). We then show that, for any two objects $\bm U_1$, $\bm U_2$ in $\RV[*]$, there are isomorphic blowups $\bm U_1^{\flat}$, $\bm U_2^{\flat}$ of them if and only if $\bb L \bm U_1$, $\bb L \bm U_2$ are isomorphic (Proposition~\ref{kernel:L}). The ``if'' direction essentially contains a form of Fubini's theorem and is the most technically involved part of the construction.
\end{enumerate}
We call the semiring homomorphism $\int_+$ thus obtained a Grothendieck homomorphism. If the objects  carry volume forms and the Jacobian transformation preserves the integral, that is, the change of variables formula holds, then it may be called a motivic integration; we will not consider this case here and postpone it to a future installment. When the semirings are formally groupified, this Grothendieck homomorphism is accordingly recast as a ring homomorphism, which is denoted by $\int$ and is understood as a (universal) additive invariant.

The structure of the Grothendieck ring $\ggk \RV[*]$ may be significantly elucidated. To wit, it can be expressed as a tensor product of two other Grothendieck rings $\ggk \RES[*]$ and $\ggk \Gamma[*]$, that is, there is an isomorphism of graded rings:
\[
\bb D: \ggk \RES[*] \otimes_{\ggk \Gamma^{c}[*]} \ggk \Gamma[*] \fun \ggk \RV[*],
\]
where $\RES[*]$ is essentially the category of definable sets in $\R$ (as a model of the theory $T$) and $\Gamma[*]$ is essentially the category of definable sets over $\Q$ (as an \omin-minimal group), both are graded by ambient dimension, and $\Gamma^{c}[*]$ is the full subcategory of $\Gamma[*]$ of finite objects,  whose Grothendieck ring admits a natural embedding into  $\ggk \RES[*]$ as well. This isomorphism results in various retractions from $\ggk \RV[*]$ into $\ggk \RES[*]$ or $\ggk \Gamma[*]$ and, when combined with the Grothendieck homomorphism $\int$ and the two Euler characteristics in \omin-minimal groups (one is a truncated version of the other), yield a (generalized) Euler characteristic
\[
\textstyle \Xint{\textup{G}}  :  \ggk \VF_* \to^{\sim} ( \Z \oplus \bigoplus_{i \geq 1} (\Z[Y]/(Y^2+Y))X^i) / (1 + 2YX + X),
\]
which is actually an isomorphism, and two specializations to $\Z$:
\[
\textstyle \Xint{\textup{R}}^g, \Xint{\textup{R}}^b: \ggk \VF_* \fun \Z,
\]
determined by the assignments $Y \efun -1$ and $Y \efun 0$ or, equivalently, $X \efun 1$ and $X \efun -1$ (see Proposition~\ref{prop:eu:retr:k} and Theorem~\ref{thm:ring}). We will demonstrate the significance of these two specializations, as opposed to only one, in a future paper  that is dedicated to the study of generalized (real) Milnor fibers in the sense of \cite{hru:loe:lef}.

For certain purposes, the difference between model theory and algebraic geometry is somewhat easier to bridge if one works over the complex field, as is demonstrated in \cite{hru:loe:lef}; however, over the real field, although they do overlap significantly, the two worlds seem to diverge in their methods and ideas. Our results should be understood in the context of ``\omin-minimal geometry'' \cite{dries:1998, DrMi96} as opposed to real algebraic geometry. In general, the various Grothendieck rings considered in real algebraic geometry bring about lesser collapse of ``algebraic data'' --- since there are much less morphisms in the background --- and can yield much finer invariants, and hence are more faithful to the geometry in this regard, although the flip side of the story is that the invariants are often computationally intractable (especially when resolution of singularities is involved) and specializations are often needed in practice. For instance, the Grothendieck ring of real algebraic varieties may be specialized to $\Z[X]$, which is called the virtual Poincar\'e polynomial (see \cite{mccrory:paru:virtual:poin}). Our method here does not seem to be suited for recovering invariants at this level, at least not directly.

The role of $T$-convexity in this paper cannot be overemphasized. However, it does not quite work if the exponential function is included in the theory $T$. It remains a worthy challenge to find a suitable framework in which the construction of this paper may be extended to that case.

Much of the content of this paper is extracted from the preprint \cite{Yin:int:tcvf}, which contains a more comprehensive study of \T-convex valued fields. This auxiliary part of the theory we are developing may be regarded as  a sequel to or a variation on the themes of the work in \cite{DriesLew95, Dries:tcon:97}. It has become clear that some of the technicalities thereof may be of independent interest. For instance, the valuative or infinitesimal version of Lipschitz continuity plays a crucial role in proving the existence of Lipschitz stratifications in an arbitrary power-bounded \omin-minimal field (this proof has been published in \cite{halyin} and the result cited there is Corollary~\ref{part:rv:cons}).

Also, in a future paper, we will use the main result here to show that, in both the real and the complex cases,  the Euler characteristic of the topological Milnor fiber coincides with that of the motivic Milnor fiber, avoiding the algebro-geometric machinery employed in \cite[Remark~8.5.5]{hru:loe:lef}.

\section{Basic results in $T$-convex valued fields}

In this section, we first describe the two-sorted language $\lan{T}{RV}{}$ for \omin-minimal valued fields and the $\lan{T}{RV}{}$-theory $\TCVF$. This theory is axiomatized. Then we show that $\TCVF$ admits quantifier elimination. Some of the results in \cite{DriesLew95, Dries:tcon:97} that are crucial for our construction are also translated into the present setting.

\subsection{Some notation}\label{subs:nota}
Recall from the introduction above that $T$ is a complete power-bounded \omin-minimal \LT-theory extending the theory $\usub{\textup{RCF}}{}$ of real closed fields.

\begin{conv}
For the moment, by \emph{definable} we mean definable with arbitrary parameters from the structure in question. But later --- starting in \S~\ref{def:VF} --- we will abandon this practice and work with a fixed set of parameters. The reason for this change will be made abundantly clear when it happens.
\end{conv}

\begin{defn}[Power-bounded]\label{defn.powBd}
Suppose that $\mdl R$ is an \omin-minimal real closed field. A \emph{power function} in $\mdl R$ is a definable endomorphism of the multiplicative group $\mdl R^+$.
We say that $\mdl R$ is \emph{power-bounded} if every definable function $f \colon \mdl R \fun \mdl R$ is eventually dominated by a power function, that is, there exists a power function $g$ such that $\abs{f(x)} \le g(x)$ for all sufficiently large $x$. A complete \omin-minimal theory extending  $\usub{\textup{RCF}}{}$ is \emph{power-bounded} if all its models are.
\end{defn}

All power functions in  $\mdl R$ may be understood as functions of the form $x \efun x^\lambda$, where $\lambda = \ddx f(1)$. The collection of all such $\lambda$ form a subfield and is called the \emph{field of exponents} of $\mdl R$. We will quote the results on power-bounded structures directly from \cite{DriesLew95, Dries:tcon:97} and hence do not need to know more about them other than the things that have already been said. At any rate, a concise and lucid account of the essentials may be found in \cite[\S~ 3]{Dries:tcon:97}.

\begin{rem}[Functional language]\label{rem:cont}
We shall need a generality that is due to Lou van den Dries (private communication). It states that the theory $T$ can be reformulated in another language \emph{all} of whose primitives, except the binary relation $\leq$, are function symbols that are interpreted as \emph{continuous} functions in all the models of $T$. Actually, for this to hold, we only need to assume that $T$ is a complete \omin-minimal theory that extends $\usub{\textup{RCF}}{}$.

More precisely, working in any model of $T$, it can be shown that all definable sets are boolean combinations of sets of the form $f(x) = 0$ or $g(x) > 0$, where $f$ and $g$ are definable total continuous functions. In particular, this holds in the prime model $\mdl P$ of $T$. Taking all definable total continuous functions in $\mdl P$ and the ordering $<$ as the primitives in a new language $\lan{T'}{}{}$, we see that $T$ can be reformulated as an \emph{equivalent} $\lan{T'}{}{}$-theory $T'$ in the sense that the syntactic categories of $T$ and $T'$ are naturally equivalent. In traditional but less general and more verbose model-theoretic jargon, this just says that if a model of $T$ is converted to a model of $T'$ in the obvious way then the two models are bi\"{i}nterpretable via the identity map, and vice versa.

The theory $T'$ also admits quantifier elimination, but it cannot be universally axiomatizable in $\lan{T'}{}{}$. To see this, suppose for contradiction that it can be. Then, by the argument in the proof of \cite[Corollary~2.15]{DMM94}, every definable function $f$ in a model of $T'$, in particular, multiplicative inverse, is given piecewise by terms. But all terms define total continuous functions. This means that, by \omin-minimality, multiplicative inverse near $0$ is given by two total continuous functions, which is absurd.

Now, we may and do extend $T'$ by definitions so that it is universally axiomatizable in the resulting language. Thus every substructure of a model of $T'$ is actually a model of $T'$ and, as such, is an elementary substructure. In fact, since $T'$ has definable Skolem functions, we shall abuse notation slightly and redefine $T$ to be $T'^{\textup{df}}$, where $T'^{\textup{df}}$ is in effect a Skolemization of $T'$ (see \cite[\S\S~2.3--2.4]{DriesLew95} for further explanation). Note that the language of $T$ contains additional function symbols only and some of them must be interpreted in all models of $T$ as discontinuous functions for the reason given above.

To summarize, the main point is that $T$ admits quantifier elimination, is universally axiomatizable, is a definitional extension of $T'$, and all the primitives of $\lan{T'}{}{}$, except $\leq$, define continuous functions in all the models of $T'$.

The syntactical maneuver of passing through $T'$ just described will only be used in Theorem~\ref{thm:complete} below, and it is not really necessary if one works with a concrete \omin-minimal extension of $\usub{\textup{RCF}}{}$ such as $\usub{T}{an}$ defined in \cite{DMM94} (also see Example~\ref{exam:RtQ}).
\end{rem}

We shall work with a sufficiently saturated model $\mdl R \coloneqq (R, <, \ldots)$ of $T$ unless suggested otherwise. Its field of exponents is denoted by $\KKK$.

\begin{nota}[Coordinate projections]\label{indexing}
For each $n \in \N$, let $[n]$ abbreviate the set $\{1, \ldots, n\}$. For any $E \sub [n]$, we write $\pr_E(A)$ for the projection of $A$ into the coordinates contained in $E$. In practice, it is often more convenient to use simple standard descriptions as subscripts. For example, if $E$ is a singleton $\{i\}$ then we shall always write $E$ as $i$ and $\tilde E \coloneqq [n] \mi E$ as $\tilde i$; similarly, if $E = [i]$, $\{k:  i \leq k \leq j\}$, $\{k:  i < k < j\}$, $\{\text{all the coordinates in the sort $S$}\}$, etc., then we may write $\pr_{\leq i}$, $\pr_{[i, j]}$, $\pr_{(i, j)}$, $\pr_{S}$, etc.; in particular, $A_{\VF}$ and $A_{\RV}$ stand for the projections of $A$ into the $\VF$-sort and $\RV$-sort coordinates, respectively.

Unless otherwise specified, by writing $a \in A$ we shall mean that $a$ is a finite tuple of elements (or ``points'') of $A$, whose length, denoted by $\lh(a)$, is not always indicated. If $a = (a_1, \ldots, a_n)$ then, for all $1 \leq i < j \leq n$, following the notational scheme above, $a_i$, $a_{\tilde i}$, $a_{\leq i}$, $a_{[i, j]}$, $a_{[i, j)}$, etc., are shorthand for the corresponding subtuples of $a$. We shall write $\{t\} \times A$, $\{t\} \cup A$, $A \mi \{t\}$, etc., simply as $t \times A$, $t \cup A$, $A \mi t$, etc., when no confusion can arise.

For $a \in \pr_{\tilde E} (A)$, the fiber $\{b : ( b, a) \in A \} \sub \pr_E(A)$ over $a$ is denoted by $A_a$. Note that, in the discussion below, the distinction between the two sets $A_a$ and $A_a \times a$ is usually immaterial and hence they may and often shall be tacitly identified. In particular, given a function $f : A \fun B$ and $b \in B$, the pullback $f^{-1}(b)$ is sometimes written as $A_b$ as well. This is a special case since functions are identified with their graphs. This notational scheme is especially useful when the function $f$ has been clearly understood in the context and hence there is no need to spell it out all the time.
\end{nota}

\begin{nota}[Subsets and substructures]\label{nota:sub}
By a definable set we mean a definable subset in $\mdl R$, and by a subset in $\mdl R$  we  mean a subset in $R$, by which we mean a subset of $R^n$ for some $n$, unless indicated otherwise. Similarly for other structures or sets in place of $\mdl R$ that have been clearly understood in the context.

Often the ambient total ordering in $\mdl R$ induces a total ordering on a definable set $S$ of interest with a distinguished element $e$. Then it makes sense to speak of the positive and the negative parts of $S$ relative to $e$, which are denoted by $S^+$ and $S^-$, respectively. Also write $S^+_e$ for $S^+ \cup e$, etc. There may also be a natural absolute value map $S \fun S^+_e$, which is always denoted by $| \cdot |$; typically $S$ is a sort and $S^\times \coloneqq S \mi e$ is equipped with a (multiplicatively written) group structure, in which case the absolute value map is usually given as a component of a (splitting) short exact sequence
\[
\pm 1 \fun S^\times \fun S^+ \quad \text{or} \quad S^+ \fun S^\times \fun \pm 1.
\]
Note that $e$ cannot be the identity element of $S^\times$. We will also write $A < e$ to mean that $A \sub S$ and $a < e$ for all $a \in A$, etc. If $\phi(x)$ is a formula then $\phi(x) < e$ denotes the subset of $S$ defined by the formula $\phi(x) \wedge x < e$.

Substructures of $\mdl R$ are written as $\mdl S \sub \mdl R$. As has been pointed out above, all substructures $\mdl S$ of $\mdl R$ are actually elementary substructures. If $A \sub \mdl R^n$ is a set definable with parameters coming from $\mdl S$ then $A(\mdl S)$ is the subset in $\mdl S$ defined by the same formula, that is, $A(\mdl S) = A \cap \mdl S^n$. Given a substructure $\mdl S \sub \mdl R$ and a set $A \sub \mdl R$, the substructure generated by $A$ over $\mdl S$ is denoted by $\la \mdl S , A \ra$ or $\mdl S \la A \ra$. Clearly $\la \mdl S , A \ra$ is the definable closure of $A$ over $\mdl S$. Later, we will expand $\mdl R$ and introduce more sorts and structures. In that situation we will write $\mdl S \la A \ra_T$ or $\la \mdl S , A \ra_T$ to emphasize that this is the \LT-substructure generated by $A$ over the \LT-reduct of $\mdl S$.
\end{nota}

\begin{nota}[Topology]
The default topology on  $\mdl R$ is of course the order topology and the default topology on $\mdl R^n$ is the corresponding product topology. Given a subset $S$ in $\mdl R$, we write $\cl(S)$ for its topological closure, $\ito(S)$ for its interior,
and $\partial S \coloneqq \cl(S) \setminus S$ for its frontier (not to be confused with the boundary $\cl(S) \setminus \ito(S)$ of $S$, which is also sometimes denoted by $\partial S$). The same topological discourse applies to a definable set if the ambient total ordering of $\mdl R$ induces a total ordering on it.
\end{nota}

\subsection{The theory $\TCVF$}\label{defn:lan}

The language $\lan{T}{RV}{}$ for \omin-minimal valued fields --- the theory $T$ may vary, of course --- has the following sorts and symbols:
\begin{itemize}
 \item A sort $\VF$, which uses the language $\lan{T}{}{}$.
 \item A sort $\RV$, whose basic language is that of groups, written multiplicatively as $\{1, \times, {^{-1}} \}$, together with a constant symbol $0_{\RV}$ (for notational ease, henceforth this will be written simply as $0$).
 \item A unary predicate $\K^{\times}$ in the $\RV$-sort. The union $\K^{\times} \cup \{0\}$ is denoted by $\K$, which is more conveniently thought of as a sort and, as such, employs the language $\lan{T}{}{}$ as well, where the constant symbols $0$, $1$ are shared with the $\RV$-sort.
 \item A binary relation symbol $\leq$ in the $\RV$-sort.
 \item A function symbol $\rv : \VF \fun \RV_0$.
\end{itemize}
We shall write $\RV$ to mean the $\RV$-sort without the element $0$, and $\RV_0$ otherwise, etc., although quite often the difference is immaterial.

\begin{defn}\label{defn:tcf}
The axioms of the theory $\usub{\textup{TCVF}}{}$ of \emph{$T$-convex valued fields} in the language $\lan{T}{RV}{}$ are presented here informally. Many of them are clearly redundant as axioms, and we try to phrase some of these in such a way as to indicate so. The list also contains additional notation that will be used throughout the paper.
\begin{enumerate}[({Ax.} 1)]
 \item The \LT-reduct of the $\VF$-sort is a model of $T$.

 Recall from Notation~\ref{nota:sub} that $\VF^+ \sub \VF$ is the subset of positive elements and $\VF^- \sub \VF$ the subset of negative elements.

 \item \label{ax:rv} The quadruple $(\RV, 1, \times, {^{-1}})$ forms an abelian group. Inversion  is augmented by $0^{-1} = 0$. Multiplication is augmented by $t \times 0 = 0 \times t = 0$ for all $t \in \RV$. The map $\rv : \VF^{\times} \fun \RV$ is a surjective group homomorphism augmented by $\rv(0) = 0$.

  \item The binary relation $\leq$ is a total ordering on $\RV_{0}$ such that, for all $t, t' \in \RV_{0}$, $t < t'$ if and only if $\rv^{-1}(t) < \rv^{-1}(t')$.

      The distinguished element $0 \in \RV_0$ is more aptly referred to as the \emph{middle element} of $\RV_{0}$. Clearly $\RV^+ = \rv(\VF^+)$  and $\RV^- = \rv(\VF^-)$ (see Notation~\ref{nota:sub}). It follows from (Ax.~\ref{ax:rv}) that $\RV^+$ is an ordered convex subgroup of $\RV$ and the quotient group $\RV / \RV^+$ is isomorphic to the group $\pm 1 \coloneqq \rv(\pm 1)$. This gives rise to an absolute value map on $\RV_{0}$, which is compatible with the absolute value map on $\VF$ in the sense that $\rv(\abs{a}) = \abs{\rv(a)}$ for all $a \in \VF$.

   \item \label{ax:K} The set $\K^{\times}$ forms a \emph{nontrivial} subgroup of $\RV$ and the set $\K^{+} = \K^{\times} \cap \RV^+$ forms a convex subgroup of $\RV^+$.

       The quotient groups $\RV / \K^+$, $\RV^{+} / \K^+$ are denoted by $\Gamma$, $\Gamma^+$ and the corresponding quotient maps by $\vrv$, $\vrv^+$. Also set $\vrv(0) = 0 \in \Gamma_0$. Since $\K^+$ is convex, $\Gamma^+$ is an ordered group, where the induced ordering is also denoted by $\leq$, and the absolute value map on $\RV_0$ descends to $\Gamma_0$ in the obvious sense.

  \item Let $\leq^{-1}$ be the ordering on $\Gamma^+_0$ inverse to $\leq$ and
       $\absG_{\infty} \coloneqq (\Gamma^+_0, +, \leq^{-1})$  the resulting \emph{additively} written ordered abelian group with the top element $\infty$. The composition
      \[
      \abval : \VF \to^{\rv} \RV_0 \to^{\abs{ \cdot}} \RV^+_0 \to^{\vrv^+}  \Gamma^+_0
       \]
       is a (nontrivial) valuation with respect to the ordering $\leq^{-1}$, with valuation ring $\OO = \rv^{-1}(\RV^{\circ}_0)$ and maximal ideal $\MM = \rv^{-1}(\RV^{\circ\circ}_0)$, where, denoting $\vrv^+ \circ \abs{ \cdot}$ by $\abvrv$,
        \begin{align*}
     \RV^{\circ}_0 &= \{t \in \RV: 1 \leq^{-1} \abvrv(t) \}, \\
     \RV^{\circ \circ}_0 &= \{t \in \RV: 1 <^{-1} \abvrv(t)\}.
     \end{align*}

 \item \label{ax:t:model} The $\K$-sort (recall that $\K$ is informally referred to as a sort) is a model of $T$ and, as a field, is the residue field of the valued field $(\VF, \OO)$.

     The natural quotient map $\OO \fun \K$ is denoted by $\res$. For notational convenience, we extend the domain of $\res$ to $\VF$ by setting $\res(a) = 0$ for all $a \in \VF \mi \OO$. The following function is also denoted by $\res$:
     \[
     \RV \to^{\rv^{-1}} \VF \to^{\res} \K.
     \]

 \item \label{ax:tcon} ($T$-convexity). Let $f : \VF \fun \VF$ be a continuous function defined by an \LT-formula. Then $f(\OO) \sub \OO$.

\item Suppose that $\phi$ is an \LT-formula that defines a continuous function $f : \VF^m \fun \VF$. Then $\phi$ also defines a continuous function $\ol f : \K^m \fun \K$. Moreover, for all $a \in \OO^m$, we have $\res(f(a)) = \ol f(\res(a))$.  \label{ax:match}
\end{enumerate}
\end{defn}

By (Ax.~\ref{ax:t:model}) and Remark~\ref{rem:cont}, (Ax.~\ref{ax:match}) can be simplified as: for all function symbols $f$ of $\lan{T'}{}{}$ and all $a \in \OO^m$, $\res(f(a)) = \ol f(\res(a))$. Then it is routine to check that, except the surjectivity of the map $\rv$ and the nontriviality of the value group $\abs{\Gamma}$ (this is an existential axiom and is actually expressed in (Ax.~\ref{ax:K})), $\TCVF$ is also universally axiomatized.

Let $\mdl S$ be a substructure of a model $\mdl M$ of $\TCVF$. We say that $\mdl S$ is \emph{$\VF$-generated} if $\RV_0(\mdl S) = \rv(\VF(\mdl S))$. Thus $\mdl S$ is indeed a model of $\TCVF$ if it is $\VF$-generated and $\Gamma(\mdl S)$ is nontrivial. At any rate, $\VF(\mdl S)$, $\res(\VF(\mdl S))$, and $\K(\mdl S)$ are all models of $T$.

For $A \sub \VF(\mdl M) \cup \RV(\mdl M)$, the substructure generated by $A$ over $\mdl S$ is denoted by $\la \mdl S , A \ra$ or $\mdl S \la A \ra$. Clearly $\VF(\la \mdl S , A \ra) = \la \mdl S , A \ra_T$ (see Notation~\ref{nota:sub}).

\begin{rem}\label{signed:Gam}
Although the behavior of the valuation map $\abval$ in the traditional sense is coded in $\TCVF$, we shall work with the \emph{signed} valuation map, which is more natural in the present setting:
\[
\vv : \VF \to^{\rv} \RVV \to^{\vrv} \GAA,
\]
where the ordering $\leq$ on the \emph{signed value group} $\Gamma_0$ no longer needs to be inverted. It is also tempting to use the ordering $\leq$ in the \emph{value group} $\abs{\Gamma}_{\infty}$ instead of its inverse, but this makes citing results in the literature a bit awkward. We shall actually abuse the notation and denote the ordering $\leq^{-1}$ in $\abs{\Gamma}_{\infty}$ also by $\leq$; this should not cause confusion since the ordering on $\Gamma_0$ will rarely be used (we will indicate so explicitly when it is used).

The axioms above guarantee that the ordered abelian group ${\GAA} /{\pm 1}$ (here $\vv(\pm 1)$ is just written as $\pm 1$) with the bottom element $0$ is isomorphic to $\abs{\Gamma}_{\infty}$ if either one of the orderings is inverted. So $\abval$ may be thought of as the composition $\vv/{\pm 1} : \VF \fun {\GAA} /{\pm 1}$.
\end{rem}

\begin{conv}\label{how:gam}
Semantically we shall treat the value group $\GAA$ as an imaginary sort. However, syntactically any reference to $\GAA$ may be eliminated in the usual way and we can still work with $\lan{T}{RV}{}$-formulas for the same purpose.
\end{conv}

\begin{exam}\label{exam:RtQ}
Here our main reference is \cite{DMM94}. A restricted analytic function $\R^m \fun \R$ is given on the cube $[-1, 1]^n$ by a power series in $n$ variables over $\R$ that converges in a neighborhood of $[-1, 1]^n$, and $0$ elsewhere. Let $\lan{}{an}{}$ be the language that extends the language of ordered rings with a new function symbol for each restricted analytic function, $\usub{\R}{an}$ the real field with its natural $\lan{}{an}{}$-structure, and  $\usub{T}{an}$ the $\lan{}{an}{}$-theory of $\usub{\R}{an}$. Obviously $\usub{T}{an}$ is polynomially bounded. More importantly, it is universally axiomatizable and admits quantifier elimination in a slightly enlarged language, and hence there is no longer any need to extend $\usub{T}{an}$ by definitions as we have arranged in \S~\ref{subs:nota}. (This language is of course more natural than a brute force definitional extension that achieves the same thing, but we do not really care what it is).

A generalized power series with coefficients in the field $\R$ and exponents in the additive group $\Q$ is a formal sum $x = \sum_{q \in \Q} a_q t^q$ such that its support $\supp(x) = \{q \in \Q : a_q \neq 0\}$ is well-ordered. Let $\R \dpar{ t^{\Q} }$, $K$ for short, be the set of all such series. Addition and multiplication in $K$ are defined in the expected way, and this makes $K$ a field, generally referred to as a Hahn field. We consider $\R$ as a subfield of $K$ via the map $a \efun at^0$. The map $ K^\times \fun \Q$ given by $x \efun \min\supp(x)$ is indeed a valuation. Its valuation ring $\R \llbracket t^{\Q} \rrbracket$, $\OO$ for short, consists of those series $x$ with $\min\supp(x) \geq 0$ and its maximal ideal $\MM$ of those series $x$ with $\min\supp(x) > 0$. Its residue field admits a section onto $\R$ and hence is isomorphic to $\R$. It is well-known that $(K, \OO)$ is a henselian valued field and $K$ is real closed. Restricted analytic functions may be naturally interpreted in $K$. According to \cite[Corollary~2.11]{DMM94}, with its naturally induced ordering, $K$ is indeed an elementary extension of $\usub{\R}{an}$ and hence a model of $\usub{T}{an}$.

We turn $K$ into a model of $\TCVF$, with signed valuation, as follows. First of all, set $\RV = K^{\times} / (1 + \MM)$. Let $\rv : K^\times \fun \RV$ be the quotient map. The leading term of a series in $K^\times$ is its first term with nonzero coefficient.  It is easy to see that two series $x$, $y$ have the same leading term if and only if $\rv(x) = \rv(y)$ and hence $\RV$ is isomorphic to the subgroup of $K^\times$ consisting of all the leading terms. There is a natural isomorphism $a_qt^q \efun (q, a_q)$ from this latter group of leading terms to the group $\Q \oplus \R^\times$, through which we may identify $\RV$ with $\Q \oplus \R^\times$. Since $1 + \MM$ is a convex subset of $K^\times$, the total ordering on $K^\times$ induces a total ordering $\leq$ on $\RV$. This ordering $\leq$ is the same as the lexicographic ordering on $\Q \oplus \R^+$ or $\Q \oplus \R^-$ via the identification just made.

Let $\R^{+}$ be the multiplicative group of the positive reals and $\RV^{+} = \Q \oplus\R^+ $. Observe that $\R^{+}$ is a convex subgroup of $\RV$. The quotient group $\Gamma \coloneqq (\Q \oplus \R^\times) / \R^{+}$ is naturally isomorphic to the subgroup $\pm e^{\Q} \coloneqq e^{\Q} \cup - e^{\Q}$ of $\R^\times$ so that $\Q$ is identified with $e^\Q$ via the map $q \efun e^q$. Adding a new symbol $\infty$ to $\RV$, now it is routine to interpret $K$ as an $\lan{T}{RV}{}$-structure, with $T = \usub{T}{an}$ and the signed valuation given by
\[
x \efun \rv(x) = (q, a_q) \efun \sgn(a_q)e^{-q},
\]
where $\sgn(a_q)$ is the sign of $a_q$. It is also a model of $\TCVF$: all the axioms are more or less immediately derivable from the valued field structure, except (Ax.~\ref{ax:tcon}), which holds since $\usub{T}{an}$ is polynomially bounded, and (Ax.~\ref{ax:match}), which follows from \cite[Proposition~2.20]{DriesLew95}.
\end{exam}

\subsection{Quantifier elimination}

Recall from \S~\ref{intro} that  $T_{\textup{convex}}$ is the $\lan{}{convex}{}$-theory of pairs $(\mdl R, \OO)$ with $\mdl R \models T$ and $\OO$ a \emph{proper} $T$-convex subring. We may and shall view $T_{\textup{convex}}$ as the $\lan{}{convex}{}$-reduct of $\TCVF$.

\begin{thm}\label{tcon:qe}
The theory $T_{\textup{convex}}$ admits quantifier elimination and is complete.
\end{thm}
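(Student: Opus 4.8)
The plan is to deduce the theorem from the work of van den Dries and Lewenberg. By Remark~\ref{rem:cont} we have arranged that $T$ admits quantifier elimination and is universally axiomatizable, and these are precisely the hypotheses under which \cite{DriesLew95} proves that $T_{\textup{convex}}$ eliminates quantifiers; so at the coarsest level the proof is that result together with Remark~\ref{rem:cont}. For a reasonably self-contained account I would run the standard substructure-extension (Shoenfield) criterion for quantifier elimination directly.

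Concretely, let $\mdl M_1 = (\mdl R_1, \OO_1)$ and $\mdl M_2 = (\mdl R_2, \OO_2)$ be models of $T_{\textup{convex}}$ with $\mdl M_2$ sufficiently saturated, let $\mdl A$ be a common $\lan{}{convex}{}$-substructure, and fix $c \in R_1 \setminus A$; it suffices to extend the inclusion $\mdl A \hookrightarrow \mdl M_2$ to an $\lan{}{convex}{}$-embedding of $\mdl A\langle c\rangle$ into $\mdl M_2$, and then iterate one generator at a time. Since $T$ is universally axiomatizable, the $\lan{T}{}{}$-reduct of $\mdl A$ is a model of $T$, the trace $\OO_{\mdl A} \coloneqq \OO_1 \cap A$ is a (possibly improper) $T$-convex subring of it, with residue field a model of $T$ and value group an ordered $\KKK$-vector space; all of these embed canonically into the corresponding objects attached to $\mdl M_2$, and the underlying valued fields are henselian, since convex valuation rings of real closed fields always are. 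Because $T$ has definable Skolem functions, $A = \dcl_T(A)$, so $c \notin \dcl_T(A)$.

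The argument then splits according to how the valued field $\mdl A\langle c\rangle$ sits over $\mdl A$ --- the three cases below being exhaustive for a one-generated extension. (i) If it is a residual extension (after rescaling, $c \in \OO_1$ with $\res(c)$ outside the residue field of $\mdl A$), transfer the problem to residue fields and extend the embedding there using quantifier elimination for $T$, then lift. (ii) If it is a value-group extension ($\vv(c)$ outside the value group of $\mdl A$), transfer to value groups and extend using quantifier elimination for ordered $\KKK$-vector spaces, then realize the required point in $\mdl M_2$. (iii) If it is an immediate extension, then $c$ is a pseudolimit of a pseudo-Cauchy sequence drawn from $\mdl A$, and one must show that such a one-generated immediate extension is determined over $\mdl A$ by the cut of $c$, so that a matching pseudolimit can be found in the saturated model $\mdl M_2$; this uses henselianity together with the analysis of immediate extensions of $T$-convex valued fields. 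I expect case (iii) to be the main obstacle --- it is the genuinely valuation-theoretic core, and corresponds to the real work carried out in \cite{DriesLew95}.

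Finally, completeness follows from the quantifier elimination by the usual check on quantifier-free $\lan{}{convex}{}$-sentences: these reduce, via the completeness of $T$, to the behaviour of the predicate $\OO$ on the $\emptyset$-generated substructure --- the prime model $\mdl P$ of $T$ --- and one has $\mdl P \subseteq \OO$ in every model, as follows from the residue-field axioms together with the minimality of $\mdl P$. Alternatively one may argue by an Ax--Kochen--Ershov-type principle, the residue field and value group theories being complete.
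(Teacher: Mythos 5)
Your proposal is correct in the sense that matters: the paper's entire proof of this theorem is the citation of \cite[Theorem~3.10, Corollary~3.13]{DriesLew95}, and your opening paragraph (QE and universal axiomatizability of $T$, arranged in Remark~\ref{rem:cont}, are exactly the hypotheses of that theorem) together with your closing paragraph on completeness reproduces precisely what is being invoked. The self-contained sketch you add is a reasonable outline of a Shoenfield-test argument, but it does not follow the route of the cited proof, and one step deserves a caveat. Van den Dries and Lewenberg do not organize the one-generator extension step around the residual/ramified/immediate trichotomy with pseudo-Cauchy sequences and henselianity; instead they analyze which $T$-convex subring of the $T$-model $\mdl A\langle c\rangle_T$ lies over $\OO_{\mdl A}$ (their Lemma~3.8 says there are at most two candidates, the convex hull and one other), splitting on whether some element of $\mdl A\langle c\rangle_T$ fills the cut between $\abs{\OO_{\mdl A}}$ and its complement --- this is exactly the case structure the present paper imitates in its proof of Theorem~\ref{theos:qe}. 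Moreover, your claim that the three cases are exhaustive for a one-generated extension is not automatic: a single generator can a priori enlarge both the residue field and the value group, and ruling this out is the content of the Wilkie inequality, which requires power-boundedness --- whereas the cited QE theorem holds for arbitrary o-minimal $T$ extending RCF. So as written your case (iii) is not the only gap; the exhaustiveness of the trichotomy would itself need an argument (or the power-boundedness hypothesis) that the original proof avoids. Since the paper only asks you to quote the result, none of this affects the correctness of your answer, but the sketch should not be mistaken for the actual proof in \cite{DriesLew95}.
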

\begin{proof}
See \cite[Theorem~3.10, Corollary~3.13]{DriesLew95}.
\end{proof}

That $\OO$ is a proper subring cannot be expressed by a universal axiom. Of course, we can always add a new constant symbol $\imath$ to $\lan{}{convex}{}$ and an axiom  ``$\imath$ is in the maximal ideal'' to $T_{\textup{convex}}$ so that $T_{\textup{convex}}$ may indeed be formulated as a universal theory. In that case, every substructure of a model of $T_{\textup{convex}}$ is a model of $T_{\textup{convex}}$ and, moreover, $T_{\textup{convex}}$ has definable Skolem functions given by $\lan{T}{}{}(\imath)$-terms (this is an easy consequence of our assumption on $T$, quantifier elimination in $T_{\textup{convex}}$, and universality of $T_{\textup{convex}}$, as in \cite[Corollary~2.15]{DMM94}). We shall not implement this maneuver formally, even though the resulting properties may come in handy occasionally.

\begin{rem}\label{res:exp}
According to \cite[Remark~2.16]{DriesLew95}, there is a natural way to expand the residue field $\K$ of the $T_{\textup{convex}}$-model $(\mdl R, \OO)$  to a \T-model as follows. Let $\mdl R' \sub \OO$ be a maximal subfield with respect to the property of being an elementary \LT-substructure of $\mdl R$. It follows that $\mdl R'$ is isomorphic to $\K$ as fields via the residue map $\res$. Then we can expand $\K$ to a \T-model so that the restriction $\res \rest \mdl R'$ becomes an isomorphism of \LT-structures. This expansion procedure does not depend on the choice of $\mdl R'$.
\end{rem}

\begin{prop}\label{uni:exp}
Every $T_{\textup{convex}}$-model expands to a unique $\TCVF$-model up to isomorphism.
\end{prop}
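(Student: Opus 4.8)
The plan is to establish existence and uniqueness separately. First a word on what is being claimed: the language $\lan{T}{RV}{}$ carries no symbol for $\OO$ itself --- by (Ax.~5) the valuation ring of a $\TCVF$-model is recovered from the $\RV$-sort as $\rv^{-1}(\RV^{\circ}_0)$ --- so ``expanding the $T_{\textup{convex}}$-model $(\mdl R,\OO)$'' means producing an $\lan{T}{RV}{}$-structure whose $\VF$-reduct is $\mdl R$ and for which $\rv^{-1}(\RV^{\circ}_0)=\OO$, and ``unique up to isomorphism'' is to be read as: unique up to an $\lan{T}{RV}{}$-isomorphism that is the identity on the $\VF$-sort.

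For existence I would simply carry out the construction sketched in the introduction. Writing $\MM$ for the maximal ideal of $\OO$, put $\RV\coloneqq R^{\times}/(1+\MM)$ with $\rv$ the quotient map (and $\rv(0)=0$); here $1+\MM$ is a convex subgroup of $R^{>0}$ because $\MM$, being an ideal of a valuation ring, is convex, so the ordering on $R^{\times}$ descends and, via the sign decomposition $\RV=\RV^{+}\sqcup\{0\}\sqcup\RV^{-}$, extends to a total ordering on $\RV_0$. Take $\K^{\times}\coloneqq\rv(\OO\mi\MM)=(\OO/\MM)^{\times}$, let $\res\colon\OO\fun\K=\OO/\MM$ be the residue map (extended by $0$ off $\OO$) with $\res^{\RV}$ the induced map on $\RV$, and equip the $\K$-sort with the canonical $T$-structure on the residue field furnished by Remark~\ref{res:exp}. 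It then remains to verify (Ax.~1)--(Ax.~8); most are immediate from elementary valuation theory, (Ax.~\ref{ax:tcon}) is literally the $T$-convexity of $\OO$, (Ax.~\ref{ax:match}) holds for this residue expansion by \cite[Remark~2.16]{DriesLew95} (cf.\ the use of \cite[Proposition~2.20]{DriesLew95} in Example~\ref{exam:RtQ}), and a direct computation of $\abval$ gives $\rv^{-1}(\RV^{\circ}_0)=\OO$, so the $\lan{}{convex}{}$-reduct is the given $(\mdl R,\OO)$.

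For uniqueness, let $\mdl M$ be any $\TCVF$-expansion of $(\mdl R,\OO)$. Morally the point is that the $\RV$- and $\K$-sorts are already interpretable in $(\mdl R,\OO)$ over $\lan{}{convex}{}$ (as $R^{\times}/(1+\MM)$ and $\OO/\MM$, with $\MM$ definable from $\OO$), so there is no room for choice; concretely, the crux is to show that the axioms force $\ker(\rv^{\mdl M})=1+\MM$. On one hand (Ax.~\ref{ax:K})--(Ax.~5) identify $\rv^{-1}(\K^{\times})$ with the unit group $\OO\mi\MM$, so $\ker(\rv^{\mdl M})\sub\OO^{\times}$; on the other hand (Ax.~\ref{ax:t:model}) makes $\res$ the residue epimorphism $\OO\fun\OO/\MM=\K$ with kernel $\MM$ and the induced $\res^{\RV}$ a retraction of $\RV$ onto $\K$, and the well-definedness of $\res^{\RV}$ together with this retraction property forces the kernel of $\rv^{\mdl M}\rest\OO^{\times}$ --- hence of $\rv^{\mdl M}$ --- to be exactly $1+\MM$. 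Consequently $\rv^{\mdl M}$ induces a group isomorphism $R^{\times}/(1+\MM)\to^{\sim}\RV^{\mdl M}$ commuting with the two $\rv$-maps. Given a second expansion $\mdl M'$, the resulting composite $\theta\colon\RV^{\mdl M}\to^{\sim}\RV^{\mdl M'}$ intertwines $\rv^{\mdl M}$ with $\rv^{\mdl M'}$; it is automatically order-preserving because by (Ax.~3) the ordering on each $\RV$-sort is pulled back from $R^{\times}$ along $\rv$; it carries $\K^{\times,\mdl M}$ onto $\K^{\times,\mdl M'}$ since both are the $\rv$-image of $\OO\mi\MM$; it intertwines the two maps $\res^{\RV}$ since these are built from the single residue map $\OO\fun\OO/\MM$; and it is an $\lan{T}{}{}$-isomorphism on the $\K$-sorts because that $T$-structure is itself forced --- by (Ax.~\ref{ax:match}) the residue-field addition and multiplication on $\K$ are determined by $\bar f\circ\res=\res\circ f$ (applied to the terms $x+y$, $xy$) on all of $\K^{m}=\res(\OO^{m})$, whence the order and, after passing to the continuous-primitive reformulation $T'$ of Remark~\ref{rem:cont}, all the remaining primitives are determined too, so both $\K$-sorts coincide with the canonical residue expansion of Remark~\ref{res:exp}. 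Extending $\theta$ by the identity on the $\VF$-sort yields the required isomorphism $\mdl M\to^{\sim}\mdl M'$.

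The step I expect to carry the weight is the identification $\ker(\rv^{\mdl M})=1+\MM$: one has to argue that the axioms genuinely forbid any equivalence on $\VF^{\times}$ finer than ``same leading term'', which is where the interplay of (Ax.~\ref{ax:K})--(Ax.~\ref{ax:t:model}) --- in particular the requirement that $\res$ retract $\RV$ onto the $\K$-sort --- does the real work. Everything downstream (transport of the ordering, of $\K^{\times}$, of $\res$, and of the residue-field $T$-structure) is then routine, modulo the cited results of \cite{DriesLew95}.
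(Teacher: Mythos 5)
Your proposal is correct and follows essentially the same route as the paper: construct the canonical expansion with $\RV = R^{\times}/(1+\MM)$, check the axioms as in Example~\ref{exam:RtQ}, and show any other expansion is carried onto it by the induced map, with the only delicate point being the $\lan{T}{}{}$-structure on the $\K$-sort, which is pinned down exactly as you do via (Ax.~\ref{ax:match}) and the continuous-primitive reformulation of Remark~\ref{rem:cont}. The one place you go beyond the paper is in spelling out why the axioms force $\ker(\rv)=1+\MM$; the paper simply calls the resulting comparison map ``the obvious bijection,'' so your extra argument is a welcome elaboration rather than a divergence.
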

\begin{proof}
Let $(\mdl R, \OO)$ be a $T_{\textup{convex}}$-model. It is enough to show that there is a canonical $\TCVF$-model expansion $(\mdl R, \RVV(\mdl R))$ of $(\mdl R, \OO)$, where $\mdl R$ is the $\VF$-sort, such that any other such expansion $(\mdl R, \RVV)$ is isomorphic to it. This canonical  expansion is constructed as follows.

Let $\RV(\mdl R)$ be the quotient group $\mdl R^\times / (1 + \MM)$ and $\rv : \mdl R^\times \fun \RV(\mdl R)$ the quotient map. As in Example~\ref{exam:RtQ}, it is routine to convert the pair $(\mdl R, \RVV(\mdl R))$ into an $\lan{T}{RV}{}$-structure and check that it satisfies all the axioms in Definition~\ref{defn:tcf}, where (Ax.~\ref{ax:t:model}) is implied by the construction just described  above. We shall refer to the obvious bijection between $(\mdl R, \RVV(\mdl R))$ and $(\mdl R, \RVV)$ as the identity map.  This map commutes with all the primitives of $\lan{T}{RV}{}$ except, possibly, those in the $\K$-sort. This is where the syntactical maneuver in Remark~\ref{rem:cont} comes in. Recall that all the functional primitives of $\lan{T'}{}{}$ define continuous functions in all the models of $T'$ and $T$ is a definitional extension of $T'$. It follows from (Ax.~\ref{ax:match}) that the identity map indeed induces an \LT-isomorphism between the two $\K$-sorts. Thus the two expansions are isomorphic.
\end{proof}

\begin{thm}\label{thm:complete}
The theory $\TCVF$ is complete.
\end{thm}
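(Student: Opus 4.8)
The plan is to reduce completeness of $\TCVF$ to completeness of its $\lan{}{convex}{}$-reduct $T_{\textup{convex}}$, which is already available from Theorem~\ref{tcon:qe}. The key observation is that the passage from a $T_{\textup{convex}}$-model to its canonical $\TCVF$-expansion, carried out in the proof of Proposition~\ref{uni:exp}, is in fact a \emph{parameter-free, model-independent interpretation} of $\lan{T}{RV}{}$-structures in $\lan{}{convex}{}$-structures; completeness then transfers formally.

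Concretely, given $(\mdl R,\OO)\models T_{\textup{convex}}$, the maximal ideal $\MM$ is $\emptyset$-definable from $\OO$, so the relation $a\sim b :\Leftrightarrow (a=b=0)\vee(a,b\neq 0\wedge a^{-1}b\in 1+\MM)$ on the $\VF$-sort is $\emptyset$-definable; its quotient realises $\RV_0$, the quotient map realises $\rv$, and the group operation, the constant $1$, the ordering $\leq$, the predicate $\K^{\times}=\rv(\OO^{\times})$, and the residue-field operations on $\K=\res(\OO)$ are all $\emptyset$-definable in the appropriate quotients. The only point that is not purely formal is the $\lan{T}{}{}$-structure on the $\K$-sort, and here I would repeat the argument in the proof of Proposition~\ref{uni:exp}: by the syntactic maneuver of Remark~\ref{rem:cont}, for each continuous primitive $f$ of $\lan{T'}{}{}$ the van den Dries--Lewenberg compatibility result (packaged as (Ax.~\ref{ax:match}), cf.\ \cite{DriesLew95} and Remark~\ref{res:exp}) makes $\ol f$ well defined on $\K$ by $\ol f(\res\bar a)=\res f(\bar a)$ for $\bar a\in\OO^m$, so $\ol f$ is interpreted uniformly from the $\lan{}{convex}{}$-structure; since $T$ is a definitional extension of $T'$, the remaining $\lan{T}{}{}$-primitives on $\K$ are then pinned down by those definitions. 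Writing $\Gamma$ for the resulting interpretation, Proposition~\ref{uni:exp} says precisely that $\Gamma(\mdl R,\OO)$ is, up to isomorphism, the unique $\TCVF$-expansion of $(\mdl R,\OO)$, and that every model of $\TCVF$ is isomorphic to $\Gamma$ applied to its own $\lan{}{convex}{}$-reduct.

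Completeness then follows from the standard fact that an interpretation preserves elementary equivalence: $\Gamma$ induces a translation $\varphi\mapsto\varphi^{\Gamma}$ of $\lan{T}{RV}{}$-sentences into $\lan{}{convex}{}$-sentences with $\Gamma(\mdl N)\models\varphi$ iff $\mdl N\models\varphi^{\Gamma}$ for every $T_{\textup{convex}}$-model $\mdl N$. Hence for any $\mdl M_1,\mdl M_2\models\TCVF$, their $\lan{}{convex}{}$-reducts are $T_{\textup{convex}}$-models, so elementarily equivalent by Theorem~\ref{tcon:qe}, and therefore $\mdl M_1\equiv\Gamma(\mdl R_1,\OO_1)\equiv\Gamma(\mdl R_2,\OO_2)\equiv\mdl M_2$; since $\TCVF$ is consistent (Example~\ref{exam:RtQ}), this gives completeness.

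The substantive work has all been done already, in Theorem~\ref{tcon:qe} and Proposition~\ref{uni:exp}; the only thing that genuinely needs care here is checking that the expansion is \emph{uniform and parameter-free} — in particular that the $\lan{T}{}{}$-operations on the residue-field sort are interpretable without reference to a chosen lift $\mdl R'\subseteq\OO$ as in Remark~\ref{res:exp}, which is exactly the reason for the detour through $\lan{T'}{}{}$ in Remark~\ref{rem:cont}. If one prefers to avoid the vocabulary of interpretations, the same conclusion is reached by a direct back-and-forth, upgrading a partial $\lan{}{convex}{}$-isomorphism produced by quantifier elimination in $T_{\textup{convex}}$ to a partial $\lan{T}{RV}{}$-isomorphism; but the interpretation argument is cleaner and is the route I would take.
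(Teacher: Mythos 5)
Your proposal is correct and takes essentially the same route as the paper: the paper's proof also reduces to completeness of $T_{\textup{convex}}$ (Theorem~\ref{tcon:qe}) by observing, via the proof of Proposition~\ref{uni:exp}, that $T_{\textup{convex}}$ and $\TCVF$ are equivalent in the sense of Remark~\ref{rem:cont} --- which is precisely your uniform, parameter-free interpretation $\Gamma$ spelled out in more detail. Your careful check that the $\lan{T}{}{}$-structure on $\K$ is interpretable without choosing a lift $\mdl R'\subseteq\OO$ is exactly the point where the paper invokes the detour through $\lan{T'}{}{}$ and (Ax.~\ref{ax:match}).
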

\begin{proof}
By Proposition~\ref{uni:exp}, every embedding between two $T_{\textup{convex}}$-models, which is necessarily elementary, expands uniquely to an $\lan{T}{RV}{}$-embedding between two $\TCVF$-models. This latter embedding is indeed elementary since $\TCVF$ admits quantifier elimination, which will be shown below. It follows that the theory $\TCVF$ is complete. But here we do not really need to go through that route. We can simply observe that, by the proof of Proposition~\ref{uni:exp}, $T_{\textup{convex}}$ and $\TCVF$ are equivalent in the sense mentioned in Remark~\ref{rem:cont}, and hence they are both complete if one of them is.
\end{proof}

\begin{conv}
From now on, we shall work in the model $\mmdl$ of $\TCVF$, which is the unique $\lan{T}{RV}{}$-expansion of the sufficiently saturated $T_{\textup{convex}}$-model $(\mdl R, \OO)$. We shall write $\VF(\mmdl)$ simply as $\VF$ or $\mdl R$, depending on the context, $\RVV(\mmdl)$ as $\RV_0$, etc. A subset in $\mmdl$ may simply be referred to as a set.

When we work in the \LT-reduct $\mdl R$ of $\mmdl$ instead of $\mmdl$, or just wish to emphasize that a set is definable in $\mdl R$ instead of $\mmdl$, the symbol ``$\lan{T}{}{}$'' or ``$T$'' will be inserted into the corresponding places in the terminology.
\end{conv}

Let $\mdl S \sub \mdl R$ be a small substructure  and $a, b \in \mdl R \mi \mdl S$ such that they make the same cut in (the ordering of) $\mdl S$. By \omin-minimality, there is an automorphism $\sigma$ of $\mdl R$ over $\mdl S$ such that $\sigma(a) = b$.

Recall that the field of exponents of $\mdl R$ is denoted by $\KKK$.

\begin{thm}\label{theos:qe}
The theory $\TCVF$ admits quantifier elimination.
\end{thm}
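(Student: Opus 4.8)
The plan is to run the classical quantifier-elimination test in the following form: it suffices to prove that whenever $\mdl S$ is a substructure of a model $\mdl M \models \TCVF$, $c$ is a single element of $\mdl M$ (in either sort), and $\mdl N \models \TCVF$ is $|\mdl M|^+$-saturated and contains $\mdl S$ as a substructure, then the inclusion $\mdl S \sub \mdl N$ extends to an embedding $\mdl S\la c\ra \to \mdl N$. Only one element is adjoined, and $\mdl S\la c\ra$ is generated by $c$ over $\mdl S$, so once $c$ is placed the rest of the embedding is forced and no iteration enters; concretely one identifies the quantifier-free $\lan{T}{RV}{}$-type of $c$ over $\mdl S$, exhibits a realization of it in $\mdl N$, and checks that the resulting assignment is a morphism. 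I treat the cases $c \in \RV_0$ and $c \in \VF$ separately; the former is a statement about the residue field and the value group, the latter is where $T$-convexity enters.

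Suppose $c \in \RV_0$. Adjoining an $\RV$-element leaves the $\VF$-sort unchanged, so only the $\RV$- and $\K$-sorts grow, and the quantifier-free type of $c$ over $\mdl S$ is carried by two data: the position of $\vv(c)$ in the signed value group $\GAA$ relative to $\vv(\RV_0(\mdl S))$; and, in the case $\vv(c) = \vv(s)$ for some $s \in \RV_0(\mdl S)$, whence $c/s \in \K^{\times}$, the $\lan{T}{}{}$-type of $c/s$ over $\K(\mdl S)$. The value group $\absG$ of a $\TCVF$-model is a nontrivial divisible ordered abelian group --- in fact an ordered vector space over the field of exponents $\KKK$, the scalar multiplications being those induced on $\absG$ via $\vv$ by the (quantifier-free definable) power functions $x \efun x^{\lambda}$ --- and $\K$ is a model of $T$; so the first datum is governed by quantifier elimination for ordered $\KKK$-vector spaces and the second by quantifier elimination for $T$ (Remark~\ref{rem:cont}), each being realizable in $\mdl N$ by saturation. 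Since $\RV^{+}$ is divisible (real-closedness of the $\VF$-sort), $-1 \in \RV(\mdl S)$, $\res$ restricts to the identity on $\K^{\times} \sub \RV$, and $\RV$ carries no structure beyond the ordered extension $1 \to \K^{\times} \to \RV \to \absG \to 1$ together with $\res$, these two data leave nothing further to decide.

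Suppose $c \in \VF$, so that $\VF(\mdl S\la c\ra) = \VF(\mdl S)\la c\ra_T$. The crux is that the quantifier-free $\lan{T}{RV}{}$-type of $c$ over $\mdl S$ is already determined by the $\lan{T}{}{}$-type of $c$ over $\VF(\mdl S)$ together with the single $\RV$-datum $\rv(c)$ and its position relative to $\RV_0(\mdl S)$. For the $\lan{T}{}{}$-part: after first applying the previous case to $\rv(c)$ one may assume $\rv(c) \in \RV_0(\mdl S)$; if moreover $\rv(c) \in \rv(\VF(\mdl S))$ then a lift $d \in \VF(\mdl S)$ makes $c \in d(1+\MM)$, while otherwise the fibre $\rv^{-1}(\rv(c))$ is a coset $a + a\MM$ of $\MM$ meeting $\VF(\mdl S)$ in the empty set, hence convex, so that all of its elements realize the same cut --- and therefore, by the automorphism property stated just before this theorem, the same $\lan{T}{}{}$-type --- over $\VF(\mdl S)$; in either situation the required type is realized in $\mdl N$ because $\rv$ is surjective there. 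That the ensuing assignment respects every atomic $\lan{T}{RV}{}$-formula, not merely the $\lan{T}{}{}$-ones, follows from quantifier elimination for $T_{\textup{convex}}$ (Theorem~\ref{tcon:qe}) together with the canonical, functorial passage from $T_{\textup{convex}}$-models to $\TCVF$-models (Proposition~\ref{uni:exp} and the opening lines of the proof of Theorem~\ref{thm:complete}): on any structure whose $\lan{}{convex}{}$-reduct is a $T_{\textup{convex}}$-model the whole $\lan{T}{RV}{}$-structure is already determined by that reduct, and an $\lan{}{convex}{}$-embedding of such reducts lifts uniquely to an $\lan{T}{RV}{}$-embedding of the $\TCVF$-expansions.

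The main obstacle is precisely the crux claim of the $\VF$-case: that enriching the $\lan{T}{}{}$-type of $c$ over $\VF(\mdl S)$ by the value $\rv(c)$ already pins down the full quantifier-free $\lan{T}{RV}{}$-type. Its substance is the compatibility of $\rv$ --- equivalently, of the disc structure and the valuative ``addition'' on $\RV$, which is itself controlled by $\vv$ and $\res$ because $\res$ restricts to the identity on $\K^{\times}$ --- with $T$-definable functions: a continuous $T$-definable function carries the disc $\rv^{-1}(t)$, all of whose points share the value $\vv(t)$, into a disc or a point, its discontinuity locus over $\VF(\mdl S)$ lying in $\VF(\mdl S)$ and hence off that disc. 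This is exactly the $T$-convexity (Holly normal form) input underlying Theorem~\ref{tcon:qe}, so most of it is imported; what remains to be checked by hand is the bookkeeping when $\mdl S$ is not $\VF$-generated (its $\RV$-elements lacking a $\VF$-lift interact with $c$ only through $\vv$ and $\res$, which $f$ respects) and the degenerate case in which the valuation induced on $\mdl S$ is trivial (handled by first adjoining a nonzero element of $\MM$ --- via the $\RV$-case and then a generic lift --- so that the $\lan{}{convex}{}$-reduct becomes a genuine $T_{\textup{convex}}$-model). With these in hand the quantifier-elimination test is met.
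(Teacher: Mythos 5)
Your overall architecture (one-element embedding test, case split by sort, reduction of the $\VF$-generated part to Theorem~\ref{tcon:qe} via Proposition~\ref{uni:exp}) matches the paper's, but there is a genuine gap, and it sits exactly where you locate the ``crux'' and then defer it to ``bookkeeping'': the case where $\mdl S$ is not $\VF$-generated. Your crux claim --- that the quantifier-free $\lan{T}{RV}{}$-type of $c\in\VF$ over $\mdl S$ is determined by the $\lan{T}{}{}$-type of $c$ over $\VF(\mdl S)$ together with $\rv(c)$ and its position relative to $\RV_0(\mdl S)$ --- is not true in that generality. Adjoining $c$ generates the whole field $\VF(\mdl S)\la c\ra_T$, and the type must record how $\rv(f(c,\bar d))$, $\vv(f(c,\bar d))$ and the associated residues sit relative to \emph{every} orphan element $t\in\RV(\mdl S)\mi\rv(\VF(\mdl S))$; two elements with the same $\lan{T}{}{}$-type over $\VF(\mdl S)$ and the same $\rv$-value can differ on, say, whether $\vv(c-d)=\vv(t)$ for some $d\in\VF(\mdl S)$ and such an orphan $t$. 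Neither Theorem~\ref{tcon:qe} nor Proposition~\ref{uni:exp} reaches this: the first concerns only the predicate $\OO$, and the second transfers $\lan{}{convex}{}$-data to $\lan{T}{RV}{}$-data only for models (or $\VF$-generated substructures), not for substructures carrying $\RV$-elements without $\VF$-lifts.

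The tell is that your argument never uses power-boundedness, whereas the theorem genuinely depends on it. The paper's proof handles the orphan elements in the opposite order --- it adjoins $\VF$-lifts $a$ of each $t\in\RV(\mdl S)\mi\RV(\la\VF(\mdl S)\ra)$ first, so that the final step is a clean application of $T_{\textup{convex}}$-QE to $\VF$-generated structures --- and the matching of the two sides at each such step is carried by the Wilkie inequality (\cite[Corollary~5.6]{Dries:tcon:97}, valid only for power-bounded $T$): it forces $\td$-style control so that at most one of $\K$, $\absG$ grows, by exactly the generator $t$ contributes, and it grows on the $\sigma$-side iff it grows on the $\mdl M$-side (this is the content of the dichotomy ``$a_1\in\OO(\la\mdl S_*,a\ra)$ iff $\bar\sigma(a_1)\in\OO(\la\sigma(\mdl S_*),b\ra)$''). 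Without this input your embedding of $\la\mdl S,c\ra$ cannot be shown to respect the $\RV$-sort. Secondarily, your $\RV$-case is also incomplete as stated: the dichotomy ``$\vv(c)\in\vv(\RV_0(\mdl S))$ or not'' omits the possibility $\vv(c)^n\in\vv(\RV_0(\mdl S))$ for some $n>1$ (the value group of a substructure need not be divisible), and the induced quantifier-free structure on $\Gamma$ coming from the $\RV$-reduct is only that of an ordered abelian group, not an ordered $\KKK$-vector space, so the invocation of $\KKK$-linear QE needs justification. These are repairable; the missing Wilkie-inequality argument is the essential one.
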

\begin{proof}
We shall run the usual Shoenfield test for quantifier elimination. To that end, let $\mdl M$ be a model of $\TCVF$, $\mdl S$ a substructure of $\mdl M$, and $\sigma : \mdl S \fun \mmdl$ an embedding. All we need to do is to extend $\sigma$ to an embedding $\mdl M \fun \mmdl$.

The construction is more or less a variation of that in the proof of \cite[Theorem~3.10]{Yin:QE:ACVF:min}. The strategy is to reduce the situation to Theorem~\ref{tcon:qe}. In the process of doing so, instead of the dimension inequality of the general theory of valued fields, the Wilkie inequality \cite[Corollary~5.6]{Dries:tcon:97} is used (see \cite[\S~3.2]{DriesLew95} for the notion of ranks of $T$-models). Note that, to use this inequality, we need to assume that $T$ is power-bounded.

Let $\mdl S_* = \la \VF(\mdl S) \ra$ and $t \in \RV(\mdl S) \mi \RV(\mdl S_*)$. Note that if such a $t$ does not exist then we have $\mdl S = \mdl S_*$ and its $\lan{}{convex}{}$-reduct is an $\lan{}{convex}{}$-substructure of the $\lan{}{convex}{}$-reduct of $\mdl M$, and hence an embedding as desired can be easily obtained by applying Theorem~\ref{tcon:qe} and Proposition~\ref{uni:exp}. Let $a \in \VF(\mdl M)$ with $\rv(a) = t$ and $b \in \VF$ with $\rv(b) = \sigma(t)$. Observe that, according to $\sigma$, $a$ and $b$ must make the same cut in $\VF(\mdl S)$ and $\VF(\sigma(\mdl S))$, respectively, and hence there is an \LT-isomorphism
\[
\bar \sigma : \la \mdl S_*, a \ra_T \fun \la \sigma(\mdl S_*), b \ra_T
\]
with $\bar \sigma(a) = b$ and $\bar \sigma \rest \VF(\mdl S) = \sigma \rest \VF(\mdl S)$. We shall show that $\bar \sigma$ expands to an isomorphism between $\la \mdl S_*, a \ra$ and $\la \sigma(\mdl S_*), b \ra$ that is compatible with $\sigma$.

Case (1): There is an $a_1 \in \la \mdl S_*, a \ra_T$ such that
\[
\abs{\OO(\mdl S_*) } < a_1 < \abs{ \VF(\mdl S_*) \mi \OO(\mdl S_*) }.
\]
Set $\abs{\Gamma}(\mdl S_*) = G$. Since $\OO(\la\mdl S_*, a \ra)$ is $T$-convex, by \cite[Lemma~5.4]{Dries:tcon:97} and \cite[Remark~3.8]{DriesLew95},
\begin{itemize}
  \item either $a_1 \in \OO(\la\mdl S_*, a \ra)$ and $\absG(\la \mdl S_*, a \ra) = G$ or
  \item $a_1 \notin \OO(\la\mdl S_*, a \ra)$ and $\absG(\la \mdl S_*, a \ra) \cong G \oplus \KKK$.
\end{itemize}
By the Wilkie inequality, if
\[
\absG(\la \mdl S_*, a \ra) \cong G \oplus \KKK
\]
then $\K(\la \mdl S_*, a \ra) = \K(\mdl S_*)$ and hence $\abvrv(t) \notin G$, which implies $\abvrv(\sigma(t)) \notin \sigma(G)$; conversely, if \[
\absG(\la \sigma(\mdl S_*), b \ra) \cong \sigma(G) \oplus \KKK
\]
then $\abvrv(t) \notin G$. Therefore
\[
\absG(\la \mdl S_*, a \ra) \cong G \oplus \KKK \quad \text{if and only if} \quad \absG(\la \sigma(\mdl S_*), b \ra) \cong \sigma(G) \oplus \KKK,
\]
which, by \cite[Remark~3.8]{DriesLew95}, is equivalent to saying that $a_1 \in \OO(\la\mdl S_*, a \ra)$ if and only if $\bar \sigma(a_1) \in \OO(\la \mdl \sigma(\mdl S_*), b \ra)$.

Subcase (1a): $a_1 \in \OO(\la \mdl S_*, a \ra)$. Subcase~(1a) of the proof of \cite[Theorem~3.10]{DriesLew95} shows that $\bar \sigma$ expands to an $\lan{}{convex}{}$-isomorphism and hence to an $\lan{T}{RV}{}$-isomorphism, which is also denoted by $\bar \sigma$. Since $\absG(\la \mdl S_*, a \ra) = G$, we may assume $t \in \K(\mdl M)$. By the Wilkie inequality, $\K(\la \mdl S_*, a \ra)$ is precisely the $T$-model generated by $t$ over $\K(\mdl S_*)$. So $\RV(\la \mdl S_*, a \ra) = \la \RV(\mdl S_*), t \ra$ and
\[
\bar \sigma \rest \RV(\la \mdl S_*, a \ra) = \sigma \rest \RV(\la \mdl S_*, a \ra).
\]

Subcase (1b): $a_1 \notin \OO(\la \mdl S_*, a \ra)$. As above, Subcase~(1b) of the proof of \cite[Theorem~3.10]{DriesLew95} shows that $\bar \sigma$ expands to an $\lan{T}{RV}{}$-isomorphism and this time $\K(\la \mdl S_*, a \ra) = \K(\mdl S_*)$. Again it is clear that
\[
\bar \sigma \rest \RV(\la \mdl S_*, a \ra) = \sigma \rest \RV(\la \mdl S_*, a \ra).
\]

Case (2): Case (1) fails. Then there is also no $b_1 \in \la \sigma(\mdl S_*), b \ra_T$ such that
\[
\abs{ \OO(\sigma(\mdl S_*)) } < b_1 < \abs{ \VF(\sigma(\mdl S_*)) \mi \OO(\sigma(\mdl S_*)) }.
\]
Using Case~(2) of the proof of \cite[Theorem~3.10]{DriesLew95}, compatibility between $\bar \sigma$ and $\sigma$ may be deduced as in Case (1) above.

Iterating this procedure, we may assume $\mdl S = \mdl S_*$. The theorem follows.
\end{proof}

\begin{cor}
For all set $A \sub \VF$, $\la A \ra$ is an elementary substructure of $\mmdl$ if and only if $\Gamma(\la A \ra)$ is nontrivial, that is, $\Gamma(\la A \ra) \neq \pm 1$.
\end{cor}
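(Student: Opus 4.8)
The plan is to derive both implications from quantifier elimination for $\TCVF$ (Theorem~\ref{theos:qe}) together with the observation recorded just after Definition~\ref{defn:tcf}: a $\VF$-generated substructure $\mdl S$ of a model of $\TCVF$ is itself a model of $\TCVF$ as soon as $\Gamma(\mdl S)$ is nontrivial. The only preliminary I would need is that, for $A \sub \VF$, the substructure $\la A \ra$ is automatically $\VF$-generated. To check this I would argue as follows: its $\VF$-part is $\la A \ra_T$, an (elementary) $\lant$-submodel of $\mdl R$; the sole primitive of $\lan{T}{RV}{}$ carrying a $\VF$-argument into the $\RV$-sort is $\rv$; and $\rv(\la A \ra_T) \cup \{0\}$ is already closed under the group operations of the $\RV$-sort, while $\K(\la A \ra) = \res(\OO(\la A \ra))$ is contained in it as well, since $\OO(\la A \ra) = \OO \cap \la A \ra_T$ is $T$-convex in $\la A \ra_T$ (the trace of a $\emptyset$-definable continuous function extends to one on $\mdl R$), so its residue field is the image under $\rv$ of the units of $\OO(\la A \ra)$. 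Hence $\RVV(\la A \ra) = \rv(\VF(\la A \ra)) \cup \{0\}$, as required.

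For the ``if'' direction I would then argue: assuming $\Gamma(\la A \ra) \neq \pm 1$, the criterion above gives $\la A \ra \models \TCVF$; since $\mmdl \models \TCVF$ and $\TCVF$ eliminates quantifiers, the inclusion $\la A \ra \hookrightarrow \mmdl$ is an elementary embedding, i.e.\ $\la A \ra \esub \mmdl$. For the ``only if'' direction: if $\la A \ra \esub \mmdl$ then $\la A \ra$ is itself a model of $\TCVF$, hence its valuation $\abval$ is nontrivial by the axioms of Definition~\ref{defn:tcf}, which by Remark~\ref{signed:Gam} (using $\GAA / {\pm 1} \cong \absG_\infty$) is precisely the statement $\Gamma(\la A \ra) \neq \pm 1$.

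I do not expect a serious obstacle, since both ingredients are already in place. The one point demanding a little care is the bookkeeping in the first step --- verifying that generating $\la A \ra$ from a set of $\VF$-points produces no $\RV$- or $\K$-elements outside $\rv(\la A \ra_T)$, so that the model criterion from \S~\ref{defn:lan} applies --- and this is essentially the remark that a $\VF$-generated substructure is determined by its $\VF$-part; the translation between triviality of the valuation and the condition $\Gamma = \pm 1$ is immediate from Remark~\ref{signed:Gam}.
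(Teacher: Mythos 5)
Your proof is correct and follows exactly the route the paper intends: the corollary is stated without proof as an immediate consequence of quantifier elimination (Theorem~\ref{theos:qe}) together with the remark after Definition~\ref{defn:tcf} that a $\VF$-generated substructure with nontrivial $\Gamma$ is a model of $\TCVF$, and the observation that $\la A\ra$ is automatically $\VF$-generated for $A\sub\VF$. Your extra bookkeeping on why $\RVV(\la A\ra)=\rv(\VF(\la A\ra))\cup\{0\}$ only makes explicit what the paper records as ``Clearly $\VF(\la \mdl S, A\ra)=\la\mdl S, A\ra_T$.''
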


\begin{cor}\label{trans:VF}
Every parametrically $\lan{T}{RV}{}$-definable subset of $\VF^n$ is parametrically $\lan{}{convex}{}$-definable.
\end{cor}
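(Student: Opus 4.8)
The plan is to leverage the quantifier elimination for $\TCVF$ established in Theorem~\ref{theos:qe}. By that theorem, a parametrically $\lan{T}{RV}{}$-definable subset $A \sub \VF^n$ is cut out by a quantifier-free $\lan{T}{RV}{}$-formula $\phi(x)$ with $x = (x_1, \dots, x_n)$ a tuple of $\VF$-sort variables and parameters ranging over $\mmdl$. First I would observe that, since $\rv$ is surjective onto $\RV$ by (Ax.~\ref{ax:rv}), every $\RV$-sort parameter appearing in $\phi$ may be replaced by $\rv(c)$ for a fresh $\VF$-sort parameter $c$; so we may assume all parameters of $\phi$ lie in the $\VF$-sort. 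It then remains to show that each atomic subformula of $\phi$ is, modulo $\TCVF$, equivalent to an $\lan{}{convex}{}$-formula in $x$ with $\VF$-sort parameters: a boolean combination of these then defines $A$ over $\lan{}{convex}{}$ (recall $\mmdl \rest \lan{}{convex}{}$ is the $T_{\textup{convex}}$-model $(\mdl R, \OO)$ and $\VF = \mdl R$).

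For the translation of atomic formulas I would proceed as follows. The sets $\OO$, $\MM$, and $1 + \MM$ are $\lan{}{convex}{}$-definable without parameters ($\OO$ is the distinguished predicate, and $\MM = \{y \in \OO : y = 0 \vee y^{-1} \notin \OO\}$ since $\OO$ is a valuation ring). Because $\rv$ is a group homomorphism and the $\RV$-operations are augmented by $\rv(0) = 0$, $0^{-1} = 0$, $0 \times t = 0$, every $\RV$-sort term in $x$ and the $\VF$-parameters is $\TCVF$-equivalent to $\rv(\rho(x))$ for a single $\lan{T}{}{}$-term $\rho$ (using that $\lan{T}{}{}$ contains multiplicative inverse); and, using (Ax.~\ref{ax:match}) — in the reformulation for the continuous primitives of $\lan{T'}{}{}$ noted after Definition~\ref{defn:tcf} — together with $\res(\rv(a)) = \res(a)$, every $\K$-sort term reduces, on each piece of a finite case split on which of its $\VF$-subterms lie in $\OO$, to $\res(\rho(x))$ for an $\lan{T}{}{}$-term $\rho$. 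Thus, within each case, every atomic subformula of $\phi$ becomes a boolean combination of an $\lan{T}{}{}$-atomic formula in $x$ and formulas of one of the shapes $\rv(a) = \rv(b)$, $\rv(a) \leq \rv(b)$, $\K^{\times}(\rv(a))$, $\res(a) = \res(b)$, $\res(a) \leq \res(b)$, with $a, b$ given by $\lan{T}{}{}$-terms in $x$ and the parameters. Finally I would translate each of these: $\rv(a) = \rv(b)$ as $(a = b = 0) \vee (a \neq 0 \wedge b \neq 0 \wedge a b^{-1} - 1 \in \MM)$; $\K^{\times}(\rv(a))$ as $a \in \OO \wedge a^{-1} \in \OO$; $\rv(a) \leq \rv(b)$ as $a < b \vee \rv(a) = \rv(b)$; and, after the same case split on membership in $\OO$, the residue conditions as the obvious $\lan{}{convex}{}$-formulas built from $\MM$ and $<$ (for $a, b \in \OO$, as $a - b \in \MM$ and as $a < b \vee a - b \in \MM$ respectively). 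Back-substituting yields the $\lan{}{convex}{}$-formula defining $A$.

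The bookkeeping above is routine, and I expect the only genuinely delicate point to be the justification of the order translations — equivalently, that the $\rv$-classes (cosets of $1 + \MM$) and the residue classes are linearly ordered by the $\VF$-ordering without interleaving. For $\RV_0$ this follows from the defining property of $\leq$ on $\RV_0$ in Definition~\ref{defn:tcf}: if $\rv(a) \neq \rv(b)$ and $a < b$, then, $\leq$ being a total order and $a, b$ being representatives of their $\rv$-classes, $\rv(b) < \rv(a)$ is impossible, so $\rv(a) < \rv(b)$; and for $\K$ it follows from convexity of $\OO$ together with $\res$ being a ring homomorphism on $\OO$, which make $\res$ weakly monotone on $\OO$ with $\res(a) = \res(b) \liff a - b \in \MM$. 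Managing the $\K$-sort terms through (Ax.~\ref{ax:match}) is the other place to be careful. As an alternative route that sidesteps quantifier elimination, one can note that $\RV$ — together with $\rv$, $\leq$, $\K^{\times}$, and the $\K$-sort structure — is interpretable in $(\mdl R, \OO)$ over $\emptyset$ via the quotient $\VF^{\times}/(1 + \MM)$, and that by Proposition~\ref{uni:exp} this interpretation recovers $\mmdl$ with the identity map on the $\VF$-sort; translating $\lan{T}{RV}{}$-formulas through it yields the same conclusion.
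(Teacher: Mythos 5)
Your proposal is correct. The paper itself gives no written proof: the corollary is placed immediately after the quantifier elimination theorem, and the sentence following it says that it ``already follows from Proposition~\ref{uni:exp}'' --- that is, the paper's official justification is exactly the interpretability argument you sketch in your last sentences (the canonical $\lan{T}{RV}{}$-expansion of $(\mdl R,\OO)$ is built from the $\lan{}{convex}{}$-interpretable quotient $\VF^{\times}/(1+\MM)$, and by uniqueness of the expansion over the identity on the $\VF$-sort this recovers $\mmdl$). Your primary route --- Theorem~\ref{theos:qe} followed by an atomic-by-atomic translation of quantifier-free $\lan{T}{RV}{}$-formulas into $\lan{}{convex}{}$ --- is a legitimate alternative that the corollary's placement also invites, and your individual translations ($\rv(a)=\rv(b)$ via $ab^{-1}-1\in\MM$, $\K^{\times}(\rv(a))$ via $a\in\OO^{\times}$, the order translations via convexity of $1+\MM$ and of $\MM$, and the treatment of $\K$-sort terms through (Ax.~\ref{ax:match}) after a case split on membership in $\OO$ and $\MM$) all check out. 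What each approach buys: the syntactic route produces an explicit quantifier-free $\lan{}{convex}{}$-formula and makes visible exactly which $\lan{}{convex}{}$-conditions replace the $\RV$-sort primitives, at the cost of invoking the (nontrivial) quantifier elimination theorem and some term bookkeeping; the interpretability route via Proposition~\ref{uni:exp} is essentially one line, needs no quantifier elimination, and is what the paper intends.
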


This corollary already follows from Proposition~\ref{uni:exp}. Anyway, it enables us to transfer results in the theory of $T$-convex valued fields \cite{DriesLew95, Dries:tcon:97} into our setting, which we shall do without further explanation.

We include here a couple of generalities on immediate isomorphisms. Their proofs are built on that of Theorem~\ref{theos:qe} and hence we shall skip some details.

\begin{defn}
Let $\mdl M$, $\mdl N$ be substructures and $\sigma : \mdl M \fun \mdl N$ an $\lan{T}{RV}{}$-isomorphism. We say that $\sigma$ is an \emph{immediate isomorphism} if $\sigma(t) = t$ for all $t \in \RV(\mdl M)$.
\end{defn}

Note that if $\sigma$ is an immediate isomorphism then, \emph{ex post facto}, $\RV(\mdl M) = \RV(\mdl N)$.

\begin{lem}\label{imm:ext}
Every immediate isomorphism $\sigma : \mdl M \fun \mdl N$ can be extended to an immediate automorphism of $\mmdl$.
\end{lem}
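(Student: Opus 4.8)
The plan is to extend $\sigma$ by a transfinite back-and-forth inside the sufficiently saturated model $\mmdl$, arranged so that immediacy is preserved at every stage. The only input from Theorem~\ref{theos:qe} that is needed is its conclusion: by quantifier elimination, every isomorphism between substructures of $\mmdl$ is a partial elementary map, so that the $\sigma$-image of a type over a small substructure is realised in $\mmdl$. It therefore suffices to prove the one-point extension step: for an immediate isomorphism $\sigma\colon\mdl M\fun\mdl N$ between small substructures and any $c\in\mmdl$ (in the $\VF$-sort or in the $\RV$-sort), $\sigma$ extends to an immediate isomorphism whose domain contains $c$, and symmetrically for the range. Alternating these two extension procedures along an enumeration of $\mmdl$ then produces the desired immediate automorphism.

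Suppose first that $c$ lies in the $\RV$-sort. I would simply extend $\sigma$ by $c\mapsto c$. The map $\sigma\cup\{c\mapsto c\}$ is a partial isomorphism: the atomic formulas tying $c$ to $\mdl M$ are built from the ordering and the group and field operations on $\RV_0$ together with the symbol $\rv$, so their parameters from $\mdl M$ lie in $\RV_0(\mdl M)$ or are values $\rv(m)$ with $m\in\VF(\mdl M)$, hence again in $\RV_0(\mdl M)$; all of these are fixed by $\sigma$, since $\sigma$ is immediate, so every such formula is preserved. By Theorem~\ref{theos:qe} this partial isomorphism is partial elementary and thus extends to an $\lan{T}{RV}{}$-isomorphism $\la\mdl M,c\ra\fun\la\mdl N,c\ra$. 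Adjoining an $\RV$-element adds no new $\VF$-elements, so $\VF(\la\mdl M,c\ra)=\VF(\mdl M)$ and the $\RV$-sort of $\la\mdl M,c\ra$ is generated by $\RV_0(\mdl M)\cup\{c\}$ under the $\RV$- and $\K$-operations alone; since the extension is a homomorphism fixing $\RV_0(\mdl M)$ and $c$, it fixes this whole $\RV$-sort, i.e. it is immediate.

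Now suppose $c$ lies in the $\VF$-sort. Here the point is that merely realising the transported type of $c$ could force newly generated $\RV$-elements to go to the ``wrong'' targets, so I would first close off the $\RV$-sort. Using the previous paragraph (applied to each element of the small set $\RV_0(\la\mdl M,c\ra)$), extend $\sigma$ to an immediate isomorphism $\sigma'\colon\mdl M'\fun\mdl N'$ with $\mdl M'=\la\mdl M,\RV_0(\la\mdl M,c\ra)\ra$. As adjoining $\RV$-elements does not change the $\VF$-sort, $\VF(\la\mdl M',c\ra)=\VF(\la\mdl M,c\ra)$, so every $\rv$-value occurring in $\la\mdl M',c\ra$ already lies in $\RV_0(\la\mdl M,c\ra)\sub\RV_0(\mdl M')$; hence $\RV_0(\la\mdl M',c\ra)=\RV_0(\mdl M')$, that is, adjoining $c$ to $\mdl M'$ produces no new $\RV$-element. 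Now pick $b\in\VF(\mmdl)$ realising $\sigma'(\tp(c/\mdl M'))$ — possible by saturation, as $\sigma'$ is partial elementary — and extend $\sigma'$ by $c\mapsto b$. By Theorem~\ref{theos:qe} this is again partial elementary and gives an $\lan{T}{RV}{}$-isomorphism $\la\mdl M',c\ra\fun\la\mdl N',b\ra$ whose $\RV$-sort is exactly $\RV_0(\mdl M')$; there it agrees with $\sigma'$, hence with the identity, so the extension is immediate.

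I expect the only genuinely delicate points to be the closing-off manoeuvre in the $\VF$-sort case — which is where the argument leans on the structure theory behind Theorem~\ref{theos:qe} — together with the bookkeeping that keeps all intermediate substructures small so that saturation remains applicable. Everything else (that the identity extension on the $\RV$-sort is a partial isomorphism, and the standard fact that partial elementary maps into a sufficiently saturated model extend one point at a time) is routine, and these are precisely the details the text proposes to skip.
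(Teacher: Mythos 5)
Your proof is correct, but it follows a genuinely different route from the paper's. The paper extends $\sigma$ in one direction only: for each $t$ not yet in $\rv(\VF(\mdl M))$ it adjoins some $a\in\rv^{-1}(t)$ with $a\mapsto a$, and verifies immediacy of that single step by re-running the case analysis inside the \emph{proof} of Theorem~\ref{theos:qe} (the Wilkie inequality and the dichotomy for $\OO(\la\mdl M,a\ra)$); once $\VF(\mdl M)$ meets every $\RV$-class, any further extension to an automorphism, supplied by Theorem~\ref{tcon:qe}, is immediate for free. You instead run an interleaved back-and-forth and make immediacy automatic via the closing-off manoeuvre: the identity on a new $\RV$-element is a partial isomorphism over any immediate map (since all $\RV$-parameters arising from $\mdl M$, including $\rv$-images of $\lan{T}{}{}$-terms in $\VF(\mdl M)$, are literally fixed by $\sigma$), and after adjoining $\RV_0(\la\mdl M,c\ra)$ to the domain the further adjunction of the $\VF$-point $c$ generates no new $\RV$-elements, so the restriction of the extension to the $\RV$-sort is forced to agree with the identity. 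The trade-off: your argument uses only the \emph{statement} of quantifier elimination together with saturation, and so is more self-contained; the paper's argument reuses machinery it has already built, produces the specific extension $a\mapsto a$ over a prescribed $\RV$-datum (the shape of argument needed again in Lemma~\ref{imm:iso}), and isolates the useful intermediate stage in which $\VF(\mdl M)$ realizes every $\RV$-class. Both proofs share the implicit standing hypothesis that $\mdl M$ and $\mdl N$ are small relative to the saturation of $\mmdl$.
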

\begin{proof}
Let $\mdl M_* = \la \VF(\mdl M) \ra$ and $\mdl N_* = \la \VF(\mdl N) \ra$. Let $t \notin \RV(\mdl M_*)$ and $a \in \rv^{-1}(t)$. Since $\sigma$ is immediate, $a$ makes the same cut in $\VF(\mdl M)$ and $\VF(\mdl N)$ according to $\sigma$. By the proof of  Theorem~\ref{theos:qe}, $\sigma$ may be extended to an immediate isomorphism $\la \mdl M, a \ra \fun \la \mdl N, a \ra$. Iterating this procedure, we reach a stage where the assertion simply follows from Theorem~\ref{tcon:qe}.
\end{proof}

We have something much stronger. For that, the following crucial property is needed.

\begin{prop}[Valuation property]\label{val:prop}
Let $\mdl M$ be a $\VF$-generated substructure and $a \in \VF$. Suppose that $\Gamma(\la \mdl M, a \ra) \neq \Gamma(\mdl M)$. Then there is a $d \in \VF(\mdl M)$ such that $\vv(a - d) \notin \vv(\mdl M)$.
\end{prop}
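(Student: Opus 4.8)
The plan is to follow the same route as the quantifier-elimination argument (Theorem~\ref{theos:qe}), reducing the valuation property to the known valuation property for $T$-convex valued fields from \cite{DriesLew95, Dries:tcon:97}, and in particular to the Wilkie inequality. Since $\mdl M$ is $\VF$-generated, we have $\RV_0(\mdl M) = \rv(\VF(\mdl M))$ and $\VF(\mdl M) = \la\VF(\mdl M)\ra_T$ is a $T$-model. The hypothesis $\Gamma(\la\mdl M, a\ra) \neq \Gamma(\mdl M)$ says precisely that adjoining $a$ enlarges the (signed) value group; equivalently, $\abvrv(\rv(a'))$ lies outside $\abs{\Gamma}(\mdl M)$ for some element $a'$ of the $T$-model $\la\VF(\mdl M), a\ra_T$ generated by $a$ over $\mdl M$. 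The goal is to locate a ``center'' $d \in \VF(\mdl M)$ with $\vv(a-d) \notin \vv(\mdl M)$, i.e. the value $\abval(a-d)$ is a genuinely new element of the value group.

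First I would reduce to the case where $a$ itself realizes the new valuation datum. By Holly normal form / \omin-minimality applied in the $T$-model $\VF(\mdl M)$, the cut of $a$ over $\VF(\mdl M)$ is controlled either by an interval datum or by a (valuative) disc datum; only in the latter situation can the value group grow. If the value group does grow, then by the structure theory of $T$-convex extensions (\cite[Lemma~5.4]{Dries:tcon:97}, \cite[Remark~3.8]{DriesLew95}) there is an element $a_1 \in \la\mdl M, a\ra_T$ sitting strictly between $\abs{\OO(\mdl M)}$ and $\abs{\VF(\mdl M)\mi\OO(\mdl M)}$, and $\absG(\la\mdl M, a\ra)$ gains a copy of the field of exponents $\KKK$. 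By the Wilkie inequality, this is exactly the case in which $\K(\la\mdl M, a\ra) = \K(\mdl M)$, so no residue-field growth compensates; this pins down that the growth is ``purely valuative.'' Next, using that the $T$-model $\la\VF(\mdl M), a\ra_T$ is generated by the single element $a$, I would argue that $a_1$ — hence the new value — can be taken to be of the form (a $\K(\mdl M)$-rational function, or more precisely an $\lan{T}{}{}(\VF(\mdl M))$-definable function) evaluated at $a$, and then extract from the proof of Theorem~\ref{theos:qe} (Case~(1), Subcase~(1b)) the relevant monotonicity: there is a definable continuous $f$ over $\VF(\mdl M)$, in fact an affine-type map $x \efun x - d$ with $d \in \VF(\mdl M)$, realizing this. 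The point is that the new valuation has to come from ``how close $a$ is'' to $\VF(\mdl M)$, and the closest point — the center of the smallest $\mdl M$-definable disc containing $a$ that still has $\mdl M$-definable radius — is the required $d$; the value $\vv(a-d)$ then is not in $\vv(\mdl M)$ precisely because otherwise the disc of that radius around $d$ would already be $\mdl M$-definable, contradicting that $a$ forces a new value.

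The main obstacle I expect is the bookkeeping between the \emph{signed} value group $\GAA$ used throughout the paper and the classical value group $\abs{\Gamma}_\infty$ appearing in the cited results of van den Dries and van den Dries–Lewenberg, together with making rigorous the claim that the new value is realized by a \emph{translate} $a - d$ with $d \in \VF(\mdl M)$ rather than by some more complicated $\lan{T}{}{}(\VF(\mdl M))$-definable function of $a$. For the latter, the key input is \omin-minimality in the $T$-reduct: any definable function is piecewise monotone and continuous, so near the cut of $a$ it is, up to a definable homeomorphism fixing valuations appropriately, comparable to a power function; combined with the fact that $T$ is power-bounded (so no definable function distorts valuation drastically), one reduces the general definable-function case to the affine case, yielding the center $d$. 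Once $d$ is found, $\vv(a-d) \notin \vv(\mdl M)$ is immediate: if it were in $\vv(\mdl M)$, then $\rv(a-d)$ would be $(1+\MM)$-equivalent to a scalar multiple of an element of $\rv(\VF(\mdl M))$, so $a$ would lie in an $\mdl M$-definable thin annulus, forcing $\Gamma(\la\mdl M, a\ra) = \Gamma(\mdl M)$, contrary to hypothesis.
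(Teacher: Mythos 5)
There is a genuine gap, and it sits exactly where the real difficulty of the statement lies. First, for context: the paper does not prove this proposition at all; it cites \cite[Proposition~9.2]{DriesSpei:2000} for the polynomially bounded case and Tyne's thesis \cite{tyne} for the general power-bounded case. The valuation property is a known-to-be-deep fact about power-bounded theories (it fails in the presence of an exponential), not a formal consequence of the machinery already set up in \S 2. Your proposal tries to derive it from the Wilkie inequality and the structure theory of $T$-convex extensions, but those results only do bookkeeping on \emph{how much} the value group and residue field can grow (rank counting); they say nothing about \emph{which} elements of $\la \VF(\mdl M), a\ra_T$ realize the new value. The crucial step in your sketch is the passage from ``some $\lan{T}{}{}(\VF(\mdl M))$-definable function of $a$ realizes a value outside $\abs{\Gamma}(\mdl M)$'' to ``some translate $a-d$ with $d\in\VF(\mdl M)$ does.'' You assert this follows from piecewise monotonicity plus power-boundedness (``comparable to a power function, so no definable function distorts valuation drastically''), but that reduction \emph{is} the valuation property; it requires a genuine analysis of definable one-variable functions near the cut of $a$ (this is the content of the cited work of van den Dries--Speissegger and Tyne), and the proof of Theorem~\ref{theos:qe} that you propose to extract it from never establishes anything of this form --- it only distinguishes whether a given $a_1$ lands in the new valuation ring or not.

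The closing ``immediate'' verification is also not sound. The ``smallest $\mdl M$-definable disc containing $a$ with $\mdl M$-definable radius'' need not exist (the set of radii of $\mdl M$-definable discs containing $a$ need not attain its infimum), so the intended $d$ is not well defined; and even for a well-chosen $d$, the implication ``if $\vv(a-d)\in\vv(\mdl M)$ then $a$ lies in an $\mdl M$-definable thin annulus, forcing $\Gamma(\la\mdl M,a\ra)=\Gamma(\mdl M)$'' is exactly backwards: the hypothesis is that $\Gamma$ grows via \emph{some} element of $\la\mdl M,a\ra_T$, and nothing you have shown rules out that this happens while every translate $a-d$ has value in $\vv(\mdl M)$. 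Excluding that scenario is the theorem. If you want a self-contained treatment you would need to reproduce the argument of \cite[\S 9]{DriesSpei:2000} (or Tyne's extension of it); otherwise the honest proof here is a citation.
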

\begin{proof}
For the polynomially bounded case, see~\cite[Proposition~9.2]{DriesSpei:2000} and the remark thereafter. Apparently this is established in full generality (power-bounded) in \cite{tyne}, which is in a repository that is password-protected.
\end{proof}

\begin{lem}\label{imm:iso}
Let $\sigma : \mdl M \fun \mdl N$ be an immediate isomorphism. Let $a \in \VF \mi \VF(\mdl M)$ and $b \in \VF \mi \VF(\mdl N)$ such that $\rv(a - c) = \rv(b -\sigma(c))$ for all $c \in \VF(\mdl M)$. Then $\sigma$ may be extended to an immediate isomorphism $\bar \sigma : \la \mdl M, a \ra \fun \la \mdl N, b \ra$ with $\bar \sigma(a) = b$.
\end{lem}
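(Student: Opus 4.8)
The plan is to adapt, to the immediate setting, the Shoenfield-test argument behind Theorem~\ref{theos:qe}, the point being that the hypothesis "$\rv(a-c)=\rv(b-\sigma(c))$ for all $c\in\VF(\mdl M)$" is exactly the kind of matched cut-and-$\RV$ data that drives that proof.

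\textbf{Reduction to the $\VF$-sort.} Taking $c=0$ gives $\rv(a)=\rv(b)$; for general $c$, since $\rv$ is compatible with the ordering and the sign map, $a$ and $b$ make, via $\sigma$, the same cut in $\VF(\mdl M)$ and $\VF(\mdl N)$. By \omin-minimality (as in the observation preceding Theorem~\ref{theos:qe}), the restriction $\sigma\rest\VF(\mdl M)$ extends to an $\lan{T}{}{}$-isomorphism $\tau\colon\la\VF(\mdl M),a\ra_T\fun\la\VF(\mdl N),b\ra_T$ with $\tau(a)=b$. Since $\VF(\la\mdl M,a\ra)=\la\VF(\mdl M),a\ra_T$ and $\la\mdl M,a\ra$ is generated as an $\lan{T}{RV}{}$-structure by $\VF(\la\mdl M,a\ra)\cup\RV(\mdl M)$ (and similarly on the $\mdl N$-side), it is enough to show that $\tau$ together with $\sigma\rest\RV(\mdl M)=\id$ glue to a well-defined $\lan{T}{RV}{}$-isomorphism; equivalently, writing each $c\in\la\VF(\mdl M),a\ra_T$ as $c=f(a)$ for an $\lan{T}{}{}(\VF(\mdl M))$-definable function $f$ and letting $f^{\sigma}$ be obtained by applying $\sigma$ to the parameters (so $\tau(c)=f^{\sigma}(b)$), it suffices to prove $\rv(f(a))=\rv(f^{\sigma}(b))$ for every such $f$.

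\textbf{Proving the $\rv$-compatibility.} Here I would run the case analysis of the proof of Theorem~\ref{theos:qe}. First peel off $\RV(\mdl M)$, which $\sigma$ already fixes, and use Corollary~\ref{trans:VF} to reduce to the case where $\mdl M=\la\VF(\mdl M)\ra$ is $\VF$-generated. Then, as in that proof, adjoining $a$ either enlarges the residue field $\K$ (leaving $\Gamma$ essentially unchanged) or enlarges $\Gamma$; which regime occurs, and the precise new $\RV$-generator, is read off from the cut of $a$ over $\VF(\mdl M)$ together with $\rv(a)$. By hypothesis this data is matched on the $\mdl N$-side: $\rv(a)=\rv(b)$, and by the valuation property (Proposition~\ref{val:prop}) there is $d\in\VF(\mdl M)$ with $\vv(a-d)\notin\vv(\mdl M)$ iff the same holds for $b-\sigma(d)$, the new value $\vv(a-d)=\vv(b-\sigma(d))$ being literally identified. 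In each subcase, as in Subcases~(1a), (1b) and Case~(2) of the proof of Theorem~\ref{theos:qe} (invoking \cite[Theorem~3.10]{DriesLew95} together with the Wilkie inequality), $\tau$ extends to an $\lan{}{convex}{}$- and then $\lan{T}{RV}{}$-isomorphism, and the new $\RV$-elements of $\la\mdl M,a\ra$ are $\la\RV(\mdl M),\rv(a)\ra$ (when $\K$ grows) or $\la\RV(\mdl M),\vv(a-d)\ra$ (when $\Gamma$ grows); in both cases they coincide with the new $\RV$-elements of $\la\mdl N,b\ra$ and $\tau$ fixes them pointwise. This yields $\rv(f(a))=\rv(f^{\sigma}(b))$, hence the desired immediate $\bar\sigma$.

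\textbf{An alternative entry point and the main obstacle.} Alternatively one can reduce to a homogeneity statement over one structure: by Lemma~\ref{imm:ext} extend $\sigma$ to an immediate automorphism $\hat\sigma$ of $\mmdl$ and set $b'=\hat\sigma(a)$; then $\rv(b'-d)=\rv(b-d)$ for all $d\in\VF(\mdl N)$, so it is enough to produce an immediate isomorphism $\la\mdl N,b'\ra\fun\la\mdl N,b\ra$ over $\mdl N$ carrying $b'$ to $b$, i.e. to show that two $\VF$-elements with the same linear $\rv$-data over a $\VF$-generated structure are conjugate by an immediate map — which is again the case analysis above. The technical heart, and the step I expect to be the main obstacle, is precisely the one inherited from Theorem~\ref{theos:qe}: controlling the $\RV$-sort increment when a single $\VF$-element is adjoined, namely that the linear data $\{\rv(a-c)\}_{c}$ already pins down $\rv(f(a))$ for arbitrary definable $f$ and that the reduction to the $\VF$-generated case terminates. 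This rests on power-boundedness through the Wilkie inequality and Proposition~\ref{val:prop}; one must also keep careful track of the distinction between "the extensions are $\lan{T}{RV}{}$-isomorphic over $\mdl M$" and the stronger conclusion that $\bar\sigma$ is \emph{immediate}, which is why the new $\RV$-generators have to be identified explicitly rather than merely matched up to isomorphism.
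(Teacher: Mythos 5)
Your overall route is the paper's own: reduce to $\VF$-generated $\mdl M$, $\mdl N$, build the \LT-isomorphism $\bar\sigma$ from the matched cut, expand it to an $\lan{T}{RV}{}$-isomorphism via the two cases of the proof of Theorem~\ref{theos:qe} (with the valuation property supplying the $d$ that transfers the ``$\Gamma$ grows'' alternative across $\sigma$), and finally argue immediacy by exhibiting a generator of $\RV(\la \mdl M, a \ra)$ over $\RV(\mdl M)$ of the form $\rv(a-d)$, which the hypothesis forces $\bar\sigma$ to fix. Up to and including the expansion step your sketch matches the paper.

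The gap sits exactly where you flag ``the main obstacle'', and the resolution you propose there is incorrect. In the case $\K(\la \mdl M, a \ra) \neq \K(\mdl M)$ you assert that the new $\RV$-elements are $\la \RV(\mdl M), \rv(a) \ra$. That fails in general: if $a = c + \epsilon$ with $c \in \VF(\mdl M)$ and $\abval(\epsilon)$ large, then $\rv(a) = \rv(c) \in \RV(\mdl M)$ while the residue extension is carried by $\rv(a-c)$; more generally the new residue a priori appears only as $\res$ of some definable function of $a$, and nothing in your argument shows it is generated by an element of the form $\rv(a-d)$ --- the only shape of element the hypothesis controls. The paper devotes a separate case to precisely this point: one passes to the auxiliary $T$-convex subring $\OO'$ (the convex hull of $\OO(\mdl M)$ in $\la \mdl M, a \ra_T$, chosen to exclude the witness $a'$), applies the valuation property to the associated valuation $\vv'$ to obtain $d$ with $\vv'(a-d) \notin \Gamma'(\mdl M)$, and then uses $\abs{\Gamma'}(\la \mdl M, a \ra) \cong \abs{\Gamma}(\mdl M) \oplus \KKK$ to find $\gamma \in \abs{\Gamma}(\mdl M)$ with $\abs{a-d}$ wedged strictly between $\OO_\gamma(\mdl M)$ (or $\MM_\gamma(\mdl M)$) and its complement in $\VF(\mdl M)$; only then does $\rv(a-d) \notin \RV(\mdl M)$ follow, after which the Wilkie inequality yields $\RV(\la \mdl M, a \ra) = \la \RV(\mdl M), \rv(a-d) \ra$ and hence immediacy. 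A smaller slip in your $\Gamma$-growth case: the new $\RV$-elements are generated by $\rv(a-d)$, not by $\vv(a-d)$ (an element of $\Gamma$); the hypothesis fixes the former, which is what is actually needed.
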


Observe that, since every element of $\VF(\la \mdl M, a \ra) = \la \mdl M, a \ra_T$ is of the form $f(a, c)$, where $c \in \VF(\mdl M)$ and $f$ is a function symbol of $\lan{T}{}{}$, and similarly for $\la \mdl N, b \ra$, the lemma is equivalent to saying that $\rv(a - c) = \rv(b -\sigma(c))$ for all $c \in \VF(\mdl M)$ implies $\rv(f(a,c)) = \rv(f(b,\sigma(c)))$ for all $c \in \VF(\mdl M)$ and all function symbols of $\lan{T}{}{}$.

\begin{proof}
Without loss of generality, we may assume that $\mdl M$, $\mdl N$ are $\VF$-generated. According to $\sigma$, $a$ and $b$ must make the same cut respectively in $\VF(\mdl M)$ and $\VF(\mdl N)$, and hence there is an \LT-isomorphism $\bar \sigma : \la \mdl M, a \ra_T \fun \la \mdl N, b \ra_T$ with $\bar \sigma(a) = b$ that extends $\sigma \rest \VF(\mdl M)$. We shall first show that $\bar \sigma$ expands to an $\lan{T}{RV}{}$-isomorphism. There are two cases to consider, corresponding to the two cases in the proof of Theorem~\ref{theos:qe}.

Case (1): There is an $a' \in \la \mdl M, a \ra_T$ such that
\[
\abs{ \OO(\mdl M)} < a' < \abs{ \VF(\mdl M) \mi \OO(\mdl M) }.
\]
Let $f$ be a function symbol of $\lan{T}{}{}$ and $c \in \VF(\mdl M)$ such that $f(a, c) = a'$. Let $b' = \bar \sigma(f(a, c))$. Then we also have
\[
\abs{\OO(\mdl N)} < b' < \abs{ \VF(\mdl N) \mi \OO(\mdl N) }.
\]
If $a' \notin \OO(\la \mdl M, a \ra)$ then $\Gamma(\la \mdl M, a \ra) \neq \Gamma(\mdl M)$. By the valuation property, there is a $d \in \VF(\mdl M)$ such that $\vv(a - d) \notin \Gamma(\mdl M)$. Then $\vv(b - \sigma(d)) \notin \Gamma(\mdl N)$ and hence, by the Wilkie inequality, $\OO(\la \mdl N, b\ra)$ is the convex hull of $\OO(\mdl N)$ in $\la \mdl N, b \ra_T$. This implies $b' \notin \OO(\la \mdl N, b \ra)$. By symmetry and \cite[Remark~3.8]{DriesLew95}, we see that $a' \in \OO(\la \mdl M, a \ra)$ if and only if $b' \in \OO(\la \mdl N, b \ra)$, and hence
\[
\bar \sigma(\OO(\la \mdl M, a \ra)) = \OO(\la \mdl N, b \ra).
\]

Case (2): Case (1) fails. We may proceed exactly as in Case (2) of the proof of Theorem~\ref{theos:qe}.

This concludes our proof that $\bar \sigma$ expands to an $\lan{T}{RV}{}$-isomorphism.

Next, we show that $\bar \sigma$ is indeed immediate. If $\RV(\la \mdl M, a \ra) = \RV(\mdl M)$ then also $\RV(\la \mdl N, b \ra) = \RV(\mdl N)$, and there is nothing more to be done. So suppose $\RV(\la \mdl M, a \ra) \neq \RV(\mdl M)$. We claim that there is a $d \in \VF(\mdl M)$ such that $\rv(a - d) \notin \RV(\mdl M)$. We consider two (mutually exclusive) cases.

Case (1): $\Gamma(\la \mdl M, a \ra) \neq \Gamma(\mdl M)$. Then the valuation property gives such a $d$ directly.

Case (2): $\K(\la \mdl M, a \ra) \neq \K(\mdl M)$. Let $a'$ be as above. Let $\OO'$ be the \T-convex subring of $\VF(\mdl M)$ that does not contain $a'$, that is, $\OO'$ is the convex hull of $\OO(\mdl M)$ in $\la \mdl M, a \ra_T$.  Let $\vv'$, $\Gamma'(\la \mdl M, a \ra)$ be the corresponding signed valuation map and signed value group. Then the valuation property yields a $d \in \VF(\mdl M)$ such that $\vv'(a - d) \notin \Gamma'(\mdl M)$. Since
\[
\abs{\Gamma'}(\la \mdl M, a \ra) \cong \abs{\Gamma}(\mdl M) \oplus \KKK,
\]
there is a $\gamma \in \abs{\Gamma}(\mdl M)$ such that (exactly) one of the following two relations hold:
\begin{gather*}
\abs{ \OO_\gamma(\mdl M)} < \abs{a- d} < \abs{ \VF(\mdl M) \mi \OO_\gamma(\mdl M) },\\
\abs{ \MM_\gamma(\mdl M)} < \abs{a- d} < \abs{ \VF(\mdl M) \mi \MM_\gamma(\mdl M) },
\end{gather*}
where
\[
\OO_\gamma = \{c \in \VF: \abval(c) \geq \gamma\} \dand \MM_\gamma = \{c \in \VF: \abval(c) > \gamma\}.
\]
It is not hard to see that, in either case,  $\rv(a - d) \notin \RV(\mdl M)$.

Since $\rv(a - d) = \rv(b - \sigma(d)) \eqqcolon t$, by the Wilkie inequality, $\RV(\la \mdl M, a \ra) = \la \RV(\mdl M), t \ra$ and hence   $\bar \sigma$ must be immediate.
\end{proof}

\subsection{Fundamental structure of $T$-convex valuation}

We review some fundamental facts concerning the valuation in $\mmdl$. Additional notation and terminology are also introduced.

Recall \cite[Theorem~A]{Dries:tcon:97}: The structure of definable sets in the $\K$-sort is precisely that given by the theory $T$.

Recall \cite[Theorem~B]{Dries:tcon:97}: The structure of definable sets in the (imaginary) $\abs \Gamma$-sort is precisely that given by the \omin-minimal theory of nontrivially ordered vector spaces over $\KKK$. The structure of definable sets in the (imaginary) $\Gamma$-sort is the same one modulo the sign. In particular, every definable function in the $\Gamma$-sort is definably piecewise $\KKK$-linear modulo the sign.

\begin{lem}\label{gk:ortho}
If $f : \Gamma \fun \K$ is a definable function then $f(\K)$ is finite. Similarly, if $g : \K \fun \Gamma$ is a definable function then $g(\Gamma)$ is finite.
\end{lem}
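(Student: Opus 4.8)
The plan is to treat both assertions as instances of the orthogonality between the residue field and the value group, and to derive them from the two purity theorems quoted just above together with the Wilkie inequality \cite[Corollary~5.6]{Dries:tcon:97}: in the form used in the proof of Theorem~\ref{theos:qe}, the latter says that adjoining a single $\VF$-element to a model can enlarge the residue field \emph{or} the value group, but not both. First I would take $f$ (resp.\ $g$) to be defined over a small $\VF$-generated model $\mdl M$: enlarge the finitely many parameters to a small model, then adjoin an $\rv$-preimage of each of its $\RV$-points to make it $\VF$-generated (still a model, cf.\ the remark after Definition~\ref{defn:tcf}). By quantifier elimination (Theorem~\ref{theos:qe}), $\mdl M$ --- and likewise $\la\mdl M, a\ra$ for any $a\in\VF$, which is again a $\VF$-generated model --- is an elementary substructure of $\mmdl$, so $\dcl(\mdl M)\cap\K=\K(\mdl M)$, $\dcl(\mdl M)\cap\Gamma=\Gamma(\mdl M)$, and $\dcl(\mdl M,a)=\la\mdl M,a\ra$.

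Now suppose the image of $f:\Gamma\fun\K$ is infinite. It is an $\mmdl$-definable subset of $\K$, so by \cite[Theorem~A]{Dries:tcon:97} it is $\K$-definable, hence by o-minimality of $T$ contains an interval of $\K$, which by saturation contains an element $c$ outside the small set $\K(\mdl M)$; pick $\gamma\in f^{-1}(c)$, so $f(\gamma)=c\notin\K(\mdl M)$ and therefore $\gamma\notin\Gamma(\mdl M)$. Choose $a\in\VF$ with $\vv(a)=\gamma$ (possible since $\vv=\vrv\circ\rv$ is a composite of surjections); then $\abval(a)\notin\absG(\mdl M)$ because $\gamma\notin\Gamma(\mdl M)$, so $\absG(\la\mdl M,a\ra)\supsetneq\absG(\mdl M)$, and the Wilkie inequality forces $\K(\la\mdl M,a\ra)=\K(\mdl M)$. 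But $c=f(\gamma)\in\dcl(\mdl M,\gamma)\cap\K\subseteq\la\mdl M,a\ra\cap\K=\K(\mdl M)$ --- a contradiction.

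For $g:\K\fun\Gamma$ the argument is symmetric. If its image is infinite, then by \cite[Theorem~B]{Dries:tcon:97} and o-minimality it contains an interval of $\Gamma$, hence by saturation an element $\delta$ with $\delta\notin\Gamma(\mdl M)$; picking $c\in g^{-1}(\delta)$ gives $c\notin\K(\mdl M)$. Lift $c$ through $\res$ to $a\in\OO$ with $\res(a)=c$; then the residue field grows, $\K(\la\mdl M,a\ra)\supsetneq\K(\mdl M)$, so by the Wilkie inequality $\absG(\la\mdl M,a\ra)=\absG(\mdl M)$ and hence $\Gamma(\la\mdl M,a\ra)=\Gamma(\mdl M)$. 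Then $\delta=g(c)\in\dcl(\mdl M,c)\cap\Gamma\subseteq\la\mdl M,a\ra\cap\Gamma=\Gamma(\mdl M)$, again a contradiction.

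The one substantive ingredient is the Wilkie inequality and the rank theory for $T$-models it rests on (this is precisely where power-boundedness of $T$ enters, exactly as in Theorem~\ref{theos:qe}); the rest is bookkeeping with $\VF$-generated models and the surjections $\rv$ and $\res$. The steps I would check most carefully are that the lift $a$ genuinely fixes the opposite sort --- that enlarging $\absG$ by adjoining $a$ keeps $\K$ fixed, and dually --- which is the Case~(1) analysis in the proof of Theorem~\ref{theos:qe}, and that $f(\gamma)$ (resp.\ $g(c)$) lands in $\dcl(\mdl M,a)$, which uses $\gamma=\vv(a)$ (resp.\ $c=\res(a)$) together with the identification $\dcl(\mdl M,a)=\la\mdl M,a\ra$.
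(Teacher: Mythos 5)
Your argument is correct, and it is genuinely a proof where the paper offers none: the paper simply cites \cite[Proposition~5.8]{Dries:tcon:97}, whereas you derive the statement from the Wilkie inequality together with stable embeddedness of the two sorts (Theorems~A and~B of \cite{Dries:tcon:97}) and quantifier elimination. The key steps all check out: an infinite definable subset of $\K$ (resp.\ $\Gamma$) contains an interval by o\nobreakdash-minimality of the induced structure, hence by saturation a point outside the small model $\mdl M$; lifting the relevant element of $\Gamma$ (resp.\ $\K$) to a single field element $a$ via the surjectivity of $\vv$ (resp.\ $\res$) forces the value group (resp.\ residue field) of $\la\mdl M,a\ra$ to grow, so the Wilkie inequality freezes the other sort, while the image point lies in $\dcl(\mdl M,a)$ intersected with that frozen sort --- contradiction. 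The only cosmetic point worth noting is that the lemma as printed has a typo ($f(\K)$ and $g(\Gamma)$ should read $f(\Gamma)$ and $g(\K)$), which you silently and correctly repair by speaking of ``the image''; also, your passage from $\gamma\notin\Gamma(\mdl M)$ to $\abs{\gamma}\notin\absG(\mdl M)$ uses that $\Gamma(\mdl M)$ is a group containing $\pm 1$, which is fine but deserves a half-sentence. What your route buys is a self-contained argument entirely inside the machinery the paper has already set up (it is essentially the same mechanism the paper deploys in the proof of Theorem~\ref{theos:qe}); what the citation buys is brevity and the full strength of van den Dries's residue-field/value-group orthogonality in the power-bounded setting.
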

\begin{proof}
See \cite[Proposition~5.8]{Dries:tcon:97}.
\end{proof}

Note that \cite[Theorem~B, Proposition~5.8]{Dries:tcon:97} require that $T$ be power-bounded.

\begin{nota}\label{gamma:what}
Recall convention~\ref{how:gam}. There are two ways of treating an element $\gamma \in \Gamma$: as a point --- when we study $\Gamma$ as an independent structure, see Convention~\ref{how:gam}  --- or a subset of $\mmdl$ --- when we need to remain in the realm of definable sets in $\mmdl$. The former perspective simplifies the notation but is of course dispensable.

We shall write $\gamma$ as $\gamma^\sharp$ when we want to emphasize that it is the set $\vrv^{-1}(\gamma)$ in $\mmdl$ that is being  considered. More generally, if $I$ is a set in $\Gamma$ then we write $I^\sharp = \bigcup\{\gamma^\sharp: \gamma \in I\}$. Similarly, if $U$ is a set in $\RV$ then $U^\sharp$ stands for $\bigcup\{\rv^{-1}(t): t \in U\}$.
\end{nota}

Since $\TCVF$ is a weakly \omin-minimal theory (see \cite[Corollary~3.14]{DriesLew95} and Corollary~\ref{trans:VF}), we can use the dimension theory of \cite[\S~4]{mac:mar:ste:weako} in $\mmdl$.

\begin{defn}
The \emph{$\VF$-dimension} of a definable set $A$,  denoted by $\dim_{\VF}(A)$, is the largest natural number $k$ such that, possibly after re-indexing of the $\VF$-coordinates, $\pr_{\leq k}(A_t)$ has nonempty interior for some $t \in A_{\RV}$.
\end{defn}

For all substructures $\mdl M$ and all $a \in \VF$, $\VF(\dcl_{\mdl M}( a)) = \la \mdl M , a \ra_T$, where $\dcl_{\mdl M}(a)$ is the definable closure of $a$ over $\mdl M$. This implies that the exchange principle with respect to definable closure --- or algebraic closure, which is the same thing since there is an ordering --- holds in the $\VF$-sort, because it holds for \T-models. Therefore, by \cite[\S~4.12]{mac:mar:ste:weako}, we may equivalently define $\dim_{\VF}(A)$ to be the maximum of the algebraic dimensions of the fibers $A_t$, $t \in A_{\RV}$.

Algebraic dimension is defined for (any sort of) any theory whose models have the exchange property with respect to algebraic closure, or more generally any suitable notion of closure. In the present setting, the algebraic dimension of a set $B \sub \VF^n$ that is definable over a substructure $\mdl M$ is just the maximum of the ranks of the \T-models $\la \mdl M , b \ra_T$, $b \in B$, relative to the \T-model $\VF(\mdl M)$ (again, see \cite[\S~3.2]{DriesLew95} for the notion of ranks of $T$-models). It can be shown that this does not depend on the choice of $\mdl M$.

Yet another way to define this notion of $\VF$-dimension is to imitate \cite[Definiton~4.1]{Yin:special:trans}, since we have:

\begin{lem}\label{altVFdim}
If $\dim_{\VF}(A) = k$ then $k$ is the smallest number such that there is a definable injection $f: A \fun \VF^k \times \RV^l$.
\end{lem}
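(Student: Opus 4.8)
The statement is an equivalence of two characterizations of $\dim_{\VF}$: on the one hand the ``fiberwise algebraic dimension'' definition adopted above, on the other hand the ``smallest $k$ admitting a definable injection $A \fun \VF^k \times \RV^l$'' definition. The plan is to prove the two inequalities separately. For the easy direction, suppose $\dim_{\VF}(A) = k$; I would produce a definable injection $A \fun \VF^k \times \RV^l$ for a suitable $l$. The idea is that on each fiber $A_t$, $t \in A_{\RV}$, the algebraic dimension is at most $k$, so (after re-indexing coordinates, which can itself be absorbed into the $\RV$-component since $A_{\RV}$ is partitioned into the finitely many re-indexing classes, or handled by a preliminary cell-decomposition-type argument) the projection $\pr_{\leq k}$ restricted to $A_t$ is finite-to-one onto its image; finiteness of the fibers of $\pr_{\leq k} \rest A_t$, uniformly in $t$, then lets one encode the finite choice plus $t$ into extra $\RV$-coordinates, using that $\acl = \dcl$ in the $\VF$-sort (the exchange principle noted just before the lemma) together with a definable-Skolem-function / definable-choice argument available because $T$ has definable Skolem functions. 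This packaging of a uniformly finite amount of data into $\RV$ is routine and is the kind of argument made in \cite{Yin:special:trans}; I expect only bookkeeping difficulties here.

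The harder direction is the lower bound: if there is a definable injection $f : A \fun \VF^k \times \RV^l$ then $\dim_{\VF}(A) \leq k$, equivalently there can be no such injection when $\dim_{\VF}(A) > k$. The plan is to argue by contradiction: assume $\dim_{\VF}(A) = m > k$ and a definable injection $f : A \fun \VF^k \times \RV^l$ exists. Restricting to a fiber $A_t$ with $t \in A_{\RV}$ realizing the dimension, one has (after re-indexing) a definable set $B \subseteq \VF^m$ with nonempty interior in its first $m$ coordinates, hence of algebraic (= $\VF$-) dimension $m$, and an injection $B \fun \VF^k \times \RV^l$. Composing with the projection to the $\VF^k$ factor gives a definable map $g : B \fun \VF^k$ whose fibers are contained in the $f$-preimages of points $\{v\}\times \RV^l$, i.e. each fiber of $g$ injects into $\RV^l$. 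The crux is then: \emph{a definable subset of $\VF^n$ that injects definably into $\RV^l$ must have $\VF$-dimension $0$} (equivalently, is finite up to the $\RV$-sort), and a definable map $B \fun \VF^k$ with all fibers of $\VF$-dimension $0$ cannot have $m$-dimensional domain when $m > k$ — this is the additivity of $\VF$-dimension (dimension of domain $\leq$ dimension of image $+$ max dimension of fibers), which is part of the dimension theory of \cite[\S 4]{mac:mar:ste:weako} available here since $\TCVF$ is weakly \omin-minimal and the exchange principle holds in $\VF$.

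So the real content to verify is the claim that a definable $B \subseteq \VF^n$ admitting a definable injection into $\RV^l$ has $\dim_{\VF}(B) = 0$. I would prove this by induction on $l$, or directly: if $\dim_{\VF}(B) \geq 1$ then some fiber $B_s$, $s$ in the $\RV$-part of $B$ (here $B \subseteq \VF^n$ so there is no $\RV$-part — one simply has that $B$ itself has a coordinate projection with nonempty interior), contains, after coordinate projection, an open interval $I \subseteq \VF$; restricting the injection gives a definable injection $I \fun \RV^l$. Now invoke that $\RV^l$ has, for each fixed image of the valuation-type data, only the residue-field directions which are a model of $T$ (an \omin-minimal structure) and the value-group directions which form an ordered $\KKK$-vector space — and crucially Lemma~\ref{gk:ortho} (orthogonality of $\K$ and $\Gamma$). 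A definable injection of a $\VF$-interval into $\RV^l$ would, by weak \omin-minimality / monotonicity, be piecewise ``locally constant into $\Gamma$'' and ``locally a definable map into $\K$'', and one derives a contradiction with $\abs I$ being infinite and the fibers being forced to collapse — essentially because $\RV$ is internal to $\K \times \Gamma$ and there is no definable injection of a $\VF$-interval into a finite-dimensional $\K$-set or $\Gamma$-set (the former would contradict \omin-minimality of the $\VF$-sort via the section $\K \cong \mdl R' \subseteq \OO$, the latter contradicts that $\Gamma$-definable maps are piecewise linear and $\vv$ is already the ``coarsest'' such map). I expect \textbf{this orthogonality-type step} — showing no $\VF$-interval embeds definably into $\RV^l$ — to be the main obstacle, and the cleanest route is probably to reduce to the $l=1$ case and split $\RV$ into its $\K^\times$-part and the $\Gamma$-part using $\vrv$, handling each using weak \omin-minimality of $\mmdl$ together with \cite[Theorem~A, Theorem~B]{Dries:tcon:97} and Lemma~\ref{gk:ortho}.
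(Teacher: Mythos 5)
Your proposal is correct and is essentially the paper's (very terse) argument spelled out: the exchange principle handles the coding of finite fibers into $\RV$ for the easy direction, and the hard direction reduces, via additivity of algebraic dimension from \cite[\S~4]{mac:mar:ste:weako}, to the fact that a definable subset of $\VF^n$ admitting a definable injection into $\RV^l$ is finite --- which is exactly Lemma~\ref{RV:no:point}/Corollary~\ref{function:rv:to:vf:finite:image}, the lemma the paper's one-line proof cites (the alternative being to quote \cite[Theorem~4.11]{mac:mar:ste:weako}). The only caveat is your sketched re-derivation of that crux: the appeal to ``\omin-minimality of the $\VF$-sort via the section $\K \cong \mdl R' \sub \OO$'' is not legitimate as stated, since that section is not definable; the clean argument is the exchange-principle one (if $a \in \VF$ is $t$-definable but not definable, then every $b \in t^\sharp$ would lie in $\dcl(a)$, contradicting saturation), so you should simply invoke Lemma~\ref{RV:no:point} rather than reprove it through the $\K$/$\Gamma$ decomposition.
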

\begin{proof}
This is immediate by a straightforward argument combining the exchange principle, Lemma~\ref{RV:no:point} below, and compactness.

Alternatively, we may just quote \cite[Theorem~4.11]{mac:mar:ste:weako}.
\end{proof}

\begin{rem}[$\RV$-dimension and $\Gamma$-dimension]\label{rem:RV:weako}
It is routine to verify that the axioms concerning only the $\RV$-sort are all universal except for the one asserting that $\K^{\times}$ is a proper subgroup, which is existential. These axioms amount to a weakly \omin-minimal theory also and the exchange principle holds for this theory. Therefore, we can use the dimension theory of \cite[\S~4]{mac:mar:ste:weako} directly in the $\RV$-sort as well. We call it the $\RV$-dimension and the corresponding operator is denoted by $\dim_{\RV}$. Note that $\dim_{\RV}$ does not depend on parameters (see \cite[\S~4.12]{mac:mar:ste:weako}) and agrees with the \omin-minimal dimension in the $\K$-sort (see \cite[\S~4.1]{dries:1998}) whenever both are applicable.

Similarly we shall use \omin-minimal dimension in the $\Gamma$-sort and call it the $\Gamma$-dimension. The corresponding operator is denoted by $\dim_{\Gamma}$.
\end{rem}

\begin{lem}\label{dim:cut:gam}
Let $U \sub \RV^n$ be a definable set with $\dim_{\RV}(U) = k$. Then $\dim_{\RV}(U_{\gamma}) = k$ for some $\gamma \in \vrv(U)$.
\end{lem}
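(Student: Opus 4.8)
The plan is to prove the sharper statement that $\dim_{\RV}(U) = \max_{\gamma \in \vrv(U)} \dim_{\RV}(U_\gamma)$ and that this maximum is attained, which gives the lemma at once; here $\vrv$ is applied coordinatewise, so $\vrv(U) \subseteq \Gamma_0^n$ and $U_\gamma = \{u \in U : \vrv(u) = \gamma\}$, and the inequality ``$\geq$'' is trivial since $U_\gamma \subseteq U$. I would argue by induction on $n$, the substantial inputs being the orthogonality of $\K$ and $\Gamma$ (Lemma~\ref{gk:ortho}) and the additivity of the weakly \omin-minimal dimension \cite[\S~4]{mac:mar:ste:weako}.

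\emph{Base case $n = 1$.} If $k = 0$ this is trivial, so let $k = 1$. By weak \omin-minimality of the $\RV$-sort (Remark~\ref{rem:RV:weako}) we may write $U$ as a finite union of convex sets, one of which, call it $C$, is infinite; replacing $C$ by $C \cap \RV^+$ or $C \cap \RV^-$ we may assume $C \subseteq \RV^+$, so that $\vrv$ is monotone on $C$ (Remark~\ref{signed:Gam}). If $\vrv(C)$ is a single point then $U$ already contains an infinite $\vrv$-fibre. Otherwise choose $\gamma_1 < \gamma < \gamma_2$ in $\Gamma$ with $\gamma_1, \gamma_2 \in \vrv(C)$ --- possible since $\Gamma$ is densely ordered --- so that, by monotonicity and convexity, $\vrv^{-1}(\gamma) \subseteq C \subseteq U$; then $U_\gamma = \vrv^{-1}(\gamma)$ is an infinite ($\K$-internal) $\K^+$-coset, so $\dim_{\RV}(U_\gamma) = 1$.

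\emph{Inductive step.} Project $U$ onto its first $\RV$-coordinate, $U \to U' := \pr_1(U)$, with fibres $U_s \subseteq \RV^{n-1}$. By dimension additivity $k = \max_j\big(j + \dim_{\RV}\{s \in U' : \dim_{\RV}(U_s) = j\}\big)$, and since a subset of $\RV$ has dimension $0$ or $1$ there are two cases. In the first, the maximum is realized with the inner dimension $0$: there is then $s_0 \in U'$ with $\dim_{\RV}(U_{s_0}) = k$, the inductive hypothesis gives $\gamma' \in \vrv(U_{s_0})$ with $\dim_{\RV}((U_{s_0})_{\gamma'}) = k$, and $\gamma := (\vrv(s_0), \gamma')$ works since $U_\gamma \supseteq \{s_0\} \times (U_{s_0})_{\gamma'}$. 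In the second, $S := \{s \in U' : \dim_{\RV}(U_s) = k - 1\}$ has $\dim_{\RV}(S) = 1$; applying the base case to $S$ yields $\gamma_1$ with $S_{\gamma_1} := S \cap \vrv^{-1}(\gamma_1)$ infinite. For each $s \in S_{\gamma_1}$ the inductive hypothesis gives $\gamma'(s) \in \vrv(U_s)$ with $\dim_{\RV}((U_s)_{\gamma'(s)}) = k - 1$; choosing $\gamma'$ definably (definable choice being available in the \omin-minimal $\Gamma$-sort) produces a definable map $s \mapsto \gamma'(s)$ whose domain $S_{\gamma_1}$ lies in the single $\vrv$-fibre $\vrv^{-1}(\gamma_1)$, hence is $\K$-internal, so by orthogonality (Lemma~\ref{gk:ortho}) its image is finite; some value $\gamma'_0$ is thus attained on an infinite $S'' \subseteq S_{\gamma_1}$. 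Now $\gamma := (\gamma_1, \gamma'_0)$ works: $U_\gamma$ projects onto a set containing $S''$, of $\RV$-dimension $1$, all of whose fibres over $S''$ equal $(U_s)_{\gamma'_0}$ of dimension $k - 1$, so $\dim_{\RV}(U_\gamma) \geq 1 + (k - 1) = k$ by additivity, hence $= k$.

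The main obstacle is this last case: the inductive hypothesis only yields, for each relevant $s$, \emph{some} witness $\gamma'(s)$, and these must be made uniform over a positive-dimensional set of $s$'s. This is precisely where one exploits that those $s$'s lie in a single $\vrv$-fibre --- a $\K$-internal set --- together with the orthogonality of $\K$ and $\Gamma$; without it the choice function could genuinely vary and the argument would collapse. A lesser point requiring care is the base case, which cannot be settled purely dimension-theoretically --- a definable subset of $\RV$ of dimension $1$ may be finite-to-one over its $\vrv$-image --- and instead rests on weak \omin-minimality, monotonicity of the signed valuation, and density of $\Gamma$.
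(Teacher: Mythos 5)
Your argument is correct, but it takes a genuinely different route from the paper's. The paper's proof is a two-line reduction: by \cite[Theorem~4.11]{mac:mar:ste:weako} one may assume $n=k$, so that $U$ contains an open box of $\RV^k$; since each fibre $\vrv^{-1}(\gamma)$ is a convex (hence open) $\K^+$-coset, such a box meets some $\gamma^\sharp$ in an open subset of $(\K^+)^k$, and that fibre already has $\RV$-dimension $k$. Your induction on $n$ replaces this appeal to the projection theorem by the additivity of the weakly \omin-minimal dimension, and the price you pay is exactly the uniformization problem you identify in the second case of the inductive step, which you resolve with definable choice in $\Gamma$ plus the $\K$--$\Gamma$ orthogonality of Lemma~\ref{gk:ortho} (note the paper's proof uses orthogonality nowhere). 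What your version buys is a more explicit location of the witness $\gamma$ and a clear display of where orthogonality can substitute for full-dimensionality; what the paper's version buys is brevity and independence from orthogonality and definable choice. Two small points to keep clean if you write this up: in the base case the containment you actually derive is $\vrv^{-1}(\gamma)\cap\RV^+\sub C$, which suffices because $\gamma$ lies in $\Gamma^+$ (a convex subset of $\Gamma$), so the whole coset $\vrv^{-1}(\gamma)$ sits in $\RV^+$; and in the uniformization step you should say explicitly that you transport $S_{\gamma_1}$ into $\K^+$ by a multiplicative translation (at the cost of an extra parameter, which is harmless since Lemma~\ref{gk:ortho} is about parametrically definable functions) before invoking orthogonality coordinatewise on the $\Gamma^{n-1}$-valued map.
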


Here $U_{\gamma}$ denotes the pullback of $\gamma$ along the obvious function $\vrv \rest U$, in line with the convention set in the last paragraph of Notation~\ref{indexing}.

\begin{proof}
By \cite[Theorem~4.11]{mac:mar:ste:weako} we may assume $n=k$. Then, for some $\gamma \in \vrv(U)$, $U_{\gamma}$ contains an open subset of $\RV^n$. The lemma follows.
\end{proof}

\begin{lem}\label{gam:red:K}
Let $D \sub \Gamma^n$ be a definable set  with $\dim_{\Gamma}(D) = k$. Then $D^\sharp$ is definably bijective to a disjoint union of finitely many sets of the form $(\K^+)^{n-k} \times D'^\sharp$, where $D' \sub \Gamma^k$.
\end{lem}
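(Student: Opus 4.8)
The plan is to reduce, by cell decomposition in the $\Gamma$-sort, to the case where $D$ is a single cell presented as the graph of a piecewise $\KKK$-affine map, to lift that map coordinatewise through power maps on $\RV^{+}$, and finally to trade any surplus copies of $\K^{+}$ for extra $\Gamma$-coordinates.

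By \cite[Theorem~B]{Dries:tcon:97} the $\Gamma$-sort is, up to sign, a nontrivially ordered $\KKK$-vector space, so cell decomposition is available. First I would decompose $D = \bigsqcup_{i} C_i$ into finitely many cells; since the $C_i^{\sharp}$ are pairwise disjoint, $D^{\sharp} = \bigsqcup_i C_i^{\sharp}$, and $\dim_{\Gamma}(C_i) \leq k$ for every $i$, so it suffices to treat one cell $C$ of dimension $d \leq k$. After a coordinate permutation, $C$ is the graph of a continuous definable map $g = (g_1,\dots,g_{n-d}) \colon C_0 \fun \Gamma^{n-d}$ over an open $\Gamma$-cell $C_0 \sub \Gamma^{d}$ (a point when $d = 0$), and by \cite[Theorem~B]{Dries:tcon:97} I may arrange the decomposition so that each $g_j$ is, up to a sign constant on $C_0$, an affine $\KKK$-linear function of the magnitudes of its arguments. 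The arithmetic input needed is that for $\lambda \in \KKK$ the power function $x \efun x^{\lambda}$ on $\VF^{+}$ is continuous and preserves both $\OO$ and $1 + \MM$ — the former by (Ax.~\ref{ax:tcon}), the latter because $\res(x^{\lambda}) = 1$ for $x \in 1 + \MM$ by (Ax.~\ref{ax:match}) — so, together with the inversion of the group $\RV$, it yields a definable map on $\RV^{+}$ with $\vrv(t^{\lambda}) = \lambda \cdot \vrv(t)$. Also, for a definable point $\gamma \in \Gamma$ and any representative $r_{\gamma} \in \gamma^{\sharp}$ (adjoined as a parameter), the set $\gamma^{\sharp} = \vrv^{-1}(\gamma)$ is precisely the $\K^{+}$-coset $r_{\gamma}\K^{+}$, so $t \efun t / r_{\gamma}$ is a definable bijection $\gamma^{\sharp} \fun \K^{+}$.

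Using these, I would construct for each $j$ a definable map $\sigma_j \colon C_0^{\sharp} \fun \RV$ — a product of power maps and inversions applied to the coordinates, times a fixed representative of the constant-coset part of $g_j$, with the sign constant inserted — such that $\vrv(\sigma_j(\bar t)) = g_j(\vrv(\bar t))$ for all $\bar t \in C_0^{\sharp}$, where $C_0^{\sharp} = \{\bar t \in \RV^{d} : \vrv(\bar t) \in C_0\}$. Then
\[
C^{\sharp} \fun (\K^{+})^{n-d} \times C_0^{\sharp}, \qquad (\bar t, \bar s) \efun \bigl( (s_j/\sigma_j(\bar t))_{j}, \bar t \bigr),
\]
is a definable bijection: it is well defined since $s_j$ and $\sigma_j(\bar t)$ lie in the single $\K^{+}$-coset $g_j(\vrv(\bar t))^{\sharp}$, so $s_j/\sigma_j(\bar t) \in \K^{+}$, and it is inverted by $\bigl( (u_j)_j, \bar t \bigr) \efun \bigl( \bar t, (u_j\sigma_j(\bar t))_j \bigr)$. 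If $d = k$ this already has the required form with $D' = C_0$. If $d < k$, fix any nonzero definable $\delta_0 \in \Gamma$ and a representative $e_0 \in \delta_0^{\sharp}$; since $(u_1,\dots,u_{k-d}) \efun (e_0 u_1,\dots,e_0 u_{k-d})$ is a definable bijection $(\K^{+})^{k-d} \fun (\delta_0^{\sharp})^{k-d}$, we get
\[
(\K^{+})^{n-d} \times C_0^{\sharp} \;\cong\; (\K^{+})^{n-k} \times \bigl(\{(\delta_0,\dots,\delta_0)\} \times C_0\bigr)^{\sharp},
\]
and $\{(\delta_0,\dots,\delta_0)\} \times C_0 \sub \Gamma^{k}$. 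Reassembling the disjoint union over the cells $C_i$ then finishes the proof.

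The main obstacle will be the sign bookkeeping. Since $\Gamma = \RV/\K^{+}$ still carries $\pm 1$-torsion, the $\KKK$-vector space structure of \cite[Theorem~B]{Dries:tcon:97} really lives on $\abs{\Gamma}$, whereas the power maps used to lift the $\KKK$-linear expressions exist only on $\RV^{+}$; matching the two is what forces the extra finite decomposition of $C_0$ into pieces on which all relevant sign functions are constant, and it is what makes the explicit description of $\sigma_j$ slightly awkward. Everything else — cell combinatorics, cosets of $\K^{+}$, and the verification that the displayed maps are mutually inverse — is routine.
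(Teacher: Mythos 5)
Your proposal is correct and follows essentially the same route as the paper's proof: reduce by a finite definable partition and \cite[Theorem~B]{Dries:tcon:97} to the case where $D$ is the graph of a piecewise $\KKK$-linear map, then use power maps on $\RV^{+}$ (together with chosen representatives of the constant terms) to produce, for each fiber $\gamma^\sharp$ of the base, a definable point of the image fiber, which splits off the $\K^{+}$ factors. The only difference is that the paper arranges each piece to be a graph over a subset of $\Gamma^{k}$ directly, so no trading step is needed, whereas you project onto a $d$-dimensional open cell and then convert the surplus copies of $\K^{+}$ back into $\Gamma$-coordinates via a fixed $\delta_0^\sharp$; this is a harmless variation.
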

\begin{proof}
Over a definable finite partition of $D$, we may assume that $D \sub (\Gamma^+)^n$ and the restriction $\pr_{\leq k} \rest D$ is injective. It follows from \cite[Theorem~B]{Dries:tcon:97} that the induced  function $f : D_{\leq k} \fun D_{>k}$ is piecewise $\KKK$-linear. Thus, for every $\gamma \in D_{\leq k}$ and every $t \in \gamma^\sharp$ there is a $t$-definable point in $f(\gamma)^\sharp$. The assertion follows.
\end{proof}

Taking disjoint union of finitely many definable sets of course will introduce extra bookkeeping coordinates, but we shall suppress this in notation.

\begin{rem}[\omin-minimal sets in $\RV$]\label{omin:res}
The theory of \omin-minimality, in particular its terminologies and notions, may be applied to a set $U \sub \RV^n$ such that $\vrv(U)$ is a singleton or, more generally, is finite. For example, we shall say that $U$ is a \emph{cell} if the multiplicative translation $U / u \sub (\K^+)^n$ of $U$ by some $u \in U$ is an \omin-minimal cell (see \cite[\S~3]{dries:1998}); this definition does not depend on the choice of $u$. Similarly, the \emph{\omin-minimal Euler characteristic} $\chi(U)$ of such a set $U$ is the \omin-minimal Euler characteristic of $U / u$ (see \cite[\S~4.2]{dries:1998}). This definition may be extended to disjoint unions of finitely many (not necessarily disjoint) sets $U_i \sub \RV^n \times \Gamma^m$ such that each $\vrv(U_i)$ is finite.
\end{rem}

\begin{thm}\label{groth:omin}
Let $U$, $V$ be definable sets in $\RV$ with $\vrv(U)$, $\vrv(V)$ finite. Then there is a definable bijection between $U$ and $V$ if and only if
\[
\dim_{\RV}(U) = \dim_{\RV}(V) \dand \chi(U) = \chi(V).
\]
\end{thm}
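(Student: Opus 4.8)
The plan is to reduce the statement to the corresponding, standard fact about definable sets in the o-minimal field $\K$. The reduction is furnished by Lemma~\ref{gam:red:K}: since $\vrv(U) \sub \Gamma^n$ is finite we have $\dim_{\Gamma}(\vrv(U)) = 0$, so $\vrv(U)^\sharp$ --- and hence its definable subset $U$ --- is definably bijective to a definable subset $U'$ of a finite disjoint union of copies of $(\K^+)^n$, i.e.\ to a definable set in $\K$; likewise $V$ is definably bijective to some $V'$. By the very way $\dim_{\RV}$ and $\chi$ were introduced in Remarks~\ref{rem:RV:weako} and \ref{omin:res} (multiplicative translation of $\vrv$-fibers into $(\K^+)^n$ followed by the o-minimal invariants), these bijections satisfy $\dim_{\RV}(U) = \dim(U')$ and $\chi(U) = \chi(U')$, and similarly for $V$. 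It therefore suffices to prove the o-minimal statement: for definable sets $U'$, $V'$ in $\K$, there is a definable bijection between them if and only if $\dim(U') = \dim(V')$ and $\chi(U') = \chi(V')$.

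The ``only if'' direction of the o-minimal statement is standard: o-minimal dimension is invariant under definable bijections, and the o-minimal Euler characteristic is additive on finite definable partitions and invariant under definable bijections (\cite[\S~4.2]{dries:1998}).

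For the ``if'' direction one invokes cell decomposition in $\K$ (\cite[\S~3]{dries:1998}): every $j$-dimensional cell is definably bijective to the open box $(\K^+)^j$ (a point when $j=0$), so $U'$ and $V'$ are each definably bijective to a finite disjoint union of such boxes; writing $d = \dim(U') = \dim(V')$, such a union is encoded by a vector $(n_0, \dots, n_d) \in \N^{d+1}$ with $n_d \ge 1$, and its Euler characteristic equals $\sum_{j} (-1)^j n_j$. The elementary identity
\[
(\K^+)^{j} \;\cong\; (\K^+)^{j} \,\sqcup\, (\K^+)^{j-1} \,\sqcup\, (\K^+)^{j} \qquad (1 \le j \le d),
\]
obtained by splitting an open interval into two open subintervals together with the point between them and multiplying by $(\K^+)^{j-1}$, shows that the isomorphism type of the union is unchanged under the move $(n_j, n_{j-1}) \mapsto (n_j+1, n_{j-1}+1)$; a short combinatorial argument then normalizes any admissible vector to one depending only on $d$ and $\sum_j (-1)^j n_j$. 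Hence $U'$ and $V'$ are definably bijective to the same box-union, which finishes the proof.

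The only step requiring genuine (if entirely standard) work is the ``if'' direction of the o-minimal statement --- equivalently, the fact that $(\dim, \chi)$ is a complete invariant for definable bijections in an o-minimal structure; all the rest rests on results already recalled above, and this o-minimal fact may, if one prefers, simply be cited.
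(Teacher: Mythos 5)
Your proposal is correct and follows essentially the same route as the paper, whose entire proof consists of citing \cite[\S~8.2.11]{dries:1998}, i.e.\ the o\nobreakdash-minimal fact that $(\dim,\chi)$ is a complete invariant for definable bijections; the reduction to $\K$ via translation of the finitely many $\vrv$-fibers is exactly what the paper leaves implicit in Remark~\ref{omin:res} and Lemma~\ref{gam:red:K}. The only difference is that you additionally sketch a (correct) proof of the cited o\nobreakdash-minimal classification via cell decomposition and the box-splitting identity, which the paper does not reproduce.
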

\begin{proof}
See \cite[\S~8.2.11]{dries:1998}.
\end{proof}

\begin{defn}[Valuative discs]\label{defn:disc}
A set $\gb \sub \VF$ is an \emph{open disc} if there is a $\gamma \in |\Gamma|$ and a $b \in \gb$ such that $a \in \gb$ if and
only if $\abval(a - b) > \gamma$; it is a \emph{closed disc} if $a \in \gb$ if and only if $\abval(a - b) \geq \gamma$. The point $b$ is a \emph{center} of $\gb$. The value $\gamma$ is the \emph{valuative radius} or simply the \emph{radius} of $\gb$, which is denoted by $\rad (\gb)$. A set of the form $t^\sharp$, where $t \in \RV$, is called an \emph{$\RV$-disc} (recall Notation~\ref{gamma:what}).

A closed disc with a maximal open subdisc removed is called a \emph{thin annulus}.

A set $\gp \sub \VF^n \times \RV_0^m$ of the form $(\prod_{i \leq n} \gb_i) \times t$ is an (\emph{open, closed, $\RV$-}) \emph{polydisc}, where each $\gb_i$ is an (open, closed, $\RV$-) disc. The \emph{polyradius} $\rad(\gp)$ of $\gp$ is the tuple $(\rad(\gb_1), \ldots, \rad(\gb_n))$, whereas the \emph{radius} of $\gp$ is $\min \rad(\gp)$. If all the discs $\gb_i$ are of the same valuative radius then $\gp$ is referred to as a \emph{ball}.

The open and the closed polydiscs centered at a point $a  \in \VF^n$ with polyradius $\gamma \in |\Gamma|^n$ are denoted by $\go(a, \gamma)$ and $\gc(a, \gamma)$, respectively.

The \emph{$\RV$-hull} of a set $A$, denoted by $\RVH(A)$, is the union of all the $\RV$-polydiscs whose intersections with $A$ are nonempty. If $A$ equals $\RVH(A)$ then $A$ is called an \emph{$\RV$-pullback}.
\end{defn}

The map $\abval$ is constant on a disc if and only if it does not contain $0$ if and only it is contained in an $\RV$-disc. If two discs have nonempty intersection then one of them contains the other. Many such elementary facts about discs will be used throughout the rest of the paper without further explanation.

\begin{nota}[The definable sort $\DC$ of discs]\label{disc:exp}
At times it will be more convenient to work in the traditional expansion $\mdl R_{\rv}^{\textup{eq}}$ of $\mmdl$ by all definable sorts. However, for our purpose, a much simpler expansion $\mdl R_{\rv}^{\bullet}$ suffices. This expansion has only one additional sort $\DC$ that contains, as elements, all the open and closed discs (since each point in $\VF$ may be regarded as a closed disc of valuative radius $\infty$, for convenience, we may and occasionally do think of $\VF$ as a subset of $\DC$). Heuristically, we may think of a disc that is properly contained in an $\RV$-disc as a ``thickened'' point of certain stature in $\VF$. For each $\gamma \in \absG$, there are two additional cross-sort maps $\VF \fun \DC$ in $\mdl R_{\rv}^{\bullet}$, one sends $a$ to the open disc, the other to the closed disc, of radius $\gamma$ that contain $a$.

The expansion $\mdl R_{\rv}^{\bullet}$ can help reduce the technical complexity of our discussion. However, as is the case with the imaginary $\Gamma$-sort, it is conceptually inessential since, for the purpose of this paper, all allusions to discs as (imaginary) elements may be eliminated in favor of objects already definable in $\mmdl$.

Whether parameters in $\DC$ are used or not shall be indicated explicitly, if it is necessary. Note that it is redundant to include in $\DC$ discs centered at $0$, since they may be identified with their valuative radii.

For a disc $\ga \sub \VF$, the corresponding imaginary element in $\DC$ is denoted by $\code{\ga}$ when notational distinction makes the discussion more streamlined; $\code{\ga}$ may be heuristically thought of as the ``name'' of $\ga$. Conversely, a set $D \sub \DC$ is often identified with the set $\{\ga : \code \ga \in D\}$, in which case $\bigcup D$ denotes a subset of $\VF$.
\end{nota}

\begin{nota}\label{nota:tor}
For each $\gamma \in |\Gamma|$, let $\MM_\gamma$ and $\OO_{\gamma}$ be the open and closed discs around $0$ with radius $\gamma$, respectively. Assume $\gamma \geq 0$. Let $\RV_{\gamma} = \VF^{\times} / (1 + \MM_\gamma)$, which is a subset of $\DC$. It is an abelian group and also inherits an ordering from $\VF^\times$. The canonical map $\VF^{\times} \fun \RV_{\gamma}$ is denoted by $\rv_{\gamma}$ and is augmented by $\rv_{\gamma}(0) = 0$.

If $\code \gb \in \DC$, $b \in \gb$, and $\rad(\gb) \leq \abval(b) + \gamma$ then $\gb$ is a union of discs of the form $\rv_{\gamma}^{-1}(\code \ga)$. In this case, we shall abuse the notation slightly and write $\code \ga \in \gb$, $\gb \sub \RV_{\gamma}$, etc.

For each $\code \ga \in \RV_{\gamma}$, let $\tor (\code \ga) \sub \RV_{\gamma}$ be the $\code \ga$-definable subset such that $\rv^{-1}_{\gamma}(\tor (\code \ga))$ forms the smallest closed disc containing $\ga$. Set
\begin{align*}
\tor^{\times}(\code \ga) & = \tor (\code \ga) \mi \code \ga,\\
\tor^+(\code \ga) &= \{t \in \tor(\code \ga):  t > \code \ga\},\\
 \tor^-(\code \ga) &= \{t \in \tor(\code \ga):  t < \code \ga\}.
\end{align*}
If $\code \ga = (\code {\ga_1}, \ldots, \code {\ga_n})$ with $\code {\ga_i} \in \RV_{\gamma_i}$ then $\prod_i \tor(\code{\ga_i})$ is simply written as $\tor(\code \ga)$; similarly for $\tor^{\times}(\code \ga)$, $\tor^+(\code \ga)$, etc.

If $\gamma = 0$ then we may, for all purposes, identify $\tor^{\times} (\code \ga)$, $\tor (\code \ga)$, etc., with $\tor^{\times}(\alpha) \coloneqq \abvrv^{-1}(\alpha) \sub \RV$, $\tor(\alpha) \coloneqq \tor^{\times}(\alpha) \cup \{0\}$, etc., where $\alpha = \rad(\ga)$.
\end{nota}

\begin{rem}[$\K$-torsors]\label{rem:K:aff}
Let $\code \ga \in \RV_{\gamma}$ and $\alpha = \rad(\ga)$. Since, via additive translation by $\code \ga$, there is a canonical $\code \ga$-definable order-preserving bijection
\[
\aff_{\goedel{\ga}} :\tor(\code \ga) \fun \tor(\alpha),
\]
we see that $\code \ga$-definable subsets of $\tor(\code \ga)^n$ naturally correspond to those of $\tor(\alpha)^n$. If there is an $\code \ga$-definable $t \in \tor^{\times}(\alpha)$ then, via multiplicative translation by $t$, this correspondence may be extended to $\code \ga$-definable subsets of $\tor(0)^n = \K^n$. More generally, for any $t \in \tor^{\times}(\alpha)$, the induced bijection $\tor(\code \ga) \fun \K$ is denoted by $\aff_{\goedel \ga, t}$. Consequently, $\tor(\code \ga)$ may be viewed as a $\K$-torsor and, as such, is equipped with much of the structure of $\K$.
\end{rem}

\begin{defn}[Derivation between $\K$-torsors]\label{rem:tor:der}
Let $\code \ga$, $\alpha$ be as above. Let $\goedel \gb \in \RV_{\delta}$ and $\beta = \rad(\gb)$. Let $f : \tor(\code \ga) \fun \tor(\code \gb)$ be a function. We define the \emph{derivative} $\ddx f$ of $f$ at any point $\code \gd \in \tor(\code \ga)$ as follows. Choose any $t \in \tor^{\times}(\alpha)$ and any $s \in \tor^{\times}(\beta)$. Consider the function
\[
f_{\goedel \ga, \goedel \gb, t,s} : \K \to^{\aff^{-1}_{\goedel \ga, t}} \tor(\code \ga) \to^f \tor(\goedel \gb) \to^{\aff_{\goedel \gb, s}} \K.
\]
Put $r = \aff_{\goedel \ga, t}(\goedel \gd)$ and suppose that $\frac{d}{dx} f_{\goedel \ga, \goedel \gb, t,s}(r) \in \K$ exists. Then we set
\[
\tfrac{d}{d x} f(\goedel \gd) = s t^{-1} \tfrac{d}{d x} f_{\goedel \ga, \goedel \gb, t,s}(r) \in \tor(\beta - \alpha).
\]
It is routine to check that this construction does not depend on the choice of $\code \ga$, $\goedel \gb$, $t$, $s$ and hence the derivative $\ddx f(\goedel \gd)$ is well-defined.
\end{defn}

\begin{defn}[$\vv$-intervals]
Let $\ga$, $\gb$ be discs, not necessarily disjoint. The subset $\ga < x < \gb$ of $\VF$, if it is not empty, is called an \emph{open $\vv$-interval} and is denoted by $(\ga, \gb)$, whereas the subset
\[
\{a \in \VF : \ex{x \in \ga, y \in \gb} ( x \leq a \leq y) \}
\]
if it is not empty, is called a \emph{closed $\vv$-interval} and is denoted by $[\ga, \gb]$. The other $\vv$-intervals $[\ga, \gb)$, $(-\infty, \gb]$, etc., are defined in the obvious way, where $(-\infty, \gb]$ is a closed (or half-closed) $\vv$-interval that is unbounded from below.

Let $A$ be such a $\vv$-interval. The discs $\ga$, $\gb$ are called the \emph{end-discs} of $A$. If $\ga$, $\gb$ are both points in $\VF$ then of course we just say that $A$ is an interval and if $\ga$, $\gb$ are both $\RV$-discs then we say that $A$ is an $\RV$-interval. If $A$ is of the form $(\ga, \gb]$ or $[\gb, \ga)$, where $\ga$ is an open disc and $\gb$ is the smallest closed disc containing $\ga$, then $A$ is called a \emph{half thin annulus} and the \emph{radius} of $A$ is $\rad(\gb)$.

Two $\vv$-intervals are \emph{disconnected} if their union is not a $\vv$-interval.
\end{defn}

Obviously the open $\vv$-interval $(\ga, \gb)$ is empty if $\ga$, $\gb$ are not disjoint. Equally obvious is that a $\vv$-interval is definable over some substructure $\mdl S$ if and only if its end-discs are definable over $\mdl S$.

\begin{rem}[Holly normal form]\label{rem:HNF}
By the valuation property Proposition~\ref{val:prop} and \cite[Proposition~7.6]{Dries:tcon:97}, we have an important tool called \emph{Holly normal form} \cite[Theorem~4.8]{holly:can:1995} (henceforth abbreviated as HNF); that is, every definable subset of $\VF$ is a unique union of finitely many definable pairwise disconnected $\vv$-intervals. This is obviously a generalization of the \omin-minimal condition.
\end{rem}

\section{Definable sets in $\VF$}\label{def:VF}
From here on, we shall work with a fixed small substructure $\mdl S$ of $\mdl R_{\rv}$, also occasionally of $\mdl R_{\rv}^{\bullet}$ (primarily in this section). The conceptual reason for this move is that the Grothendieck rings in our main construction below change their meaning if the  set of  parameters changes. In particular, allowing all parameters trivializes the whole construction somewhat. For instance, every definable set will contain a definable point. Consequently, all Galois actions on the classes of finite definable sets are killed off, and this is highly undesirable for motivic integration in algebraically closed valued fields. Admittedly, this problem is not as severe in our setting. Anyway, we follow the practice in \cite{hrushovski:kazhdan:integration:vf}.

Note that $\mdl S$ is regarded as a part of the language now and hence, contrary to the usual convention in the model-theoretic literature, ``$\0$-definable'' or ``definable'' only means ``$\mdl S$-definable'' instead of ``parametrically definable'' if no other qualifications are given. To simplify the notation, we shall not mention $\mdl S$ and its extensions in context if no confusion can arise. For example, the definable closure operator $\dcl_{\mdl S}$, etc., will simply be written as $\dcl$, etc.

For the moment we do not require that $\mdl S$ be $\VF$-generated or $\Gamma(\mdl S)$ be nontrivial. When we work in $\mdl R_{\rv}^{\bullet}$ --- either by introducing parameters of the form $\code \ga$ or the phrase ``in $\mdl R_{\rv}^{\bullet}$'' --- the substructure $\mdl S$ may contain names for discs that may or may not be definable from $\VF(\mdl S) \cup \RV(\mdl S)$.

\subsection{Definable functions and atomic open discs}

The structural analysis of definable sets in $\VF$ below is, for the most part, of a rather technical nature. One highlight is Corollary~\ref{part:rv:cons}. It is a crucial ingredient of the proof in \cite{halyin} that all definable closed sets in an arbitrary power-bounded \omin-minimal field admit Lipschitz stratification.

\begin{conv}\label{topterm}
Since apart from $\leq$ the language $\lan{T}{}{}$ only has function symbols, we may and shall assume that, in any $\lan{T}{RV}{}$-formula, every \LT-term occurs in the scope of an instance of the function symbol $\rv$. For example, if $f(x)$, $g(x)$ are \LT-terms then the formula $f(x) < g(x)$ is equivalent to $\rv(f(x) - g(x)) < 0$. The \LT-term $f(x)$ in $\rv(f(x))$ shall be referred to as a \emph{top \LT-term}.
\end{conv}

We begin by studying definable functions between various regions of the structure.

\begin{lem}\label{Ocon}
Let $f : \OO \fun \VF$ be a definable function. Then for some $\gamma \in \GAA$ and $a \in \OO$ we have $\vv(f(b)) = \gamma$ for all $b > a$ in $\OO$.
\end{lem}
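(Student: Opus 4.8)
The statement asserts that a definable function $f : \OO \fun \VF$ is eventually of constant valuation near the ``top'' of $\OO$, i.e.\ on a final segment of $\OO$ (elements $b$ with $a < b$, $b \in \OO$). The natural approach is to pass to the value group and invoke \omin-minimality there. The composition $\vv \circ f : \OO \fun \GAA$ is a definable function into the (imaginary) $\Gamma$-sort. By Remark~\ref{signed:Gam} and the discussion of the $\abs{\Gamma}$-sort, it suffices to control $\abval \circ f$, a definable function from $\OO \sub \VF$ into $\absG_\infty$; the sign issue is handled separately and is harmless since the sign of $f(b)$, as a $\VF$-definable quantity, is also eventually constant on a final segment of $\OO$ by \omin-minimality of $T$ applied to $\{b \in \OO : f(b) > 0\}$ etc.

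So the crux is: a definable function $h : \OO \fun \absG_\infty$ is eventually constant on a final segment of $\OO$. I would argue as follows. For each possible ``value'' we cannot directly invoke \omin-minimality of $\OO$ because $\OO$ is only weakly \omin-minimal in $\mmdl$; instead, note that the preimages $h^{-1}(\gamma)$ partition $\OO$ into definable pieces indexed by $\gamma \in \absG$. By HNF (Remark~\ref{rem:HNF}), each such definable subset of $\VF$ is a finite union of pairwise disconnected $\vv$-intervals, and $\OO$ itself is a closed disc around $0$. The key point is that the ``largest'' such interval — the one whose closure reaches the top of $\OO$ — must, by the finiteness of the HNF decomposition and definability of the whole family, be a $\vv$-interval on which $h$ is constant; equivalently, one applies \omin-minimality in the $\absG$-sort together with the fact (from \cite[Theorem~B]{Dries:tcon:97} or the dimension theory of \cite{mac:mar:ste:weako}) that a definable function cannot oscillate cofinally in $\OO$ between infinitely many values of $\absG$, since that would force a definable subset of $\OO$ with no HNF, or a definable subset of $\absG$ of dimension $\geq 1$ accumulating at a point, contradicting \omin-minimality of the ordered vector space $\absG$. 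Concretely: the function $b \mapsto h(b)$ takes values in a definable subset $D \sub \absG$; if $D$ were infinite (in the relevant sense), \omin-minimality of $\absG$ gives a subinterval, and pulling back along $h$ one produces a $\vv$-interval cofinal in $\OO$ on which $h$ is strictly monotone — but a $\VF$-definable function into $\absG$ that is cofinally non-constant near the top of $\OO$ contradicts weak \omin-minimality (there would be no finite HNF for a level set), so $h$ stabilizes.

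The \textbf{main obstacle} is making the last step rigorous without circularity: one must pin down precisely why a definable $h : \OO \fun \absG$ cannot take infinitely many values on every final segment. I expect the cleanest route is to apply HNF to the ``graph-level'' set $\{b \in \OO : h(b) = h(b') \text{ for the eventual value}\}$ — or, more robustly, to cite the monotonicity/finiteness theorem for definable functions $\VF \fun \absG$ that follows from weak \omin-minimality of $\TCVF$ (Corollary~\ref{trans:VF}) and \cite[\S~4]{mac:mar:ste:weako}: such a function is given piecewise on finitely many $\vv$-intervals, and on the unbounded-above piece inside $\OO$ it is monotone, hence either constant (done) or cofinally injective into $\absG$, which is impossible since its image would then be a definable subset of $\absG$ that is cofinal in the cut of $\absG$ determined by $\OO$ — but $\OO$ corresponds to the cut $\geq 0$ in $\absG_\infty$, and any element $b \in \OO$ has $\abval(b) \geq 0$, so a cofinally injective $h$ would need $\absG_{\geq 0}$ to be reached injectively from below in $\OO$; one then gets a contradiction with the Wilkie inequality / valuation property by taking a single element realizing a value not in $\vv(\mdl S)$. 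I would write it up by reducing to the monotonicity theorem and then ruling out the injective case by a direct HNF argument, which I expect to be short once set up correctly.
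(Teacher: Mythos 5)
There is a genuine gap here, and it is a structural one. The paper proves this lemma by citing \cite[Proposition~4.2]{Dries:tcon:97}, and the remark immediately following the lemma stresses that the statement is \emph{false} when $T$ is not power-bounded. Your argument, however, is essentially ``soft'': it uses only weak \omin-minimality of $\TCVF$, HNF, and \omin-minimality of the $\absG$-sort. These features are not what separates the power-bounded case from the exponential case (weak \omin-minimality of $T_{\textup{convex}}$, for instance, holds for any \omin-minimal $T$ extending $\usub{\textup{RCF}}{}$), so no argument built only on them can prove the lemma. A concrete witness to the failure in the exponential case is $f(b) = \exp(ab)$ with $a$ a parameter satisfying $\abval(a) < 0$: then $\abval(f(b))$ strictly decreases each time $\res(b)$ increases, cofinally in $\OO$, yet $f$ is definable, continuous and strictly monotone, and every level set of $\vv \circ f$ is a single $\vv$-interval.

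That example also pinpoints the individual steps that do not hold up. A definable $h : \OO \fun \absG$ that is cofinally non-constant near the top of $\OO$ does \emph{not} contradict weak \omin-minimality or HNF: each level set $h^{-1}(\gamma)$ is a single definable subset of $\VF$ with a perfectly good finite HNF (typically one disc), and nothing in weak \omin-minimality bounds the number of values of a definable function (compare $\abval \rest \OO$, whose image is all of $\absG_{\geq 0}$). The image of $h$ being an interval of $\absG$ is likewise fully compatible with \omin-minimality of $\absG$. Finally, ``taking a single element realizing a value not in $\vv(\mdl S)$'' yields no contradiction with the Wilkie inequality: adjoining one $\VF$-point is allowed to enlarge $\absG$ by one $\KKK$-dimension, which is exactly what happens. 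A correct proof must use power-boundedness quantitatively, via the growth dichotomy and the asymptotics of unary definable functions near the cut at the top of $\OO$; this is what the cited Proposition~4.2 of \cite{Dries:tcon:97} does, and the cleanest fix for your write-up is simply to reduce to that result (as the paper does) rather than to attempt a soft derivation.
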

\begin{proof}
See \cite[Proposition~4.2]{Dries:tcon:97}.
\end{proof}

Note that this is false if $T$ is not power-bounded.

A definable function $f$ is \emph{quasi-\LT-definable} if it is a restriction of an \LT-definable function (with parameters in $\VF(\mdl S)$, of course).

\begin{lem}\label{fun:suba:fun}
Every definable function $f : \VF^n \fun \VF$ is piecewise quasi-\LT-definable; that is, there are a definable finite partition $A_i$ of $\VF^n$ and \LT-definable functions $f_i: \VF^n \fun \VF$ such that $f \rest A_i = f_i \rest A_i$ for all $i$.
\end{lem}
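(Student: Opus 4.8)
The goal is to show that an arbitrary definable $f:\VF^n\fun\VF$ agrees, on each piece of a finite definable partition, with an $\lan{T}{}{}$-definable function. The natural route is to use quantifier elimination for $\TCVF$ (Theorem~\ref{theos:qe}), together with Corollary~\ref{trans:VF}, which tells us that the graph of $f$, being a definable subset of $\VF^{n+1}$, is already $\lan{}{convex}{}$-definable. So the real content is: every $\lan{}{convex}{}$-definable function on a $\VF$-region is piecewise the restriction of a plain $\lan{T}{}{}$-definable function. First I would reduce to the case $n=1$ by the usual fibering argument (work with $f$ as a function of its last coordinate over a parameter running through $\VF^{n-1}$, or simply invoke a cell-decomposition-type statement and induct on $n$), so that uniformity in the parameters is handled by compactness at the end.

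**Key steps.** (1) Replace $\mmdl$-definability by $\lan{}{convex}{}$-definability via Corollary~\ref{trans:VF}. (2) Working in the pair $(\mdl R,\OO)$, use quantifier elimination for $T_{\textup{convex}}$ (Theorem~\ref{tcon:qe}): the formula defining the graph of $f$ is a boolean combination of atomic $\lan{T}{}{}$-formulas and formulas of the form ``$\tau(\bar x)\in\OO$'' for $\lan{T}{}{}$-terms $\tau$. By Convention~\ref{topterm} / the remarks in \S~\ref{defn:lan} we may assume the extra predicate only ever wraps a top $\lan{T}{}{}$-term. (3) Partition $\VF^n$ according to which of the finitely many relevant terms $\tau_j(\bar x, y)$ (viewed with the output variable $y$) land in $\OO$, in $\MM$, or outside $\OO$; on each such piece the graph of $f$ is cut out by a purely $\lan{T}{}{}$-formula. (4) Within each piece, the resulting $\lan{T}{}{}$-definable subset of $\VF^{n+1}$ is, by definable choice / cell decomposition for the \omin-minimal theory $T$, the graph of an $\lan{T}{}{}$-definable function after a further finite partition (here one uses that $T$ has definable Skolem functions, as arranged in Remark~\ref{rem:cont}). (5) Finally, compactness upgrades the fiberwise statement over parameters to a single finite partition of $\VF^n$ with uniformly $\lan{T}{}{}$-definable pieces $f_i$.

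**Main obstacle.** The delicate point is step~(3)–(4): after stratifying by the membership pattern of the terms in $\OO$, one must be sure that on each stratum the graph really is $\lan{T}{}{}$-definable and not merely $\lan{T}{}{}(\imath)$-definable or dependent on a choice of a small element of $\MM$ — i.e. that the quantifier-free defining formula, restricted to the stratum, contains no residual occurrence of the valuation predicate that cannot be eliminated there. This is where the Holly normal form (Remark~\ref{rem:HNF}) and the valuation property (Proposition~\ref{val:prop}) do the work: they guarantee that the ``valuative constraints'' on a definable subset of $\VF$ decompose into finitely many $\vv$-intervals whose end-discs are themselves definable, so that over each piece the function is genuinely controlled by an $\lan{T}{}{}$-formula. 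Once that is in place, the \omin-minimal cell decomposition for $T$ and compactness finish the argument routinely.
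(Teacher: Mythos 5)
There is a genuine gap at the heart of your steps (3)--(4). After stratifying $\VF^n$ by the $\OO$-membership pattern of the terms $\tau_j(\bar x,y)$ \emph{evaluated at} $y=f(\bar x)$, the graph of $f$ over a stratum is not thereby cut out by a purely $\lan{T}{}{}$-formula. Substituting the pattern's truth values into the quantifier-free formula $\phi(\bar x,y)$ yields an $\lan{T}{}{}$-formula $\Psi_p(\bar x,y)$ whose fiber over $\bar x$ merely \emph{contains} $f(\bar x)$; it may be infinite (an interval), because values of $y$ realizing a different pattern are no longer excluded once the valuation predicates have been frozen. Definable Skolem functions for $T$ then select \emph{some} $\lan{T}{}{}$-definable point of that fiber, but nothing forces it to be $f(\bar x)$: pinning $f(\bar x)$ down inside such an interval again requires the valuation, which is exactly what you are trying to eliminate. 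Your ``main obstacle'' paragraph does not close this: HNF produces $\vv$-intervals whose end-discs are definable in the \emph{full} language, not $\lan{T}{}{}$-definable (think of $\OO$ itself), and the valuation property concerns value-group growth in extensions and plays no visible role here.

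The missing idea is a perturbation argument, and it is the actual content of the paper's proof. Reduce to the case of one output variable, take the quantifier-free $\lan{T}{RV}{}$-formula $\phi(x,y)$ defining the graph with all $\lan{T}{}{}$-terms wrapped in $\rv$ (Convention~\ref{topterm}), and let $B_{a,i}$ be the finite characteristic set of $\tau_i(a,\cdot)$ given by o-minimal monotonicity. If $f(a)\notin\bigcup_i B_{a,i}$, then every $\tau_i(a,\cdot)$ is continuous and monotone (or constant) near $f(a)$, so some $b\neq f(a)$ satisfies $\rv(\tau_i(a,b))=\rv(\tau_i(a,f(a)))$ for all $i$; hence $\phi(a,b)$ holds, contradicting that $\phi(a,\cdot)$ defines a singleton. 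Thus $f(a)$ lies in a finite, uniformly $\lan{T}{}{}$-definable set of points over $a$, and compactness plus definable choice in $T$ finishes. Your scaffolding (quantifier elimination, top terms, compactness) is the right setup, but without this step the argument does not go through.
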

\begin{proof}
By compactness, this is immediately reduced to the case $n = 1$. In that case, let $\phi(x, y)$ be a quantifier-free formula that defines $f$. Let $\tau_i(x, y)$ enumerate the top \LT-terms in $\phi(x, y)$. For each $a \in \VF$ and each $t_i(a, y)$, let $B_{a, i} \sub \VF$ be the characteristic finite subset of the function $t_i(a, y)$ given by \omin-minimal monotonicity (see \cite[\S~3.1]{dries:1998}). It is not difficult to see that if $f(a) \notin \bigcup_i B_{a, i}$ then there would be a $b \neq f(a)$ such that
\[
\rv(\tau_i(a, b)) = \rv(\tau_i(a, f(a)))
\]
for all $i$ and hence $\phi(a, b)$ holds, which is impossible since $f$ is a function. The lemma follows.
\end{proof}

This lemma is just a variation of \cite[Lemma~2.6]{Dries:tcon:97}.

\begin{cor}[Monotonicity]\label{mono}
Let $A \sub \VF$ and $f : A \fun \VF$ be a definable function. Then there is a definable finite partition of $A$ into $\vv$-intervals $A_i$ such that every $f \rest A_i$ is quasi-\LT-definable, continuous, and monotone (constant or strictly increasing or strictly decreasing). Consequently, each $f(A_i)$ is a $\vv$-interval.
\end{cor}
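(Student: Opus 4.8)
The plan is to combine the piecewise quasi-$\lan{T}{}{}$-definability from Lemma~\ref{fun:suba:fun} with classical \omin-minimal monotonicity applied in the $\VF$-sort (which is a model of the \omin-minimal theory $T$), and then upgrade the resulting $\lan{T}{}{}$-partition of $A$ into intervals to a partition into $\vv$-intervals by invoking Holly normal form. First I would apply Lemma~\ref{fun:suba:fun} in the case $n=1$ to obtain a definable finite partition $A = \bigsqcup_i A_i'$ together with $\lan{T}{}{}$-definable functions $f_i : \VF \fun \VF$ agreeing with $f$ on $A_i'$. Since the $\VF$-sort with its $\lan{T}{}{}$-structure is an \omin-minimal field, the \omin-minimal monotonicity theorem (see \cite[\S~3.1]{dries:1998}) applies to each $f_i$ restricted to (the $\lan{T}{}{}$-definable envelope of) $A_i'$, yielding a further partition of $A_i'$ into \omin-minimal cells — finitely many points and open intervals — on each of which $f_i$, and hence $f$, is continuous and monotone.

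Next I would pass from this partition into $\lan{T}{}{}$-intervals to a partition into $\vv$-intervals. The pieces already produced are definable subsets of $\VF$, so by Holly normal form (Remark~\ref{rem:HNF}) each of them is a unique finite union of pairwise disconnected $\vv$-intervals; replacing each piece by this finite collection gives a definable finite partition of $A$ into $\vv$-intervals $A_i$ refining the previous one. On each such $A_i$ the function $f$ is then still quasi-$\lan{T}{}{}$-definable (it is a restriction of the relevant $f_i$), continuous, and monotone, since continuity and monotonicity are inherited by restriction to a subset of an interval on which they already hold. Finally, for the last sentence: a continuous monotone function on a $\vv$-interval has $\vv$-interval image — if $f\rest A_i$ is constant this is immediate, and if it is strictly monotone and continuous then $f\rest A_i$ is an order isomorphism (or anti-isomorphism) onto its image, so the image is a convex subset of $\VF$; by HNF a convex definable subset of $\VF$ is a single $\vv$-interval, so $f(A_i)$ is a $\vv$-interval.

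The main obstacle I anticipate is the bookkeeping at the interface between the two notions of "interval": \omin-minimal monotonicity naturally hands back ordinary (order-)intervals and points, but I need the final pieces to be $\vv$-intervals, and the two partitions need not a priori be compatible. The point to get right is that HNF lets me refine \emph{any} definable partition of $\VF$ into one by $\vv$-intervals without destroying the good behavior of $f$ on each piece, because monotonicity and continuity are local/restriction-stable properties; and that convexity of the image together with HNF forces the image to be a single $\vv$-interval (rather than a union of several disconnected ones). Modulo this compatibility argument, everything else is a routine assembly of Lemma~\ref{fun:suba:fun}, \omin-minimal monotonicity, and Remark~\ref{rem:HNF}.
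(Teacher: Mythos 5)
Your proposal is correct and follows exactly the route the paper takes: its proof of this corollary is literally ``immediate by Lemma~\ref{fun:suba:fun}, \omin-minimal monotonicity, and HNF,'' and you have simply filled in the routine details of combining those three ingredients. The compatibility/refinement step via HNF and the convexity argument for the image are exactly the intended (and correct) way to make the one-line proof explicit.
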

\begin{proof}
This is immediate by Lemma~\ref{fun:suba:fun}, \omin-minimal monotonicity, and HNF.
\end{proof}

This corollary is a version of \cite[Corollary~2.8]{Dries:tcon:97}, slightly finer due to the presence of HNF.

\begin{cor}\label{uni:fun:decom}
For the function $f$ in Corollary~\ref{mono}, there is a definable function $\pi : A \fun \RV^2$ such that, for each $t \in \RV^2$, $f \rest A_t$ is either constant or injective.
\end{cor}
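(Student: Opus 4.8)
The plan is to read this off from Monotonicity (Corollary~\ref{mono}): that result already produces a partition of $A$ on whose pieces $f$ has the required behaviour, so the only remaining task is to encode that partition as a single definable map into $\RV$, which we then view inside $\RV^2$.

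First I would apply Corollary~\ref{mono} to $f$ to obtain a definable finite partition $A = A_1 \sqcup \dots \sqcup A_n$ into $\vv$-intervals such that each $f \rest A_i$ is continuous and monotone. Since a monotone function is constant or strictly monotone, and a strictly monotone function is injective, every $f \rest A_i$ is already constant or injective; it remains only to realize the (finitely many, $\mdl S$-definable) blocks $A_i$ as the non-empty fibres of one definable function. For this I would fix $n$ pairwise distinct $\mdl S$-definable elements $c_1, \dots, c_n$ of $\RV$ --- these exist because $T$ extends $\usub{\textup{RCF}}{}$, so the residue field $\K$ has characteristic $0$ and $\K(\mdl S) \supseteq \Z$, while $\K^{\times} \sub \RV$; one may take $c_i = \rv(i)$, the classes being distinct since $i - j$ and $i/j - 1$ are units for $i \ne j$. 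Then set $\pi : A \fun \RV^2$, $\pi(x) = (c_i, 0)$ for $x \in A_i$. This $\pi$ is $\mdl S$-definable, being given by finitely many $\mdl S$-definable cases; for $t = (c_i, 0)$ we have $A_t = A_i$, on which $f$ is constant or injective, and for every other $t \in \RV^2$ the fibre $A_t$ is empty, so $f \rest A_t$ is vacuously constant. Hence $\pi$ has the required property.

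I do not expect a genuine obstacle, since the statement is essentially a repackaging of Corollary~\ref{mono}; the one point worth a sentence is that $\RV$ carries enough $\mdl S$-definable elements to index an $n$-block partition, which follows from $\Z \hookrightarrow \K$. (One could equally well take $\pi$ into $\RV$ and postcompose with $t \mapsto (t,0)$; the target $\RV^2$ is retained only for notational uniformity with later statements.)
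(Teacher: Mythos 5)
Your argument is correct and is essentially the paper's own proof, which simply reads the statement off from Corollary~\ref{mono} (the paper's one-line proof says exactly that, adding only that the proof of an ACVF analogue also carries over). One cosmetic point: the paper's convention reserves $\RV$ for the sort \emph{without} the element $0$, so your labels should be, say, $(c_i,1)$ rather than $(c_i,0)$ in order to land in $\RV^2$.
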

\begin{proof}
This follows easily from monotonicity. Also, the proof of \cite[Lemma~4.11]{Yin:QE:ACVF:min} still works.
\end{proof}

\begin{lem}\label{RV:no:point}
Given a tuple $t = (t_1, \ldots, t_n) \in \RV$, if $a \in \VF$ is $t$-definable then $a$ is definable. Similarly, for $\gamma = (\gamma_1, \ldots, \gamma_n) \in \Gamma$, if $t \in \RV$ is $\gamma$-definable then $t$ is definable.
\end{lem}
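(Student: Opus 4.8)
The plan is to prove the first statement by induction on $n$, with the crux being the case $n = 1$; the second statement is then handled by essentially the same mechanism, working in the $\Gamma$-sort against the $\K$-sort. For $n = 1$: suppose $a \in \VF$ is $t$-definable for some $t \in \RV$, and suppose for contradiction that $a$ is not definable. Since $a \in \dcl(t)$ there is a definable function $g : \RV \fun \VF$ with $g(t) = a$; by Lemma~\ref{fun:suba:fun} we can cut $\RV$ into finitely many definable pieces on each of which $g$ agrees with a quasi-\LT-definable function, and $t$ lies in one such piece, so we may as well assume $g$ is (the restriction of) an \LT-definable function. But an \LT-definable function is really defined in the \LT-reduct $\mdl R$, where $\RV$ is not a sort at all; more to the point, I would invoke the homogeneity statement recorded just before Theorem~\ref{theos:qe} (and its analogue for $\RV$, cf.\ the automorphism argument in the proof of Theorem~\ref{theos:qe}): since $a$ is not definable, there is an automorphism $\sigma$ of $\mmdl$ over $\mdl S$ moving $a$. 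The key point is to arrange that $\sigma$ fixes $t$. This is where $\vv$-analysis of the fibres of $g$ enters.

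Concretely, I would argue as follows. Consider the definable set $X = g^{-1}(a) \sub \RV$; it is definable over $a$, finite or infinite, and contains $t$. If $X$ is finite then its elements are all $a$-definable, hence (running the induction / finiteness downward) the elements of $X$, in particular $t$, would be algebraic over $\mdl S$ in the $\RV$-sort — but a finite definable subset of $\RV$ consists of definable points (the residue field and value group both eliminate imaginaries enough for this, or more simply: a finite $\mdl S$-definable subset of $\RV$ is fixed setwise by $\aut(\mmdl/\mdl S)$ and by weak \omin-minimality of the $\RV$-sort each point is fixed), contradicting that $a \notin \dcl(\mdl S)$ via $g(t)=a$. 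So $X$ is infinite, and by \omin-minimality in the $\RV$-sort (Remark~\ref{rem:RV:weako}) $X$ contains an \omin-minimal cell, in particular we may pick $t' \in X$ with $t' \neq t$ but $\vrv(t')$, $\vrv(t)$ in the same connected piece — the cleanest route is: since $g$ is piecewise strictly monotone or constant by \omin-minimal monotonicity applied in the $\RV$-sort, and $g$ is not injective near $t$ (as $X$ is infinite), $g$ is constant on an \omin-minimal interval around $t$ inside a single $\K$-torsor $\tor^{\times}(\alpha)$. Now apply the \LT-homogeneity in $\K$ (or in that torsor, via Remark~\ref{rem:K:aff}): there is an automorphism fixing $\mdl S$ and fixing the relevant $\RV$-data that moves $t$ within this constant-fibre; but $g$ being constant there, it still maps to $a$, so $a$ is genuinely $\dcl$-independent of the choice, and chasing this gives an automorphism over $\mdl S \langle t\rangle$...

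The honest obstacle, and the step I expect to require the most care, is producing an automorphism of $\mmdl$ over $\mdl S$ that fixes $t$ while moving $a$ — equivalently, showing $\tp(a/\mdl S\langle t\rangle_{\RV})$ is not isolated by any formula forcing $a$ to be a specific point. I would do this by going through the substructure $\mdl S\langle t\rangle$: by Lemma~\ref{imm:iso} / Lemma~\ref{imm:ext}, if $a, a' \in \VF$ both satisfy $\rv(a - c) = \rv(a' - c)$ for all $c \in \VF(\mdl S\langle t\rangle)$ then there is an immediate automorphism of $\mmdl$ over $\mdl S\langle t\rangle$ sending $a \mapsto a'$ (note $\mdl S\langle t\rangle$ may fail to be $\VF$-generated, but its $\VF$-part is $\VF(\mdl S)$ and the hypothesis involves only $\VF(\mdl S)$, so the valuation property Proposition~\ref{val:prop} still applies after passing to the $\VF$-generated hull). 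So it suffices to find $a' \neq a$ in $\VF$ with the same $\rv$-cut over $\VF(\mdl S)$: since $\Gamma$ and $\RV$ do not grow when we pass to $\mdl S\langle t\rangle$ unless $t$ forces new $\VF$-elements — which by Lemma~\ref{RV:no:point} applied to smaller data it does not — the $\rv$-cut of $a$ over $\VF(\mdl S)$ must be realized by infinitely many points (otherwise $a$ would already be in $\dcl(\mdl S)$ via HNF, since a $\vv$-interval-defined condition pinning down $a$ forces $a$ to be an endpoint, hence \LT-definable over $\mdl S$, hence definable). That gives the desired $a'$, hence an automorphism over $\mdl S\langle t\rangle \supseteq \mdl S \cup \{t\}$ moving $a$, contradicting $a \in \dcl(\mdl S, t)$. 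The inductive step $n \to n+1$ is routine: write $t = (t', t_{n+1})$, first fix $t'$ (inductively $a$ is $t$-definable forces nothing new unless $t_{n+1}$ does, treat $t_{n+1}$ over $\mdl S\langle t'\rangle$), then apply the $n=1$ case over the enlarged base. For the second assertion one repeats the whole argument with the pair $(\Gamma, \K)$ in place of $(\RV, \VF)$: a definable $g : \Gamma \fun \RV$ with $g(\gamma) = t$, piecewise $\KKK$-linear modulo sign by Theorem~B of \cite{Dries:tcon:97}, has fibres that are either finite or contain a genuine $\Gamma$-interval on which $g$ is constant (here Lemma~\ref{gk:ortho} is exactly the orthogonality input preventing $\K$-valued functions on $\Gamma$ from being injective on infinite sets), and \omin-minimal homogeneity of $\Gamma$ over $\mdl S$ then moves $\gamma$ while fixing $t$.
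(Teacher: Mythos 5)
There is a genuine circularity at the decisive step of your argument. To produce an automorphism of $\mmdl$ over $\mdl S\la t\ra$ that moves $a$, you invoke Lemma~\ref{imm:iso}, whose hypothesis requires $\rv(a-c)=\rv(a'-c)$ for all $c\in\VF(\mdl S\la t\ra)$, and you justify replacing $\VF(\mdl S\la t\ra)$ by $\VF(\mdl S)$ with the parenthetical ``its $\VF$-part is $\VF(\mdl S)$'', citing Lemma~\ref{RV:no:point} ``applied to smaller data''. But the equality $\VF(\mdl S\la t\ra)=\VF(\mdl S)$ is precisely the statement being proved, already for $n=1$: if it fails, then $a$ itself lies in $\VF(\mdl S\la t\ra)$, and the hypothesis of Lemma~\ref{imm:iso} with $c=a$ forces $a'=a$, so no such automorphism exists. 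The construction therefore assumes its own conclusion. There are secondary problems as well: Lemma~\ref{fun:suba:fun} and \omin-minimal monotonicity are statements about functions on $\VF^n$, not about a cross-sort function $g:\RV\fun\VF$ (likewise Theorem~B of \cite{Dries:tcon:97} concerns functions $\Gamma\fun\Gamma$, not $\Gamma\fun\RV$); the fact that such a $g$ has finite image is Corollary~\ref{function:rv:to:vf:finite:image}, which is deduced \emph{from} the present lemma and cannot be used here. The finite-fibre case is also not closed: from $X=g^{-1}(a)$ finite and $a$-definable you only get $t\in\acl(a)$, not $t\in\acl(\mdl S)$.

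The intended argument is much shorter and avoids automorphisms entirely: induct on $n$ and use the exchange principle in the $\VF$-sort, which is available because $\VF(\dcl_{\mdl M}(b))=\la\mdl M,b\ra_T$ and $T$ is \omin-minimal. For any $b\in t_n^\sharp$, the tuple $t$ is definable over $(t_1,\dots,t_{n-1},b)$, so the inductive hypothesis over the base $\mdl S\la b\ra$ gives $a\in\VF(\la b\ra)$. If $a$ were not definable, exchange would yield $b\in\VF(\la a\ra)$ --- for \emph{every} $b\in t_n^\sharp$ --- so the entire $\RV$-disc $t_n^\sharp$ would be contained in $\VF(\la a\ra)$, which is absurd since the latter is small and the former is an infinite definable set in a sufficiently saturated model. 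The second assertion is the same argument run with the exchange principle in the $\RV$-sort (Remark~\ref{rem:RV:weako}), replacing $t_n^\sharp$ by the fibre $\vrv^{-1}(\gamma_n)\sub\RV$.
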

\begin{proof}
The first assertion follows directly from Lemma~\ref{fun:suba:fun}. It can also be easily seen through an induction on $n$ with the trivial base case $n=0$. For any $b \in t_n^\sharp$, by the inductive hypothesis, we have $a \in \VF(\la b \ra)$. If $a$ were not definable then, by the exchange principle, we would have $b \in \VF(\la a \ra)$ and hence $t_n^\sharp \sub \VF(\la a \ra)$, which is impossible. The second assertion is similar, using the exchange principle in the $\RV$-sort (see Remark~\ref{rem:RV:weako}).
\end{proof}

\begin{cor}\label{function:rv:to:vf:finite:image}
Let $U \sub \RV^m$ be a definable set and $f : U \fun \VF^n$ a definable function. Then $f(U)$ is finite.
\end{cor}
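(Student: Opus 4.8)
The plan is to reduce to Lemma~\ref{RV:no:point} essentially immediately. First I would observe that, since $f$ is $\mdl S$-definable with arguments ranging over $\RV^m$, for any $t \in U$ each of the $n$ coordinates of the point $f(t) \in \VF^n$ is definable over $t$; applying Lemma~\ref{RV:no:point} to each coordinate separately then shows that every coordinate of $f(t)$, hence the tuple $f(t)$ itself, is $\mdl S$-definable. Consequently the image $f(U)$ is an $\mdl S$-definable subset of $\VF^n$ all of whose points are $\mdl S$-definable.

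The second step is the general observation that an $\mdl S$-definable set consisting entirely of $\mdl S$-definable points is finite, which I would prove by contradiction using saturation of $\mmdl$: if $f(U)$ were infinite it would have cardinality exceeding $|\mdl S| + \aleph_0$ (over any small parameter set one realizes the type asserting membership in $f(U)$ together with distinctness from all previously chosen elements, since a finite subset of such a type is satisfiable in the infinite set $f(U)$), whereas there are at most $|\mdl S| + \aleph_0$ many $\mdl S$-definable points of $\VF^n$; so $f(U)$ would contain a point not definable over $\mdl S$, contradicting the first step. Equivalently, one may invoke strong homogeneity of $\mmdl$: an infinite $\mdl S$-definable set cannot be fixed pointwise by $\aut(\mmdl / \mdl S)$, yet the first step shows $f(U)$ is.

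I do not expect a real obstacle: all the content sits in Lemma~\ref{RV:no:point}, already proved, and the remaining care is purely bookkeeping --- checking that the coordinatewise use of that lemma is legitimate (it is, the lemma being a statement about single elements of $\VF$), and that $f(U)$ is genuinely $\mdl S$-definable (it is, being the image of an $\mdl S$-definable set under an $\mdl S$-definable map). The only subtlety worth flagging is that one must work in the sufficiently saturated $\mmdl$ with $\mdl S$ small, exactly as fixed at the start of Section~\ref{def:VF}.
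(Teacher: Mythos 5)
Your proposal is correct and follows the paper's own route: the paper also reduces to single $\VF$-coordinates and deduces the claim from Lemma~\ref{RV:no:point} together with compactness, which is exactly the saturation argument you spell out in your second step. You have merely made explicit the cardinality/saturation bookkeeping that the paper leaves as ``immediate.''
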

\begin{proof}
We may assume $n=1$. Then this is immediate by Lemma~\ref{RV:no:point} and compactness.
\end{proof}

There is a more general version of Lemma~\ref{RV:no:point} that involves parameters in the $\DC$-sort:

\begin{lem}\label{ima:par:red}
Let $\code \ga = (\code{\ga_1}, \ldots, \code{\ga_n}) \in \DC^n$. If $a \in \VF$ is $\code \ga$-definable then $a$ is definable.
\end{lem}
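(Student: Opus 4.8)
The plan is to strip off the disc parameters one at a time and feed the outcome into Lemma~\ref{RV:no:point}. Since the one‑disc case will be invoked over arbitrary small bases, I would prove the statement for all small substructures simultaneously, by induction on $n$, the case $n=0$ being vacuous; throughout I would work in $\xmdl$, regarding each $\gamma\in|\Gamma|$ as the disc $\OO_\gamma$ around $0$ so that $|\Gamma|$ is a subsort of $\DC$. By Definition~\ref{defn:disc} we may also assume each $\ga_i$ is a genuine disc, hence infinite.

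The heart of the matter is the case $n=1$, which I would establish unconditionally first: if $a\in\VF$ is $\code\ga$-definable over $\mdl S$, with $\ga$ a disc of radius $\delta=\rad(\ga)$, then $a$ is $\mdl S$-definable. For any $b\in\ga$, the disc $\ga$ is precisely the open (or closed) disc of radius $\delta$ through $b$, so $\code\ga\in\dcl(\mdl S,\delta,b)$ via the appropriate cross-sort map $\VF\to\DC$ of $\xmdl$; by transitivity of $\dcl$, $a\in\VF(\la\mdl S,\delta,b\ra)$ for \emph{every} $b\in\ga$. I would then choose $b\in\ga$ with $b\notin\dcl(\mdl S,\delta,a)$, which is possible because $\ga$ has cardinality exceeding that of $\mdl S$ while $\dcl(\mdl S,\delta,a)$ has cardinality $|\mdl S|$. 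If $a\notin\VF(\la\mdl S,\delta\ra)$, then the exchange principle in the $\VF$-sort — valid since it holds for $T$-models, as recalled at the start of \S~\ref{def:VF} — would force $b\in\dcl(\mdl S,\delta,a)$, a contradiction; hence $a\in\VF(\la\mdl S,\delta\ra)$. Finally, picking any $s\in\RV$ with $\abvrv(s)=\delta$ gives $\delta\in\dcl(s)$, so $a\in\VF(\la\mdl S,s\ra)$, and the first part of Lemma~\ref{RV:no:point} then yields $a\in\VF(\mdl S)$.

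For general $n$, I would set $\mdl S'=\la\mdl S,\code{\ga_1},\ldots,\code{\ga_{n-1}}\ra$. Over $\mdl S'$ the point $a$ is $\code{\ga_n}$-definable, so the case $n=1$ applied over $\mdl S'$ gives $a\in\VF(\mdl S')$, i.e. $a$ is $(\code{\ga_1},\ldots,\code{\ga_{n-1}})$-definable over $\mdl S$; by the induction hypothesis $a$ is $\mdl S$-definable. This closes the induction and proves the lemma.

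The only step that is not formal bookkeeping or a citation of Lemma~\ref{RV:no:point} is the one‑disc case, and within it the genuinely non-formal move is the choice of a sufficiently generic point of $\ga$ together with the exchange principle — precisely the device used in the proof of Lemma~\ref{RV:no:point} itself. The one thing to watch is that $\delta$ and the cross-sort maps are read inside whichever expansion one is working in; identifying $|\Gamma|$ with the discs centred at $0$ keeps the whole argument within $\xmdl$.
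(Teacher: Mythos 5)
Your proof is correct and follows essentially the same route as the paper's: induction on $n$, replacing a disc name by a point of the disc together with its radius, then eliminating the point via the exchange principle and a genericity/saturation argument and the radius via Lemma~\ref{RV:no:point}. The only difference is cosmetic — you peel off the last disc over the enlarged base $\la\mdl S,\code{\ga_1},\ldots,\code{\ga_{n-1}}\ra$ and apply exchange before Lemma~\ref{RV:no:point}, whereas the paper removes all disc names at once via the inductive hypothesis and applies exchange last.
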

\begin{proof}
We proceed by induction on $n$. Let $b \in \ga_n$ and $t \in \RV$ such that $\abvrv(t) = \rad(\ga_n)$. Then $a$ is $(\code {\ga_1}, \ldots, \code {\ga_{n-1}}, t, b)$-definable. By the inductive hypothesis and Lemma~\ref{RV:no:point}, we have $a \in \VF(\la b \ra)$. If $a$ were not definable then we would have $b \in \VF(\la a \ra)$ and hence $\ga_n \sub \VF(\la a \ra)$, which is impossible unless $\ga_n$ is a definable point in $\VF$.
\end{proof}

\begin{lem}\label{open:K:con}
In $\mdl R^{\bullet}_{\rv}$, let $\ga \sub \VF$ be a definable open disc and $f : \ga \fun \K$ a definable nonconstant function. Then there is a definable proper subdisc $\gb \sub \ga$ such that $f \rest (\ga \mi \gb)$ is constant.
\end{lem}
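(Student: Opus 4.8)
The plan is to locate the value that $f$ takes at the generic type of $\ga$, show that this value is a \emph{definable} element of $\K$, and then propagate constancy off a proper subdisc by means of Holly normal form. The whole point of the hypothesis that $\ga$ be \emph{open} (as opposed to closed) will be concentrated in the step that adjoining the generic point of $\ga$ does not enlarge the residue field.

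First I would isolate the relevant generic type. Working in $\mdl R^{\bullet}_{\rv}$ one may add $\code{\ga}$ to the base, so $\ga$ becomes a definable element of $\DC$; then $\abval$ is constant on $\ga$, which therefore lies inside a single $\RV$-disc. By HNF (Remark~\ref{rem:HNF}), every definable $\vv$-interval that meets $\ga$ is either all of $\ga$ or is contained in a definable proper subdisc of $\ga$; hence $\ga$ carries a generic complete type $p$ over the base, axiomatised by the partial type $\{\,x\in\ga\,\}\cup\{\,x\notin\gb : \gb\subsetneq\ga \text{ a definable subdisc}\,\}$. (Should there be two such types, corresponding to the two sides of a definable centre of $\ga$, one simply runs what follows on each of them.)

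The crux is: for $a\models p$ and $\mdl S'=\la\mdl S,a\ra$ one has $\K(\mdl S')=\K(\mdl S)$. Indeed $\VF(\mdl S')$ is the $T$-model generated by $a$ over $\VF(\mdl S)$, an extension of rank $1$, so by the Wilkie inequality \cite[Corollary~5.6]{Dries:tcon:97} the ranks of the induced residue-field and value-group extensions sum to at most $1$. Because $\ga$ is an \emph{open} disc, $a$ avoids the closed disc of radius $\rad(\ga)$ about any of its points, so at every valuative scale the residue of $a$ relative to $\VF(\mdl S)$ already lies in $\K(\mdl S)$: at scales below $\rad(\ga)$ this residue is read off from the leading datum $\rv(a)$, which is $\mdl S$-definable, and at finer scales it is $0$. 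Hence the residue-field extension is trivial. This is exactly the residue-field-preserving branch of the analysis in the proof of Theorem~\ref{theos:qe}, carried out through the valuation property (Proposition~\ref{val:prop}); it is the only place where openness of $\ga$ is used, and it is the main obstacle — power-boundedness enters here through the validity of the Wilkie inequality and the valuation property, and for a closed disc the generic point does enlarge $\K$, so the statement genuinely fails there.

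It then follows that $k_0\coloneqq f(a)\in\K(\mdl S')=\K(\mdl S)$ is a definable element of $\K$, independent of $a$ by completeness of $p$. Thus $\ga\mi\{\,x\in\ga:f(x)=k_0\,\}$ is a definable subset of $\ga$ that omits $p$, so by compactness against the displayed axiomatisation of $p$ it is contained in a finite union $\gb_1\cup\cdots\cup\gb_m$ of definable proper subdiscs of $\ga$. Finally, since $\ga$ is open we have $\vv(b-b')>\rad(\ga)$ for all $b,b'\in\ga$, whence any two proper subdiscs of $\ga$ lie in a common proper superdisc strictly inside $\ga$; a short induction on $m$ places $\gb_1\cup\cdots\cup\gb_m$ inside a single definable proper subdisc $\gb\subsetneq\ga$, and then $f\rest(\ga\mi\gb)$ is the constant $k_0$, as required.
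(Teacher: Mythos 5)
Your route is genuinely different from the paper's (which fibres $\ga$ over $\vv(\ga)\subseteq\Gamma$, gets constancy on almost every fibre from stable embeddedness of $\K$ and weak o\nobreakdash-minimality, and finishes with the $\K$--$\Gamma$ orthogonality of Lemma~\ref{gk:ortho}), and your central claim --- that adjoining a generic point of an open disc does not enlarge the residue field, via the Wilkie inequality and the valuation property --- is sound in substance. The genuine gap is at the final compactness step, and you flag it yourself without repairing it. The displayed partial type is complete only when $\ga$ contains no definable point or proper subdisc; otherwise it has exactly two completions $p^{+},p^{-}$, one on each side of the (directed, hence convex) union of all definable proper subdiscs. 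Compactness then only covers a definable set omitting \emph{both} completions by finitely many proper subdiscs, and running your argument on each completion separately yields two definable values $k^{+}=f(p^{+})$, $k^{-}=f(p^{-})$ and the conclusion that $f$ takes values in $\{k^{+},k^{-}\}$ off a proper subdisc --- strictly weaker than constancy. This cannot be patched: if $c\in\ga$ is definable, the function equal to $1$ on $\{x\in\ga: x>c\}$ and to $2$ elsewhere is definable and nonconstant, yet for \emph{every} proper subdisc $\gb$ the set $\ga\mi\gb$ meets both sides of $c$ (if $c\in\gb$, then for $a\in\ga\mi\gb$ the reflection $2c-a$ also lies in $\ga\mi\gb$), so no $\gb$ witnesses the conclusion. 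The two generic values can therefore genuinely differ; any correct argument must either assume $\ga$ has a unique generic type or weaken the conclusion accordingly. (The paper's own proof passes through the same juncture: its function $g:\vv(\ga)\fun\K$ has finite image, but $\vv(\ga)$ has a positive and a negative end on which $g$ may differ; so the obstruction lies in the statement in the ordered setting, not in your particular method.)

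Two smaller points. First, after adding $\code\ga$ to the base you assert that $\abval$ is constant on $\ga$ and that $\rv(a)$ is definable; this fails when $0\in\ga$ (e.g.\ $\ga=\MM$), in which case the generic $a$ creates a new element of $\abs\Gamma$ rather than having a definable $\rv$-value, and it is the value-group branch of the Wilkie dichotomy that kills the residue extension. The conclusion $\K(\la\mdl S,a\ra)=\K(\mdl S)$ survives, but the case split should be made explicit. Second, your closing observation --- that finitely many proper subdiscs of an \emph{open} disc always lie in a single definable proper subdisc, because the valuative distance between any two points of $\ga$ exceeds $\rad(\ga)$ --- is correct and is a clean use of openness that the paper handles differently (via the concentricity remark at the start of its proof).
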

\begin{proof}
If $\gb_1$ and $\gb_2$ are two proper subdiscs of $\ga$ such that $f \rest (\ga \mi \gb_1)$ and $f \rest (\ga \mi \gb_2)$ are both constant then $\gb_1$ and $\gb_2$ must be concentric, that is, $\gb_1 \cap \gb_2 \neq \0$, for otherwise $f$ would be constant. Therefore, it is enough to show that $f \rest (\ga \mi \gb)$ is constant for some proper subdisc $\gb \sub \ga$. To that end, without loss of generality, we may assume that $\ga$ is centered at $0$. For each $\gamma \in \vv(\ga) \sub \Gamma$, by \cite[Theorem~A]{Dries:tcon:97} and \omin-minimality, $f(\vv^{-1}(\gamma))$ contains a $\gamma$-definable element $t_{\gamma} \in \K$. By weak \omin-minimality, $f(\vv^{-1}(\gamma)) = t_{\gamma}$ for all but finitely many $\gamma \in \vv(\ga)$. Let $g : \vv(\ga) \fun \K$ be the definable function given by $\gamma \fun t_{\gamma}$. By Lemma~\ref{gk:ortho}, the image of $g$ is finite. The assertion follows.
\end{proof}

Alternatively, we may simply quote \cite[Theorem~1.2]{jana:omin:res}.

\begin{defn}
Let $D$ be a set of parameters. We say that a (not necessarily definable) nonempty set $A$ \emph{generates a (complete) $D$-type} if, for every $D$-definable set $B$, either $A \sub B$ or $A \cap B = \0$. In that case, $A$ is \emph{$D$-type-definable} if no set properly contains $A$ and also generates a $D$-type. If $A$ is $D$-definable and generates a $D$-type, or equivalently, if $A$ is both $D$-definable and $D$-type-definable then we say that $A$ is \emph{$D$-atomic} or \emph{atomic over $D$}.
\end{defn}

We simply say ``atomic'' when $D =\0$.

In the literature, a type could be a partial type and hence a type-definable set may have nontrivial intersection with a definable set. In this paper, since partial types do not play a role, we shall not carry the superfluous qualifier ``complete'' in our terminology.

\begin{rem}[Taxonomy of atomic sets]\label{rem:type:atin}
It is not hard to see that, by HNF, if $\gi \sub \VF$ is atomic then $\gi$ must be a $\vv$-interval. In fact, there are only four possibilities for $\gi$: a point, an open disc, a closed disc, and a half thin annulus. There are no ``meaningful'' relations between them, see Lemma~\ref{atom:type}.
\end{rem}

\begin{lem}\label{atom:gam}
In $\mdl R^{\bullet}_{\rv}$, let $\ga$ be an atomic set. Then $\ga$ remains $\gamma$-atomic for all $\gamma \in \Gamma$. Moreover, if $\ga \sub \VF^n$ is an open polydisc then it remains $\code \ga$-atomic.
\end{lem}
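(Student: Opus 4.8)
The plan is to reduce everything to a statement about conjugacy of points. Since $\mmdl$ is sufficiently saturated, a definable set is atomic over a small parameter set $D$ precisely when it is $D$-definable and all of its points have the same type over $D$, i.e. are pairwise $\aut(\mmdl/D)$-conjugate; so in each case it suffices to fix two points $a_0,a_1\in\ga$ and produce an automorphism fixing the enlarged parameters and carrying $a_0$ to $a_1$. We may assume $\ga\sub\VF^n$ has at least two elements (the one-point case being trivial). Being $\mdl S$-definable and atomic, $\ga$ then contains no point of $\VF(\mdl S)$, and the image of $\ga$ under any $\mdl S$-definable function into $\VF$ is again $\mdl S$-definable and generates a complete $\mdl S$-type, hence is atomic and so, by Remark~\ref{rem:type:atin}, is a point, an open disc, a closed disc, or a half thin annulus.

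The crux of the argument will be the claim that \emph{every $\mdl S$-definable function $g\colon\ga\fun\Gamma$ is constant}. By quantifier elimination (Theorem~\ref{theos:qe}) and Convention~\ref{how:gam}, $g$ is given piecewise by $x\mapsto\vv(f(x))$ for $\lan{T}{}{}$-terms $f$ with parameters in $\VF(\mdl S)$, and since $\ga$ is atomic it is contained in one such piece; thus $g\rest\ga=\vv\circ f$ on $\ga$ for a fixed such $f$. By the previous paragraph $f(\ga)$ is atomic, so it is a point (in which case $g$ is constant) or else an open disc, a closed disc, or a half thin annulus with at least two elements, hence containing no point of $\VF(\mdl S)$ and in particular not containing $0$. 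A disc not containing $0$ lies inside a single $\RV$-disc, and a half thin annulus not containing $0$ is a union of $\RV$-discs all carrying the same value of $\vv$; in either case $\vv$ is constant on $f(\ga)$, so $g$ is constant on $\ga$. It follows that $\Gamma(\la\mdl S,a\ra)$ is one and the same subset $\Gamma_{\ga}\sub\Gamma$ for every $a\in\ga$ and that every $\sigma\in\aut(\mmdl/\mdl S)$ restricts to the identity on $\Gamma_{\ga}$.

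Granting this, the first assertion follows quickly. Fix $\gamma\in\Gamma$ and $a_0,a_1\in\ga$, pick $\tau\in\aut(\mmdl/\mdl S)$ with $\tau(a_0)=a_1$, and set $\gamma'=\tau(\gamma)$; since $\tau$ fixes $\Gamma_{\ga}=\Gamma(\la\mdl S,a_1\ra)$ pointwise, $\gamma$ and $\gamma'$ have the same $\mathcal{L}_{\Gamma}$-type over $\Gamma(\la\mdl S,a_1\ra)$. As $\Gamma$ is stably embedded and carries exactly the pure ordered $\KKK$-vector space structure (\cite[Theorem~B]{Dries:tcon:97}), the type of a $\Gamma$-element over $\mdl S a_1$ is determined by its $\mathcal{L}_{\Gamma}$-type over $\Gamma(\la\mdl S,a_1\ra)$, whence $\tp(\gamma/\mdl S a_1)=\tp(\gamma'/\mdl S a_1)$; so there is $\rho\in\aut(\mmdl/\mdl S a_1)$ with $\rho(\gamma')=\gamma$, and $\rho\circ\tau$ fixes $\mdl S\gamma$ and sends $a_0$ to $a_1$. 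Thus $\ga$, being obviously $\mdl S\gamma$-definable, is $\mdl S\gamma$-atomic. For the second assertion, if $\ga=\prod_{i\le n}\gb_i$ is an open polydisc atomic over $\mdl S$, then each $\gb_i=\pr_i(\ga)$ is $\mdl S$-definable, so $\code{\ga}$ lies in the definable closure of $\mdl S$ in $\mdl R_{\rv}^{\bullet}$ (here one may also invoke Lemma~\ref{ima:par:red} to see directly that adjoining $\code{\ga}$ produces no new $\VF$-points, the polyradius being already $\mdl S$-definable); hence adjoining $\code{\ga}$ changes neither $\mdl S$ nor its definable closure, every automorphism over $\mdl S$ fixes $\code{\ga}$, and $\ga$ remains atomic over $\mdl S\code{\ga}$.

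I expect the main obstacle to be the constancy claim, and inside it the half thin annulus case: one must verify that $\vv$ stays constant on an atomic half thin annulus even though such a set can meet several distinct $\RV$-discs, which is exactly where the taxonomy of the four atomic types in Remark~\ref{rem:type:atin} and the ultrametric behaviour of discs around $0$ come in. The model-theoretic ingredient from \cite[Theorem~B]{Dries:tcon:97} (stable embeddedness and purity of $\Gamma$) is routine and should present no difficulty, and the reductions at the start are bookkeeping.
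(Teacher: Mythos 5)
Your argument reaches the right conclusions, but by a far longer road than necessary, and the detour hides the one fact that does all the work. The paper's proof of the first assertion is a single application of definable choice in the $\Gamma$-sort: if $\ga$ were not $\gamma$-atomic, the nonempty $\emptyset$-definable set of those $\delta\in\Gamma^m$ for which the witnessing formula $\phi(x,\delta)$ cuts $\ga$ properly would contain a definable point $\delta_0$, and $\phi(x,\delta_0)$ would then contradict atomicity of $\ga$ over $\mdl S$. Your ``crux,'' the constancy of every definable $g\colon\ga\fun\Gamma$, is an instance of the same fact and needs neither quantifier elimination nor the taxonomy of Remark~\ref{rem:type:atin}: $g(\ga)$ is a definable subset of $\Gamma$ generating a complete type, hence contains a definable point by definable choice, hence is that point. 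By contrast, your route leaves real (if reparable) gaps: a definable $\Gamma$-valued function need not be piecewise of the form $\vv\circ f$ for a single $\lan{T}{}{}$-term $f$ with parameters in $\VF(\mdl S)$ (in general one gets $\KKK$-linear combinations of such values plus constants from $\Gamma(\mdl S)$, and $\mdl S$ may carry $\RV$-parameters not of the form $\rv(c)$); and the passage from equality of $\Gamma$-types over $\Gamma(\la\mdl S,a_1\ra)$ to equality of full types over $\mdl S a_1$ again requires definable choice (or elimination of imaginaries) in $\Gamma$ to bring the parameters of an arbitrary $\mdl S a_1$-definable subset of $\Gamma^m$ down into $\dcl(\mdl S a_1)\cap\Gamma$. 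So the automorphism argument is not wrong, but once its gaps are filled it strictly contains the one-line proof.

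For the second assertion, your observation that $\code{\ga}\in\dcl(\mdl S)$ in $\mdl R^\bullet_{\rv}$ is correct under the paper's literal definition of ``atomic,'' which builds $\mdl S$-definability of $\ga$ into the hypothesis, and under that reading the assertion is indeed vacuous. Note, however, that the paper's own proof --- pass by compactness to a $\rad(\ga)$-definable set $A$ that cuts every polydisc of that polyradius properly and contradict the first assertion --- deliberately avoids using definability of $\ga$, only that it generates a complete type; that is the form in which $\code{\ga}$-atomicity is needed in practice, where the polydisc is typically only contained in a type-definable set (compare Lemmas~\ref{atom:self} and~\ref{one:atomic}). Your argument does not survive that weakening, so the compactness argument is worth recording alongside it.
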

\begin{proof}
The first assertion is a direct consequence of definable choice in the $\Gamma$-sort. For the second assertion, let $\gamma = \rad(\ga)$. If $\ga$ were not $\code \ga$-atomic then, by compactness, there would be a $\gamma$-definable subset $A \sub \VF^n$ such that $A \cap \ga$ is nonempty and, for all open polydisc $\gb$ with $\gamma = \rad(\gb)$, if $A \cap \gb$ is nonempty then it is a proper subset of $\gb$ --- this contradicts the first assertion that $\ga$ is $\gamma$-atomic.
\end{proof}

Recall from \cite[Definition~4.5]{mac:mar:ste:weako} the notion of a cell in a weakly \omin-minimal structure. In our setting, it is easy to see that, by HNF, we may require that the images of the bounding functions $f_1$, $f_2$ of a cell $(f_1, f_2)_A$ in the $\VF$-sort be contained in $\DC$; cell decomposition \cite[Theorem~4.6]{mac:mar:ste:weako} holds accordingly. Cells are in general not invariant under coordinate permutations; however, by cell decomposition, an atomic subset of $\VF^n$ must be a cell and must remain so under coordinate permutations.

\begin{lem}\label{open:rv:cons}
In $\mdl R^{\bullet}_{\rv}$, let $\ga \sub \VF^n$ be an atomic open polydisc and $f : \ga \fun \VF$ a definable function. If $f$ is not constant then $f(\ga)$ is an (atomic) open disc; in particular, $\rv \rest f(\ga)$ is always constant.
\end{lem}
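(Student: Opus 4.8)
The plan is to exhibit $f(\ga)$ as an atomic set and then to eliminate, one at a time, the non-open alternatives in the taxonomy of atomic subsets of $\VF$. First I would arrange that $\ga$ is atomic over the base $\mdl S$: by Lemma~\ref{atom:gam} the open polydisc $\ga$ stays $\code{\ga}$-atomic, and by Lemma~\ref{ima:par:red} enlarging $\mdl S$ by $\code{\ga}$ adds no $\VF$-points, so we may assume $\ga$ is atomic. Then $f(\ga)$ is atomic as well: it is $\mdl S$-definable, and for every $\mdl S$-definable $B$ the preimage $f^{-1}(B)$ is $\mdl S$-definable, so $\ga \sub f^{-1}(B)$ or $\ga \cap f^{-1}(B) = \0$, i.e.\ $f(\ga) \sub B$ or $f(\ga) \cap B = \0$. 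By HNF (Remark~\ref{rem:HNF}) and Remark~\ref{rem:type:atin}, $f(\ga)$ is thus a point, an open disc, a closed disc, or a half thin annulus; it is not a point since $f$ is not constant, and it cannot contain $0$, for $\{0\}$ is $\mdl S$-definable and so $0 \in f(\ga)$ would force $f(\ga) = \{0\}$ and $f$ constant.

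The crux is to rule out the closed-disc and half-thin-annulus cases. Suppose $f(\ga)$ is one of these, and let $\gc$ be the smallest closed disc containing $f(\ga)$; then $\gc$ is an $\mdl S$-definable closed disc of some finite valuative radius $\gamma$, and since $f(\ga)$ misses $0$ the map $\abval$ is constant on $f(\ga)$, so $\rv_{\gamma}$ carries $f(\ga)$ into a single $\mdl S$-definable $\K$-torsor $\tor$ (Notation~\ref{nota:tor}, Remark~\ref{rem:K:aff}). In either case $f(\ga)$ meets more than one maximal open subdisc of $\gc$, so $\rv_{\gamma} \circ f \colon \ga \fun \tor$ is a non-constant $\mdl S$-definable function. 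I would now invoke the $\K$-torsor analogue of Lemma~\ref{open:K:con}: its proof carries over unchanged, since the ingredients --- \cite[Theorem~A]{Dries:tcon:97}, \omin-minimality, and Lemma~\ref{gk:ortho} --- apply verbatim to functions valued in a $\K$-torsor. When $n = 1$ this yields a proper $\mdl S$-definable subdisc $\gb \subsetneq \ga$ off which $\rv_{\gamma} \circ f$ is constant; but $\gb$ is then a nonempty proper $\mdl S$-definable subset of $\ga$, contradicting atomicity. For $n > 1$ one first reduces to $n = 1$: writing $\ga = \gb_1 \times \ga'$, if $f$ is constant on $\gb_1 \times \{a'\}$ for every $a' \in \ga'$ then $f$ factors through $\pr_{\geq 2}$ and one inducts on $n$, while otherwise $f$ is non-constant on some such slice and, since $\ga'$ remains atomic over a generic $a'$ by orthogonality of the coordinate discs, one again inducts. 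Hence $f(\ga)$ is an open disc; and since it then avoids $0$ as noted, it lies inside a single $\RV$-disc, so $\rv \rest f(\ga)$ is constant, giving the final clause.

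I expect the main obstacle to be the passage from $\K$ to a $\K$-torsor in Lemma~\ref{open:K:con}, together with the $n > 1$ bookkeeping --- in particular the claim that the coordinate discs of an atomic open polydisc are mutually orthogonal, so that a slice stays atomic after fixing the remaining coordinates generically. A more self-contained route for $n = 1$, avoiding the torsor version, would be to apply Corollary~\ref{mono}: atomicity forces $f \rest \ga$ to be quasi-\LT-definable, continuous, and strictly monotone, hence the restriction of an \LT-definable continuous monotone $\hat f$; then, using \omin-minimal $C^1$-smoothness on $\ga$ (atomicity again keeping $\ga$ off the finite bad locus) and a valuative mean-value estimate supplied by $T$-convexity, one checks that $\hat f$ maps every sufficiently small open subdisc of $\ga$ onto an open disc, whence it maps the directed union $\ga$ onto an open disc.
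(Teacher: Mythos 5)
Your opening moves coincide with the paper's: $f(\ga)$ is atomic, hence by HNF and Remark~\ref{rem:type:atin} a point, an open disc, a closed disc, or a half thin annulus, and the first alternative is excluded since $f$ is nonconstant. But the two key steps after that both have gaps. For $n=1$, the assertion that the proof of Lemma~\ref{open:K:con} ``carries over unchanged'' to functions valued in the $\K$-torsor of maximal open subdiscs of $\gc$ is not justified. That proof picks, for each $\gamma$, a $\gamma$-definable point $t_\gamma$ inside a definable subset of $\K$, which is legitimate because $\K$ is a model of $T$ and has definable Skolem functions; the analogous choice inside $\tor(\code{\ga'})$ is exactly what can fail, since the identification of $\tor(\code{\ga'})$ with $\K$ in Remark~\ref{rem:K:aff} needs a definable element of $\tor^{\times}(\alpha)$, which need not exist over $\mdl S$ (in this section $\mdl S$ is not assumed $\VF$-generated and $\Gamma(\mdl S)$ may be trivial; cf.\ Lemma~\ref{S:def:cl} and Hypothesis~\ref{hyp:point}). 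Likewise Lemma~\ref{gk:ortho} is stated for $\K$, not for torsors. Each transfer needs an argument, so this is a substantive unproved claim rather than a routine citation. The paper sidesteps all of it: by Corollary~\ref{mono} and atomicity, $f$ is continuous and strictly monotone on $\ga$, so if $f(\ga)$ were a closed disc or a half thin annulus then $f^{-1}$ would be a definable monotone surjection from a set cofinal in a closed disc onto an open disc, and its valuative behaviour near the end of that closed disc contradicts Lemma~\ref{Ocon}. Your ``more self-contained route'' at the end is close to this in spirit, but Lemma~\ref{Ocon} already does the job of the mean-value estimate you propose.

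The more serious gap is the inductive step. In your second branch you obtain that each slice image $f(\gb_1\times\{a'\})$ is an open disc and then say ``one again inducts,'' but the dimension has not dropped and nothing is said about how these open discs fit together: a priori they could be spread over many maximal open subdiscs of the smallest closed disc containing $f(\ga)$, which is precisely what must be excluded. The missing idea is to slice in \emph{both} directions. By the inductive hypothesis, for every $a\in\pr_1(\ga)$ the set $f(a\times\pr_{>1}(\ga))$ is a point or an open disc inside $f(\ga)$, hence lies in a single maximal open subdisc of that closed disc; the same holds for $f(\pr_1(\ga)\times a)$ with $a\in\pr_{>1}(\ga)$. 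Any two points of the polydisc are joined by a chain of two such slices sharing a point, so all of $f(\ga)$ would lie in one maximal open subdisc of itself, which is absurd when $f(\ga)$ is a closed disc or a half thin annulus. Without this (or a substitute such as applying your torsor argument to $a'\mapsto$ the maximal open subdisc containing $f(\gb_1\times\{a'\})$, which reintroduces the first gap one dimension down), the induction does not close.
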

\begin{proof}
By atomicity, $f(\ga)$ must be an atomic $\vv$-interval. We proceed by induction on $n$. For the base case $n=1$, suppose for contradiction that $f(\ga)$ is a closed disc (other than a point) or a half thin annulus. By monotonicity, we may assume that $f(\ga)$ is, say, strictly increasing. Then $f^{-1}$ violates Lemma~\ref{Ocon}, contradiction.

For the case $n > 1$, suppose for contradiction again that $f(\ga)$ is a closed disc or a half thin annulus. By the inductive hypothesis, for every $a \in \pr_{1}(\ga)$ there is a maximal open subdisc $\gb_a \sub f(\ga)$ that contains $f(\ga_a)$, similarly for every $a \in \pr_{>1}(\ga)$. It follows that $f(\ga)$ is actually contained in a maximal open subdisc of $f(\ga)$, which is absurd.
\end{proof}

\begin{cor}\label{poly:open:cons}
Let $f : \VF^n \fun \VF$ be a definable function and $\ga \sub \VF^n$ an open polydisc. If $(\rv \circ f) \rest \ga$ is not constant then there is an $\code \ga$-definable nonempty proper subset of $\ga$.
\end{cor}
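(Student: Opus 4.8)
The plan is to prove the statement by contradiction, turning ``no good subset exists'' into ``$\ga$ is $\code{\ga}$-atomic'' and then feeding this into Lemma~\ref{open:rv:cons}, which already does all the genuine work.

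First I would record the elementary observation that $\ga$ is itself $\code{\ga}$-definable (it is literally the set named by the parameter $\code{\ga}$), so that the negation of the desired conclusion, namely ``$\ga$ has no $\code{\ga}$-definable nonempty proper subset'', is equivalent to the assertion that $\ga$ generates a complete $\code{\ga}$-type, i.e.\ that $\ga$ is $\code{\ga}$-atomic: for any $\code{\ga}$-definable set $B$, the intersection $B\cap\ga$ is a $\code{\ga}$-definable subset of $\ga$, hence equal to $\0$ or to $\ga$. Assuming this for contradiction, I would then pass to the enlarged base $\mdl S\la\code{\ga}\ra$ — equivalently, relabel ``definable'' as ``$\code{\ga}$-definable'' for the remainder of the argument. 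This base change is harmless: Lemma~\ref{Ocon}, Corollary~\ref{mono}, and Lemma~\ref{open:rv:cons} all hold over an arbitrary small substructure, and constancy of a function is an absolute property. Over this base, $f$ (being $\mdl S$-definable) is in particular $\code{\ga}$-definable and $\ga$ is an atomic open polydisc, so Lemma~\ref{open:rv:cons} applies to $f\rest\ga$: either $f\rest\ga$ is constant, or $f(\ga)$ is an atomic open disc on which $\rv$ is constant. In either case $(\rv\circ f)\rest\ga$ is constant, contradicting the hypothesis, and so the required $\code{\ga}$-definable nonempty proper subset of $\ga$ must exist.

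I do not expect a real obstacle here: the substantive valuation-theoretic content (power-boundedness, entering through Lemma~\ref{Ocon} and monotonicity) is already sealed inside Lemma~\ref{open:rv:cons}, and this corollary is essentially that lemma read contrapositively, then repackaged through the disc sort $\DC$ so that the resulting set is controlled by the single parameter $\code{\ga}$ rather than by some element of $f(\ga)$. The only points needing a moment's care are the equivalence ``no $\code{\ga}$-definable nonempty proper subset'' $\Longleftrightarrow$ ``$\ga$ is $\code{\ga}$-atomic'' and the accompanying base change, both of which are routine bookkeeping.
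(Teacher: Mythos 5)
Your proof is correct and matches the paper's intent exactly: the paper states this corollary without proof as the immediate contrapositive of Lemma~\ref{open:rv:cons}, and your argument (no $\code{\ga}$-definable nonempty proper subset $\Leftrightarrow$ $\ga$ is $\code{\ga}$-atomic, then base change to $\mdl S\la\code{\ga}\ra$ and apply the lemma) is precisely the spelled-out version of that. Nothing to add.
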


Here is a strengthening of Lemma~\ref{atom:gam}:

\begin{lem}\label{atom:self}
Let $B \sub \VF^n$ be \LT-type-definable and $\ga = \ga_1 \times \ldots \times \ga_n \sub B$ an open polydisc. Then, for all $a = (a_1, \ldots, a_n)$ and $b = (b_1, \ldots, b_n)$ in $\ga$, there is an immediate automorphism $\sigma$ of $\mmdl$ with $\sigma(a) = b$. Consequently, $\ga$ is $(\code \ga, t)$-atomic for all $t \in \RV$.
\end{lem}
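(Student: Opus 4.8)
The plan is to reduce everything to the two extension lemmas of this section. By Lemma~\ref{imm:ext} it suffices to exhibit an immediate isomorphism between small substructures that carries $a$ to $b$ and restricts to the identity on $\mdl S$, and I would assemble one coordinate at a time: change $a_1$ into $b_1$, then $a_2$ into $b_2$, and so on, composing the resulting immediate isomorphisms to get $\la \mdl S, a_1, \ldots, a_n\ra \fun \la \mdl S, b_1, \ldots, b_n\ra$. Since permuting $\VF$-coordinates leaves the situation unchanged in shape — one is always moving a point of the polydisc $\ga$ within $\ga$ — the whole construction reduces to a single step: producing an immediate isomorphism of $\mdl M \coloneqq \la \mdl S, a_2, \ldots, a_n\ra$ with itself that is the identity on $\mdl M$ and sends $a_1$ to $b_1$.

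For that step I would invoke Lemma~\ref{imm:iso} with $\sigma = \id_{\mdl M}$, so the task becomes verifying $\rv(a_1 - c) = \rv(b_1 - c)$ for every $c \in \VF(\mdl M)$. If $c \notin \ga_1$ this is a one-line ultrametric computation: since $a_1, b_1 \in \ga_1$ while $c \notin \ga_1$, one has $a_1 - b_1 \in (a_1 - c)\MM$, and then $b_1 - c = (a_1 - c) - (a_1 - b_1)$ has the same image under $\rv$ as $a_1 - c$. The substantive case is $c \in \ga_1 \cap \VF(\mdl M)$; here $c = g(a_2, \ldots, a_n)$ for some \LT-term $g$ over $\mdl S$, and I would pass to the \LT-definable function $G \colon \ga \fun \VF$, $G(x_1, \ldots, x_n) = x_1 - g(x_2, \ldots, x_n)$, and aim to prove that $\rv \circ G$ is constant on $\ga$ — which gives $\rv(a_1 - c) = \rv(G(a)) = \rv(G(b)) = \rv(b_1 - c)$ at once. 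By Corollary~\ref{poly:open:cons}, non-constancy of $\rv \circ G$ on $\ga$ would produce a nonempty proper $\code\ga$-definable subset of~$\ga$, so what is really needed is that $\ga$ admits no such subset; this is the point where the hypothesis that $\ga$ lies inside an \LT-type-definable set $B$ must be exploited, through the structure theory of (atomic) open polydiscs established in Lemma~\ref{atom:gam} and the cell-decomposition discussion preceding this lemma.

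I expect this constancy step — deducing, from $\ga \sub B$ with $B$ merely \LT-type-definable, that no \LT-definable function can cut $\ga$ into $\code\ga$-definable pieces — to be the main obstacle; once it is in place the coordinate-by-coordinate construction together with Lemmas~\ref{imm:iso} and \ref{imm:ext} yields the immediate automorphism $\sigma$ with $\sigma(a) = b$ with no further difficulty.

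Finally I would derive the ``consequently'' clause by a routine compactness argument. Fix $t \in \RV$ and suppose $C$ is a $(\code\ga, t)$-definable set with $\emptyset \neq C \cap \ga \subsetneq \ga$; choose $a \in C \cap \ga$ and $b \in \ga \mi C$ and let $\sigma$ be an immediate automorphism with $\sigma(a) = b$, furnished by the first part. Being immediate, $\sigma$ fixes $\RV$ pointwise, hence fixes $t$ and all valuative radii; moreover $\sigma(\ga)$ is the open polydisc of polyradius $\rad(\ga)$ around $\sigma(a) = b \in \ga$, which is $\ga$ itself, so $\sigma$ also fixes the imaginary $\code\ga$. Thus $\sigma$ is an automorphism over $\mdl S \cup \{\code\ga, t\}$ carrying the point $a$ of the $(\code\ga, t)$-definable set $C$ off $C$ — a contradiction. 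Hence no such $C$ exists, so $\ga$ generates a complete $(\code\ga, t)$-type, and being visibly $\code\ga$-definable it is $(\code\ga, t)$-atomic.
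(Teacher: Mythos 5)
Your overall architecture --- reducing via Lemmas~\ref{imm:ext} and \ref{imm:iso} to a single-coordinate move, iterating coordinate by coordinate, and deriving the ``consequently'' clause from the automorphism statement --- matches the paper's (which instead inducts on $n$ by peeling off the last coordinate and transporting $a'$ to $b'$ by an already-built immediate isomorphism), and your one-coordinate-at-a-time variant is legitimate. But there is a genuine gap at the crux, namely the case $c = g(a_2, \ldots, a_n) \in \ga_1$. First, the intermediate claim you aim for --- that $\rv \circ G$ is constant on $\ga$ --- is \emph{false} precisely in that case: restricting $G$ to $\ga_1 \times \{(a_2, \ldots, a_n)\}$ gives $x_1 \efun x_1 - c$, whose image is the open disc $\ga_1 - c$ containing $0$, on which $\rv$ is very far from constant. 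Second, the route you propose to constancy (``$\ga$ admits no nonempty proper $\code{\ga}$-definable subset'') is circular, since that is essentially the second assertion of the lemma; and Lemma~\ref{atom:gam} cannot supply it, because that lemma only upgrades atomicity over $\0$ to atomicity over $\code{\ga}$ and so presupposes that $\ga$ is already atomic, which is not a hypothesis here.

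What is actually needed --- and what the paper's proof supplies at the corresponding step --- is that your ``substantive case'' is vacuous. Since $B$ generates a complete \LT-type, it either contains or is disjoint from the \LT-definable graph $\{x : x_1 = g(x_2, \ldots, x_n)\}$; containment is impossible because the fiber of that graph over $(a_2, \ldots, a_n)$ is a singleton while the fiber $\ga_1$ of $\ga \sub B$ is infinite; hence $B$ is disjoint from the graph, so $g(a_2, \ldots, a_n) \notin \ga_1$ and only your easy ultrametric case occurs. (The same disjointness argument works at the later steps, where the relevant points $(b_1, \ldots, b_{k-1}, x_k, a_{k+1}, \ldots, a_n)$ still lie in $\ga \sub B$. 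In the paper's version of the induction the fixed point $c$ is replaced by the set $f(\ga')$, which is why Lemma~\ref{open:rv:cons} is additionally invoked there to see that $f(\ga')$ is a point or an open disc before running the ultrametric computation.) With this disjointness argument inserted in place of the constancy claim, your proof closes; as written, the key step is missing and the claim it rests on would fail.
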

\begin{proof}
To see that the first assertion implies the second, suppose for contradiction that there is an $(\code \ga, t)$-definable nonempty proper subset $A \sub \ga$. Let $a \in A$, $b \in \ga \mi A$, and $\sigma$ be an immediate automorphism of $\mmdl$ with $\sigma(a) = b$. Then $\sigma$ is also an immediate automorphism of $\mmdl$ over $\la \code \ga, t \ra$, contradicting the assumption that $A$ is $(\code \ga, t)$-definable.

For the first assertion, by Lemma~\ref{imm:ext}, it is enough to show that there is an immediate isomorphism $\sigma : \la a \ra \fun \la b \ra$ sending $a$ to $b$. Write
\[
\ga' = \ga_1 \times \ldots \times \ga_{n-1}, \quad a' =  (a_1, \ldots, a_{n-1}),  \quad b' =  (b_1, \ldots, b_{n-1}).
\]
Then, by induction on $n$ and Lemma~\ref{imm:iso}, it is enough to show that, for any immediate isomorphism $\sigma' : \la a' \ra \fun \la b' \ra$ sending $a'$ to $b'$ and any \LT-definable function $f : \VF^{n-1} \fun \VF$,
\[
\rv(a_n - f(a')) = \rv(b_n - \sigma'(f(a'))).
\]
This is clear for the base case $n=1$, since $\ga$ must be disjoint from $\VF(\mdl S)$. For the case $n > 1$, we choose an immediate automorphism of $\mmdl$ extending $\sigma'$, which is still denoted by $\sigma'$; this is possible by Lemma~\ref{imm:ext}. By the inductive hypothesis and Lemma~\ref{open:rv:cons}, $f(\ga') = f(\sigma'(\ga')) = \sigma'(f(\ga'))$ is either a point or an open disc. Since $B$ is \LT-type-definable, it follows that $f(\ga')$ must be disjoint from $\ga_n$ and hence the desired condition is satisfied.
\end{proof}

\begin{cor}\label{part:rv:cons}
Let $A \sub \VF^n$ and $f : A \fun \VF$ be an \LT-definable function. Then there is an \LT-definable finite partition $A_i$ of $A$ such that, for all $i$, if $\ga \sub A_i$ is an open polydisc then $\rv \rest f(\ga)$ is constant and $f(\ga)$ is either a point or an open disc.
\end{cor}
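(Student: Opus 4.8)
The plan is to combine the rigidity statement Lemma~\ref{atom:self} with Lemma~\ref{open:rv:cons}, and then upgrade the conclusion from type-definable sets to an honest $\LT$-definable partition by a saturation argument run inside the disc-expansion $\mdl R^{\bullet}_{\rv}$. Call a set $C \sub \VF^n$ \emph{admissible} if every open polydisc $\ga \sub C$ satisfies the conclusion of the corollary, i.e.\ $f(\ga)$ is a point or an open disc and $\rv \rest f(\ga)$ is constant. Admissibility obviously passes to subsets, so it is enough to cover $A$ by finitely many $\LT$-definable admissible subsets and then disjointify them.

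The heart of the matter is: for each $a \in A$ there is an $\LT$-definable set $A_a$ with $a \in A_a \sub A$ that is admissible. I would prove this by contradiction. If it fails, then for every $\LT$-definable $A' \sub A$ with $a \in A'$ there is a non-admissible open polydisc inside $A'$. Working in $\mdl R^{\bullet}_{\rv}$, where the code $\code \ga$ of an open polydisc $\ga$ ranges over $\DC^n$ and the conditions ``$\ga \sub A'$'', ``$f(\ga)$ is a point or an open disc'' and ``$\rv \rest f(\ga)$ is constant'' are all first-order in $\code \ga$, consider the partial type in $\code y$ asserting that $\code y$ codes an open polydisc $\ga \sub A$ that is not admissible, together with $\ga \sub A'$ for every $\LT$-definable $A' \ni a$. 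This is finitely satisfiable (intersect the finitely many sets $A'$ involved, which is again $\LT$-definable, contained in $A$, and contains $a$), so by saturation it is realised by some $\code \ga$. Then $\ga$ is contained in the $\LT$-type-definable set $B$ cut out by $\tp_{\LT}(a/\VF(\mdl S))$, so Lemma~\ref{atom:self} makes $\ga$ atomic over $\code \ga$ (indeed $(\code \ga, t)$-atomic for any $t \in \RV$); since $f$ is $\LT$-definable, $f \rest \ga$ is definable over the small base $\mdl S\langle \code \ga, t\rangle$, and Lemma~\ref{open:rv:cons}, whose proof uses power-boundedness only through Lemma~\ref{Ocon} and goes through over any small base of $\mdl R^{\bullet}_{\rv}$, forces $f(\ga)$ to be a point or an open disc with $\rv \rest f(\ga)$ constant --- contradicting the non-admissibility of $\ga$.

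Granting this, the family $\{A_a : a \in A\}$ of $\LT$-definable admissible subsets covers $A$, so by compactness finitely many of them, say $A_{a_1}, \ldots, A_{a_k}$, already cover $A$; then $A_i \coloneqq A_{a_i} \mi \bigcup_{j < i} A_{a_j}$ is an $\LT$-definable finite partition of $A$ into admissible sets, which is exactly the assertion.

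I expect the only real obstacle to be bookkeeping: verifying that ``$\ga$ is a non-admissible open polydisc contained in $A'$'' is genuinely expressible by an $\lan{T}{RV}{}$-formula in the disc code $\code \ga$ (so that the saturation step is legitimate), and checking that Lemma~\ref{open:rv:cons} relativises to the base $\mdl S\langle \code \ga, t\rangle$. The genuine content --- that an open polydisc lying inside a type-definable set becomes atomic over its own name --- has already been isolated as Lemma~\ref{atom:self}, so the remainder is packaging.
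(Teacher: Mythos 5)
Your proof is correct and takes essentially the same route as the paper's: pass to the $\lan{T}{}{}$-type-definable set $D_a$ containing each $a$, apply Lemma~\ref{atom:self} to make every open polydisc in $D_a$ atomic over its own code, invoke Lemma~\ref{open:rv:cons}, and then use compactness twice to extract a finite $\lan{T}{}{}$-definable cover. Your explicit saturation argument with a partial type in the disc code is just the unpacked form of the paper's first compactness step, so there is nothing to add.
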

\begin{proof}
For $a \in A$, let $D_a \sub A$ be the \LT-type-definable subset containing $a$. By Lemma~\ref{atom:self}, every open polydisc $\ga \sub D_a$ is $\code \ga$-atomic and hence, by Lemma~\ref{open:rv:cons}, the assertion holds for $\ga$. Then, by compactness, the assertion must hold in a definable subset $A_a \sub A$ that contains $a$; by compactness again, it holds in finitely many definable subsets $A_1, \ldots, A_m$ of $A$ with $\bigcup_i A_i = A$. Then the partition of $A$ generated by $A_1, \ldots, A_m$ is as desired.
\end{proof}

\begin{rem}\label{rem:LT:com}
Clearly the conclusion of Corollary~\ref{part:rv:cons} still holds if we replace ``\LT-definable'' with ``definable'' everywhere therein. Moreover, its proof  works almost verbatim in all situations where we want to partition an \LT-definable set $A \sub \VF^n$ into finitely many \LT-definable pieces $A_i$ such that certain definable property, not necessarily \LT-definable, holds on every open polydisc (or other imaginary elements) contained in $A_i$.
\end{rem}

Here is a variation of Lemma~\ref{atom:self}.

\begin{lem}\label{atom:exp}
Let $\ga \sub \VF^n$ be an $\code \ga$-atomic open polydisc. Let $e \in \VF^{\times}$ with $\abval(e) \gg 0$ (here $\gg$ stands for ``sufficiently larger than''). Then $\ga$  is $(\code \ga, e)$-atomic.
\end{lem}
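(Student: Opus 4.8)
The plan is to follow the proof of Lemma~\ref{atom:self} as closely as possible. Since $\ga$ is already $(\code\ga, e)$-definable (it is $\code\ga$-definable), it suffices, exactly as there, to produce for any $a, b \in \ga$ an automorphism of $\mdl R_{\rv}^{\bullet}$ fixing $\la \mdl S, \code\ga, e \ra$ pointwise and sending $a$ to $b$; this forces $\ga$ to be $(\code\ga, e)$-type-definable, hence $(\code\ga, e)$-atomic. The single new feature is that $e$ lies in the $\VF$-sort, so the automorphism must \emph{fix $e$}, not merely fix $\RV$ the way an immediate automorphism does; this is where the hypothesis $\abval(e) \gg 0$ is spent.

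The heart of the matter is the following claim: reading $\abval(e) \gg 0$ as ``$\abval(e)$ exceeds every element of $\absG(\la \mdl S, \code\ga \ra)$'' (such $e$ exist by saturation), the set $\VF(\la \mdl S, \code\ga, e \ra)$ meets no open disc that is atomic over $\la \mdl S, \code\ga \ra$; in particular it is disjoint from every factor $\ga_i$, hence from $\ga$. To see this, write a hypothetical common point as $g(e)$ for a partial function $g$ definable over $\la \mdl S, \code\ga \ra$ — and since $\code\ga$ contributes no $\VF$-points, $g$ may be taken definable over $\VF(\mdl S)$. As $\abval(e) \gg 0$, the element $e$ realises the cut at $0$ over the $T$-model $\VF(\la \mdl S, \code\ga \ra)$, so by the \omin-minimal monotonicity theorem $g$ has, along this cut, a limit $L \in \VF \cup \{\pm\infty\}$ definable over $\la \mdl S, \code\ga \ra$, and power-boundedness controls the rate of approach. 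If $L = \pm\infty$ then $\abval(g(e))$ drops below $\absG(\la \mdl S, \code\ga \ra)$, so $g(e)$ is too large in absolute value to lie in a disc all of whose points have bounded valuation. If $L$ is finite and nonzero then $\abval(g(e) - L)$ exceeds every element of $\absG(\la \mdl S, \code\ga \ra)$, so $g(e)$ lies in a given $\la \mdl S, \code\ga \ra$-definable disc of bounded radius if and only if $L$ does — and $L$, being definable over $\la \mdl S, \code\ga \ra$, lies in no disc atomic over it. If $L = 0$ then $\abval(g(e))$ exceeds $\absG(\la \mdl S, \code\ga \ra)$, so $g(e) \in \MM_{\gamma}$ for every $\gamma \in \absG(\la \mdl S, \code\ga \ra)$, whereas an atomic open disc is never a disc about $0$. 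This is the exact counterpart at $0 \in \VF$ of Lemma~\ref{Ocon}, and, like it, it fails without power-boundedness.

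Granting the claim, the rest transcribes the proof of Lemma~\ref{atom:self}, now built on Lemma~\ref{imm:iso} and Lemma~\ref{imm:ext} over the base $\la \mdl S, \code\ga, e \ra$. For $n = 1$: for $a, b \in \ga$ and any $c \in \VF(\la \mdl S, \code\ga, e \ra)$ we have $c \notin \ga$, hence $\abval(a - c) \le \rad(\ga) < \abval(a - b)$ and therefore $\rv(a - c) = \rv(b - c)$; Lemma~\ref{imm:iso} then gives an immediate isomorphism $\la \mdl S, \code\ga, e, a \ra \to \la \mdl S, \code\ga, e, b \ra$ fixing $\la \mdl S, \code\ga, e \ra$ with $a \mapsto b$, and Lemma~\ref{imm:ext} extends it to an immediate automorphism — still fixing $\code\ga$ and $e$. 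For $n > 1$ one writes $\ga = \ga' \times \ga_n$, matches the first $n - 1$ coordinates by induction (note that $\ga'$ is again $\code\ga$-atomic, and that $\ga_n$ is $(\code\ga, e)$-atomic by the case $n = 1$), and extends across the last coordinate by Lemma~\ref{imm:iso}, using Lemma~\ref{open:rv:cons} to see that the relevant images $f(e, \ga')$ are points or open discs, and then — just as the type-definability of $B$ is used in Lemma~\ref{atom:self} — that each such disc is disjoint from $\ga_n$. I expect the germ-at-$0$ analysis underpinning the claim, together with the bookkeeping that keeps $\abval(e)$ large relative to each base appearing in the induction and the verification of the disjointness step, to be where the genuine work lies; everything else is a rerun of Lemma~\ref{atom:self}.
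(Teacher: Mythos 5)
Your overall architecture matches the paper's: induction on $n$, reduction to producing immediate automorphisms fixing $e$ via Lemmas~\ref{imm:iso} and~\ref{imm:ext}, and a power-boundedness/germ-at-$0$ analysis to control $\VF(\la \mdl S, \code{\ga}, e\ra)$. Your ``claim'' is essentially the $n=1$ instance of the key disjointness statement, and your proof of it (the limit $L$ at the cut at $0$, with the three cases $L=\pm\infty$, $L$ finite nonzero, $L=0$ handled by power-boundedness) is a correct one-variable version of what the paper establishes.

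The gap is in the inductive step, and you have in effect flagged it without closing it. After producing $\sigma' : \la a', e\ra \fun \la b', e\ra$, what must be checked is $\rv(a_n - f(e,a')) = \rv(b_n - \sigma'(f(e,a')))$ for every \LT-definable $f$, and for this one needs $f(e,\ga') \cap \ga_n = \0$. Your claim controls only $\VF(\la \mdl S, \code{\ga}, e\ra)$, i.e., values $g(e)$ of one-variable functions; but $f(e,a')$ ranges over $\VF(\la \mdl S, \code{\ga}, e, a'\ra)$ as $a'$ ranges over $\ga'$, and the set $f(e,\ga')$ --- a point or an open disc by Lemma~\ref{open:rv:cons} --- need not contain any $(\code{\ga},e)$-definable point when it is a disc, so the claim says nothing about whether it meets $\ga_n$. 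Unlike in Lemma~\ref{atom:self}, there is no ambient \LT-type-definable set $B$ to force disjointness for free; and rerunning your germ analysis over the base $\la \mdl S, \code{\ga}, a'\ra$ for each $a'$ would require a single $e$ that is small uniformly in $a' \in \ga'$, while the thresholds a priori depend on $a'$. This uniform disjointness is precisely the content of the second half of the paper's proof: assuming $f(e,\ga')\cap\ga_n \neq \0$ for $e$ arbitrarily close to $0$, it applies monotonicity in $e$ fiberwise, uses Lemma~\ref{open:rv:cons} on the $\code{\ga'}$-definable endpoint function $l : \ga' \fun \VF^+$ to see that the $\vv$-interval $(0,l(\ga'))$ is nonempty, deduces that each $f^*_{a'}$ maps onto $\ga_n$, and then invokes \omin-minimality in the $\Gamma$-sort together with atomicity of $\ga'$ to show that the threshold $\beta_{a'} = \abval((f^*_{a'})^{-1}(\ga_n))$ is a single value $\beta$ independent of $a'$, whence $\abval(e)>\beta$ suffices. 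Without some version of this uniformity argument your induction does not close.
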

\begin{proof}
The argument is somewhat similar to that in the proof of Lemma~\ref{atom:self}. We proceed by induction on $n$. Write $\ga = \ga_1 \times \ldots \times \ga_n$ and $\ga' = \ga_1 \times \ldots \times \ga_{n-1}$.
Let $(a' , a_n)$ and $(b', b_n)$ be two points in $\ga' \times \ga_n$. By the inductive hypothesis and Lemma~\ref{atom:self}, there is an immediate isomorphism $\sigma' : \la a', e \ra \fun \la b', e \ra$ with $\sigma'(e) = e$ and $\sigma'(a') = b'$. Thus, it is enough to show that, for all \LT-definable function $f : \VF^{n} \fun \VF$,
\[
\rv(a_n - f(e, a')) = \rv(b_n - \sigma'(f(e, a'))).
\]

Suppose for contradiction that we can always find an $e \in \VF^{\times}$ that is arbitrarily close to $0$ such that $f(e, \ga') \cap \ga_n \neq \0$ (this must hold for some such $f$, for otherwise we are already done by compactness); more precisely, by weak \omin-minimality, without loss of generality, there is an open interval $(0, \epsilon) \sub \VF^+$ such that $f(e, \ga') \cap \ga_n \neq \0$ for all $e \in (0, \epsilon)$.

For each $a' \in \ga'$, let $f_{a'}$ be the $a'$-\LT-definable function on $\VF^+$ given by $b \efun f(b, a')$. By \omin-minimal monotonicity, there is an $\code{\ga'}$-definable function $l : \ga' \fun \VF^+$ such that $f^*_{a'} \coloneqq f_{a'} \rest A_{a'}$ is continuously monotone (of the same kind) for all $a' \in \ga'$, where $A_{a'} \coloneqq (0, l(a'))$. By Lemma~\ref{open:rv:cons}, $l(\ga')$ is either a point or an open disc. Thus the $\vv$-interval $(0, l(\ga'))$ is nonempty, which implies that $f^*_{a'}(e) \in \ga_n$ for some $a' \in \ga'$ and some $e \in A_{a'}$. In that case, we must have that,  for all  $a' \in \ga'$, $\ga_n \sub f^*_{a'}(A_{a'})$ and hence $ f^*_{a'}$ is bijective. By \omin-minimality in the $\Gamma$-sort, $\abval((f^*_{a'})^{-1}(\ga_n))$ has to be a singleton, say, $\beta_{a'}$; in fact, the function given by $a' \efun \beta_{a'}$ has to be constant and hence we may write $\beta_{a'}$ as $\beta$. It follows that, for all $e \in \VF^+$ with $\abval(e) > \beta$, $f(e, \ga') \cap \ga_n = \0$, contradiction.
\end{proof}

Next we come to the issue of finding definable points in definable sets. As we have mentioned above, this is a trivial issue if the space of parameters is not fixed.

\begin{lem}\label{S:def:cl}
The substructure $\mdl S$ is definably closed.
\end{lem}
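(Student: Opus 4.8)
The plan is to show $\dcl(\mdl S)=\mdl S$ one sort at a time. By compactness any $\mdl S$-definable element is definable over a finite tuple $(\bar c,\bar s,\bar d)$ with $\bar c\in\VF(\mdl S)$, $\bar s\in\RV(\mdl S)$ and $\bar d\in\DC(\mdl S)$ (the last block occurring only when we work in $\mdl R_{\rv}^{\bullet}$), so it suffices to locate $\dcl(\bar c,\bar s,\bar d)$ in each sort.

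The $\VF$-sort carries the real content. Given $a\in\VF$ lying in $\dcl(\bar c,\bar s,\bar d)$, I would first invoke Lemma~\ref{ima:par:red} and then Lemma~\ref{RV:no:point} — whose proofs go through verbatim over any small base of parameters — over the base $\la\bar c\ra$, thereby discarding in turn the disc parameters $\bar d$ and then the $\RV$-parameters $\bar s$; this leaves $a\in\dcl(\bar c)$. Iterating the identity $\VF(\dcl_{\mdl M}(a))=\la\mdl M,a\ra_T$ recorded just before the definition of $\VF$-dimension (equivalently, using Corollary~\ref{trans:VF} together with the fact that the $T$-model $\VF(\mdl S)$ is $\lan{}{convex}{}$-definably closed) gives $\dcl(\bar c)\cap\VF=\la\bar c\ra_T\subseteq\VF(\mdl S)$, so $a\in\VF(\mdl S)$.

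For the remaining sorts the strategy is to reduce to the purity statements \cite[Theorems~A and~B]{Dries:tcon:97}: the structure induced on $\K$ is exactly that of the $T$-model $\K(\mdl S)$, the structure induced on $\Gamma$ is exactly that of the ordered $\KKK$-vector space $\Gamma(\mdl S)$ (a $\KKK$-subspace, since $\VF(\mdl S)$ is real closed and closed under power functions), and the two are mutually orthogonal by Lemma~\ref{gk:ortho}. Hence an $\mdl S$-definable element of $\K$ (respectively of $\Gamma$), after replacing the $\VF$-parameters by their residues (respectively their values), dropping the orthogonal $\Gamma$- (respectively $\K$-) parameters and the $\DC$-parameters, becomes definable in the pure $T$-structure over $\K(\mdl S)$ (respectively in the pure ordered-vector-space structure over $\Gamma(\mdl S)$); a $T$-model is definably closed, and a $\KKK$-subspace is definably closed inside an ordered $\KKK$-vector space, so the element lies in $\K(\mdl S)$ (respectively $\Gamma(\mdl S)$). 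For a general $t\in\RV$ definable over $\mdl S$ one then has $\vrv(t)\in\Gamma(\mdl S)=\vrv(\RV(\mdl S))$; choosing $s\in\RV(\mdl S)$ with $\vrv(s)=\vrv(t)$ trivializes the $\K^+$-torsor $\vrv^{-1}(\vrv(t))$ over $\mdl S$ in the sense of Remark~\ref{rem:K:aff}, reducing $t\in\RV(\mdl S)$ to the $\K$-case; the $\DC$-sort is handled the same way, peeling off the radius into $\Gamma$ and the residual datum into the relevant $\RV_{\gamma}$-torsor.

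I expect the main obstacle to be precisely this last reduction: separating mixed parameters into genuinely independent $\K$-, $\Gamma$- and $\RV_{\gamma}$-components so that orthogonality can be applied, and checking that the required torsor trivializations are definable over $\mdl S$. The $\VF$-sort step, by contrast, is essentially a bookkeeping corollary of Lemmas~\ref{ima:par:red} and~\ref{RV:no:point} and Corollary~\ref{trans:VF}.
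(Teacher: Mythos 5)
Your treatment of the $\VF$-sort matches the paper's, which disposes of it in one line via Lemma~\ref{RV:no:point}. For the $\RV$-sort, however, you take a genuinely different route, and that route has a gap. The paper argues directly in the $\RV$-sort: when $\vrv(\RV(\mdl S))$ is nontrivial, $\RV(\mdl S)$ is a model of the (essentially universally axiomatized) reduct of $\TCVF$ to the $\RV$-sort and hence, by quantifier elimination, an elementary substructure of $\RV$, so it is definably closed; the degenerate case $\RV(\mdl S)=\K(\mdl S)$ is a separate quantifier-elimination computation. You instead decompose along $\K^{+}\to\RV^{+}\to\Gamma^{+}$, invoking purity and orthogonality and then a torsor trivialization.

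The gap is your claim that $\Gamma(\mdl S)=\vrv(\RV(\mdl S))$ is a $\KKK$-subspace of $\Gamma$, ``since $\VF(\mdl S)$ is real closed and closed under power functions.'' That justification only controls $\vv(\VF(\mdl S))$. At this point of the paper $\mdl S$ is not assumed to be $\VF$-generated, so $\RV(\mdl S)$ may contain generators $t$ with $\vrv(t)\notin\vv(\VF(\mdl S))$; the substructure they generate is only required to be closed under the group primitives of the $\RV$-sort, so $\vrv(\RV(\mdl S))$ is a priori just a subgroup, not a $\KKK$-subspace. Since definable closure in an ordered $\KKK$-vector space is the $\KKK$-span, closure of $\Gamma(\mdl S)$ under $\KKK$-scalars --- equivalently, closure of $\RV(\mdl S)$ under the definable root and power maps such as $t\mapsto t^{1/2}$ --- is precisely the content of the lemma in the $\Gamma$-direction: you are assuming what must be proved. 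The same unproved fact is what licenses choosing $s\in\RV(\mdl S)$ with $\vrv(s)=\vrv(t)$ in your torsor step. (A smaller soft spot: a parameter $s\in\RV(\mdl S)$ with $\vrv(s)\neq 0$ is not a ``$\Gamma$-parameter,'' so Lemma~\ref{gk:ortho} alone does not let you discard it when defining elements of $\K$; you need the exchange principle of Remark~\ref{rem:RV:weako} there.) The clean fix is the paper's: establish definable closedness of $\RV(\mdl S)$ wholesale via quantifier elimination for the $\RV$-reduct, rather than assembling it from the $\K$- and $\Gamma$-quotients.
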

\begin{proof}
By Lemma~\ref{RV:no:point}, we have $\VF(\dcl( \mdl S)) = \VF(\mdl S)$. Suppose that $t \in \RV$ is definable. By the first sentence of Remark~\ref{rem:RV:weako}, if $\vrv(\RV(\mdl S))$ is nontrivial then $\RV(\mdl S)$ is a model of the reduct of $\TCVF$ to the $\RV$-sort and hence, by quantifier elimination, is an elementary substructure of $\RV$, which implies $t \in \RV(\mdl S)$. On the other hand, if $\vrv(\RV(\mdl S))$ is trivial then $\RV(\mdl S) = \K(\mdl S)$ and it is not hard, though a bit tedious, to check, using quantifier elimination again, that $t \in \K(\mdl S)$.
\end{proof}

If $\mdl S$ is $\VF$-generated and $\Gamma(\mdl S)$ is nontrivial then $\mdl S$ is an elementary substructure and hence every definable set contains a definable point. This, of course, fails if $\mdl S$ carries extra $\RV$-data, by the above lemma. However, we do have:

\begin{lem}\label{clo:disc:bary}
Every definable closed disc $\gb$ contains a definable point.
\end{lem}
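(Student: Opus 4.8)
The plan is to handle a few degenerate configurations directly and then run a contradiction argument built on a generic point of $\gb$ together with the Wilkie inequality.

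\emph{Degenerate cases.} If $\rad(\gb)=\infty$ then $\gb$ is a singleton and its unique element is $\mdl S$-definable. If $0\in\gb$ then $\gb=\gc(0,\rad(\gb))$ and the $\mdl S$-definable point $0$ lies in $\gb$. So assume $\gamma:=\rad(\gb)\in|\Gamma|$ and $0\notin\gb$. By the ultrametric inequality $\rv$ is then constant on $\gb$, and its value $t$ there is $\mdl S$-definable, being the unique element of the $\mdl S$-definable set $\rv(\gb)$; hence $\gb$ is a closed subdisc of the $\mdl S$-definable $\RV$-disc $t^\sharp$, and $\delta:=\abvrv(t)<\gamma$, with $\delta$, $\gamma$ and $\gamma-\delta$ all in $|\Gamma|(\mdl S)$.

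\emph{The residue-field torsor attached to $\gb$.} Since $\gamma-\delta\in|\Gamma|(\mdl S)$, the map $\rv_{\gamma-\delta}$ and the set $\rv_{\gamma-\delta}(\gb)\sub\RV_{\gamma-\delta}$ are $\mdl S$-definable; by Notation~\ref{nota:tor} the latter has the form $\tor(\code\ga)$ (with $\code\ga=\rv_{\gamma-\delta}(a)$ for any $a\in\gb$), $\gb=\rv_{\gamma-\delta}^{-1}(\tor(\code\ga))$, and the maximal open subdiscs of $\gb$ are exactly the fibres of $\rv_{\gamma-\delta}\rest\gb$. By Remark~\ref{rem:K:aff}, $\tor(\code\ga)$ is a torsor over the residue field $\K$; in particular it is infinite, and a generic element of it over $\mdl S$, having fixed valuation $\delta$, enlarges the residue field by exactly one dimension over $\mdl S$ and leaves the value group unchanged.

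\emph{The contradiction.} Suppose $\gb$ contains no $\mdl S$-definable point; then $\gb\cap\VF(\mdl S)=\0$ by Lemma~\ref{RV:no:point} (as in the proof of Lemma~\ref{S:def:cl}) together with Lemma~\ref{ima:par:red} for $\DC$-parameters. As $\gamma<\infty$, $\gb$ is infinite and, by HNF (Remark~\ref{rem:HNF}) and weak \omin-minimality, is not a finite union of proper $\mdl S$-definable subsets, so fix $b\in\gb$ realizing the generic type of $\gb$ over $\mdl S$; put $\bar b:=\rv_{\gamma-\delta}(b)$. Then $\bar b$ is generic in $\tor(\code\ga)$ over $\mdl S$, and $b$ is generic in the open disc $\bar b$ over $\la\mdl S,\bar b\ra$ (the fibration $\gb\to\tor(\code\ga)$ transports genericity in the standard way). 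By the previous paragraph, passing from $\mdl S$ to $\la\mdl S,\bar b\ra$ adds a rank-one residue-field extension and no value group. Passing further from $\la\mdl S,\bar b\ra$ to $\la\mdl S,b\ra$: here $\VF(\la\mdl S,\bar b\ra)=\VF(\mdl S)$ (Lemma~\ref{ima:par:red}) and, since $\gb$, hence $\bar b$, meets no zero of any $\lan{T}{}{}(\VF(\mdl S))$-definable function, the sets $f(\bar b)$ never contain $0$; by Lemma~\ref{open:rv:cons} and Corollary~\ref{mono} each $f(\bar b)$ is then an open disc or a point, so $\vv$ is constant on it with value in $\GAA(\la\mdl S,\bar b\ra)$, and by the valuation property (Proposition~\ref{val:prop}) $b$ adds no value group over $\la\mdl S,\bar b\ra$; as $b\notin\VF(\mdl S)$ it therefore adds a further rank-one residue-field extension. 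Thus $\la\mdl S,b\ra$ carries a rank-two residue-field extension over $\mdl S$, while its $\VF$-sort $\la\VF(\mdl S),b\ra_T$ has rank $\le 1$ over $\VF(\mdl S)$ — contradicting the Wilkie inequality \cite[Corollary~5.6]{Dries:tcon:97} (with Lemma~\ref{gk:ortho}). Hence $\gb$ contains an $\mdl S$-definable point.

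\emph{Main obstacle.} The delicate point is making the rank bookkeeping in the last paragraph airtight when $\mdl S$ is not $\VF$-generated, i.e.\ carries $\RV$- or $\DC$-data not realized in $\VF(\mdl S)$: one must cleanly separate the residue-field and value-group data already present in $\mdl S$ from what $\bar b$ and then $b$ genuinely contribute, and verify that each of the two steps really increases residue-field rank over $\mdl S$ itself rather than merely over $\la\VF(\mdl S)\ra$. This is also the step where power-boundedness enters, through Proposition~\ref{val:prop}, Lemma~\ref{gk:ortho} and the Wilkie inequality.
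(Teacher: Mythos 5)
Your reduction to a closed disc $\gb$ properly contained in a definable $\RV$-disc, and the identification of $\rv_{\gamma-\delta}(\gb)$ with a $\K$-torsor, are fine; but the rank computation that is supposed to produce the contradiction does not go through, and the difficulty you flag at the end as delicate bookkeeping is in fact fatal. The Wilkie inequality controls the residue field of the $T$-convex extension $\la\VF(\mdl S),b\ra_T\supseteq\VF(\mdl S)$, i.e.\ $\res(\OO\cap\la\VF(\mdl S),b\ra_T)$ over $\res(\OO\cap\VF(\mdl S))$. The two ``residue-field dimensions'' you extract from $b$ --- the position of $\bar b$ in $\tor(\code\ga)$ and the position of $b$ in $\bar b$ --- are torsor-valued imaginaries; turning either into an actual element of $\K$ (or of $\RV$) requires a base point of the relevant disc, and under your contradiction hypothesis no such point is definable. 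Concretely, since $\gb\cap\VF(\mdl S)=\0$, for every $c\in\VF(\mdl S)$ we have $\abval(b-c)<\gamma$, so $\rv(b-c)$ does not depend on the choice of $b\in\gb$ at all; combining the proof of Lemma~\ref{imm:iso} (equivalently, Case~(2) of the proof of Theorem~\ref{theos:qe}) with stable embeddedness of $\RV$, one gets $\RV(\la\mdl S,b\ra)=\la\RV(\mdl S),\rv(b-d)\ra$ for a single $d\in\VF(\mdl S)$, an extension generated by one element which moreover records only the location of $\gb$, not of $b$ inside it. By exchange in the $\RV$-sort this extension has rank at most one over $\RV(\mdl S)$, so the asserted rank-two residue-field extension simply does not exist, and neither the Wilkie inequality nor anything else is contradicted. (A symptom of the same problem: you invoke Proposition~\ref{val:prop} over $\la\mdl S,\bar b\ra$, which is not $\VF$-generated, so it does not apply there either.)

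The paper's proof avoids ranks entirely and is much shorter: assuming $\gb$ has no definable point, saturation yields an open disc $\ga\supsetneq\gb$ disjoint from $\VF(\mdl S)$; any $a\in\ga\mi\gb$ and $b\in\gb$ then satisfy $\rv(c-a)=\rv(c-b)$ for all $c\in\VF(\mdl S)$, so the immediate-automorphism machinery (Lemmas~\ref{imm:ext} and~\ref{imm:iso}, as in the proof of Lemma~\ref{atom:self}) produces an automorphism over $\mdl S$ carrying a point outside $\gb$ to a point inside it, contradicting the definability of $\gb$. The essential content is an indistinguishability statement --- points just outside a pointless closed disc realize the same type over $\mdl S$ as points inside it --- and that is not visible to any residue-field or value-group rank count. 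If you want to salvage your approach, you would have to replace the rank argument by exactly this kind of type/automorphism argument.
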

\begin{proof}
Suppose for contradiction that $\gb$ does not contain a definable point. Since $\mmdl$ is sufficiently saturated, there is an open disc $\ga$ that is disjoint from $\VF(\mdl S)$ and properly contains $\gb$. Let $a \in \ga \mi \gb$ and $b \in \gb$. Clearly $\rv(c - b) = \rv(c - a)$ for all $c \in \VF(\mdl S)$. As in the proof of Lemma~\ref{atom:self}, there is an immediate automorphism $\sigma$ of $\mmdl$ such that $\sigma(a) = b$. This means that $\gb$ is not definable, which is a contradiction.
\end{proof}

Notice that the argument above does not work if $\gb$ is an open disc.

\begin{cor}\label{open:disc:def:point}
Let $\ga \sub \VF$ be a disc and $A$ a definable subset of $\VF$. If $\ga \cap A$ is a nonempty proper subset of $\ga$ then $\ga$ contains a definable point.
\end{cor}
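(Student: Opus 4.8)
The plan is to reduce the whole statement to Lemma~\ref{clo:disc:bary}. Two cases are trivial: if $\ga$ is a single point (a closed disc of radius $\infty$) the hypothesis $\ga \cap A \subsetneq \ga$ nonempty cannot hold, and if $\ga$ is a closed disc the conclusion is exactly Lemma~\ref{clo:disc:bary}. So the entire content lies in the case where $\ga$ is an \emph{open} disc, and the goal is to extract from the hypothesis a definable \emph{closed} disc sitting inside $\ga$.

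To do this I would apply Holly normal form (Remark~\ref{rem:HNF}) to the definable set $A$, writing it as a finite union of pairwise disconnected $\vv$-intervals $I_1, \ldots, I_n$; by uniqueness of HNF these $I_j$ are $\mdl S$-definable, hence so are their end-discs. Since the intersection of two convex subsets of $\VF$ is convex, each $\ga \cap I_j$ is either empty, all of $\ga$, or a proper nonempty subset of $\ga$; the hypothesis that $\ga \cap A = \bigcup_j (\ga \cap I_j)$ is proper and nonempty then forces some index $j$ with $\ga \cap I_j$ proper and nonempty, i.e.\ some $I_j$ genuinely cutting $\ga$.

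Next I would localize the cut. Choosing $q \in \ga \cap I_j$ and $p \in \ga \setminus I_j$ and using convexity of $I_j$, the point $p$ lies entirely above or entirely below $I_j$; say above, and let $\gc$ be the corresponding (right) end-disc of $I_j$, which is then a genuine disc or a point (it cannot be $+\infty$). If $\gc$ is a point, it is a definable point with $q \le \gc \le p$, hence $\gc \in \ga$ by convexity and we are done. If $\gc$ is a disc, a short bookkeeping check on whether $I_j$ is open or closed at $\gc$ shows that $\gc \cap \ga \neq \0$ while $\ga \not\subseteq \gc$ (the point $p > \gc$ rules out $\ga \subseteq \gc$), so $\gc \subseteq \ga$; and $\gc \neq \ga$, for otherwise $I_j$ would not cut $\ga$. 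Hence $\gc \subsetneq \ga$, and therefore $\rad(\gc) > \rad(\ga)$ (nested discs of equal radius coincide).

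The last step — and the only delicate point — is that the end-disc $\gc$ produced by HNF need not be closed, so Lemma~\ref{clo:disc:bary} does not apply to $\gc$ directly. The fix is to pass to the smallest closed disc $\gc^{-}$ containing $\gc$: it is $\mdl S$-definable, and since $\rad(\gc^{-}) = \rad(\gc) > \rad(\ga)$, one checks by the ultrametric inequality (any center of $\gc$ is a center of $\ga$) that $\gc^{-} \subseteq \ga$. Lemma~\ref{clo:disc:bary} then yields a definable point in $\gc^{-} \subseteq \ga$. I expect the main obstacle to be exactly this passage from an open ``transition disc'' to its closed hull without leaving $\ga$; the bracket case analysis showing $\gc \subseteq \ga$ in every configuration is routine but must be carried out carefully.
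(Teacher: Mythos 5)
Your argument is, in substance, the paper's argument: the official proof is the one-line observation that HNF yields a definable closed disc inside $\ga$, to which Lemma~\ref{clo:disc:bary} applies, and your write-up supplies exactly the bookkeeping behind that sentence — in particular, passing from the (possibly open) end-disc $\gc$ to its closed hull $\gc^{-}$ and using $\rad(\gc^{-}) = \rad(\gc) > \rad(\ga)$ to keep $\gc^{-}$ inside $\ga$ is the right fix for the fact that end-discs of HNF pieces need not be closed. The one step to repair is your dispatch of the closed-disc case: the corollary does not assume $\ga$ itself is definable (only $A$ is), and it is later invoked for non-definable discs — e.g.\ the fibers $t^\sharp$ in the proof of Lemma~\ref{RV:bou:dim} and the discs of Lemma~\ref{one:atomic} — so when $\ga$ is a closed disc you cannot simply quote Lemma~\ref{clo:disc:bary} for $\ga$. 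Fortunately your HNF argument for the open case runs essentially unchanged there, and the final containment is even easier: once $\gc \subsetneq \ga$ with $\ga$ closed, the closed hull $\gc^{-}$ lies in $\ga$ automatically (here the radius comparison may fail, since $\gc$ could be a maximal open subdisc of $\ga$ of the same radius, but then $\gc^{-} \subseteq \ga$ still holds). So the fix costs nothing, but as written that case is a gap against the statement actually being proved.
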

\begin{proof}
It is not hard to see that, by HNF, if $\ga \cap A$ is a nonempty proper subset of $\ga$ then $\ga$ contains a definable closed disc and hence the claim is immediate by Lemma~\ref{clo:disc:bary}.
\end{proof}

\begin{lem}\label{one:atomic}
Let $A \sub \VF$ be a definable set that contains infinitely many open discs of radius $\beta$. Then one of these discs $\ga$ is $(\code \ga, \beta)$-atomic.
\end{lem}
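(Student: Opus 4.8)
The plan is to isolate, by a saturation argument, one open disc $\ga\subseteq A$ of radius $\beta$ that is in ``general position'' relative to the base, and then to verify atomicity directly by producing enough automorphisms, exactly as in the proofs of Lemma~\ref{atom:self} and Lemma~\ref{clo:disc:bary}.

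First I would put $B=\{a\in\VF:\go(a,\beta)\subseteq A\}$, the union of all open discs of radius $\beta$ contained in $A$; this is definable over $\mdl S\cup\{\beta\}$, and by hypothesis the set $\mathcal B\subseteq\DC$ of open $\beta$-discs contained in $A$ is infinite. A preliminary remark is that $\VF(\la\mdl S,\beta\ra)=\VF(\mdl S)$: choosing $t\in\RV$ with $\abvrv(t)=\beta$ (possible since $\abvrv$ is onto $|\Gamma|$), any $\VF$-point definable over $\mdl S\cup\{\beta\}$ is definable over $\mdl S\cup\{t\}$, hence over $\mdl S$ by Lemma~\ref{RV:no:point}. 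Since $\mmdl$ is sufficiently saturated, the infinite $\la\mdl S,\beta\ra$-definable set $\mathcal B$ has cardinality exceeding $|\VF(\mdl S)|$; as each point of $\VF(\mdl S)$ lies in at most one member of $\mathcal B$, I can choose $\ga\in\mathcal B$ with $\ga\cap\VF(\mdl S)=\0$.

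Next I would show that this $\ga$ is $(\code{\ga},\beta)$-atomic. Since $\ga$ is $\code{\ga}$-definable it is, by the definition of atomicity, enough to check that $\ga$ generates a complete type over $D:=\la\mdl S,\code{\ga},\beta\ra$, and for that it suffices to show that for any $a,b\in\ga$ there is an automorphism $\sigma$ of $\mmdl$ over $D$ with $\sigma(a)=b$. Because $\ga$ is an open disc of radius $\beta$ disjoint from $\VF(\mdl S)$, for every $c\in\VF(\mdl S)$ we have $\abval(a-c)\le\beta<\abval(a-b)$, hence $\rv(a-c)=\rv(b-c)$ by the ultrametric inequality. Putting $\mdl M=\la\VF(\mdl S)\ra$, so that $\VF(\mdl M)=\VF(\mdl S)$ and $a,b\notin\VF(\mdl M)$, the identity map on $\mdl M$ together with $a$ and $b$ satisfies the hypotheses of Lemma~\ref{imm:iso}, which yields an immediate isomorphism $\la\mdl M,a\ra\to\la\mdl M,b\ra$ sending $a$ to $b$; by Lemma~\ref{imm:ext} this extends to an immediate automorphism $\sigma$ of $\mmdl$.

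It then remains to see that $\sigma$ is over $D$. Being immediate, $\sigma$ fixes $\RV(\mmdl)$ pointwise, hence fixes $\RV(\mdl S)$ and the element $\beta\in\Gamma$; by construction it fixes $\VF(\mdl S)$; and since $\sigma(a)=b$ lies in the open $\beta$-disc $\ga$ and $\sigma$ fixes $\rad(\ga)=\beta$, we get $\sigma(\ga)=\go(b,\beta)=\ga$, so $\sigma$ fixes $\code{\ga}$. Thus $\sigma$ is an automorphism over $D$ carrying $a$ to $b$, which gives the claim. I expect the only genuinely delicate points to be the saturation/counting step that extracts a $\beta$-disc disjoint from $\VF(\mdl S)$ (this is where the hypothesis ``infinitely many'' is used, together with $\VF(\la\mdl S,\beta\ra)=\VF(\mdl S)$), and the bookkeeping that the immediate automorphism supplied by Lemmas~\ref{imm:iso} and~\ref{imm:ext} really fixes all of $\la\mdl S,\code{\ga},\beta\ra$; the underlying mechanism is the one already exploited in Lemma~\ref{atom:self} and Lemma~\ref{clo:disc:bary}.
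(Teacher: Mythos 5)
Your proof is correct, but it reaches the atomic disc by a different route than the paper. The paper's proof reduces the statement to Lemmas~\ref{atom:gam} and~\ref{atom:self} by showing that some open disc of radius $\beta$ in $A$ lies in an \LT-type-definable set, and it establishes this existence by contradiction: using Corollary~\ref{open:disc:def:point} and HNF it argues that all but finitely many of the discs are either absorbed or missed by any given definable subset of $A$, and then concludes by a compactness argument in $\mdl R^{\bullet}_{\rv}$ with the parameter $\beta$. You instead extract the good disc by a counting/saturation argument --- the infinitely many pairwise disjoint $\beta$-discs in $A$ outnumber $\VF(\mdl S)$, so one of them, $\ga$, avoids $\VF(\mdl S)$ --- and then verify $(\code\ga,\beta)$-atomicity directly by rerunning the one-variable case of Lemma~\ref{atom:self}: the ultrametric inequality gives $\rv(a-c)=\rv(b-c)$ for all $a,b\in\ga$ and $c\in\VF(\mdl S)$, Lemmas~\ref{imm:iso} and~\ref{imm:ext} produce an immediate automorphism carrying $a$ to $b$, and immediacy guarantees that $\beta$ and $\code\ga$ are fixed. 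The two proofs share the same underlying automorphism machinery, but your existence step is more elementary and self-contained (no appeal to HNF or to the contradiction/compactness step), at the cost of redoing in dimension one what Lemma~\ref{atom:self} already packages; the paper's version is shorter on the page precisely because it delegates everything to that lemma. Both arguments are sound, and your bookkeeping of why the automorphism fixes all of $\la\mdl S,\code\ga,\beta\ra$ is exactly the point that needs checking.
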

\begin{proof}
By Lemmas~\ref{atom:gam} and \ref{atom:self}, it is enough to show that some open disc $\ga \sub A$ of radius $\beta$ is contained in a type-definable set. Suppose for contradiction that this is not the case. By Corollary~\ref{open:disc:def:point} and HNF, for every definable set $B \sub A$, we have either $\ga \cap B = \0$ or $\ga \sub B$ for all but finitely many such open discs $\ga \sub A$. Passing to $\mdl R^{\bullet}_{\rv}$ and applying compactness (with the parameter $\beta$), the claim follows.
\end{proof}

\subsection{Contracting from $\VF$ to $\RV$}

We can relate definable sets in $\VF$ to those in $\RV$, specifically, $\RV$-pullbacks, through a procedure called contraction. But a more comprehensive study of the latter will be postponed to the next section.

\begin{defn}[Disc-to-disc]\label{defn:dtdp}
Let $A$, $B$ be two subsets of $\VF$ and $f : A \fun B$ a bijection. We say that $f$ is \emph{concentric} if, for all open discs $\ga \sub A$, $f(\ga)$ is also an open disc; if both $f$ and $f^{-1}$ are concentric then $f$ has the \emph{disc-to-disc property} (henceforth abbreviated as ``dtdp'').

More generally, let $f : A \fun B$ be a bijection between two sets $A$ and $B$, each with exactly one $\VF$-coordinate. For each $(t, s) \in f_{\RV}$, let $f_{t, s} = f \cap (\VF^2 \times (t, s))$,
which is called a \emph{$\VF$-fiber} of $f$. We say that $f$ has \emph{dtdp} if every $\VF$-fiber of $f$ has dtdp.
\end{defn}

We are somewhat justified in not specifying ``open disc'' in the terminology since if $f$ has dtdp then, for all open discs $\ga \sub A$ and all closed discs $\gc \sub \ga$, $f(\gc)$ is also a closed disc. In fact, this latter property is stronger: if $f(\gc)$ is  a closed disc for all closed discs $\gc \sub A$ then $f$ has dtdp. But we shall only be concerned with open discs, so we ask for it directly.

\begin{lem}\label{open:pro}
Let $f : A \fun B$ be a definable bijection between two sets $A$ and $B$, each with exactly one $\VF$-coordinate. Then there is a definable finite partition $A_i$ of $A$ such that each $f \rest A_i$ has dtdp.
\end{lem}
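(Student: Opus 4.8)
The plan is to reduce the statement entirely to Corollary~\ref{part:rv:cons}. Write $A \sub \VF \times \RV^m$ and $B \sub \VF \times \RV^l$, and put $g = \pr_{\VF} \circ f : A \fun \VF$ and $g' = \pr_{\VF} \circ f^{-1} : B \fun \VF$. I would begin by recording two elementary observations. First, concentricity of $\VF$-fibers (and hence dtdp) is inherited by restriction to a definable subset: if $A' \sub A$ is definable then any open disc contained in a $\VF$-fiber of $A'$ is also contained in the corresponding $\VF$-fiber of $A$, and the two restrictions of $f$ agree on it, so its image is again an open disc. Second, since $(f^{-1})_{s,t} = (f_{t,s})^{-1}$, the assertion ``$f$ has dtdp'' is equivalent to the conjunction ``every $\VF$-fiber of $f$ is concentric'' and ``every $\VF$-fiber of $f^{-1}$ is concentric''. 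By these two observations it suffices to produce a definable finite partition $\{A_i\}$ of $A$ whose pieces make the $\VF$-fibers of $f$ concentric, do the same for $f^{-1}$ to obtain $\{B_j\}$ partitioning $B$, and then pass to the common refinement $C_{ij} = A_i \cap f^{-1}(B_j)$.

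To partition $A$: for each $t \in A_{\RV}$ the function $g(\cdot,t) : A_t \sub \VF \fun \VF$ is definable over $\mdl S\la t\ra$, so Corollary~\ref{part:rv:cons} together with Remark~\ref{rem:LT:com} (applied over the parameter set $\mdl S\la t \ra$, after first reducing to quasi-\LT-definable pieces by Lemma~\ref{fun:suba:fun} if one prefers) yields a finite $\mdl S\la t\ra$-definable partition of $A_t$ such that $g(\cdot,t)$ sends every open disc contained in a piece to a point or an open disc. A routine compactness argument renders these partitions uniform in $t$ and hence produces a \emph{definable} finite partition $\{A_i\}$ of $A$ with the property that for all $i$, all $t$, and all open discs $\ga \sub (A_i)_t$, the set $g(\ga \times t)$ is a point or an open disc. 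Since $f$ is injective and $\ga$ is infinite, $g(\ga \times t)$ is infinite, hence an open disc; and if $\ga$ lies in the domain of a $\VF$-fiber $(f\rest A_i)_{t,s}$, then $(f\rest A_i)_{t,s}(\ga) = g(\ga\times t)$. Thus every $\VF$-fiber of each $f\rest A_i$ is concentric. The same reasoning applied to $g'$ gives a definable finite partition $\{B_j\}$ of $B$ with the analogous property for $f^{-1}$.

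It remains to check the common refinement. The sets $C_{ij} = A_i \cap f^{-1}(B_j)$ form a definable finite partition of $A$. The $\VF$-fibers of $f \rest C_{ij}$ are concentric because $f\rest C_{ij}$ is a restriction of $f\rest A_i$ (first observation), and the $\VF$-fibers of $(f\rest C_{ij})^{-1} = f^{-1}\rest\bigl(B_j \cap f(A_i)\bigr)$ are concentric because the latter is a restriction of $f^{-1}\rest B_j$. By the second observation, $f\rest C_{ij}$ has dtdp, which finishes the proof.

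The step I expect to require the most care is the passage from the fibrewise partitions supplied by Corollary~\ref{part:rv:cons} to a single definable partition of $A \sub \VF \times \RV^m$, i.e.\ making the construction uniform in the $\RV$-parameter $t$. This is the only genuinely non-bookkeeping point; it is a standard compactness/definable-choice argument in this setting (entirely analogous to the uniformization at the end of the proof of Corollary~\ref{part:rv:cons}), but it is where one must be precise. Everything else — the equivalence reformulating dtdp in terms of concentricity of $\VF$-fibers of $f$ and of $f^{-1}$, and the inheritance of concentricity under restriction to definable subsets — is routine.
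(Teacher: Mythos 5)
Your proof is correct and follows essentially the same route as the paper's: the paper reduces to $A,B\sub\VF$ by compactness and then reruns the argument of Corollary~\ref{part:rv:cons} (via Lemmas~\ref{open:rv:cons} and~\ref{atom:self}), which is exactly your fibrewise application of that corollary plus uniformization in the $\RV$-parameter, together with the symmetric treatment of $f^{-1}$ and a common refinement. The reformulation of dtdp via concentricity of the $\VF$-fibers of $f$ and $f^{-1}$ and the restriction/refinement bookkeeping are all sound.
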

\begin{proof}
By compactness, we may simply assume that $A$ and $B$ are subsets of $\VF$. Then we may proceed exactly as in the proof of Corollary~\ref{part:rv:cons}, using Lemmas~\ref{open:rv:cons} and~\ref{atom:self} (also see Remark~\ref{rem:LT:com}).
\end{proof}

\begin{defn}
Let $A$ be a subset of $\VF^n$. The \emph{$\RV$-boundary} of $A$, denoted by $\partial_{\RV}A$, is the definable subset of $\rv(A)$ such that $t \in \partial_{\RV} A$ if and only if $t^\sharp \cap A$ is a proper nonempty subset of $t^\sharp$. The definable set $\rv(A) \mi \partial_{\RV}A$, denoted by $\ito_{\RV}(A)$, is called the \emph{$\RV$-interior} of $A$.
\end{defn}

Obviously, $A \sub \VF^n$ is an $\RV$-pullback if and only if $\partial_{\RV} A$ is empty. Note that $\partial_{\RV}A$ is in general different from the topological boundary $\partial(\rv(A))$ of $\rv(A)$ in $\RV^n$ and neither one of them includes the other.

\begin{lem}\label{RV:bou:dim}
Let $A$ be a definable subset of $\VF^n$. Then $\dim_{\RV}(\partial_{\RV} A) < n$.
\end{lem}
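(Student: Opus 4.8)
The plan is to induct on $n$. For the base case $n=1$ I would appeal to Holly normal form (Remark~\ref{rem:HNF}): write $A$ as a finite union of pairwise disconnected $\vv$-intervals $I_1,\dots,I_k$ with end-discs $\ga_j,\gb_j$. Using that each $\rv^{-1}(t)$ is order-convex, one checks that if an $\RV$-disc $t^\sharp$ meets some $I_j$ in a proper nonempty subset then $t^\sharp$ strictly contains one of the end-discs of $I_j$; hence $\partial_{\RV}A$ is contained in the finite set of $t\in\RV$ such that some $\ga_j$ or $\gb_j$ is a proper subdisc of $t^\sharp$. In particular $\partial_{\RV}A$ is finite, so $\dim_{\RV}(\partial_{\RV}A)\le 0<1$.

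For the inductive step, fix $n\ge 2$, write coordinates on $\VF^n$ as $(x,y)$ with $x\in\VF^{n-1}$ and $y\in\VF$, and recall that $(s,r)^\sharp=s^\sharp\times r^\sharp$ for $(s,r)\in\RV^{n-1}\times\RV$. I would split according to two cases for a point $(s,r)\in\partial_{\RV}A$. Either (i) some $x_*\in s^\sharp$ satisfies $r\in\partial_{\RV}(A_{x_*})$, in which case $(s,r)$ lies in $P_1\coloneqq(\rv\times\id)(W)$, where $W\coloneqq\{(x,r')\in\VF^{n-1}\times\RV: r'\in\partial_{\RV}(A_x)\}$; or (ii) $A_x\cap r^\sharp\in\{\emptyset,r^\sharp\}$ for every $x\in s^\sharp$, and then examining a witness $(x_1,y_1)\in A$ and a witness $(x_0,y_0)\notin A$ inside $s^\sharp\times r^\sharp$ shows $r^\sharp\sub A_{x_1}$ and $r^\sharp\cap A_{x_0}=\emptyset$, so $s^\sharp$ meets the set $C_r\coloneqq\{x\in\VF^{n-1}: r^\sharp\sub A_x\}$ in a proper nonempty subset, i.e.\ $(s,r)$ lies in $P_2\coloneqq\{(s,r): s\in\partial_{\RV}(C_r)\}$. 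All of $W$, $C_r$, $P_1$, $P_2$ are (uniformly) definable, and $\partial_{\RV}A\sub P_1\cup P_2$, so it suffices to bound each.

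Bounding $P_2$ is easy: for each fixed $r$ the set $C_r$ lies in $\VF^{n-1}$, so the induction hypothesis gives $\dim_{\RV}(\partial_{\RV}(C_r))< n-1$, and since $r$ ranges over a subset of $\RV$ the dimension theory of \cite[\S~4]{mac:mar:ste:weako} yields $\dim_{\RV}(P_2)\le 1+(n-2)=n-1<n$. For $P_1$, the base case says each fibre $\partial_{\RV}(A_x)$ of $W$ over $\VF^{n-1}$ is finite, so after a definable partition $W$ is a finite union of graphs of definable functions $h_i\colon V_i\fun\RV$ with $V_i\sub\VF^{n-1}$. The crucial observation is that, on a suitable refinement, each $h_i$ factors as $\rv\circ c_i$ for a definable $c_i\colon V_i\fun\VF$: by construction $h_i(x)$ is the class of an $\RV$-disc $h_i(x)^\sharp$ whose intersection with the $\la\mdl S,x\ra$-definable set $A_x$ is proper and nonempty, so Corollary~\ref{open:disc:def:point} (applied over $\la\mdl S,x\ra$, with definable choice for uniformity) produces an $\la\mdl S,x\ra$-definable point $c_i(x)\in h_i(x)^\sharp$. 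Applying Corollary~\ref{part:rv:cons} together with Remark~\ref{rem:LT:com} to $c_i$, I may assume $h_i$ is constant on every open polydisc contained in $V_i$. Then over $\ito_{\RV}(V_i)$ the fibre of $G_i\coloneqq\{(\rv(x),h_i(x)): x\in V_i\}$ above $s$ is the single point $h_i(s^\sharp)$ (since $s^\sharp\sub V_i$ is an open polydisc), so this part of $G_i$ has $\dim_{\RV}\le\dim_{\RV}(\rv(V_i))\le n-1$; over $\partial_{\RV}(V_i)$, which has $\dim_{\RV}< n-1$ by the induction hypothesis, each fibre is a subset of $\RV$, contributing at most $(n-2)+1=n-1$. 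Hence $\dim_{\RV}(P_1)\le n-1<n$, and combining gives $\dim_{\RV}(\partial_{\RV}A)\le n-1<n$.

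The main obstacle is the $P_1$ estimate. A definable set with finite fibres over $\VF^{n-1}$ can nonetheless have $\rv$-image of full dimension $n$ in $\RV^n$ (the last coordinate being an arbitrary definable $\RV$-valued function of $x$), so the finiteness of fibres alone is not enough; what saves the argument is that the particular $\RV$-valued functions $h_i$ arising here factor through $\rv$, which is where Corollary~\ref{open:disc:def:point} is indispensable, after which Corollary~\ref{part:rv:cons} forces local constancy on open polydiscs. The base case is elementary but still requires some care with the combinatorics of how an $\RV$-disc can overlap a $\vv$-interval (and with the degenerate end-discs at $\pm\infty$ and the middle element $0$, all of which contribute nothing).
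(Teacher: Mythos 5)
Your proof is correct, and its essential mechanism is the paper's: both run an induction on $n$ with the base case supplied by Holly normal form, and both handle the hard part (your $P_1$) by invoking Corollary~\ref{open:disc:def:point} plus compactness to replace the finite fibre-boundaries $\partial_{\RV}(A_x)$ by $\rv$-images of definable $\VF$-valued functions, then Corollary~\ref{part:rv:cons} (with Remark~\ref{rem:LT:com}) to make those functions $\rv$-constant on open polydiscs, so their contribution becomes a finite union of graphs over subsets of $\RV^{n-1}$. Where you genuinely diverge is in the decomposition of $\partial_{\RV}A$. The paper treats all coordinate directions simultaneously: for each $i\in[n]$ it partitions $\pr_{\tilde i}(A)$ into pieces $A_{ij}$, collects the resulting graphs into a set $C$, dumps everything lying over $\partial_{\RV}A_{ij}$ into a set $B$, and the covering $\partial_{\RV}A\sub B\cup C$ rests on the (left implicit) observation that if for every $i$ and every $a\in s_{\tilde i}^\sharp$ the fibre $A_a$ meets $s_i^\sharp$ trivially, then changing one coordinate at a time shows $s^\sharp\cap A\in\{\emptyset, s^\sharp\}$. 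You instead split off a single coordinate and absorb your case (ii) into the auxiliary family $C_r=\{x: r^\sharp\sub A_x\}$, applying the inductive hypothesis to each $C_r$ over the parameter $r$ (harmless, since the statement and proof are uniform in the base structure) and then the fibre-dimension inequality from \cite[\S~4]{mac:mar:ste:weako}. Both organizations are valid: yours is the more transparent ``last coordinate'' recursion at the cost of the extra definable family $C_r$, while the paper's avoids that family but needs the union over all coordinates and the one-coordinate-at-a-time argument to see that $B\cup C$ really exhausts $\partial_{\RV}A$. Your expanded base case (tracking which end-discs of the $\vv$-intervals are properly contained in an $\RV$-disc) is also a correct fleshing-out of what the paper dismisses as immediate.
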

\begin{proof}
We do induction on $n$. The base case $n=1$ follows immediately from HNF.

We proceed to the inductive step. Since $\partial_{\RV} A_a$ is finite for every $i \in [n]$ and every $a \in \pr_{\tilde i}(A)$, by Corollary~\ref{open:disc:def:point} and compactness, there are a definable finite partition $A_{ij}$ of $\pr_{\tilde i}(A)$ and, for each $A_{ij}$, finitely many definable functions $f_{ijk} : A_{ij} \fun \VF$ such that
\[
\textstyle\bigcup_k \rv(f_{ijk}(a)) = \partial_{\RV} A_a \quad \text{for all } a \in A_{ij}.
\]
By Corollary~\ref{part:rv:cons}, we may assume that if $t^\sharp \sub A_{ij}$ then the restriction $\rv \rest f_{ijk}(t^\sharp)$ is constant. Hence each $f_{ijk}$ induces a definable function $C_{ijk} : \ito_{\RV}(A_{ij}) \fun \RVV$.
Let
\[
\textstyle C = \bigcup_{i, j, k} C_{ijk} \dand B = \bigcup_{i,j} \bigcup_{t \in \partial_{\RV} A_{ij}} \rv(A)_t.
\]
Obviously $\dim_{\RV}(C) < n$. By the inductive hypothesis, for all $A_{ij}$ we have $\dim_{\RV}(\partial_{\RV} A_{ij}) < n-1$. Thus $\dim_{\RV}(B) < n$. Since $\partial_{\RV} A \sub B \cup C$, the claim follows.
\end{proof}

For $(a, t) \in \VF^n \times \RV_0^m$, we write $\rv(a,t)$ to mean $(\rv(a), t)$, similarly for other maps.

\begin{defn}[Contractions]\label{defn:corr:cont}
A function $f : A \fun B$ is \emph{$\rv$-contractible} if there is a (necessarily unique) function $f_{\downarrow} : \rv(A) \fun \rv(B)$, called the \emph{$\rv$-contraction} of $f$, such that
\[
(\rv \rest B) \circ f = f_{\downarrow} \circ (\rv \rest A).
\]
Similarly, it is \emph{$\res$-contractible} (resp.\ \emph{$\vv$-contractible}) if the same holds in terms of $\res$ (resp.\ $\vv$ or $\vrv$, depending on the coordinates) instead of $\rv$.
\end{defn}

The subscripts in these contractions will be written as $\downarrow_{\rv}$, $\downarrow_{\res}$, etc., if they occur in the same context and therefore need to be distinguished from one another notationally.

\begin{lem}\label{fn:alm:cont}
For every definable function $f : \VF^n \fun \VF$ there is a definable set $U \sub \RV^n$ with $\dim_{\RV}(U) < n$ such that $f \rest (\VF^n \mi  U^\sharp)$ is $\rv$-contractible.
\end{lem}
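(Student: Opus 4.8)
The plan is to reduce the statement to Corollary~\ref{part:rv:cons} (in its definable incarnation, Remark~\ref{rem:LT:com}) and then measure the ``bad locus'' with Lemma~\ref{RV:bou:dim}. First I would record the elementary reformulation: if $C \sub \VF^n$ is an $\RV$-pullback, then a definable $g : C \fun \VF$ is $\rv$-contractible if and only if $\rv \circ g$ is constant on every fibre $t^\sharp \sub C$; indeed, one direction is immediate from the defining identity $(\rv\rest B)\circ g = g_{\downarrow}\circ(\rv\rest C)$, and for the other one simply defines $g_{\downarrow}(t) = \rv(g(a))$ for an arbitrary $a \in t^\sharp$, which is well defined and definable. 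Hence it suffices to produce a definable $U \sub \RV_0^n$ with $\dim_{\RV}(U) < n$ such that $\rv\circ f$ is constant on $t^\sharp$ for every $t \in \RV_0^n \mi U$: then $\VF^n \mi U^\sharp = (\RV_0^n \mi U)^\sharp$ is an $\RV$-pullback on which $f$ is $\rv$-contractible.

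Next I would apply Corollary~\ref{part:rv:cons}, in the definable form of Remark~\ref{rem:LT:com}, to $f : \VF^n \fun \VF$: there is a definable finite partition $\VF^n = \bigsqcup_l D_l$ such that for every $l$ and every open polydisc $\ga \sub D_l$ the restriction $\rv \rest f(\ga)$ is constant. I would then set
\[
U = \bigl(\RV_0^n \mi (\RV^{\times})^n\bigr) \cup \bigcup_l \partial_{\RV} D_l,
\]
which is definable. The first piece is a finite union of sets definably bijective to $\RV_0^{n-1}$, hence of $\RV$-dimension $<n$; by Lemma~\ref{RV:bou:dim} each $\partial_{\RV}D_l$ has $\RV$-dimension $<n$; as the partition is finite, $\dim_{\RV}(U) < n$. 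Finally, for $t \in \RV_0^n \mi U$ all coordinates of $t$ lie in $\RV^{\times}$, so $t^\sharp$ is an open polydisc; and since $t \notin \partial_{\RV}D_l$ for every $l$, each $t^\sharp \cap D_l$ is empty or equal to $t^\sharp$, so there is a unique $l$ with $t^\sharp \sub D_l$, whence $\rv\circ f$ is constant on $t^\sharp$ by the choice of the partition. This gives the required $U$.

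I do not expect a serious obstacle here: the content is already packaged in Corollary~\ref{part:rv:cons} and Lemma~\ref{RV:bou:dim}. The only points needing a little care are the two that the write-up above flags: discarding the lower-dimensional locus of tuples $t$ having a zero coordinate (where $t^\sharp$ is not an open polydisc, so the cited corollary does not apply verbatim), and checking that $\bigcup_l \partial_{\RV}D_l$ genuinely absorbs every remaining bad $t$ — both routine once one has the reformulation in terms of constancy of $\rv\circ f$ on $\RV$-polydiscs.
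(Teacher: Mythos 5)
Your proof is correct and follows essentially the same route as the paper's: both identify the bad locus as (a finite union of) $\RV$-boundaries of definable sets and then invoke Lemma~\ref{RV:bou:dim}. The only difference is that the paper obtains the constancy-on-open-polydiscs input from Corollary~\ref{poly:open:cons} together with a compactness argument producing a single set $A$ with $\partial_{\RV}A$ equal to the bad locus, whereas you obtain it from the finite partition of Corollary~\ref{part:rv:cons} (via Remark~\ref{rem:LT:com}) and take the union of the boundaries of the pieces --- a cosmetic repackaging of the same ingredients.
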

\begin{proof}
By Corollary~\ref{poly:open:cons}, for any $t \in \RV^n$, if $\rv(f(t^\sharp))$ is not a singleton then $t^\sharp$ has a $t$-definable proper subset. By compactness, there is a definable subset $A \sub \VF^n$ such that $t \in \partial_{\RV} A$ if and only if $\rv(f(t^\sharp))$ is not a singleton. So the assertion follows from Lemma~\ref{RV:bou:dim}.
\end{proof}

For any definable set $A$, a property holds \emph{almost everywhere} in $A$ or \emph{for almost every point} in $A$ if it holds away from a definable subset of $A$ of a smaller $\VF$-dimension. This terminology will also be used with respect to other notions of dimension.

\begin{rem}[Regular points]
Let $f : \VF^n \fun \VF^m$ be a definable function. By Lemma~\ref{fun:suba:fun} and \omin-minimal differentiability, $f$ is $C^p$ almost everywhere for all $p$ (see \cite[\S~7.3]{dries:1998}). For each $p$, let $\reg^p(f) \sub \VF^n$ be the definable subset of regular $C^p$-points of $f$. If $p=0$ then we write $\reg(f)$, which is simply the subset of the regular points of $f$.

Assume $n=m$. If $a \in \reg(f)$ and $f$ is $C^1$ in a neighborhood of $a$ then $\reg^1(f)$ contains a neighborhood of $a$ on which the sign of the Jacobian of $f$, which is denoted by $\jcb_{\VF} f$, is constant. If $f$ is locally injective on a definable open subset $A \sub \VF^n$ then $f$ is regular almost everywhere in $A$ and hence, for all $p$, $\dim_{\VF}(A \mi \reg^p(f)) < n$.

By \cite[Theorem~A]{Dries:tcon:97},  the situation is quite similar if $f$ is a (parametrically) definable function of the form $\tor(\alpha)^n \fun \tor(\beta)^m$, $\alpha, \beta \in \absG$, and $\dim_{\VF}$ is replaced by $\dim_{\RV}$, in particular, if $f$ is such a function from $\K^n$ into $\K^m$, or more generally, from $\tor(u)$ into $\tor(v)$, where $u \in \RV^n_{\alpha}$ and $v \in \RV^m_{\beta}$ (see Notation~\ref{rem:K:aff} and Definition~\ref{rem:tor:der}).
\end{rem}

\begin{rem}[$\rv$-contraction of univariate functions]\label{contr:uni}
Suppose that $f$ is a definable function from $\OO^\times$ into $\OO$. By monotonicity, there are a definable finite set $B \sub \OO^\times$ and a definable finite partition of $A \coloneqq \OO^\times \mi B$ into infinite $\vv$-intervals $A_i$ such that both $f$ and $\ddx f$ are quasi-\LT-definable, continuous, and monotone on each $A_i$. If $\rv(A_i)$ is not a singleton then let $U_i \sub \K$ be the largest open interval contained in $\rv(A_i)$. Let
\[
A^*_i = U_i^\sharp, \quad U = \textstyle{\bigcup_i U_i}, \quad A^* = U^\sharp, \quad f^* = f \rest A^*.
\]
By Lemma~\ref{fn:alm:cont}, we may refine the partition such that both $f^*$ and $\frac{d}{d x} f^*$ are $\rv$-contractible. By Lemma~\ref{gk:ortho}, $\vv \rest f^*(A^*_i)$ and $\vv \rest \tfrac{d}{d x} f^*(A^*_i)$ must be constant, say $\alpha_i$ and $\beta_i$, respectively. So it makes sense to speak of $\ddx f^*_{\downarrow_{\rv}}$ on each $U_i$, which a priori is not the same as $(\ddx f^*)_{\downarrow_{\rv}}$. Deleting finitely many points from $U$ if necessary, we assume that $f^*_{\downarrow_{\rv}}$, $(\ddx f^*)_{\downarrow_{\rv}}$, and $\ddx f^*_{\downarrow_{\rv}}$ are all continuous monotone functions on each $U_i$.

We claim that $\abs{\beta_i} = \abs{\alpha_i}$ unless $f^*_{\downarrow_{\rv}} \rest U_i$ is constant. Suppose for contradiction that $f^*_{\downarrow_{\rv}} \rest U_i$ is not constant and $\abs{\beta_i} \neq \abs{\alpha_i}$. First examine the case $\abs{\beta_i} < \abs{\alpha_i}$. A moment of reflection shows that, then,  $f^* \rest A^*_i$ would increase or decrease too fast to confine $f^*(A_i^*)$ in $\vv^{-1}(\alpha_i)$. Dually, if $\abs{\beta_i} > \abs{\alpha_i}$ then $f^* \rest A^*_i$ would increase or decrease too slowly to make $f^*_{\downarrow_{\rv}}(U_i)$ contain more than one point. In either case, we have reached a contradiction. Actually, a similar estimate shows that if $\abs{\beta_i} = \abs{\alpha_i} < \infty$ then $f^*_{\downarrow_{\rv}} \rest U_i$ cannot be constant.

Finally, we show that  $\abs{\beta_i} = \abs{\alpha_i}$ implies $(\ddx f^*)_{\downarrow_{\rv}} = \ddx f^*_{\downarrow_{\rv}}$ on $U_i$ (note that if $\abs{\beta_i} > \abs{\alpha_i}$ then $\ddx f^*_{\downarrow_{\rv}} = 0$). Suppose for contradiction that, say,
\[
(\ddx f^*)_{\downarrow_{\rv}}(\rv(a)) > \ddx f^*_{\downarrow_{\rv}}(\rv(a)) > 0
\]
for some $a \in A^*_i$. Then there is an open interval $I \sub U_i$ containing $\rv(a)$ such that $(\ddx f^*)_{\downarrow_{\rv}}(I) > \ddx f^*_{\downarrow_{\rv}}(I)$. It follows that $f^*_{\downarrow_{\rv}}(I)$ is properly contained in $\rv(f^*(I^\sharp)) = f^*_{\downarrow_{\rv}}(I)$, which is absurd. The other cases are similar.
\end{rem}

The higher-order multivariate version is more complicated to state than to prove:

\begin{lem}\label{univar:der:contr}
Let $A  \sub (\OO^\times)^n$ be a definable $\RV$-pullback with $\dim_{\RV}(\rv(A)) = n$ and $f : A \fun \OO$ a definable function. Let $p \in \N^n$ be a multi-index of order $\abs{p} = d$ and $k \in \N$ with $k \gg d$. Suppose that $f$ is $C^k$ and, for all $q \leq p$, $\frac{\partial^q}{\partial x^q} f$ is $\rv$-contractible and its contraction $(\frac{\partial^q}{\partial x^q} f)_{\downarrow_{\rv}}$ is also $C^k$. Then there is a definable set $V \sub \rv(A)$ with $\dim_{\RV}(V) < n$ and $U \coloneqq \rv(A) \mi V$ open such that, for all $a \in U^\sharp$ and all $q' < q \leq p$ with $\abs{q'} + 1 = q$, exactly one of the following two conditions holds:
\begin{itemize}
 \item either $\frac{\partial^{q}}{\partial x^{q}} f(a) = 0$ or $\abval (\frac{\partial^{q'}}{\partial x^{q'}} f(a)) < \abval (\frac{\partial^{q}}{\partial x^{q}} f(a))$,
 \item $(\frac{\partial^{q - q'}}{\partial x^{q - q'}} \frac{\partial^{q'}}{\partial x^{q'}} f)_{\downarrow_{\rv}}(\rv (a)) = \frac{\partial^{q - q'}}{\partial x^{q - q'}}(\frac{\partial^{q'}}{\partial x^{q'}} f)_{\downarrow_{\rv}}(\rv( a)) \neq 0$.
\end{itemize}
If the first condition never occurs then, for all $q \leq p$, we actually have $(\frac{\partial^q}{\partial x^q} f )_{\downarrow_{\rv}} = \frac{\partial^{q}}{\partial x^{q}} f_{\downarrow_{\rv}}$ on $U$. At any rate, for all $q \leq p$, we have $(\frac{\partial^q}{\partial x^q} f )_{\downarrow_{\res}} = \frac{\partial^{q}}{\partial x^{q}} f_{\downarrow_{\res}}$ on $U$.
\end{lem}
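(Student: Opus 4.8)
The plan is to reduce the whole statement, one coordinate direction at a time, to the one-variable analysis of Remark~\ref{contr:uni}, and then to propagate the resulting information along differentiation paths. Since every condition in sight is local on $\rv(A)$, I would first invoke cell decomposition in the $\RV$-sort (the weakly \omin-minimal dimension theory of \cite[\S~4]{mac:mar:ste:weako}, cf.\ Remark~\ref{rem:RV:weako}) together with compactness to write $\rv(A)$ as a finite disjoint union of definable cells; every cell of $\RV$-dimension $< n$ is thrown into $V$, and since $\dim_{\RV}(\rv(A)) = n$ there is at least one open cell $U$ of $\RV$-dimension $n$ on which to argue, absorbing further exceptional loci of $\RV$-dimension $< n$ into $V$ as they arise. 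Fixing such a $U$ and an edge $q' < q \leq p$ with $q = q' + e_j$, put $g = \frac{\partial^{q'}}{\partial x^{q'}} f$; by hypothesis $g$ is $C^k$ with $k \gg d$, is $\rv$-contractible, and $g_{\downarrow_{\rv}}$ is $C^k$ as well, and the same holds for $\frac{\partial}{\partial x_j} g = \frac{\partial^{q}}{\partial x^{q}} f$. Because $A$ is an $\RV$-pullback, for each $c \in \pr_{\tilde j}(A)$ the fiber $A_c \subseteq \OO^\times$ is again an $\RV$-pullback and the univariate restriction $g_c \colon x_j \mapsto g(x_j, c)$ is $\rv$-contractible with $(g_c)_{\downarrow_{\rv}}(\cdot) = g_{\downarrow_{\rv}}(\cdot\,, \rv(c))$; moreover $(\frac{\partial}{\partial x_j} g)_c = \ddx g_c$ and $\ddx\bigl((g_c)_{\downarrow_{\rv}}\bigr)$ equals the restriction of $\frac{\partial}{\partial x_j}(g_{\downarrow_{\rv}})$ to the line, since partial derivatives and contractions only see the line through the point.

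Next I would apply Remark~\ref{contr:uni} to each $g_c$ (after an additive shift of the domain into $\OO^\times$). Because the partition produced there is definable, it can be taken uniform in $c$: off a bad set that is finite in the $x_j$-variable one gets $\vv$-interval pieces with $\K$-interval pullbacks $A^*_i$ on which $\vv \rest g_c(A^*_i)$ and $\vv \rest \ddx g_c(A^*_i)$ are constants $\alpha^c_i$, $\beta^c_i$, and: $\abs{\beta^c_i} = \abs{\alpha^c_i}$ unless $(g_c)_{\downarrow_{\rv}}$ is locally constant there, $\abs{\beta^c_i} < \abs{\alpha^c_i}$ is impossible, and $\abs{\beta^c_i} = \abs{\alpha^c_i}$ forces $(\ddx g_c)_{\downarrow_{\rv}} = \ddx\bigl((g_c)_{\downarrow_{\rv}}\bigr)$, the common value being nonzero when finite. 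Collecting the bad loci over all $c$ gives a definable subset of $\rv(A)$ of $\RV$-dimension $< n$ (it meets each $x_j$-line in a finite set), which I add to $V$; on what remains of $U$, the compatibilities from the first paragraph translate the conclusions of Remark~\ref{contr:uni} into exactly the stated dichotomy at each $a \in U^\sharp$. The first bullet is the case $\frac{\partial^{q}}{\partial x^{q}} f(a) = 0$ (here $\ddx g_c \equiv 0$ and $\abs{\beta^c_i} = \abs{\alpha^c_i} = \infty$) together with the case $\abs{\beta^c_i} > \abs{\alpha^c_i}$, which says $\abval(\frac{\partial^{q'}}{\partial x^{q'}} f(a)) < \abval(\frac{\partial^{q}}{\partial x^{q}} f(a))$ and forces $\ddx\bigl((g_c)_{\downarrow_{\rv}}\bigr) = 0$; the second bullet is the case $\abs{\beta^c_i} = \abs{\alpha^c_i} < \infty$, where the two contractions agree and the value is nonzero. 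These are mutually exclusive because in the first bullet the derivative of the contraction vanishes while the second demands it not vanish. Running this over the finitely many edges below $p$ and intersecting the good sets produces the required $U$ and $V$.

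For the penultimate assertion, if the first bullet never occurs then $(\frac{\partial^q}{\partial x^q} f)_{\downarrow_{\rv}} = \frac{\partial^q}{\partial x^q} f_{\downarrow_{\rv}}$ on $U$ for every $q \leq p$, by induction on $\abs{q}$ along any differentiation path: writing $q = q' + e_j$, the second bullet gives $(\frac{\partial^q}{\partial x^q} f)_{\downarrow_{\rv}} = \frac{\partial}{\partial x_j}\bigl((\frac{\partial^{q'}}{\partial x^{q'}} f)_{\downarrow_{\rv}}\bigr)$ and the inductive hypothesis rewrites the inner contraction as $\frac{\partial^{q'}}{\partial x^{q'}} f_{\downarrow_{\rv}}$. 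The $\res$-version follows from the same fiberwise argument, now comparing $(g_c)_{\downarrow_{\res}}$ with $\ddx g_c$: on strict units $\res$ and $\rv$ have identical fibers while $\res$ collapses all of $\MM$, so the exceptional first-bullet possibilities no longer obstruct --- when $\abval(\frac{\partial^{q}}{\partial x^{q}} f(a)) > \abval(\frac{\partial^{q'}}{\partial x^{q'}} f(a))$ both $\frac{\partial^q}{\partial x^q} f_{\downarrow_{\res}}$ and $(\frac{\partial^q}{\partial x^q} f)_{\downarrow_{\res}}$ vanish --- and hence $(\frac{\partial^q}{\partial x^q} f)_{\downarrow_{\res}} = \frac{\partial^q}{\partial x^q} f_{\downarrow_{\res}}$ holds on all of $U$, again by induction on $\abs{q}$.

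The step I expect to be the main obstacle is the uniform bookkeeping: assembling the per-line decompositions coming from Remark~\ref{contr:uni} into a single definable open cell $U$ of $\RV$-dimension $n$ that works simultaneously for every $a \in U^\sharp$ and every edge below $p$, while verifying that all the accumulated exceptional loci genuinely have $\RV$-dimension $< n$ (using the dimension theory of \cite[\S~4]{mac:mar:ste:weako} and estimates of the type in Lemma~\ref{RV:bou:dim}) and that the $C^k$-hypotheses survive each restriction and contraction --- which is precisely what the slack $k \gg d$ pays for, since one must differentiate the contractions up to $d$ times.
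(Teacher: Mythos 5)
Your proposal follows essentially the same route as the paper's proof: reduce to a single differentiation step in one coordinate, apply the univariate analysis of Remark~\ref{contr:uni} fiberwise along that coordinate, collect the exceptional loci into a set of $\RV$-dimension $< n$ via Lemma~\ref{RV:bou:dim}-type bookkeeping, and propagate the identities by induction on $\abs{q}$. The only cosmetic difference is that the paper performs the induction on $d$ up front to reduce to $p = (0,\ldots,0,1)$, whereas you treat all edges $q' < q \leq p$ in parallel and do the induction at the end.
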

\begin{proof}
First observe that, by induction on $d$, it is enough to consider the case $d =1$ and $p = (0, \ldots, 0, 1)$. For each $a \in \pr_{<n}(A)$, by the discussion in Remark~\ref{contr:uni}, there is an $a$-definable finite set $V_{a}$ of $\rv(A)_{\rv(a)}$ such that the assertion holds for the restriction $f \rest (A_a \mi V_{a}^\sharp)$. Let $A^* = \bigcup_{a \in \pr_{<n}(A)} V_{a}^\sharp \sub A$. By Lemma~\ref{RV:bou:dim}, $\dim_{\RV}(\partial_{\RV} A^*) < n$ and hence $\dim_{\RV}(\rv(A^*)) < n$. Therefore, by Lemma~\ref{fn:alm:cont}, there is a definable open set $U \sub \ito(\rv(A) \mi \rv(A^*))$ that is as desired.
\end{proof}

Suppose that $f = (f_1, \ldots, f_m) : A \fun \OO$ is a sequence of definable $\res$-contractible functions, where the set $A$ is as in Lemma~\ref{univar:der:contr}. Let $P(x_1, \ldots, x_m)$ be a partial differential operator with definable $\res$-contractible coefficients $a_i : A \fun \OO$ and $P_{\downarrow_{\res}}(x_1, \ldots, x_m)$ the corresponding operator with $\res$-contracted coefficients $a_{i\downarrow_{\res}} : \res(A) \fun \K$. Note that both $P(f) : A \fun \OO$ and $P_{\downarrow_{\res}}(f_{\downarrow_{\res}}) : \res(A) \fun \K$ are defined almost everywhere. By Lemma~\ref{univar:der:contr}, such an operator $P$ almost commutes with $\res$:

\begin{cor}\label{rv:op:comm}
For almost all $t \in \rv(A)$ and all $a \in t^\sharp$,
\[
\res(P(f)(a)) = P_{\downarrow_{\res}}(f_{\downarrow_{\res}})(\res(a)).
\]
\end{cor}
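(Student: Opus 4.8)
The plan is to obtain the identity by applying the residue homomorphism $\res$ termwise to the expression defining $P(f)$. The algebraic bookkeeping (finite sums and products) will be handled by the fact that $\res\colon\OO\fun\K$ is a ring homomorphism (Ax.~\ref{ax:t:model}, equivalently (Ax.~\ref{ax:match}) applied to $+$ and $\times$), and the differentiations occurring in $P$ by the last assertion of Lemma~\ref{univar:der:contr}, which says precisely that $\res$-contraction commutes with taking partials. So the only real content is to arrange a single definable domain on which all the ingredients of $P(f)$ are simultaneously well behaved.

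First I would write $P(f)$ as a finite sum of terms $a_i\cdot\prod_{(j,\alpha)}\frac{\partial^{\alpha}f_j}{\partial x^{\alpha}}$, with the multi-indices $\alpha$ occurring of order at most the order $d$ of $P$; fix a multi-index $p$ of order $d$ dominating all of them. Using $\omin$-minimal differentiability (see \cite[\S~7.3]{dries:1998}) and Lemma~\ref{fn:alm:cont}, I would remove a set of $\RV$-dimension $<n$ from $\rv(A)$ and pass to the corresponding $\RV$-pullback, on which each $f_j$ is $C^{k}$ for $k\gg d$ and every partial $\frac{\partial^{\alpha}f_j}{\partial x^{\alpha}}$ with $\alpha\le p$ is $\rv$-contractible with $C^{k}$ contraction, so that Lemma~\ref{univar:der:contr} applies to each $f_j$. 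Applying that lemma to each $f_j$ and intersecting the outputs yields a definable open $U\subseteq\rv(A)$ with $\dim_{\RV}(\rv(A)\mi U)<n$ such that, for all $j$ and all $\alpha\le p$,
\[
\Bigl(\frac{\partial^{\alpha}f_j}{\partial x^{\alpha}}\Bigr)_{\downarrow_{\res}}=\frac{\partial^{\alpha}}{\partial x^{\alpha}}(f_j)_{\downarrow_{\res}}\quad\text{on }U .
\]
Moreover I would read off from the trichotomy in Lemma~\ref{univar:der:contr}, by induction on $\abs{\alpha}$ (the base case $\abs{\alpha}=0$ holding because $f_j$ is $\OO$-valued), that the value of $\frac{\partial^{\alpha}f_j}{\partial x^{\alpha}}$ at any point of $U^{\sharp}$ lies in $\OO$: in each alternative — value $0$, strictly larger valuation than the previous partial, or nonzero $\res$-contraction — one lands in $\OO$. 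Discarding one more set of $\RV$-dimension $<n$, I may also assume $P(f)$ is defined throughout $U^{\sharp}$.

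Then, for any $t\in U$ and $a\in t^{\sharp}$, all the factors $a_i(a)$ and $\frac{\partial^{\alpha}f_j}{\partial x^{\alpha}}(a)$ lie in $\OO$, so the ring homomorphism $\res$ distributes over the sums and products expressing $P(f)(a)$. Substituting $\res(a_i(a))=a_{i\downarrow_{\res}}(\res(a))$ (the hypothesis on the coefficients) and, for each partial, $\res\bigl(\frac{\partial^{\alpha}f_j}{\partial x^{\alpha}}(a)\bigr)=\frac{\partial^{\alpha}}{\partial x^{\alpha}}(f_j)_{\downarrow_{\res}}(\res(a))$ (the displayed identity together with the definition of $\res$-contraction) reassembles exactly $P_{\downarrow_{\res}}(f_{\downarrow_{\res}})(\res(a))$. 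Since $\rv(A)\mi U$ has $\RV$-dimension $<n$, this proves the assertion for almost all $t$.

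I do not expect a genuine obstacle here: this really is a corollary. The only step that needs care is the packaging in the second paragraph — extracting from Lemma~\ref{univar:der:contr} that all the partials occurring in $P$ are simultaneously $C^{k}$, $\rv$-contractible, $\OO$-valued, and have contraction equal to the partial of the contraction, over a single open set whose complement has $\RV$-dimension $<n$. Once $U$ is fixed, the identity is purely formal, amounting to $\res$ being a ring homomorphism.
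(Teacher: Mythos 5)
Your proposal is correct and follows exactly the route the paper intends: the paper offers no separate proof, presenting the corollary as an immediate consequence of the last assertion of Lemma~\ref{univar:der:contr} (applied to each $f_j$ and each partial occurring in $P$, over a common $U$ obtained via Lemma~\ref{fn:alm:cont}) together with the fact that $\res$ is a ring homomorphism on $\OO$. Your extra care in checking $\OO$-valuedness of the partials via the trichotomy is a detail the paper glosses over, but it is consistent with the paper's assertion that $P(f)$ is $\OO$-valued almost everywhere.
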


\begin{cor}
Let $U$, $V$ be definably connected subsets of $(\K^+)^n$ and $f : U^\sharp \fun V^\sharp$ a definable $\res$-contractible function. Suppose that $f_{\downarrow_{\res}} : U \fun V$ is continuous and locally injective. Then there is a definable subset $U^* \sub U$ of $\RV$-dimension $< n$ such that the sign of $\jcb_{\VF} f$ is constant on $(U \mi U^*)^\sharp$.
\end{cor}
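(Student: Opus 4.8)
The plan is to push the question down to the residue field by means of Corollary~\ref{rv:op:comm} and then appeal to \omin-minimality in $\K$. If $\dim_{\RV}(U) < n$ we may take $U^* = U$, so assume $\dim_{\RV}(U) = n$. Discarding from $U$ a definable subset of $\RV$-dimension $< n$ (the first contribution to $U^*$), I would arrange that $U$ is an open, definably connected subset of $(\K^+)^n$, so that $U^\sharp \sub (\OO^\times)^n$ is a definable $\RV$-pullback with $\dim_{\RV}(\rv(U^\sharp)) = n$, i.e.\ of exactly the shape to which Lemma~\ref{univar:der:contr} and Corollary~\ref{rv:op:comm} apply, and $f = (f_1, \dots, f_n) : U^\sharp \fun V^\sharp \sub (\OO^\times)^n$. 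From here on, every further removal of a definable set of $\RV$-dimension $< n$ from $U$ is silently absorbed into $U^*$; in particular the differentiability and contractibility hypotheses needed to invoke Corollary~\ref{rv:op:comm} are secured almost everywhere by Lemma~\ref{fun:suba:fun}, \omin-minimal differentiability, Lemma~\ref{fn:alm:cont} and Remark~\ref{contr:uni}.

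I would then apply Corollary~\ref{rv:op:comm} to $f$ with $P$ the partial differential operator $P(y_1, \dots, y_n) = \det\!\big(\tfrac{\partial y_j}{\partial x_i}\big)_{1 \le i, j \le n}$. Its coefficients are the constants $\pm 1$, hence trivially $\res$-contractible and fixed by $\res$-contraction, so $P_{\downarrow_{\res}}$ is again the Jacobian operator and $P_{\downarrow_{\res}}(f_{\downarrow_{\res}}) = \jcb_{\K}(f_{\downarrow_{\res}})$. Corollary~\ref{rv:op:comm} thus yields $\res\big(\jcb_{\VF} f(a)\big) = \jcb_{\K}(f_{\downarrow_{\res}})(\res a)$ for every $a$ over the surviving part of $U$, and for such an $a$ lying in an $\RV$-polydisc $t^\sharp$ with $t \in U \sub \K^+$ one has $\res a = t$. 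Since the residue map is order-preserving on $\OO$, so that $\sgn(\res c) = \sgn(c)$ for every unit $c$, the sign of $\jcb_{\VF} f$ over a point $t$ of the surviving $U$ coincides with $\sgn\big(\jcb_{\K}(f_{\downarrow_{\res}})(t)\big)$ whenever the latter is nonzero.

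Now put $g := f_{\downarrow_{\res}} : U \fun V$; by hypothesis $g$ is definable, continuous and locally injective on the definably connected open subset $U$ of $(\K^+)^n$ inside the \omin-minimal field $\K \models T$. By the residue-field form of the regular-points discussion preceding Remark~\ref{contr:uni} (obtained via \cite[Theorem~A]{Dries:tcon:97}), $g$ is $C^1$-regular off a definable set of $\RV$-dimension $< n$, which I again absorb into $U^*$; on what remains $\jcb_{\K}(g) \neq 0$. Combining this with the previous paragraph reduces the corollary to the assertion that $\sgn(\jcb_{\K} g)$ is constant on the regular locus of $g$ inside $U$. Once this is known, taking $U^*$ to be the union of all the sets discarded along the way --- a definable set of $\RV$-dimension $< n$ --- completes the proof.

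This last assertion is the crux, and the only point where a genuinely topological input is required: a definable, continuous, locally injective map on a definably connected open subset of an \omin-minimal field has $C^1$-Jacobian of constant sign on its regular locus. I would derive it as follows. By \omin-minimal invariance of domain (cf.\ \cite{dries:1998}), $g$ is a local homeomorphism, so its local topological degree defines a definable, $\{\pm 1\}$-valued, continuous function on $U$, which is therefore constant as $U$ is definably connected; and at every $C^1$-regular point this degree equals the sign of $\jcb_{\K} g$. Alternatively, since the assertion is first-order in the parameters defining $g$ and $T$ is complete, it may be verified in a single convenient model. I expect this \omin-minimal orientation fact --- not anything in the valued-field machinery --- to be the principal obstacle, and would either quote it from the \omin-minimal literature or supply a short argument of the kind just indicated.
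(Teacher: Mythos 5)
Your argument is correct and is essentially the paper's: the proof there consists precisely of invoking Corollary~\ref{rv:op:comm} to push the sign of $\jcb_{\VF} f$ down to the residue field and then citing \cite[Theorem~3.2]{pet:star:otop} for the \omin-minimal fact you isolate as the crux (constancy of the sign of the Jacobian of a definable, continuous, locally injective map on a definably connected set). So the topological input you flag as the principal obstacle is indeed exactly the quoted Peterzil--Starchenko result, and the rest of your reduction matches the intended route.
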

\begin{proof}
This follows immediately from Corollary~\ref{rv:op:comm} and \cite[Theorem~3.2]{pet:star:otop}.
\end{proof}

\begin{lem}\label{atom:type}
In $\xmdl$, let $\ga \sub \VF$ be an atomic subset and $f : \ga \fun \VF$ a definable injection. Then $\ga$ and $f(\ga)$ must be of the same one of the four possible forms (see Remark~\ref{rem:type:atin}).
\end{lem}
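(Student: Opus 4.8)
The plan is to run through the four possible forms listed in Remark~\ref{rem:type:atin} and eliminate every ``mixed'' outcome. First I would record the two easy structural facts. Since $f$ is a definable injection, it is a definable bijection of $\ga$ onto $f(\ga)$; hence $f(\ga)$ is definable, and it is again atomic, because a nonempty proper definable subset $B\subseteq f(\ga)$ would pull back under $f$ to a nonempty proper definable subset of $\ga$, contradicting atomicity of $\ga$. Therefore, by HNF (Remark~\ref{rem:type:atin}), $f(\ga)$ is also a point, an open disc, a closed disc, or a half thin annulus. Moreover $\ga$ is a single point if and only if $f(\ga)$ is, since $f$ is a bijection; so from now on I may assume $\ga$, and hence also $f(\ga)$, is infinite.

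The next step is to discard the closed-disc case entirely. An atomic set with more than one point can contain no definable point, since such a point would be a nonempty proper definable subset; but by Lemma~\ref{clo:disc:bary} every definable closed disc \emph{does} contain a definable point. Hence neither $\ga$ nor $f(\ga)$ can be a nondegenerate closed disc, so each of them is an open disc or a half thin annulus.

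It then remains to separate open discs from half thin annuli, and this is exactly what Lemma~\ref{open:rv:cons} provides. If $\ga$ is an open disc, then $f$, being injective on an infinite set, is non-constant, so $f(\ga)$ is an open disc by Lemma~\ref{open:rv:cons} in the case $n=1$; thus $\ga$ an open disc forces $f(\ga)$ an open disc. Conversely, applying the same lemma to the definable non-constant injection $f^{-1}\colon f(\ga)\fun\ga\subseteq\VF$ shows that if $f(\ga)$ is an open disc then $\ga=f^{-1}(f(\ga))$ is an open disc. Combining the two directions, $\ga$ is an open disc precisely when $f(\ga)$ is, and the only case left is that $\ga$ and $f(\ga)$ are both half thin annuli, which is the assertion.

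The argument has no genuinely hard step; the only place one might expect trouble is the open/closed/half-thin-annulus trichotomy, and in particular telling a closed disc apart from a half thin annulus under a definable bijection. Here that difficulty evaporates because Lemma~\ref{clo:disc:bary} rules out atomic closed discs outright. If one preferred not to lean on Lemma~\ref{clo:disc:bary}, the same separation can be obtained by first using Lemma~\ref{open:pro} together with the indivisibility of an atomic set to conclude that such a bijection must have the disc-to-disc property, hence induces a definable bijection between the two definable sets of maximal open subdiscs --- one an affine line over $\K$, the other an affine line over $\K$ with a point removed --- and then invoking the invariance of the $o$-minimal Euler characteristic (Theorem~\ref{groth:omin}) for a contradiction. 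In either formulation the combinatorial core is the same: Lemma~\ref{open:rv:cons} makes ``being an open disc'' an invariant of definable bijections, and Lemma~\ref{clo:disc:bary} makes ``being a (nondegenerate) closed disc'' impossible for an atomic set.
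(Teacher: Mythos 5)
Your reduction to the non-degenerate cases and your treatment of the open-disc case (applying Lemma~\ref{open:rv:cons} to $f$ and to $f^{-1}$) are fine and agree with the paper. The gap is the step where you discard closed discs altogether by citing Lemma~\ref{clo:disc:bary}. That lemma is proved in the default setting of \S~3, where the parameter substructure $\mdl S$ lies in $\mdl R_{\rv}$: its proof picks an open disc containing $\gb$ and disjoint from $\VF(\mdl S)$ and extends to an immediate automorphism of $\mmdl$ over $\mdl S$, and this breaks down as soon as $\mdl S$ contains names of discs (the automorphism would move them). Lemma~\ref{atom:type} is stated in $\xmdl$, where such parameters are allowed, and there a non-degenerate closed disc genuinely can be atomic and contain no definable point: for instance the closed disc $\tor(\code{\gm})$ over the single parameter $\code{\gm} \in \DC$ --- the relevant automorphism group contains all translations of the $\K$-torsor $\tor(\code{\gm})$, so no maximal open subdisc, and a fortiori no point, is definable. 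This is precisely why Remark~\ref{rem:type:atin} lists closed discs and half thin annuli among the four forms, and why the paper's proof spends essentially all of its effort on the one case you throw away: $\ga$ a closed disc and $f(\ga)$ a half thin annulus. That case is where power-boundedness enters: a definable bijection $\tor(\code{\gm}) \fun \tor^+(\code{\gn})$ would induce a definable group isomorphism from the additive group $\K$ onto the multiplicative group $\K^+$, i.e., an exponential (or, in the polynomially bounded case, a function violating the growth bound).

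Your proposed fallback does not repair this. The maximal open subdiscs of a half thin annulus form a copy of $\K^+$, a half-line --- not an affine line with a point removed --- while those of the closed disc form a copy of $\K$; both have $\RV$-dimension $1$ and \omin-minimal Euler characteristic $-1$, so Theorem~\ref{groth:omin} produces a definable bijection between them rather than a contradiction. The obstruction is not set-theoretic: one must use that a bijection between the two atomic sets is forced (by atomicity and dtdp) to be compatible with the transitive group actions on the two torsors, translations on $\tor(\code{\gm})$ versus scalings on $\tor^+(\code{\gn})$, and it is the resulting definable isomorphism $\K \cong \K^+$ that contradicts power-boundedness. Some such argument is unavoidable here.
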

\begin{proof}
This is trivial if $\ga$ is a point. The case of $\ga$ being an open disc is covered by Lemma~\ref{open:rv:cons}. So we only need to show that if $\ga$ is a closed disc then $f(\ga)$ cannot be a half thin annulus. We shall give two proofs. The first one works only when $T$ is polynomially bounded, but is more intuitive and much simpler.

Suppose that $T$ is polynomially bounded. Suppose for contradiction that $\code \ga$ is of the form $\tor(\goedel \gm)$ for some $\goedel \gm \in \RV_{\gamma}$ and $\goedel{f(\ga)}$ is of the form $\tor^+(\goedel \gn)$ for some $\goedel \gn \in \RV_{\delta}$. By Lemma~\ref{open:pro} and monotonicity, $f$ induces an increasing (or decreasing, which can be handled similarly) bijection $f_{\downarrow} : \tor(\goedel \gm) \fun \tor^+(\goedel \gn)$.
In fact,  for all $p \in \N$,
\[
\tfrac{d^p}{d x^p} f_{\downarrow} : \tor(\goedel \gm) \fun \tor^{+}(\delta - p \gamma)
\]
cannot be constant and hence must be continuous, surjective, and increasing. Using additional parameters, we can translate $f_{\downarrow}$ into a function $\K \fun \K^+$ and this function cannot be polynomially bounded by elementary differential calculus, which is a contradiction.

We move on to the second proof. The argument is essentially the same as that in the proof of \cite[Lemma~3.45]{hrushovski:kazhdan:integration:vf}.

Consider the group
\[
G \coloneqq \aut(\tor(\goedel \gm) / \K) \leq \aut(\xmdl / \K).
\]
Suppose for contradiction that $G$ is finite. Since every $G$-orbit is finite, every point in $\tor(\goedel \gm)$ is $\K$-definable. It follows that there exists a nonconstant definable function $\tor(\goedel \gm) \fun \K$. But this is not possible since $\ga$ is atomic.

Let $\Lambda$ be the group of affine transformations of $\K$, that is, $\Lambda = \K^{\times} \ltimes \K$, where the first factor is the multiplicative group of $\K$ and the second the additive group of $\K$. Every automorphism in $G$ is a $\K$-affine transformation of $\tor(\goedel \gm)$ and hence $G$ is a subgroup of $\Lambda$. For each $\K$-definable relation $\phi$ on $\tor(\goedel \gm)$, let $G_{\phi} \sub \Lambda$ be the definable subgroup of $\K$-affine transformations that preserve $\phi$. So $G = \bigcap_{\phi} G_{\phi}$. Since there is no infinite descending chain of definable subgroups of $\Lambda$, we see that $G$ is actually an infinite definable group. Then we may choose two nontrivial automorphisms $g, g' \in G$ whose fixed points are distinct. It follows that the commutator of $g$, $g'$ is a translation and hence, by \omin-minimality, $G$ contains all the translations, that is, $\K \leq G$.

By a similar argument, every automorphism in $H \coloneqq \aut(\tor^+(\goedel \gn) / \K)$ is a $\K$-linear transformation of $\tor^+(\goedel \gn)$ and hence $H = \K^+ \leq \K^{\times}$.

Now any definable bijection between $\tor(\goedel \gm)$ and $\tor^+(\goedel \gn)$ would induce a definable group isomorphism $\K \fun \K^+$, that is, an exponential function, which of course contradicts the assumption that $T$ is power-bounded.
\end{proof}

\begin{defn}[$\vv$-affine and $\rv$-affine]\label{rvaffine}
Let $\ga$ be an open disc and $f : \ga \fun \VF$ an injection.
We say that $f$ is \emph{$\vv$-affine} if there is a (necessarily unique) $\gamma \in \Gamma$, called the \emph{shift} of $f$, such that, for all $a, a' \in \ga$,
\[
\abval(f(a) - f(a')) = \gamma + \abval(a - a').
\]
We say that $f$ is \emph{$\rv$-affine} if there is a (necessarily unique) $t \in \RV$, called the \emph{slope} of $f$, such that, for all $a, a' \in \ga$,
\[
\rv(f(a) - f(a')) = t \rv(a - a').
\]
\end{defn}

Obviously $\rv$-affine implies $\vv$-affine. With the extra structure afforded by the total ordering, we can reproduce (an analogue of) \cite[Lemma~3.18]{Yin:int:acvf} with a somewhat simpler proof:

\begin{lem}\label{rv:lin}
In $\xmdl$, let $f : \ga \fun \gb$ be a definable bijection between two atomic open discs. Then $f$ is $\rv$-affine and hence $\vv$-affine with respect to $\rad(\gb) - \rad(\ga)$.
\end{lem}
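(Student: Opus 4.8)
The plan is to trade the disc‑combinatorics of the $\ACVF$ analogue \cite[Lemma~3.18]{Yin:int:acvf} for a single application of the \omin-minimal mean value theorem, available here thanks to the total ordering; this is the source of the promised simplification. The first step is to check that $f$ is a $C^1$ function with nowhere‑vanishing derivative on all of $\ga$. By Corollary~\ref{mono} there is a definable partition of $\ga$ into $\vv$-intervals on each of which $f$ is quasi-\LT-definable, continuous and monotone, and since $\ga$ is atomic this partition is trivial; hence $f = F\rest\ga$ for an \LT-definable $F$, and $f$ is continuous and strictly monotone on $\ga$. The non‑differentiability locus of $F$ is \LT-definable and, by \omin-minimality, finite, so it meets the atomic set $\ga$ in an $\mdl S$-definable finite set, which is therefore empty; thus $f$ is differentiable on all of $\ga$, with $\mdl S$-definable derivative $f'$. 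Now $\{a\in\ga:f'(a)=0\}$ is $\mdl S$-definable, hence empty or all of $\ga$ by atomicity, and the latter would force $f$ to be constant on the convex set $\ga$, contradicting injectivity; so $f'$ vanishes nowhere on $\ga$, and consequently the $\mdl S$-definable map $\rv\circ f'\colon\ga\to\RV$ is constant (this also follows from Lemma~\ref{open:rv:cons}). Write $t\in\RV$ for its value, which is nonzero.

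The second step uses that an open disc is order‑convex: if $x<z<y$ with $x,y\in\ga$ and $b$ is a center of $\ga$, then $\abval(z-x)\ge\abval(y-x)>\rad(\ga)$ and $\abval(x-b)>\rad(\ga)$, so $\abval(z-b)>\rad(\ga)$ and $z\in\ga$. Hence for any $a\ne a'$ in $\ga$ the closed interval between them lies in $\ga$, and applying the \omin-minimal mean value theorem to $F\rest[a,a']$ produces a point $\xi$ strictly between $a$ and $a'$ with
\[
f(a)-f(a')=F(a)-F(a')=F'(\xi)\,(a-a')=f'(\xi)\,(a-a').
\]
Applying $\rv$ and using $\rv(f'(\xi))=t$ gives $\rv(f(a)-f(a'))=t\,\rv(a-a')$; since $a,a'$ were arbitrary, $f$ is $\rv$-affine with slope $t$, and therefore $\vv$-affine with shift $\abvrv(t)$.

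It remains to identify the shift. From the displayed identity, $\abval(f(a)-f(a'))=\abvrv(t)+\abval(a-a')$ for all $a\ne a'$ in $\ga$; since $f$ maps $\ga$ onto $\gb$, the set of values $\abval(y-y')$ for $y\ne y'\in\gb$ equals $\abvrv(t)+\{\abval(a-a'):a\ne a'\in\ga\}$, and taking infima --- which are $\rad(\gb)$ and $\rad(\ga)$ respectively, as $\ga$ and $\gb$ are open discs --- gives $\rad(\gb)=\abvrv(t)+\rad(\ga)$, so the slope of $f$ has valuation $\rad(\gb)-\rad(\ga)$. The one delicate point is the first step, where atomicity has to be invoked repeatedly to promote the piecewise/generic conclusions of Corollary~\ref{mono} and of \omin-minimal differentiability to statements valid on the whole of $\ga$; once $f$ is known to be everywhere $C^1$ with constant $\rv$-derivative, the mean value theorem does the rest, so I do not anticipate a serious obstacle.
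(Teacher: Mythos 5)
Your proof is correct, and it takes a genuinely different route from the one in the paper. The paper never invokes the mean value theorem in $\VF$: it first gets dtdp for $f$ from Lemma~\ref{open:pro}, then, for each closed subdisc of $\ga$, studies the induced $C^1$ map between $\K$-torsors (Definition~\ref{rem:tor:der}), uses Lemma~\ref{open:rv:cons} to find a single $t$ with $\ddx f(\ga)\sub t^\sharp$, and runs the ``derivative commutes with contraction'' analysis of Remark~\ref{contr:uni} to show that each torsor map is exactly $u\efun tu$; the identity $\rv(f(a)-f(a'))=t\,\rv(a-a')$ is then read off by passing to the smallest closed disc containing $a$ and $a'$. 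You replace this torsor bookkeeping by two observations: open discs are order-convex, and the o\nobreakdash-minimal MVT applies to the \LT-definable extension of $f$ (atomicity trivializes the partitions coming from Corollary~\ref{mono} and from o\nobreakdash-minimal differentiability, so $f$ is indeed everywhere $C^1$ with nowhere-vanishing derivative on $\ga$). Both arguments still rest on Lemma~\ref{open:rv:cons} --- applied by you to $f'$, by the paper to $\ddx f$ --- and hence on power-boundedness via Lemma~\ref{Ocon}; and your identification of the shift as $\rad(\gb)-\rad(\ga)$ is the same computation the paper dispatches with ``clear from dtdp''. What your route buys is brevity and a more elementary argument exploiting the ordering directly (the paper already advertises its proof as simpler than the $\ACVF$ analogue for this reason; yours takes that one step further). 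What the paper's route buys is the explicit affine description of the induced maps on every $\K$-torsor, which is the picture reused in Remark~\ref{2cell:linear} and will matter once volume forms and Jacobians enter; your proof recovers the slope $t$ but not that finer torsor-level statement.
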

\begin{proof}
Since $f$ has dtdp by Lemma~\ref{open:pro}, for all $\rad(\ga) < \delta$ and all
\[
\gd \coloneqq \tor(\goedel \gc) \sub \rv_{\delta- \abval(\ga)}(\ga),
\]
it induces a $\goedel \gd$-definable $C^1$ function $f_{\goedel \gd} : \gd \fun \tor(\goedel{f(\gc)})$. The codomain of its derivative $\ddx f_{\goedel \gd}$ can be narrowed down to either $\tor^+(\epsilon - \delta)$ or $\tor^{-}(\epsilon - \delta)$, where $\epsilon = \rad(f(\gc))$. By Lemma~\ref{open:rv:cons}, there is a $t \in \RV$ such that $\ddx f(\ga) \sub t^\sharp$. By Lemma~\ref{atom:gam}, $\ga$ remains atomic over $\delta$. Then, by (an accordingly modified version of) Remark~\ref{contr:uni}, we must have that, for all $\gd$ as above, all $\goedel \gc \in \gd$, and all $a \in \gc$,
\[
\ddx f_{\goedel \gd}(\goedel \gc) = \rv(\ddx f(a)) = t
\]
and hence
\[
\aff_{\goedel{f(\gc)}} \circ f_{\goedel \gd} \circ \aff^{-1}_{\goedel \gc} : \tor(\delta) \fun \tor(\epsilon)
\]
is a linear function given by $u \efun tu$ (see Definition~\ref{rem:tor:der} for the notation). It follows that, for
\begin{itemize}
  \item $a$ and $a'$ in $\ga$,
  \item $\gd$ the smallest closed disc containing $a$ and $a'$,
  \item $\gc$ and $\gc'$ the maximal open subdiscs of $\gd$ containing $a$ and $a'$, respectively,
\end{itemize}
we have
\[
\rv(f(a) - f(a')) = \rv(f(\gc) - f(\gc')) = t \rv(\gc - \gc') = t \rv(a - a').
\]
That is, $f$ is $\rv$-affine. Moreover, it is clear from dtdp that $\abvrv(t) = \rad(\gb) - \rad(\ga)$.
\end{proof}

\section{Grothendieck semirings}\label{sect:groth}

In this section, we define various categories of definable sets and explore the relations between their Grothendieck semirings. The first main result is that the Grothendieck semiring $\gsk \RV[*]$ of the $\RV$-category $\RV[*]$ can be naturally expressed as a tensor product of the Grothendieck semirings of two of its full subcategories $\RES[*]$ and $\Gamma[*]$. The second main result is that there is a natural surjective semiring homomorphism from $\gsk \RV[*]$ onto the Grothendieck semiring $\gsk \VF_*$ of the $\VF$-category $\VF_*$.

\begin{hyp}\label{hyp:gam}
By (the proof of) Lemma~\ref{S:def:cl}, every definable set in $\RV$ contains a definable point if and only if $\Gamma(\mdl S) \neq \pm 1$. Thus, from now on, we shall assume that $\Gamma(\mdl S)$ is nontrivial.
\end{hyp}

\subsection{The categories of definable sets}
As in Definition~\ref{defn:dtdp}, an $\RV$-fiber of a definable set $A$ is a set of the form $A_a$, where $a \in A_{\VF}$. The $\RV$-fiber dimension of $A$ is the maximum of the $\RV$-dimensions of its $\RV$-fibers and is denoted by $\dim^{\fib}_{\RV}(A)$.

\begin{lem}\label{RV:fiber:dim:same}
Suppose that $f : A \fun A'$ is a definable bijection. Then $\dim^{\fib}_{\RV}(A) = \dim^{\fib}_{\RV} (A')$.
\end{lem}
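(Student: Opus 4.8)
The plan is to prove the inequality $\dim^{\fib}_{\RV}(A') \geq \dim^{\fib}_{\RV}(A)$; applying the same argument to $f^{-1}$ then yields the reverse inequality, hence equality. Write $A \sub \VF^n \times \RV^m$ and $A' \sub \VF^{n'} \times \RV^{m'}$. Since $\dim_{\RV}(A_a)$, as $a$ ranges over $A_{\VF}$, takes only finitely many values, the supremum defining $\dim^{\fib}_{\RV}(A)$ is attained; so I would fix $a \in A_{\VF}$ with $\dim_{\RV}(A_a) = d \coloneqq \dim^{\fib}_{\RV}(A)$. From here on everything is definable over the small substructure $\la \mdl S, a \ra$, over which all the results of \S~\ref{def:VF} remain valid.

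The crucial point is that the $f$-image of the single $\RV$-fiber $\{a\} \times A_a$ can meet only finitely many $\RV$-fibers of $A'$. To see this I would form the composite $g$ of $f \rest (\{a\} \times A_a)$ with the coordinate projection $A' \fun \VF^{n'}$; identifying $\{a\} \times A_a$ with $A_a \sub \RV^m$, this is a definable function $A_a \fun \VF^{n'}$, so by Corollary~\ref{function:rv:to:vf:finite:image} its image is a finite set $\{b_1, \ldots, b_k\}$. Consequently $A_a = \bigsqcup_{i \leq k} A_a^{(i)}$, where $A_a^{(i)} \sub A_a$ is the (definable) set of points of $A_a$ whose $f$-image has $\VF$-part equal to $b_i$; by additivity of $\dim_{\RV}$ over finite unions there is an index $i_0$ with $\dim_{\RV}(A_a^{(i_0)}) = d$.

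Finally, $f$ restricts to a definable bijection from $A_a^{(i_0)}$ onto $f(\{a\} \times A_a^{(i_0)})$, every point of which has $\VF$-part $b_{i_0}$; discarding that constant $\VF$-part is a definable bijection onto a subset $C \sub A'_{b_{i_0}} \sub \RV^{m'}$. Since $\dim_{\RV}$ is invariant under definable bijections between subsets of $\RV$-powers, $\dim_{\RV}(C) = \dim_{\RV}(A_a^{(i_0)}) = d$, and therefore $\dim_{\RV}(A'_{b_{i_0}}) \geq d$, giving $\dim^{\fib}_{\RV}(A') \geq d$ as required. The only substantial input is Corollary~\ref{function:rv:to:vf:finite:image} (equivalently Lemma~\ref{RV:no:point}): it is precisely what forbids a single $\RV$-fiber from being dispersed among infinitely many $\RV$-fibers of the target. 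Everything else is routine bookkeeping with the dimension theory of \cite{mac:mar:ste:weako} in the $\RV$-sort, so I do not anticipate any real obstacle beyond making the relativization to $\la \mdl S, a\ra$ precise.
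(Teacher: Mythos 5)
Your proposal is correct and is essentially the paper's own proof: the paper likewise composes $f$ on a fiber $A_a$ with the projection $A' \fun A'_{\VF}$, invokes Corollary~\ref{function:rv:to:vf:finite:image} to see that the image is finite, and concludes $\dim^{\fib}_{\RV}(A) \leq \dim^{\fib}_{\RV}(A')$ by symmetry. You have merely spelled out the finite-partition and invariance-of-$\dim_{\RV}$ bookkeeping that the paper leaves implicit.
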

\begin{proof}
Let $\dim^{\fib}_{\RV}(A) = k$ and $\dim^{\fib}_{\RV}(A') =
k'$. For each $a \in \pr_{\VF}(A)$, let $h_{a} : A_a \fun A'_{\VF}$ be the $a$-definable function induced by $f$ and $\pr_{\VF}$. By Corollary~\ref{function:rv:to:vf:finite:image}, the image of $h_{a}$ is finite. It follows that $k \leq k'$. Symmetrically
we also have $k \geq k'$ and hence $k = k'$.
\end{proof}

\begin{defn}[$\VF$-categories]\label{defn:VF:cat}
The objects of the category $\VF[k]$ are the definable sets of $\VF$-dimension $\leq k$ and $\RV$-fiber dimension $0$ (that is, all the $\RV$-fibers are finite). Any definable bijection between two such objects is a morphism of $\VF[k]$. Set $\VF_* = \bigcup_k \VF[k]$.
\end{defn}

\begin{defn}[$\RV$-categories]\label{defn:c:RV:cat}
The objects of the category $\RV[k]$ are the pairs $(U, f)$ with $U$ a definable set in $\RVV$ and $f : U \fun \RV^k$ a definable finite-to-one function. Given two such objects $(U, f)$, $(V, g)$, any definable bijection $F : U \fun V$ is a \emph{morphism} of $\RV[k]$.
\end{defn}

Set $\RV[{\leq} k] = \bigoplus_{i \leq k} \RV[i]$ and $\RV[*] = \bigoplus_{k} \RV[k]$; similarly for the other categories below.

\begin{nota}\label{0coor}
We emphasize that if $(U, f)$ is an object of $\RV[k]$ then $f(U)$ is a subset of $\RV^k$ instead of $\RV_0^k$, while $0$ can occur in any coordinate of $U$. An object of  $\RV[*]$ of the form $(U, \id)$ is often written as $U$.

More generally, if $f : U \fun \RV_0^k$ is a definable finite-to-one function then $(U, f)$ denotes the obvious object of $\RV[{\leq} k]$. Often $f$ will be a coordinate projection (every object in $\RV[*]$ is isomorphic to an object of this form). In that case, $(U, \pr_{\leq k})$ is simply denoted by $U_{\leq k}$ and its class in $\gsk \RV[k]$ by $[U]_{\leq k}$, etc.
\end{nota}

\begin{rem}\label{fintoone}
Alternatively, we could allow only injections instead of finite-to-one functions in defining the objects of $\RV[k]$. Insofar as the Grothendieck semigroup $\gsk \RV[k]$ is concerned, this is not more restrictive in our setting since for any $\bm U \coloneqq (U, f) \in \RV[k]$ there is a definable finite partition $\bm U_i \coloneqq (U_i, f_i)$ of $\bm U$, in other words, $[\bm U] = \sum_i [\bm U_i]$ in $\gsk \RV[k]$, such that each $f_i$ is injective. It is technically more convenient to work with finite-to-one functions, though (for instance, we can take finite disjoint unions).
\end{rem}

In the above definitions and other similar ones below, all morphisms are actually isomorphisms and hence the categories are all groupoids. For the cases $k =0$, the reader should interpret things such as $\RV^0$ and how they interact with other things in a natural way. For instance, $\RV^0$ may be treated as the empty tuple. So the categories $\VF[0]$, $\RV[0]$ are equivalent.

About the position of ``$*$'' in the notation: ``$\VF_*$'' suggests that the category is filtrated and ``$\RV[*]$'' suggests that the category is graded.

\begin{defn}[$\RES$-categories]\label{defn:RES:cat}
The category $\RES[k]$ is the full subcategory of $\RV[k]$ such that $(U, f) \in \RES[k]$ if and only if $\vrv(U)$ is finite.
\end{defn}

\begin{rem}[Explicit description of ${\gsk \RES[k]}$]\label{expl:res}
Let $\RES$ be the category whose objects are the definable sets $U$ in $\RVV$ with $\vrv(U)$ finite and whose morphisms are the definable bijections. The obvious forgetful functor $\RES[*] \fun \RES$ induces a surjective semiring homomorphism $\gsk \RES[*] \fun \gsk \RES$, which is clearly not injective.

The semiring $\gsk \RES$ is actually generated by isomorphism classes $[U]$ with $U$ a set in $\K^+$. By Theorem~\ref{groth:omin}, we have the following explicit description of $\gsk \RES$. Its underlying set is $(0 \times \N) \cup (\N^+ \times \Z)$. For all $(a, b), (c, d) \in \gsk \RES$,
\[
(a, b) + (c, d) = (\max\{a, c\}, b+d), \quad (a, b) \times (c, d) = (a + c, b \times d).
\]
By the computation in \cite{kage:fujita:2006}, the dimensional part is lost in the groupification $\ggk \RES$ of $\gsk \RES$, that is, $\ggk \RES = \Z$, which is of course much simpler than $\gsk \RES$. However, following the philosophy of \cite{hrushovski:kazhdan:integration:vf}, we shall work with Grothendieck semirings whenever possible.

By Lemma~\ref{gk:ortho}, if $(U, f) \in \RES[*]$ then $\vrv(f(U))$ is finite as well. Therefore the semiring $\gsk \RES[*]$ is generated by isomorphism classes $[(U, f)]$ with $f$ a bijection between two sets in $\K^+$. As above, each $\gsk \RES[k]$ may be described explicitly as well. The semigroup $\gsk \RES[0]$ is canonically isomorphic to the semiring $(0, 0) \times \N$. For $k > 0$, the underlying set of $\gsk \RES[k]$ is $\bigcup_{0 \leq i \leq k}((k, i) \times \Z)$, and its semigroup operation is given by
\[
(k, i, a) + (k, i', a') = (k, \max\{i, i'\}, a + a').
\]
Moreover, multiplication in $\gsk \RES[*]$ is given by
\[
(k, i, a) \times (l, j, b) = (k+l, i + j, a \times b).
\]
\end{rem}

\begin{defn}[$\Gamma$-categories]\label{def:Ga:cat}
The objects of the category $\Gamma[k]$ are the finite disjoint unions of definable subsets of $\Gamma^k$. Any definable bijection between two such objects is a \emph{morphism} of $\Gamma[k]$. The category $\Gamma^{c}[k]$ is the full subcategory of $\Gamma[k]$ such that $I \in \Gamma^{c}[k]$ if and only if $I$ is finite.
\end{defn}

Clearly $\gsk \Gamma^c[k]$ is naturally isomorphic to $\N$ for all $k$ and hence $\gsk \Gamma^c[*] \cong \N[X]$.

\begin{nota}\label{nota:RV:short}
We introduce the following shorthand for distinguished elements in the various Grothendieck semigroups and their groupifications (and closely related constructions):
\begin{gather*}
\bm 1_{\K} = [\{1\}] \in \gsk \RES[0], \quad [1] = [(\{1\}, \id)] \in \gsk \RES[1],\\
[\bm T] = [(\K^+, \id)] \in \gsk \RES[1], \quad [\bm A] = 2 [\bm T] +  [1] \in \gsk \RES[1],\\
\bm 1_{\Gamma} = [\Gamma^0] \in \gsk \Gamma[0], \quad [e] = [\{1\}] \in \gsk \Gamma[1], \quad [\bm H] = [(0,1)] \in \gsk \Gamma[1],\\
[\bm P] = [(\RV^{\circ \circ}, \id)] -  [1] \in \ggk \RV[1].
\end{gather*}
Here $\RV^{\circ \circ} = \RV^{\circ \circ}_0 \mi 0$. Note that the interval $\bm H$ is formed in the signed value group $\Gamma$, whose ordering is inverse to that of the value group $\abs \Gamma_\infty$ (recall Remark~\ref{signed:Gam}). The interval $(1, \infty) \sub  \Gamma$ is denoted by $\bm H^{-1}$.

As in~\cite{hrushovski:kazhdan:integration:vf}, the elements $[\bm P]$ and $\bm 1_{\K} + [\bm P]$ in $\ggk \RV[*]$ play special roles in the main construction (see Propositions~\ref{kernel:L} and the remarks thereafter).
\end{nota}

The following lemma is a generality proven elsewhere. It is only needed to prove Lemma~\ref{gam:pulback:mono}.

\begin{lem}\label{gen:mat:inv}
Let $K$ be an integral domain and $M$ a torsion-free $K$-module, the latter is viewed as the main sort of a first-order structure of some expansion of the usual $K$-module language. Let $\gF$ be a class of definable functions in the sort $M$ such that
\begin{itemize}
  \item all the identity functions are in $\gF$,
  \item all the functions in $\gF$ are definably piecewise $K$-linear, that is, they are definably piecewise of the form $x \efun M x + c$, where $M$ is a matrix with entries in $K$ and $c$ is a definable point,
  \item $\gF$ is closed under composition, inversion, composition with $\mgl(K)$-transformations ($K$-linear functions with invertible matrices), and composition with coordinate projections.
\end{itemize}
If $g : D \fun E$ is a bijection in $\gF$, where $D, E \sub M^n$, then $g$ is definably a piecewise $\mgl_n(K)$-transformation.
\end{lem}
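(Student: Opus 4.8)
The plan is to linearize both $g$ and its inverse at once and then read off invertibility from the relation $g^{-1}\circ g=\id$. Since $\gF$ is closed under inversion, $g^{-1}$ also lies in $\gF$, so both maps are definably piecewise $K$-linear. First I would fix a definable finite partition $D=\bigsqcup_i D_i$ with $g\rest D_i$ of the form $x\efun M_ix+c_i$ (with $M_i$ a matrix over $K$ and $c_i$ a definable point), a definable finite partition $E=\bigsqcup_j E_j$ with $g^{-1}\rest E_j$ of the form $y\efun N_jy+d_j$, and then pass to the common refinement $D_{ij}\coloneqq D_i\cap g^{-1}(E_j)$. On each nonempty $D_{ij}$ the identity $g^{-1}(g(x))=x$ becomes
\[
(N_jM_i-I)\,x=-(N_jc_i+d_j)\qquad\text{for all }x\in D_{ij},
\]
so that $N_jM_i-I$ annihilates every difference of points of $D_{ij}$; symmetrically $M_iN_j-I$ annihilates every difference of points of $g(D_{ij})$.

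The argument would then run by induction on $n$. If the differences of points of $D_{ij}$ span a $K$-submodule of $M^n$ of full rank $n$ over the fraction field of $K$, the displayed relation together with torsion-freeness of $M$ forces $N_jM_i=I$; hence $M_i\in\mgl_n(K)$ with inverse $N_j$, and $g\rest D_{ij}$ is already the restriction of the $\mgl_n(K)$-transformation $x\efun M_ix+c_i$. Otherwise $D_{ij}$ lies in a proper affine $K$-subspace $v_0+W$ of $M^n$ with $\mathrm{rank}\,W=m<n$, and $g(D_{ij})$ likewise lies in such a subspace of rank $\le m$, being the image of $v_0+W$ under $x\efun M_ix+c_i$. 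Using that $\gF$ is closed under composition with $\mgl_n(K)$-transformations and with coordinate projections, I would reduce, after a further definable partition, to a bijection in $\gF$ between definable subsets of $M^m$, apply the inductive hypothesis there, and then lift each resulting $\mgl_m(K)$-block back to an $\mgl_n(K)$-transformation (acting as the identity on the remaining $n-m$ coordinates) which agrees with $g$ on the corresponding sub-piece of $D_{ij}$ once the constant discrepancy is absorbed into its translation part exactly as in the first paragraph. The base case $n=0$ is vacuous; for $n=1$ one sees directly that on a piece where $g$ is non-constant the scalar $M_i$ satisfies $N_jM_i=1$, hence lies in $K^{\times}=\mgl_1(K)$, while a piece on which $g$ is constant is a single point, handled by the identity matrix.

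The hard part will be organizational rather than conceptual. One must carry out the descent to lower dimension \emph{definably}, choosing $v_0$, $W$ and the intermediate $\mgl_n(K)$-changes of coordinates as definable functions of the relevant parameters — this is where a cell-decomposition-type normal form for the ambient structure is needed — and one must manage the passage from a rank-$m$ submodule of $M^n$ to $M^m$ over a general integral domain rather than a field, where putting a submodule into coordinate position is no longer automatic. These are precisely the ingredients that make the statement a fact about the ambient (weakly \omin-minimal) $K$-module structure rather than a purely algebraic one, and are the reason it is quoted here as a generality established elsewhere.
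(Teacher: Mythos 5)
The paper does not actually prove this lemma: its ``proof'' is the citation \cite[Lemma~2.29]{Yin:int:expan:acvf}, so there is no in-paper argument to measure yours against, and I can only judge the proposal on its own terms. Your first paragraph and the non-degenerate branch are correct and are surely the engine of any proof: since $g^{-1}\in\gF$, a common refinement yields matrices $M_i$, $N_j$ over $K$ with $(N_jM_i-I)(x-x')=0$ on each piece, and when this forces $N_jM_i=I$ the matrix $M_i$ lies in $\mgl_n(K)$ because a square matrix over a commutative ring with a one-sided inverse is invertible (adjugate). One adjustment: for a general torsion-free $M$ the ``rank of the span of the differences inside $M^n$'' is not the right invariant (that span can have rank up to $n\cdot\rk M$ when $\rk M>1$); the clean dichotomy is simply whether $N_jM_i=I$ or not, and in the latter case a nonzero row $a$ of $N_jM_i-I$ gives a relation $\sum_l a_l(x_l-x_l')=0$ on the piece, so torsion-freeness makes the projection away from a coordinate with $a_l\neq 0$ injective there; the closure properties of $\gF$ then keep the projected bijection inside $\gF$, and the induction can start. (Your worry about doing this ``definably'' is overstated: the matrices and the annihilating rows are finitely many fixed elements of $K$ per piece, handed to you by the piecewise-linearity hypothesis, so no parametric choices arise.)

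The genuine gap is the lift-back at the end of the degenerate case, and it is conceptual, not organizational. Knowing that the projected bijection between subsets of $M^{n-1}$ agrees on a sub-piece with some $P\in\mgl_{n-1}(K)$, you must produce $Q\in\mgl_n(K)$ agreeing with $M_i$ on the $K$-span $W$ of the differences of that sub-piece. Your recipe (``act as the identity on the remaining coordinates'') presupposes that $W$ and $M_iW$ have been put into coordinate position, i.e.\ that the graph maps expressing the eliminated coordinate in terms of the retained ones are matrices over $K$; but solving $a_lx_l=-\sum_{l'\neq l}a_{l'}x_{l'}$ introduces $a_l^{-1}$, so these maps are a priori only $\operatorname{Frac}(K)$-linear, and the assembled block matrix (with $P$ in the corner and the graph maps filling the off-diagonal blocks) need not have entries in $K$. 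Equivalently, one must extend the bijection $M_i\rest W\colon W\fun M_iW$, which admits a two-sided $K$-matrix inverse on $W$, to an element of $\mgl_n(K)$; over a non-field integral domain this is a genuine statement about perturbing the rows of $M_i$ by elements of the annihilator of $W$ so as to make the determinant a unit (already for $K=k[t^2,t^3]$, $n=2$, $W=K\cdot(t^2,t^3)$ this requires a nontrivial computation), and it is exactly what is missing from the write-up. In the one place this paper uses the lemma (Lemma~\ref{gam:pulback:mono}) one has $K=\KKK$, a field, and there your argument does close up, since the graph maps are then $K$-linear and extending $M_i\rest W$ to $\mgl_n(K)$ is just extension of a basis.
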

\begin{proof}
See \cite[Lemma~2.29]{Yin:int:expan:acvf}.
\end{proof}

\begin{lem}\label{gam:pulback:mono}
Let $g$ be a $\Gamma[k]$-morphism. Then $g$ is definably a piecewise $\mgl_k(\KKK)$-transformation modulo the sign, that is, a piecewise $\mgl_k(\KKK) \times \Z_2$-transformation. Consequently, $g$ is a $\vrv$-contraction (recall Definition~\ref{defn:corr:cont}).
\end{lem}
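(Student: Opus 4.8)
The plan is to apply Lemma~\ref{gen:mat:inv} to the $\Gamma$-sort, using Theorem~B of \cite{Dries:tcon:97} as the structural input. First I would recall that, by Theorem~B as restated in \S\ref{subs:nota}, the structure of definable sets in the imaginary $\Gamma$-sort is that of a nontrivially ordered $\KKK$-vector space modulo the sign; in particular every definable function in the $\Gamma$-sort is definably piecewise $\KKK$-linear modulo the sign, i.e. definably piecewise of the form $x \efun Mx + c$ with $M \in \mgl(\KKK)$-data twisted by a sign vector in $\Z_2^k$, and $c$ a definable point. To fit this into the hypotheses of Lemma~\ref{gen:mat:inv} I would work not with $\Gamma$ directly but with the associated ordered $\KKK$-vector space $V$ (the one underlying $\absG$), viewed as a torsion-free $\KKK$-module, and take $\gF$ to be the class of all definable functions between definable subsets of powers of $V$ that are definably piecewise $\KKK$-linear modulo the sign. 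One checks the three bullet conditions of Lemma~\ref{gen:mat:inv}: the identities are in $\gF$; every member is definably piecewise of the form $x \efun Mx + c$ (here absorbing the sign twist into $M$, since a diagonal sign matrix is an element of $\mgl(\KKK)$); and $\gF$ is closed under composition, inversion, and composition with $\mgl(\KKK)$-transformations and coordinate projections — all immediate from the piecewise $\KKK$-linear normal form and \omin-minimal cell decomposition in the vector-space reduct, which guarantees the relevant partitions are themselves definable.

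Given a $\Gamma[k]$-morphism $g : I \fun J$ with $I, J$ finite disjoint unions of definable subsets of $\Gamma^k$, I would lift it (or rather its components after passing to $\absG$ and splitting off the sign coordinates, which are finite data and can be absorbed into a further finite partition) to a bijection $D \fun E$ in $\gF$ with $D, E \sub V^k$. Lemma~\ref{gen:mat:inv} then yields that $g$ is definably a piecewise $\mgl_k(\KKK)$-transformation on $V^k$; re-incorporating the sign coordinates, this says $g$ is definably a piecewise $\mgl_k(\KKK) \times \Z_2$-transformation, which is the first assertion. The only mild subtlety here is bookkeeping: a $\Gamma[k]$-object is a finite disjoint union, so one first refines the partition so that on each piece $g$ is given by a single $\KKK$-linear-modulo-sign formula, and separately one must make sure the translation constants $c$ that appear are genuinely definable points of $\Gamma$ (or $\absG$), which they are since $g$ is definable over $\mdl S$ — definable choice in the $\Gamma$-sort (used already in Lemma~\ref{atom:gam}) supplies them.

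For the consequence — that $g$ is a $\vrv$-contraction — I would argue as follows. Recall from Notation~\ref{gamma:what} that for $\gamma \in \Gamma$ the set $\gamma^\sharp = \vrv^{-1}(\gamma) \sub \RV$ is a $\K^+$-torsor, and that a piecewise $\mgl_k(\KKK)$-transformation modulo the sign acts on tuples of such torsors compatibly with $\vrv$: on each piece of the partition, a formula $\gamma \efun M\gamma + c$ with $M \in \mgl_k(\KKK)$ and $c$ a definable $\Gamma$-point induces, via the exponentiation/power-function structure identifying $\KKK$-linear maps on $\Gamma$ with the contractions of monomial maps on $\RV$ (cf.\ the torsor discussion in Remark~\ref{rem:K:aff} and the fact that $\KKK$ is the field of exponents), a well-defined map $\gamma^\sharp \fun (M\gamma+c)^\sharp$; hence for $t \in \gamma^\sharp$ one has $\vrv(g(t)) = g(\vrv(t))$ after restricting to the appropriate piece, which is precisely $\rv$- (here $\vrv$-)contractibility in the sense of Definition~\ref{defn:corr:cont}. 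The main obstacle, and the step I expect to require the most care, is the first one: correctly packaging the "modulo the sign" phenomenon and the finite-disjoint-union structure so that the hypotheses of Lemma~\ref{gen:mat:inv} are literally met — in particular verifying that the class $\gF$ is closed under inversion, since a priori the inverse of a piecewise-linear bijection need only be piecewise linear on the image partition, and one must invoke \omin-minimality in the $\KKK$-vector space reduct to see that this image partition is definable. Once that is in place, the rest is formal.
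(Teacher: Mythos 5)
Your proposal is correct and follows essentially the same route as the paper: the paper's proof simply observes that Lemma~\ref{gen:mat:inv} applies to the class of definable functions in the $\absG$-sort (via \cite[Theorem~B]{Dries:tcon:97}), and that the second claim follows because the natural actions of $\mgl_k(\KKK)$ on $(\RV^+)^k$ and $(\Gamma^+)^k$ commute with $\vrv$. Your write-up merely makes explicit the ``routine'' verifications (closure of $\gF$ under inversion, the finite sign/partition bookkeeping, definability of the translation constants) that the paper leaves to the reader.
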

\begin{proof}
For the first claim, it is routine to check that Lemma~\ref{gen:mat:inv} is applicable to the class of definable functions in the $\abs \Gamma$-sort. The second claim follows from the fact that the natural actions of $\mgl_k(\KKK)$ on $(\RV^+)^k$ and $(\Gamma^+)^k$ commute with the map $\vrv$.
\end{proof}

\begin{rem}\label{why:glz}
In \cite{hrushovski:kazhdan:integration:vf}, $\Gamma[k]$-morphisms are by definition piecewise $\mgl_k(\Z)$-transformations. This is because, in the setting there, the $\vrv$-contractions are precisely the piecewise $\mgl_k(\Z)$-transformations, which form a proper subclass of definable bijections in the $\Gamma$-sort, which in general are piecewise $\mgl_k(\Q)$-transformations.
\end{rem}

\begin{lem}\label{G:red}
For all $I \in \Gamma[k]$ there are finitely many definable sets $H_i \sub \Gamma^{n_i}$ with $\dim_{\Gamma}(H_i) = n_i \leq k$ such that $[I] = \sum_i [H_i] [e]^{k -n_i}$ in $\gsk \Gamma[k]$.
\end{lem}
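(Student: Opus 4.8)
The plan is to reduce an arbitrary object $I \in \Gamma[k]$ to a finite disjoint union of definable subsets $H_i$ of $\Gamma^{n_i}$ each of full $\Gamma$-dimension $n_i$, at the cost of the ``thin'' directions which contribute factors of $[e]$. First I would invoke the cell decomposition for the \omin-minimal $\Gamma$-sort (via \cite[Theorem~B]{Dries:tcon:97} and the $o$-minimal cell decomposition of \cite[\S~3]{dries:1998}): every definable $D \sub \Gamma^k$ is a finite disjoint union of cells, so by additivity of $[\cdot]$ in $\gsk \Gamma[k]$ it suffices to treat a single cell $C \sub \Gamma^k$. For a cell, some $k - n$ of the coordinates are determined by definable (piecewise $\KKK$-linear, modulo sign) functions of the remaining $n$ coordinates, where $n = \dim_{\Gamma}(C)$; reindexing via an $\mgl_k(\KKK) \times \Z_2$-transformation — which is a legitimate $\Gamma[k]$-morphism by Lemma~\ref{gam:pulback:mono} — I may assume $\pr_{\leq n} \rest C$ is injective with image $H \sub \Gamma^n$ of dimension $n$, and the remaining coordinates are graphs of definable functions on $H$.

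The key step is then to detach the graph part. The point is that $C$ is definably bijective, over $\gsk \Gamma[k]$, to $H \times \{q\}^{k-n}$ for any fixed definable point $q \in \Gamma$ (e.g. $q = 1$): the map sending $(\gamma_1,\dots,\gamma_n, f_{n+1}(\bar\gamma),\dots,f_k(\bar\gamma))$ to $(\gamma_1,\dots,\gamma_n, 1,\dots,1)$ is a definable bijection $C \fun H \times \{1\}^{k-n}$, and this is precisely a morphism in $\Gamma[k]$ because both source and target sit inside $\Gamma^k$. Hence $[C] = [H \times \{1\}^{k-n}] = [H][e]^{k-n}$ in $\gsk \Gamma[k]$, using the definition of $[e] = [\{1\}] \in \gsk\Gamma[1]$ from Notation~\ref{nota:RV:short} and the compatibility of multiplication in $\gsk \Gamma[*]$ with cartesian product in the graded sense. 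Summing over the finitely many cells yields the claimed identity $[I] = \sum_i [H_i][e]^{k - n_i}$ with $\dim_{\Gamma}(H_i) = n_i \leq k$.

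The main obstacle I anticipate is bookkeeping around the grading and the sign ambiguity rather than any deep difficulty: one must be careful that the ambient dimension $k$ is preserved throughout (so that all the bijections produced really are $\Gamma[k]$-morphisms and not merely abstract bijections), and that the $\KKK$-linear coordinate changes used to put a cell in ``graph over the first $n$ coordinates'' form are genuinely definable over the relevant parameters and invertible — which is exactly what Lemma~\ref{gam:pulback:mono} guarantees, including the modulo-sign caveat. A secondary point worth a sentence is that the functions $f_{n+1}, \dots, f_k$ defining a cell may only be definably piecewise linear, so one may need to refine the cell decomposition further so that each is (on each piece) a single affine function; this is harmless since it only subdivides into finitely many more cells, and the argument above applies to each.
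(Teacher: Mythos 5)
Your proof is correct and follows essentially the same route as the paper's: both rest on the observation that a definable set of $\Gamma$-dimension $n<k$ is piecewise the graph of a definable function over a set in $\Gamma^{n}$, and that such a graph is definably bijective --- hence $\Gamma[k]$-isomorphic --- to its base times a point, contributing the factor $[e]^{k-n}$. The only difference is organizational (the paper peels off one coordinate at a time by induction on $k$ via \cite[Theorem~B]{Dries:tcon:97}, while you do it in one step via cell decomposition), and your caution about piecewise linearity and Lemma~\ref{gam:pulback:mono} is unnecessary, since by Definition~\ref{def:Ga:cat} every definable bijection is already a $\Gamma[k]$-morphism.
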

\begin{proof}
We do induction on $k$. The base case $k = 0$ is trivial. For the inductive step $k > 0$, the claim is also trivial if $\dim_{\Gamma}(I) = k$; so let us assume that $\dim_{\Gamma}(I) < k$. By \cite[Theorem~B]{Dries:tcon:97}, we may partition $I$ into finitely many definable pieces $I_i$ such that each $I_i$ is the graph of a definable function $I'_i \fun \Gamma$, where $I'_i \in \Gamma[k-1]$. So the claim simply follows from the inductive hypothesis.
\end{proof}

\begin{rem}\label{gam:res}
There is a natural map $\Gamma[*] \fun \RV[*]$ given by $I \efun \bm I \coloneqq (I^\sharp, \id)$ (see Notation~\ref{gamma:what}). By Lemma~\ref{gam:pulback:mono}, this map induces a homomorphism $\gsk \Gamma[*] \fun \gsk \RV[*]$ of graded semirings. By \cite[Theorem~A]{Dries:tcon:97} and Theorem~\ref{groth:omin}, this homomorphism restricts to an injective homomorphism $\gsk \Gamma^{c}[*] \fun \gsk \RES[*]$ of graded semirings. There is also a similar semiring homomorphism $\gsk \Gamma^c[*] \fun \gsk \RES$, but it is not injective.
\end{rem}

\begin{ques}
Is the homomorphism $\gsk \Gamma[*] \fun \gsk \RV[*]$ above injective?
\end{ques}

Now, the map from $\gsk \RES[*] \times \gsk \Gamma[*]$ to $\gsk \RV[*]$ naturally determined by the assignment
\[
([(U, f)], [I]) \efun [(U \times I^\sharp, f \times \id)]
\]
is well-defined and is clearly $\gsk \Gamma^{c}[*]$-bilinear. Hence it induces a $\gsk \Gamma^{c}[*]$-linear map
\[
\bb D: \gsk \RES[*] \otimes_{\gsk \Gamma^{c}[*]} \gsk \Gamma[*] \fun \gsk \RV[*],
\]
which is a homomorphism of graded semirings. We shall abbreviate ``$\otimes_{\gsk \Gamma^{c}[*]}$'' as ``$\otimes$'' below. Note that, by the universal mapping property, groupifying a tensor product in the category of $\gsk \Gamma^{c}[*]$-semimodules is the same, up to isomorphism, as taking the corresponding tensor product in the category of $\ggk \Gamma^{c}[*]$-modules. We will show that $\bb D$ is indeed an isomorphism of graded semirings.

\subsection{The tensor expression}

Heuristically, $\RV$ may be viewed as a union of infinitely many one-dimensional vector spaces over $\K$. Weak \omin-minimality states that every definable subset of $\RV$ is nontrivial only within finitely many such  one-dimensional spaces. The tensor expression of $\gsk \RV[*]$ we seek may be thought of as a generalization of this phenomenon to all definable sets in $\RV$.

\begin{lem}\label{resg:decom}
Let $A \sub \RV^k \times \Gamma^l$ be an $\alpha$-definable set, where $\alpha \in \Gamma$. Set $\pr_{\leq k}(A) = U$ and suppose that $\vrv(U)$ is finite. Then there is an $\alpha$-definable finite partition $U_i$ of $U$ such that, for each $i$ and all $t, t' \in U_i$, we have $A_t = A_{t'}$.
\end{lem}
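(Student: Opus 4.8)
The approach is to exploit the orthogonality between the residue-field-type sorts and the value group: the fibre $A_{t}$ can depend on $t\in U$ only through a definable choice of finitely many parameters in $\Gamma$, and any definable map into $\Gamma$ whose domain meets only finitely many fibres of $\vrv$ has finite image. As a preliminary I would record this multivariate form of orthogonality: if $W$ is a definable set with $\vrv(W)$ finite and $h:W\fun\Gamma^{m}$ is definable, then $h(W)$ is finite. Splitting $W$ according to the finitely many values of $\vrv$ and trivialising each $\vrv$-fibre as a subset of a cartesian power of $\K$ by an affine chart (Remark~\ref{rem:K:aff}, using auxiliary parameters), this reduces to the statement that a definable function $\K^{i}\fun\Gamma$ has finite image, which follows from Lemma~\ref{gk:ortho} by induction on $i$, using definable choice in the o-minimal $\K$- and $\Gamma$-sorts together with compactness.

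Next, since the structure induced on the $\Gamma$-sort is exactly the o-minimal theory of nontrivially ordered $\KKK$-vector spaces (\cite[Theorem~B]{Dries:tcon:97}), the sort $\Gamma$ is stably embedded and has definable Skolem functions. Applying this to the $\alpha$-definable set $A\sub\RV^{k}\times\Gamma^{l}$, I would obtain a uniform parametrisation of its fibres inside $\Gamma$: a formula $\psi(x;w)$ in the language of the $\Gamma$-sort, with $x$ ranging over $\Gamma^{l}$ and $w$ a fixed tuple of $\Gamma$-variables of length $m$, together with an $\alpha$-definable function $\mu:U\fun\Gamma^{m}$ such that $A_{t}=\psi(\Gamma^{l};\mu(t))$ for every $t\in U$.

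The conclusion is then immediate. By hypothesis $\vrv(U)$ is finite, so by the preliminary step $\mu(U)=\{w_{1},\dots,w_{r}\}$ is finite; being a finite $\alpha$-definable subset of $\Gamma^{m}$ in the o-minimal $\Gamma$-sort, each $w_{j}$ is $\alpha$-definable. Setting $U_{j}=\mu^{-1}(w_{j})$ yields an $\alpha$-definable finite partition of $U$, and for every $t\in U_{j}$ we have $A_{t}=\psi(\Gamma^{l};w_{j})$, which does not depend on $t$; this is the required partition.

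The one step that needs genuine care is the uniform parametrisation, i.e.\ passing from pointwise stable embeddedness of $\Gamma$ to a single formula $\psi$ and a definable reparametrising map $\mu$; this is standard once \cite[Theorem~B]{Dries:tcon:97} is available, but is worth spelling out. If one prefers to bypass it, the same result can be proved by induction on $l$: a uniform cell decomposition of $A$ in the last $\Gamma$-coordinate reduces the problem to graphs of definable functions $g:D\fun\Gamma$ with $D\sub\RV^{k}\times\Gamma^{l-1}$; the inductive hypothesis rectangularises $D$, and along each $\K$-torsor direction the breakpoints and constants describing $g$ take only finitely many values by the orthogonality recorded above, so that $g$ — and hence its graph — becomes constant on each block of a finite $\alpha$-definable refinement. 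In either presentation the substantive input is this orthogonality together with the o-minimal structure theory of $\Gamma$, and the remainder is bookkeeping.
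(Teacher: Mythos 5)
Your proof is correct and follows essentially the same route as the paper's: the paper's two-line argument observes that by stable embeddedness each fibre $A_t$ is already $(\vrv(t),\alpha)$-definable in the $\Gamma$-sort alone, and since $\vrv(U)$ is finite the partition then follows by compactness. Your uniform parametrisation via an $\alpha$-definable $\mu:U\fun\Gamma^m$ with finite image is an explicit unwinding of that compactness step, with the orthogonality input (Lemma~\ref{gk:ortho}) made visible rather than absorbed into the phrase ``$(\vrv(t),\alpha)$-definable''.
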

\begin{proof}
By stable embeddedness, for every $t \in U$, $A_t$ is $(\vrv(t), \alpha)$-definable in the $\Gamma$-sort alone. Since $\vrv(U)$ is finite, the assertion simply follows from compactness.
\end{proof}

\begin{lem}\label{gam:tup:red}
Let $\beta$, $\gamma = (\gamma_1, \ldots, \gamma_m)$ be finite tuples in $\Gamma$. If there is a $\beta$-definable nonempty proper subset of $\gamma^\sharp$ then, for some $\gamma_i$ and every $t \in \gamma^\sharp_{\tilde i}$, $\gamma^\sharp_i$ contains a $t$-definable point. Consequently, if $U$ is such a  subset of $\gamma^\sharp$ then either $U$ contains a definable point or there exists a subtuple  $\gamma_* \sub  \gamma$ such that $\pr_{\gamma_*}(U) = \gamma^\sharp_*$, where $\pr_{\gamma_*}$ denotes the obvious coordinate projection, and there is a $\beta$-definable function from $\gamma^\sharp_*$ into $(\gamma \mi \gamma_*)^\sharp$.
\end{lem}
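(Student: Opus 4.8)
The plan is to prove the two assertions in turn, each by induction on $m$: first the opening sentence — call it (A) — and then the ``Consequently'' clause — call it (B) — reading ``$t$-definable'' as ``definable over $\mdl S\langle\beta,t\rangle$'' and ``definable point'' in (B) as ``$\beta$-definable point''. Note at the outset that, as $U\neq\0$ is $\beta$-definable and $U\sub\gamma^\sharp$, the tuple $\gamma=\vrv(u)$ ($u\in U$) is $\beta$-definable, hence so is $\gamma^\sharp$ and each of its coordinate sub-blocks; this is used tacitly. For (A) with $m=1$: by Remark~\ref{omin:res} the structure induced on $\gamma_1^\sharp$ (a coset of $\K^+$ in $\RV$) is o-minimal, so $U$ is a finite union of points and intervals of $\gamma_1^\sharp$; since $U$ is neither empty nor all of $\gamma_1^\sharp$, the set $\cl(U)\mi\ito(U)$ (the boundary of $U$ inside $\gamma_1^\sharp$) is nonempty, finite and $\beta$-definable, and its least element is a $\beta$-definable point of $\gamma_1^\sharp$. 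For $m\geq2$, put $\gamma'=(\gamma_1,\ldots,\gamma_{m-1})$ and $V=\pr_{<m}(U)\sub\gamma'^\sharp$. If $V\subsetneq\gamma'^\sharp$, the inductive hypothesis applied to $V$ supplies an index $i<m$; since $\gamma^\sharp_{\tilde i}=\gamma'^\sharp_{\tilde i}\times\gamma_m^\sharp$, the same $i$ witnesses (A) for $U$. If $V=\gamma'^\sharp$, every fibre $U^s=\{t:(s,t)\in U\}$, $s\in\gamma'^\sharp$, is nonempty; set $W=\{s\in\gamma'^\sharp:U^s=\gamma_m^\sharp\}$, which is $\beta$-definable and (as $U$ is proper) $\neq\gamma'^\sharp$. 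If $W=\0$, each $U^s$ is a proper nonempty $(\beta,s)$-definable subset of $\gamma_m^\sharp$, so the case $m=1$ gives a $(\beta,s)$-definable point of $\gamma_m^\sharp$ for every $s\in\gamma'^\sharp=\gamma^\sharp_{\tilde m}$, i.e.\ $i=m$ works; and if $\0\neq W\subsetneq\gamma'^\sharp$, the inductive hypothesis applied to $W$ again yields a suitable $i<m$.

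For (B), use (A) and, after reindexing, assume the coordinate supplied by (A) is the $m$-th; a routine compactness argument (finitely many $\beta$-definable partial functions cover $\gamma^\sharp_{\tilde m}=\gamma'^\sharp$, then patch) converts the ``$(\beta,t)$-definable point'' statement into a single $\beta$-definable function $\phi:\gamma'^\sharp\fun\gamma_m^\sharp$. We induct on $m$. For $m=1$: by (A) the torsor $\gamma_1^\sharp$ has a $\beta$-definable point, so it is $\beta$-definably order-isomorphic to $\K^+\sub\K$, and then the proper nonempty $\beta$-definable subset $U$ visibly contains a $\beta$-definable point (an isolated point of $U$, or a point strictly between two $\beta$-definable endpoints of an interval component). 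For $m\geq2$: if $\pr_{<m}(U)=\gamma'^\sharp$, take $\gamma_*=\gamma'$ together with $\phi$, giving the second alternative. Otherwise $\pr_{<m}(U)$ is a proper nonempty $\beta$-definable subset of $\gamma'^\sharp$; apply the inductive hypothesis. If it produces a $\beta$-definable point $q\in\pr_{<m}(U)$, consider $\tilde U_q=\{t\in\gamma_m^\sharp:(q,t)\in U\}$: if $\tilde U_q\neq\gamma_m^\sharp$ the case $m=1$ gives a $\beta$-definable point of $\tilde U_q$, hence of $U$ (first alternative); if $\tilde U_q=\gamma_m^\sharp$ then $\{q\}\times\gamma_m^\sharp\sub U$, so $\gamma_*=(\gamma_m)$ with the constant function $\gamma_m^\sharp\fun\gamma'^\sharp$, $t\mapsto q$, gives the second alternative. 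If instead the inductive hypothesis produces a subtuple $\gamma_*\subsetneq\gamma'$ with $\pr_{\gamma_*}(\pr_{<m}(U))=\gamma_*^\sharp$ and a $\beta$-definable $g:\gamma_*^\sharp\fun(\gamma'\mi\gamma_*)^\sharp$, then $\pr_{\gamma_*}(U)=\gamma_*^\sharp$ and $s\mapsto(g(s),\phi(s,g(s)))$ is a $\beta$-definable function $\gamma_*^\sharp\fun(\gamma\mi\gamma_*)^\sharp$, again the second alternative.

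I expect the main obstacle to be the inductive bookkeeping in (B): one must verify that each possible output of the inductive hypothesis — a definable point of the projection (which may or may not lift to a definable point of $U$), or a smaller distinguished subtuple — can be propagated to stage $m$, and, in the last branch, one must reconstruct the function on the dropped coordinate $\gamma_m$ by composing the inductive-hypothesis function $g$ with the function $\phi$ extracted from (A). By contrast, (A) and the two base cases are routine: they reduce, via Remark~\ref{omin:res}, to o-minimality of the individual fibres $\gamma_i^\sharp$ together with compactness.
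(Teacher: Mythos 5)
Your argument for the first claim is the same induction on $m$ as the paper's (the paper distinguishes the first coordinate where you distinguish the last, and works with the complementary fibre condition $U_t\neq\gamma_1^\sharp$), and your treatment of the ``Consequently'' clause is a correct filling-in of what the paper dismisses with ``follows easily from the first.'' The only point worth making explicit is that your reading of ``$t$-definable'' as ``definable over $\mdl S\la\beta,t\ra$'' (and of ``definable point'' as ``$\beta$-definable point'') is harmless precisely because of the second assertion of Lemma~\ref{RV:no:point} — a point of $\RV$ definable from the extra $\Gamma$-parameters $\beta$ is already definable without them — which is exactly why the paper cites that lemma in its base case.
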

\begin{proof}
For the first claim we do induction on $m$. The base case $m = 1$ simply follows from \omin-minimality in the $\K$-sort and Lemma~\ref{RV:no:point}. For the inductive step $m > 1$, let $U$ be a $\beta$-definable nonempty proper subset of $\gamma^\sharp$. By the inductive hypothesis, we may assume
\[
\{ t \in \pr_{>1}(U) : U_t \neq \gamma^\sharp_1\} = \gamma^\sharp_{> 1}.
\]
Then $\gamma_1$ is as desired.

The second claim follows easily from the first.
\end{proof}

%For notational ease, we shall write products of the form $(\K^+)^{n} \times A$ simply as $\ekp n A$.

\begin{lem}\label{RV:decom:RES:G}
Let $U \sub \RV^m$ be a definable set. Then there are finitely many definable sets of the form $V_i \times D_i \sub (\K^+)^{k_i} \times \Gamma^{l_i}$ such that $k_i + l_i = m$ for all $i$ and $[U] = \sum_i [V_i \times D_i^\sharp]$ in $\gsk \RV[*]$.
\end{lem}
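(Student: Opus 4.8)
The plan is to argue by induction on $m$, using the scissor relation to cut $U$ into finitely many definable pieces and using the freedom to replace each piece by a definable bijective image. The case $m=0$ is trivial, so assume $m\geq 1$.

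Write $\vrv_m\colon\RV^m\fun\Gamma^m$ for the coordinatewise valuation map and set $I=\vrv_m(U)$. Partition $I$ into $I_0=\{\bar\gamma\in I: U_{\bar\gamma}=\bar\gamma^\sharp\}$ and $I_1=\{\bar\gamma\in I: \0\neq U_{\bar\gamma}\subsetneq\bar\gamma^\sharp\}$. Over $I_0$ the set $U$ restricts to the $\RV$-pullback $I_0^\sharp$, which is already of the required shape with $k_i=0$, $D_i=I_0\sub\Gamma^m$. The work is in $U\cap I_1^\sharp$. For each $\bar\gamma\in I_1$, Lemma~\ref{gam:tup:red} applies to the $\bar\gamma$-definable proper nonempty subset $U_{\bar\gamma}$ of $\bar\gamma^\sharp$ and produces a coordinate $i$ whose fibre $\gamma_i^\sharp$ carries an $\RV$-definable point relative to the remaining coordinates, or else a proper subtuple of the $\Gamma$-coordinates of $\bar\gamma$ to which the others are functionally slaved. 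Two facts drive the reduction: by Lemma~\ref{RV:no:point}, an element of $\RV$ definable over $\Gamma$-parameters is already $\mdl S$-definable; and by Lemma~\ref{resg:decom}, once the $\RV$-part of a set in $\RV^k\times\Gamma^l$ has finite $\vrv$-image, the $\Gamma$-fibres are locally constant. In the case $m=1$ the first fact already forces $I_1\sub\dcl(\0)\cap\Gamma$, hence (as $\mdl S$ is small and $\Gamma$ is \omin-minimal) $I_1$ is finite; then $U\cap I_1^\sharp$ has finite $\vrv$-image, lies in $\RES$, and by Theorem~\ref{groth:omin} together with the explicit description in Remark~\ref{expl:res} its class is a sum of classes of definable subsets of $\K^+$, i.e.\ of terms with $l_i=0$. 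This settles $m=1$; note that for $m\geq 2$ the locus $I_1$ need not be finite — only a ``residue-field-like'' subtuple of the fibre coordinates behaves rigidly — which is exactly why more is needed.

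For $m\geq 2$, working on $U\cap I_1^\sharp$: after a finite definable partition (legitimate by compactness, as in the proof of Corollary~\ref{part:rv:cons}) and a coordinate permutation, we may assume the distinguished coordinate of Lemma~\ref{gam:tup:red} is the $m$-th one on the piece at hand. Using the accompanying $\RV$-definable point, one straightens the $m$-th coordinate by an $\vrv$-compatible definable bijection turning it into a subset of $\K^+$, leaving the first $m-1$ $\RV$-coordinates untouched (in the ``functionally slaved'' alternative one instead projects the $m$-th coordinate away, keeping track of the defining function so as to remain in ambient dimension $m$). The inductive hypothesis applied to the projection $W\coloneqq\pr_{<m}$ of the piece gives $[W]=\sum_j[V_j\times D_j^\sharp]$ with $V_j\sub(\K^+)^{k_j}$, $D_j\sub\Gamma^{l_j}$, $k_j+l_j=m-1$. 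On each term $V_j\times D_j^\sharp$ the straightened $m$-th coordinate defines a subset of $(\K^+)^{k_j}\times\K^+\times\Gamma^{l_j}$ whose projection to the $\K^+$-coordinates has finite $\vrv$-image, so Lemma~\ref{resg:decom} produces a further finite definable partition over which this set has the form $P\times D'^\sharp$ with $P\sub(\K^+)^{k_j+1}$, $D'\sub\Gamma^{l_j}$, and $(k_j+1)+l_j=m$. Collecting the finitely many resulting pieces yields the claimed decomposition.

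The delicate points are all in the inductive step: trimming the two alternatives of Lemma~\ref{gam:tup:red} to a single finite definable partition of $I_1$ with a fixed distinguished coordinate; checking that the straightening maps are genuinely definable and commute with $\vrv$ (this is where Lemmas~\ref{RV:no:point} and~\ref{resg:decom} are used); handling the ``functionally slaved'' branch while respecting the ambient-dimension bookkeeping $k_i+l_i=m$; and reassembling the finitely many product pieces, via compactness, into one finite sum of classes in $\gsk\RV[*]$. I expect the compactness-based reassembly and the slaved branch to be the main obstacles.
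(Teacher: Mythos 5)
Your overall strategy is the paper's: induct on $m$, split off the fibres $U_{\bar\gamma}$ that are full pullbacks, use Lemma~\ref{gam:tup:red} (plus Lemma~\ref{RV:no:point}) to straighten a distinguished coordinate into $\K^+$ by dividing by a definable point, and finish with Lemma~\ref{resg:decom}. The $m=1$ case is fine (if more roundabout than the paper's appeal to weak \omin-minimality). But the inductive step has a genuine gap, located exactly where you suspected: in the reassembly after the ``slaved''/straightened coordinate is split off. You apply the inductive hypothesis to the base $W=\pr_{<m}$ and only afterwards try to account for the straightened $m$-th coordinate. After transporting along the bijections $W_j\cong V_j\times D_j^\sharp$, what you are left with is a definable family of subsets $B_{(v,d)}\sub\K^+$ parameterized by $(v,d)\in V_j\times D_j^\sharp$. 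This is a subset of $(\K^+)^{k_j+1}\times\RV^{l_j}$, \emph{not} of $(\K^+)^{k_j+1}\times\Gamma^{l_j}$, and Lemma~\ref{resg:decom} requires the latter: its conclusion is that the genuinely $\Gamma$-valued fibres are constant on a partition of the $\K^+$-part. Here the dependence of $B_{(v,d)}$ on $d$ need not factor through $\vrv(d)$. For instance, if $\delta\in\Gamma(\mdl S)$ and $p\in\delta^\sharp$ is a definable point, the family $d\efun (0,d/p)\sub\K^+$ for $d\in\delta^\sharp$ varies genuinely within a single $\vrv$-fibre. So the set you feed to Lemma~\ref{resg:decom} is not of the required shape, and establishing that such a family is nonetheless piecewise a product up to definable bijection is a substantive claim (it is essentially the trivialization issue underlying Lemma~\ref{Gtamp}), not something the cited lemma delivers.

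The repair is to run the recursion in the opposite direction. After straightening the distinguished coordinate into $\K^+$, apply the inductive hypothesis to the fibres over the new $\K^+$-coordinate (uniformly, by compactness), rather than to the projection away from it; equivalently, keep applying Lemma~\ref{gam:tup:red} inside each $\bar\gamma^\sharp$ until the coordinates that have not been translated into $\K^+$ form a full polydisc $\gamma^\sharp$. Then each piece is a union of sets $t\times\gamma^\sharp$ with $t\in(\K^+)^k$, i.e.\ a subset of $(\K^+)^k\times\Gamma^l$ whose $\Gamma$-fibres are honest pullbacks and whose $\K^+$-part has finite $\vrv$-image; Lemma~\ref{resg:decom} then applies as intended and makes the $\Gamma$-fibres constant on a finite partition of the $\K^+$-part. (A minor further point: only the first claim of Lemma~\ref{gam:tup:red} is needed here; the ``functionally slaved'' alternative of the second claim does not have to be handled separately.)
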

\begin{proof}
The case $m=1$ is an immediate consequence of weak \omin-minimality in the $\RV$-sort. For the case $m>1$, by Lemma~\ref{gam:tup:red}, compactness, and a routine induction on $m$, over a definable finite partition of $U$, we may assume that $U$ is a union of sets of the form $t \times \gamma^\sharp$, where $t \in (\K^+)^k$, $\gamma \in \Gamma^l$, and $k+l=m$. Then the assertion follows from Lemma~\ref{resg:decom}.
\end{proof}

Let $Q$ be a set of parameters in $\mdl R^{\bullet}_{\rv}$. We say that a $Q$-definable set $I \sub \Gamma^m$ is \emph{$Q$-reducible} if $I^\sharp$ is $Q$-definably bijective to $\K^+ \times I_{\tilde i}^\sharp$, where $i \in [m]$ and $I_{\tilde i} = \pr_{\tilde i}(I)$. For every $t \in (\K^+)^{n}$ and every $\alpha \in \Gamma^m$, $\alpha$ is $(t,\alpha)$-reducible if and only if, by  Lemma~\ref{gam:tup:red}, there is a $(t,\alpha)$-definable nonempty proper subset of $\alpha^\sharp$ if and only if, by  Lemma~\ref{gam:tup:red} again, there is an $\alpha$-definable set $U \sub (\K^+)^{n}$ containing $t$ such that $\alpha$ is $(u,\alpha)$-reducible for every $u \in U$ if and only if, by \omin-minimality in the $\K$-sort and Lemma~\ref{RV:no:point}, $\alpha$ is $\alpha$-reducible.

We say that a definable set $A$ in $\RV$ is \emph{$\Gamma$-tamped} of \emph{height} $l$ if there are $U \in \RES[k]$ and $I \in \Gamma[l]$ with $\dim_{\Gamma}(I) = l$ such that $A = U \times I^\sharp$. In that case, there is only one way to write $A$ as such a product, and if $B = V \times J^\sharp \sub A$ is also $\Gamma$-tamped then the coordinates occupied by $J^\sharp$ are also occupied by $I^\sharp$, in particular, $\dim_{\Gamma}(J) = l$ if and only if $V \sub U$ and $J \sub I$.

\begin{lem}\label{Gtamp}
Let $A = U \times I^\sharp$, $B = V \times J^\sharp$ be $\Gamma$-tamped sets of the same height $l$, where $U$, $V$ are sets in $\K^+$. Let $f$ be a definable bijection whose domain contains $A$ and whose range contains $ B$. Suppose that $B \mi f( A)$, $A \mi f^{-1}(B)$ do not have $\Gamma$-tamped subsets of height $l$. Then there are finitely many $\Gamma$-tamped sets $A_i = U_i \times I_i^\sharp \sub U \times I^\sharp$ and $B_i = V_i \times J_i^\sharp \sub V \times J^\sharp$ such that
\begin{itemize}
 \item $ A \mi \bigcup_i  A_i$ and $ B \mi \bigcup_i  B_i$ do not have $\Gamma$-tamped subsets of height $l$,
  \item each restriction $f \rest A_i$ is of the form $p_i \times q_i$, where $p_i : U_i \fun V_i$, $q_i : I_i^\sharp \fun J_i^\sharp$ are bijections and the latter $\vrv$-contracts to a $\Gamma[*]$-morphism $q_{i \downarrow} : I_i \fun J_i$.
\end{itemize}
\end{lem}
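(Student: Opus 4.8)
The plan is to strip off the parts of $f$ that miss their target, then to separate the $\Gamma$-direction from the $\RES$-direction using orthogonality of $\K$ and $\Gamma$, and finally to collect the pieces by an induction that keeps the discarded part small. For the reduction, put $A' = A \cap f^{-1}(B)$ and $B' = f(A) \cap B$; then $A \mi A' = A \mi f^{-1}(B)$ and $B \mi B' = B \mi f(A)$ contain no $\Gamma$-tamped subset of height $l$ by hypothesis, and $f$ restricts to a bijection $A' \fun B'$, so it suffices to treat this restriction and to dump $A \mi A'$, $B \mi B'$ into the leftover. Thus I may assume $f : A \fun B$ is onto.

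For the $\Gamma$-direction, fix $\gamma \in I$: the set $U \times \gamma^\sharp$ is $\gamma$-definable and $\vrv$ takes only the value $\gamma$ on its last $l$ coordinates, so composing $f$ with the projection $B = V \times J^\sharp \fun J^\sharp$ and then with $\vrv$ yields a definable map $U \times \gamma^\sharp \fun J \sub \Gamma^l$ of finite image, by Lemma~\ref{gk:ortho} and its multivariate consequence (equivalently, full orthogonality of $\K$ and $\Gamma$). Applying the same to $f^{-1}$, the incidence relation ``$f(U \times \gamma^\sharp)$ meets $V \times \delta^\sharp$'' between $I$ and $J$ is finite-to-finite and total in both directions; using \omin-minimal dimension theory in the $\Gamma$-sort (Remark~\ref{rem:RV:weako}, \cite[Theorem~B]{Dries:tcon:97}) I would partition $I$ and $J$ so that, off a subset of $I$ of $\Gamma$-dimension $< l$, this relation is the graph of a definable bijection $\bar g$; on each piece $I_i$ set $J_i = \bar g(I_i)$ and $q_{i \downarrow} = \bar g \rest I_i$. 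By Lemma~\ref{gam:pulback:mono}, $q_{i \downarrow}$ is piecewise an $\mgl_l(\KKK) \times \Z_2$-transformation, hence a $\vrv$-contraction, so it lifts canonically to a bijection $q_i : I_i^\sharp \fun J_i^\sharp$. A fibrewise application of Lemma~\ref{resg:decom} (with parameter $\gamma$) together with compactness lets me refine the partition so that for $\gamma \in I_i$ the map $f$ carries $U \times \gamma^\sharp$ into $V \times q_{i \downarrow}(\gamma)^\sharp$, the ``spillover'' being absorbed into the leftover; bijectivity then forces $f$ to restrict to an $\RES$-bijection $U \times \gamma^\sharp \fun V \times q_{i \downarrow}(\gamma)^\sharp$ for each such $\gamma$. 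Feeding this family, uniformly in $\gamma \in I_i$, into \omin-minimal cell decomposition in the $\RES$-sort and absorbing the exceptional loci of smaller $\RES$-dimension into the leftover (refining $U$ and $I_i$ if necessary) produces $A_i = U_i \times I_i^\sharp$, $B_i = V_i \times J_i^\sharp$ with $f \rest A_i$ in the asserted product form $p_i \times q_i$, where $q_i$ $\vrv$-contracts to the $\Gamma[*]$-morphism $q_{i \downarrow}$.

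The remaining point, which I expect to be the main obstacle, is the first bullet: that $A \mi \bigcup_i A_i$, and symmetrically $B \mi \bigcup_i B_i$, has no $\Gamma$-tamped subset of height $l$. The leftover is assembled from $A \mi f^{-1}(B)$ (small by hypothesis), the $\Gamma$-dimension-$< l$ locus where $\bar g$ fails to be a bijection, the lower-$\RES$-dimensional loci from the cell decomposition, and the ``spillover'' set where $f$ does not send a whole disc $U \times \gamma^\sharp$ into a single $V \times \delta^\sharp$. The last is the delicate one: a priori it could be large, and bounding it is precisely where the hypotheses on $B \mi f(A)$ and $A \mi f^{-1}(B)$ are used --- if $f$ split the discs of a $\Gamma$-tamped subset of height $l$ nontrivially while still being onto the clean product $B$ up to a small set, one would be forced either to collapse two points of $A$ to one point of $B$ or to leave a $\Gamma$-tamped subset of height $l$ of $B$ uncovered, contradicting injectivity or the hypothesis. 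I would set this up as an induction on the pair $(\dim_{\Gamma} I, \dim^{\fib}_{\RV} A)$ ordered lexicographically: one round of the analysis above produces the $A_i$ together with a leftover that, by Lemmas~\ref{gam:tup:red} and~\ref{RV:decom:RES:G}, is covered by finitely many $\Gamma$-tamped sets of height $< l$ or of strictly smaller such rank, to each of which the lemma applies by the inductive hypothesis; unioning all the pieces so produced gives the statement, the $\vrv$-contraction clause for each $q_{i \downarrow}$ being automatic from Lemma~\ref{gam:pulback:mono}.
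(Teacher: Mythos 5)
There is a genuine gap, and it sits exactly at the point you flag as ``the delicate one.'' Your argument for why the spillover locus --- the set of $(t,\alpha)$ for which $f$ splits the fiber $t \times \alpha^\sharp$ (or your coarser block $U \times \gamma^\sharp$) across several $\Gamma$-levels of $B$ --- contains no $\Gamma$-tamped subset of height $l$ is not a proof. The dichotomy ``either collapse two points or leave a height-$l$ tamped subset of $B$ uncovered'' does not follow: $f$ can cut every fiber $t\times\alpha^\sharp$ into several definable pieces landing in distinct $s\times\beta^\sharp$'s, and these pieces, collected over varying $\alpha$, can reassemble into complete fibers of $B$, so $f$ stays a bijection onto $B$ with no uncovered tamped set and no collapsing. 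Nor does your induction on $(\dim_{\Gamma} I, \dim^{\fib}_{\RV} A)$ close this: you never show the spillover locus has $\dim_{\Gamma} < l$ or is covered by sets of lower height, which is what the inductive step would need. The mechanism the paper uses here is entirely different and is the heart of the proof: if $f(t\times\alpha^\sharp)$ is \emph{not} contained in a single $s\times\beta^\sharp$, then the preimage of one piece is a $(t,\alpha)$-definable nonempty proper subset of $t\times\alpha^\sharp$, so by Lemma~\ref{gam:tup:red} the point $\alpha$ is reducible over $(t,\alpha)$, hence over $\alpha$ alone, hence lies in a reducible subset of $I$; by compactness finitely many reducible subsets $I_i$ cover the bad locus, and reducible sets are precisely (via the proof of Lemma~\ref{gam:red:K}) the parts whose pullbacks split off a $\K^+$ factor and therefore carry no height-$l$ tamped subsets. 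Without the reducibility dichotomy, the first bullet of the conclusion is simply not established.

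A secondary weakness: the product form $f\rest A_i = p_i\times q_i$ does not follow from ``feeding the family into \omin-minimal cell decomposition in the $\RES$-sort.'' You need that the $V$-component of $f(t,x)$ depends only on $t$ and the $J^\sharp$-component only on $x$. The paper gets this by first establishing the exact fiber equality $f(t\times\alpha^\sharp)=s\times\beta^\sharp$ for each individual $t$ and $\alpha$ outside the reducible locus (note: single points $t\in U$, not blocks $U\times\gamma^\sharp$), and then applying Lemma~\ref{resg:decom} to the graph of the induced map $(t,\alpha)\efun(s,\beta)$ to make $\beta$ independent of $t$ over a finite partition. Your appeal to orthogonality (Lemma~\ref{gk:ortho}) and to Lemma~\ref{gam:pulback:mono} for the $\vrv$-contraction of $q_{i\downarrow}$ is fine as far as it goes, but the finite-to-finite incidence relation you build between $I$ and $J$ is a detour that neither yields the product splitting nor controls the leftover.
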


Let $t \times \alpha^\sharp \sub A$. If $t \times \alpha^\sharp \sub A \mi f^{-1}(B)$ then, by Lemma~\ref{resg:decom}, it is contained in a definable set  $U' \times I'^\sharp \sub A \mi f^{-1}(B)$ with $U' \sub U$ and $I' \sub I$. Since $A \mi f^{-1}(B)$ does not have $\Gamma$-tamped subsets of height $l$, we must have $\dim_{\Gamma}(I') < l$. It follows from (the proof of) Lemma~\ref{gam:red:K} that $I'$ is piecewise reducible, which implies that $\alpha$ is $\alpha$-reducible. At any rate, if $\alpha$ is $(t,\alpha)$-reducible then $\alpha$ is $\alpha$-reducible and hence there is a reducible subset of $I$ that contains $\alpha$.

\begin{proof}
Remove all the reducible subsets of $I$ from $I$ and call the resulting set $\bar I$; similarly for $\bar J$. Then, for all $t \in U$ and all $\alpha \in \bar I$, $f(t \times \alpha^\sharp)$ must be contained in a set of the form $s \times \beta^\sharp$, for otherwise it would have a $(t,\alpha)$-definable nonempty proper subset and hence would be $(t,\alpha)$-reducible. In fact, $f(t \times \alpha^\sharp) = s \times \beta^\sharp$, for otherwise $\beta$ is $(t,\alpha)$-reducible and hence, by \omin-minimality in the $\K$-sort and the assumption $\dim_{\Gamma}(I) = \dim_{\Gamma}(J) = l$, a $(t,\alpha)$-definable subset of $\alpha^\sharp$ can be easily constructed. For the same reason, we must actually have $\beta \in \bar J$. It follows that $f(U \times \bar I^\sharp) =  V \times \bar J^\sharp$. Then, by compactness, there are finitely many reducible subsets $I_i$ of $I$ such that, for all $t \in U$ and all $\alpha \in I_* = I \mi \bigcup_i I_i$, $f(t \times \alpha^\sharp) = s \times \beta^\sharp$ for some $s \in V$ and $\beta \in J$. Applying Lemma~\ref{resg:decom} to (the graph of) the function on $U \times I_*$ induced by $f$, the lemma follows.
\end{proof}

\begin{prop}\label{red:D:iso}
$\bb D$ is an isomorphism of graded semirings.
\end{prop}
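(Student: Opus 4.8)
The plan is to prove that $\bb D$ is surjective and injective separately, the injectivity by exhibiting an explicit inverse built from the $\Gamma$-tamped decomposition.

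\textbf{Surjectivity.} By the remark in Notation~\ref{0coor} every object of $\RV[k]$ is isomorphic to one of the form $(W,\pr_{\leq k})$, and by the argument of Remark~\ref{fintoone} we may, up to a finite partition and hence up to the scissor relation, take $\pr_{\leq k}$ injective; thus every class in $\gsk\RV[*]$ is a finite sum of classes $[W]$ with $W\sub\RV^m$ for suitable $m$. By Lemma~\ref{RV:decom:RES:G}, in $\gsk\RV[*]$ each such $[W]$ equals $\sum_i[V_i\times D_i^\sharp]$ with $V_i\sub(\K^+)^{k_i}$, $D_i\sub\Gamma^{l_i}$ and $k_i+l_i=m$. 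Since $\vrv(V_i)$ is a singleton, $(V_i,\id)\in\RES[k_i]$ and $[V_i\times D_i^\sharp]=\bb D([(V_i,\id)]\otimes[D_i])$ in the degree-$m$ component. So $\bb D$ is onto and visibly respects the grading.

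\textbf{Injectivity.} I will construct a graded semiring homomorphism $\bb E:\gsk\RV[*]\fun\gsk\RES[*]\otimes\gsk\Gamma[*]$ with $\bb E\circ\bb D=\id$; since $\bb D$ is surjective this makes it an isomorphism with inverse $\bb E$. Combining Lemma~\ref{RV:decom:RES:G} with Lemma~\ref{gam:red:K} (to rewrite each $D_i^\sharp$ with $\dim_\Gamma D_i<l_i$ as $(\K^+)^{l_i-\dim_\Gamma D_i}\times D_i'^\sharp$), every definable set in $\RV$ admits a finite partition into $\Gamma$-tamped sets. As a $\Gamma$-tamped set $A=U\times I^\sharp$ has a unique such presentation, the assignment $[A]\efun[(U,\id)]\otimes[I]$ is unambiguous on these generators, and we set $\bb E([A])\coloneqq\sum_j[(U_j,\id)]\otimes[I_j]$ for a $\Gamma$-tamped partition $A=\bigsqcup_j U_j\times I_j^\sharp$. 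The substance is that $\bb E$ is well defined, i.e.\ respects (a) identification of definably isomorphic objects and (b) the scissor relations. For (b): two $\Gamma$-tamped partitions of a fixed set have a common refinement into $\Gamma$-tamped pieces, so one only needs that refining a single $\Gamma$-tamped set $U\times I^\sharp$ is compatible with $\efun$, which follows from Lemma~\ref{resg:decom} applied to the $\Gamma$-coordinate slices (so that the refinement can be taken of the form $U'\times I'^\sharp$, the tensor classes adding up correctly). For (a): given a definable bijection $F$ between definable sets in $\RV$, decompose source and target into $\Gamma$-tamped pieces, restrict $F$, and decompose again, reducing to a bijection $F:U\times I^\sharp\fun V\times J^\sharp$ between $\Gamma$-tamped sets; an orthogonality argument via Lemma~\ref{gk:ortho} (any definable map from a $\Gamma$-tamped set of height $l$ into $\Gamma^m$ has image of $\Gamma$-dimension $\le l$, since the $\K$-directions contribute nothing, while $\vrv$ of a $\Gamma$-tamped set realizes its full height) shows the two heights coincide, say both equal $l$, so Lemma~\ref{Gtamp} applies. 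It yields $\Gamma$-tamped pieces $A_i=U_i\times I_i^\sharp\sub A$, $B_i=V_i\times J_i^\sharp\sub B$ with $F\rest A_i=p_i\times q_i$, $p_i:U_i\fun V_i$ an $\RES$-isomorphism and $q_{i\downarrow}:I_i\fun J_i$ a $\Gamma[*]$-isomorphism, and with leftovers $A\mi\bigcup_iA_i$, $B\mi\bigcup_iB_i$ having no $\Gamma$-tamped subset of height $l$ — hence decomposing into $\Gamma$-tamped pieces of strictly smaller height, on which $F$ restricts to a bijection. For each $i$, $[(U_i,\id)]=[(V_i,\id)]$ in $\gsk\RES[*]$ and $[I_i]=[J_i]$ in $\gsk\Gamma[*]$, so $[(U_i,\id)]\otimes[I_i]=[(V_i,\id)]\otimes[J_i]$; the leftovers are handled by induction on the height; summing gives $\bb E([A])=\bb E([B])$. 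Finally $\bb E\circ\bb D=\id$ is immediate on generators $[(U,f)]\otimes[I]$ after reducing $(U,f)$ to standard form, and multiplicativity and grading-compatibility of $\bb E$ are routine, using Lemma~\ref{G:red} and Remark~\ref{gam:res} to reconcile the powers of $[e]$ introduced by the reductions with the $\RES$-grading.

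\textbf{The main obstacle.} The crux is step (a): controlling an \emph{arbitrary} definable bijection between $\Gamma$-tamped sets. This rests entirely on Lemma~\ref{Gtamp} — that such a bijection splits as a product map on a large $\Gamma$-tamped part plus a strictly-lower-height remainder — together with the induction on the height and the (mild but necessary) orthogonality fact that such a bijection preserves the height. Everything else is bookkeeping: matching the gradations on the two sides via Lemma~\ref{G:red} and Remark~\ref{gam:res}, and verifying scissor compatibility via Lemma~\ref{resg:decom}.
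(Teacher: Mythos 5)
Your proof is correct and follows essentially the same route as the paper: surjectivity from Lemma~\ref{RV:decom:RES:G}, and injectivity by decomposing into $\Gamma$-tamped pieces, applying Lemma~\ref{Gtamp} to the pieces of maximal height to split the bijection into product maps plus a lower-height remainder, and inducting on the height. Packaging this as the construction of an explicit inverse $\bb E$ rather than as a direct check that two preimages of a common class coincide is only a cosmetic difference; the key lemma, the orthogonality input, and the induction are the same as in the paper's argument.
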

\begin{proof}
Surjectivity of $\bb D$ follows immediately from Lemma~\ref{RV:decom:RES:G}. For injectivity, let $\bm U_i \coloneqq (U_i, f_i)$, $\bm V_j \coloneqq (V_j, g_j)$ be objects in $\RES[*]$ and $I_i$, $J_j$ objects in $\Gamma[*]$ such that $\bb D([\bm U_i] \otimes [I_i])$, $\bb D([\bm V_j] \otimes [J_j])$ are objects in $\gsk \RV[l]$ for all $i$, $j$. Set
\[
\textstyle M_i = U_i \times I_i^\sharp, \quad N_i = V_j \times J_j^\sharp, \quad M = \biguplus_i M_i, \quad N = \biguplus_j N_j.
\]
Suppose that there is a definable bijection $f : M \fun N$. We need to show
\[
\textstyle \sum_i [\bm U_i] \otimes [I_i] = \sum_j [\bm V_j] \otimes [J_j].
\]
By Lemma~\ref{gam:red:K}, we may assume that all $M_{i}$, $N_{j}$ are $\Gamma$-tamped. By \omin-minimal cell decomposition, without changing the sums, we may assume that each $U_i$ is a disjoint union of finitely many copies of $(\K^+)^i$ and thereby re-index $M_i$ more informatively as $M_{i, m} = U_i \times I_m^\sharp$, where $I_m$ is an object in $\Gamma[m]$; similarly each $N_j$ is re-indexed as $N_{j, n}$. By Lemma~\ref{dim:cut:gam}, the respective maximums of the numbers $i+m$, $j+n$ are the $\RV$-dimensions of $M$, $N$ and hence must be equal; it is denoted by $p$. Let $q$ be the largest $m$ such that $i + m = p$ for some $M_{i, m}$ and $q'$ the largest $n$ such that $j + n = p$ for some $N_{j, n}$. It is not hard to see that we may arrange $q = q'$.

We now proceed by induction on $q$. The base case $q=0$ is rather trivial. For the inductive step, by Lemma~\ref{Gtamp}, we see that certain products contained in $M_{p-q, q}$, $N_{p-q, q}$ give rise to the same sum and the inductive hypothesis may be applied to the remaining portions.
\end{proof}

%\begin{cor}\label{gsk:gamma:inj}
%Every nonzero $[\bm U] \in \gsk \RES[*]$ induces an injective semigroup homomorphism:
%\[
%[\bm U] \otimes - : \gsk \Gamma[*] \fun \gsk \RES[*] \otimes \gsk \Gamma[*].
%\]
%In particular, there is a canonical injective homomorphism of graded semirings:
%\[
%\bb L_{\Gamma} : \gsk \Gamma[*] \fun \gsk \RV[*].
%\]
%\end{cor}
%\begin{proof}
%Suppose that $[\bm U] \otimes [I] = [\bm U] \otimes [J]$. By Proposition~\ref{red:D:iso}, $U \times \vrv^{-1}(I)$ is definably bijective to $U \times \vrv^{-1}(J)$. By Remark~\ref{rem:Ga:match:impr}, $[I] = [J]$. For the second assertion we may simply take $[\bm U] = \bm 1_{\K}$ in $[\bm U] \otimes -$ and then compose it with the isomorphism $\bb D$.
%\end{proof}

We may view $\Gamma$ as a double cover of $\abs \Gamma$ via the identification $\Gamma / {\pm 1} = \abs \Gamma$. Consequently we can associate two Euler characteristics $\chi_{\Gamma,g}$, $\chi_{\Gamma, b}$ with the $\Gamma$-sort, induced by those on $|\Gamma|$ (see \cite{kage:fujita:2006} and also~\cite[\S~ 9]{hrushovski:kazhdan:integration:vf}). They are distinguished by
\[
\chi_{\Gamma, g}(\bm H) = \chi_{\Gamma, g}(\bm H^{-1}) = -1 \quad \text{and} \quad \chi_{\Gamma, b}(\bm H) = \chi_{\Gamma, b}(\bm H^{-1}) = 0.
\]
Similarly, there is an Euler characteristic $\chi_{\K}$ associated with the $\K$-sort (there is only one). We shall denote all of these Euler characteristics simply by $\chi$ if no confusion can arise. Using these $\chi$ and the groupification of $\bb D$ (also denoted by $\bb D$), we can construct various retractions from the Grothendieck ring $\ggk \RV[*]$ to (certain localizations of) the Grothendieck rings $\ggk \RES[*]$ and $\ggk \Gamma[*]$.

\begin{lem}\label{gam:euler}
The Euler characteristics induce naturally three graded ring homomorphisms:
\[
\mdl E_{\K} : \ggk \RES[*] \fun \Z[X] \quad \text{and} \quad \mdl E_{\Gamma, g}, \mdl E_{\Gamma, b} : \ggk \Gamma[*] \fun \Z[X].
\]
\end{lem}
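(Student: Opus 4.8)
The plan is to define each map on the generating isomorphism classes by pairing the appropriate Euler characteristic with a monomial recording the ambient dimension, and then to check that the assignment respects the relations defining the Grothendieck semirings. For $(U,f) \in \RES[k]$ put $\mdl E_{\K}([(U,f)]) = \chi(U) X^{k}$; this makes sense because $\vrv(U)$ is finite (Remark~\ref{omin:res}), so the $o$-minimal Euler characteristic $\chi(U)$ is defined. For $I \in \Gamma[k]$ put $\mdl E_{\Gamma, g}([I]) = \chi_{\Gamma, g}(I) X^{k}$ and $\mdl E_{\Gamma, b}([I]) = \chi_{\Gamma, b}(I) X^{k}$. I would carry out the verification for $\mdl E_{\K}$; the two $\Gamma$-versions are word for word the same, with $\chi_{\Gamma, g}$ or $\chi_{\Gamma, b}$ in place of $\chi$.

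\emph{Well-definedness.} A morphism $F : (U,f) \fun (V,g)$ of $\RES[k]$ is a definable bijection $U \fun V$, hence $\chi(U) = \chi(V)$ by Theorem~\ref{groth:omin}, while $k$ is fixed by the grading; so $\mdl E_{\K}$ is constant on isomorphism classes in $\RES[k]$. For a subobject $(V,g) \sub (U,f)$ the complement is $(U \mi V, f \rest (U \mi V))$, and additivity of $\chi$ gives $\chi(U \mi V) + \chi(V) = \chi(U)$, i.e.\ $\mdl E_{\K}$ respects the scissor relation $[(U,f) \mi (V,g)] + [(V,g)] = [(U,f)]$. Thus $\mdl E_{\K}$ descends to an additive map $\gsk \RES[k] \fun \Z X^{k}$, and, summing over $k$, to an additive, grading-preserving map $\gsk \RES[*] \fun \Z[X]$.

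\emph{Multiplicativity and groupification.} The product in $\RES[*]$ sends $(U,f) \in \RES[k]$, $(V,h) \in \RES[l]$ to $(U \times V, f \times h) \in \RES[k+l]$, so the product formula $\chi(U \times V) = \chi(U)\chi(V)$ together with $X^{k} X^{l} = X^{k+l}$ yields
\[
\mdl E_{\K}([(U,f)] \cdot [(V,h)]) = \chi(U \times V) X^{k+l} = \mdl E_{\K}([(U,f)])\, \mdl E_{\K}([(V,h)]),
\]
and $\mdl E_{\K}(\bm 1_{\K}) = \chi(\{1\}) X^{0} = 1$; hence $\mdl E_{\K} : \gsk \RES[*] \fun \Z[X]$ is a homomorphism of graded semirings. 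Since $\Z[X]$ is a ring, the universal property of groupification extends it uniquely to a homomorphism of graded rings $\mdl E_{\K} : \ggk \RES[*] \fun \Z[X]$ (which, in view of the explicit presentation in Remark~\ref{expl:res}, is in fact an isomorphism, though this is not needed here). Repeating the argument verbatim with $\chi_{\Gamma, g}$ and with $\chi_{\Gamma, b}$ produces the homomorphisms of graded rings $\mdl E_{\Gamma, g}, \mdl E_{\Gamma, b} : \ggk \Gamma[*] \fun \Z[X]$.

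The only point that is not purely formal is the list of properties invoked for the Euler characteristics of the $\Gamma$-sort: each of $\chi_{\Gamma, g}$, $\chi_{\Gamma, b}$ must be invariant under definable bijections, additive over finite disjoint unions, and multiplicative over Cartesian products. Unlike the $\K$-sort, which is genuinely $o$-minimal, the $\Gamma$-sort is $o$-minimal only ``modulo the sign'' (Theorem~B of \cite{Dries:tcon:97}), so one has to know that each of the two Euler characteristics on $\abs\Gamma$ lifts consistently along the double cover $\Gamma / {\pm 1} = \abs\Gamma$ and that the lift stays additive and, in particular, multiplicative. This is exactly what is established in \cite{kage:fujita:2006} and \cite[\S~9]{hrushovski:kazhdan:integration:vf}, and it is the one ingredient I would quote rather than reprove.
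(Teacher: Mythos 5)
Your proposal is correct and follows essentially the same route as the paper, which simply sets $\mdl E_{\K, k}([U]) = \chi(U)$ and $\mdl E_{\Gamma, k}([I]) = \chi(I)$ on each graded piece and assembles $\mdl E_{\K} = \sum_k \mdl E_{\K, k} X^k$, etc., asserting well-definedness. You have merely filled in the routine verifications (isomorphism invariance via Theorem~\ref{groth:omin}, additivity, multiplicativity, groupification) and correctly isolated the one non-formal ingredient, the behaviour of the two Euler characteristics on the double cover $\Gamma \to \abs\Gamma$, which the paper likewise delegates to \cite{kage:fujita:2006} and \cite[\S~9]{hrushovski:kazhdan:integration:vf}.
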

\begin{proof}
For $U \in \RES[k]$ and $I \in \Gamma[k]$, we set $\mdl E_{\K, k}([U]) = \chi(U)$ (see Remark~\ref{omin:res}) and $\mdl E_{\Gamma, k}([I]) = \chi(I)$. These maps are well-defined and they induce graded ring homomorphisms $\mdl E_{\K} \coloneqq \sum_k \mdl E_{\K, k} X^k$ and $\mdl E_{\Gamma} \coloneqq \sum_k \mdl E_{\Gamma, k} X^k$ as desired.
\end{proof}

By the computation in \cite{kage:fujita:2006}, $\ggk \Gamma[*]$ is canonically isomorphic to the graded ring
\[
\textstyle \Z[X, Y^{(2)}] \coloneqq \Z \oplus \bigoplus_{i \geq 1} (\Z[Y]/(Y^2+Y))X^i,
\]
where $YX$ represents the class $[\bm H] = [\bm H^{-1}]$ in $\ggk \Gamma[1]$. Thus $\mdl E_{\Gamma, g}$, $\mdl E_{\Gamma, b}$ are also given by
\[
\Z[X, Y^{(2)}] \two^{Y \efun -1}_{Y \efun 0} \Z[X].
\]

\begin{rem}[Explicit description of ${\ggk \RV[*]}$]\label{rem:poin}
Of course, $\mdl E_{\K}$ is actually an isomorphism. The  homomorphism $\gsk \Gamma^{c}[*] \fun \gsk \RES[*]$ in Remark~\ref{gam:res} and $\mdl E_{\K}$ then induce an isomorphism $\mdl E_{\K^c} : \ggk \Gamma^{c}[*] \fun \Z[X]$. But this isomorphism is different from the groupification $\mdl E_{\Gamma^c}$ of the canonical isomorphism $\gsk \Gamma^{c}[*] \cong \gsk \N[*]$. This latter isomorphism $\mdl E_{\Gamma^c}$ is also induced by $\mdl E_{\Gamma, g}$, $\mdl E_{\Gamma, b}$ (the two homomorphisms agree on $\ggk \Gamma^{c}[*]$). They are distinguished by $\mdl E_{\K^c}([e]) = -X$ and $\mdl E_{\Gamma^c}([e]) = X$. We have a commutative diagram
\[
\bfig
 \hSquares(0,0)/<-`->`->`->`->`<-`->/[{\ggk \RES[*]}`{\ggk \Gamma^{c}[*]}`{\ggk \Gamma[*]}`\Z[X]`\Z[X]`{\Z[X, Y^{(2)}]}; ``\mdl E_{\K}`\mdl E_{\Gamma^c}`\cong`\tau`]
\efig
\]
where $\tau$ is the involution determined by $X \efun -X$. The graded ring
\[
\Z[X] \otimes_{\Z[X]} \Z[X, Y^{(2)}]
\]
may be identified with $\Z[X, Y^{(2)}]$ via the isomorphism given by $x \otimes y \efun \tau(x)y$. Consequently, by Proposition~\ref{red:D:iso}, there is a graded ring isomorphism
\[
\ggk \RV[*] \to^{\sim} \Z[X, Y^{(2)}] \quad \text{with} \quad \bm 1_{\K} + [\bm P] \efun 1 + 2YX + X.
\]
Setting
\[
 \Z^{(2)}[X] = \Z[X, Y^{(2)}] / (1 + 2YX + X),
\]
we see that there is a canonical ring isomorphism
\[
\bb E_{\Gamma}: \ggk \RV[*] / (\bm 1_{\K} + [\bm P]) \to^{\sim} \Z^{(2)}[X].
\]
%There are exactly two ring homomorphisms $\Z[X, Y^{(2)}] \fun \Z$ with kernel $(1 + 2YX + X)$ and they may be respectively factored as
%\begin{gather*}
%\Z[X, Y^{(2)}] \to^{X \efun 1} \Z[Y]/ (Y^2+Y) \to^{Y \efun -1} \Z, \\
%\Z[X, Y^{(2)}] \to^{X \efun -1} \Z[Y]/ (Y^2+Y) \to^{Y \efun 0} \Z.
%\end{gather*}
There are exactly two ring homomorphisms $\Z^{(2)}[X] \fun \Z$ determined by the assignments $Y \efun -1$ and $Y \efun 0$ or, equivalently, $X \efun 1$ and $X \efun -1$. Combining these with $\bb E_{\Gamma}$, we see that there are exactly two ring homomorphisms
\[
\bb E_{\Gamma,g},  \bb E_{\Gamma,b}: \ggk \RV[*] / (\bm 1_{\K} + [\bm P]) \fun \Z.
\]
\end{rem}

\begin{prop}\label{prop:eu:retr:k}
There are two ring homomorphisms
\[
\bb E_{\K, g}: \ggk \RV[*] \fun \ggk \RES[*][[\bm A]^{-1}] \quad \text{and} \quad \bb E_{\K, b}: \ggk \RV[*] \fun \ggk \RES[*][[1]^{-1}]
\]
such that
\begin{itemize}
  \item their ranges are precisely the zeroth graded pieces of their respective codomains,
  \item $\bm 1_{\K} + [\bm P]$ vanishes under both of them,
  \item for all $x \in \ggk \RES[k]$, $\bb E_{\K, g} (x) = x [\bm A]^{-k}$ and $\bb E_{\K, b}(x) = x [1]^{-k}$.
\end{itemize}
\end{prop}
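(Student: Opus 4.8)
The plan is to construct the two homomorphisms directly from the tensor decomposition $\bb D$ of Proposition~\ref{red:D:iso} together with the Euler characteristic homomorphisms of Lemma~\ref{gam:euler}. First I would observe that groupifying $\bb D$ gives a ring isomorphism $\ggk \RV[*] \cong \ggk \RES[*] \otimes_{\ggk \Gamma^c[*]} \ggk \Gamma[*]$, so it suffices to define compatible homomorphisms out of each factor. On the $\ggk \RES[*]$ factor we use the identity (suitably embedded after inverting $[\bm A]$, resp.\ $[1]$); on the $\ggk \Gamma[*]$ factor we use the composite of $\mdl E_{\Gamma,g}$ (resp.\ $\mdl E_{\Gamma,b}$) with the map $\Z[X] \fun \ggk \RES[0][[\bm A]^{-1}]$, resp.\ $\ggk \RES[0][[1]^{-1}]$, sending $X$ to $-[\bm A]$, resp.\ to $-[1]$. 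The sign is dictated by the involution $\tau$ appearing in Remark~\ref{rem:poin}: the isomorphism $\mdl E_{\K^c}$ sends $[e] \efun -X$ while $\mdl E_{\Gamma^c}$ sends $[e] \efun X$, so matching the two maps on the amalgamating subring $\ggk \Gamma^c[*]$ forces the $X \efun -[\bm A]$ (resp.\ $X \efun -[1]$) choice. One then checks these two maps agree on $\ggk \Gamma^c[*]$, which is exactly the content of the commutative square in Remark~\ref{rem:poin}, and hence by the universal property of the tensor product they assemble into ring homomorphisms $\bb E_{\K,g}$, $\bb E_{\K,b}$ as claimed.

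Next I would verify the three bulleted properties. For the third bullet, note that an element $x \in \ggk \RES[k]$ corresponds under $\bb D$ to $x \otimes \bm 1_\Gamma \in \ggk \RES[k] \otimes \ggk \Gamma[0]$; but in $\ggk \RES[*]$ the class $[\bm T]$ (hence $[\bm A] = 2[\bm T] + [1]$) carries grading $1$, so to land in the zeroth graded piece of the localization we must multiply by $[\bm A]^{-k}$ (resp.\ $[1]^{-k}$), which is precisely the normalization built into the definition. This simultaneously gives the first bullet: the generators of $\ggk \RV[k]$ are, via $\bb D$ and Lemma~\ref{RV:decom:RES:G}, products of classes from $\ggk \RES[i]$ and $\bm I$'s from $\ggk \Gamma[*]$ with $i + (\dim_\Gamma) = k$, and each such generator gets rescaled by the total power $[\bm A]^{-k}$ (resp.\ $[1]^{-k}$) so that its image has grading $0$; since every class is a $\Z$-combination of such generators, the whole range sits in the zeroth graded piece, and surjectivity onto that piece is clear because $[\bm A]$, $[1]$ are invertible there and $\ggk \RES[0] = \N$ groupifies to $\Z$.

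For the second bullet I would compute $\bb E_{\K,g}(\bm 1_\K + [\bm P])$ and $\bb E_{\K,b}(\bm 1_\K + [\bm P])$ directly. By Notation~\ref{nota:RV:short} and the identification in Remark~\ref{rem:poin}, $\bm 1_\K + [\bm P] = [(\RV^{\circ\circ}_0, \id)]$ corresponds under $\bb D$ (after groupification) to the element $1 + 2YX + X$ of $\Z[X, Y^{(2)}]$, where $YX = [\bm H]$ lives in the $\ggk \Gamma[1]$ factor. Applying $\mdl E_{\Gamma,g}$ (i.e.\ $Y \efun -1$) sends $1 + 2YX + X$ to $1 - 2X + X = 1 - X$, and then $X \efun [\bm A]$ together with the sign correction $X \efun -[\bm A]$ gives $1 + [\bm A]$... here one must be careful: the correct bookkeeping is that the $\RES$-part contributes $[\bm A]$ and the $\Gamma$-part contributes the polynomial in $X$ evaluated at $X = -[\bm A]$, and the claim $[\bm A] = -[1] - 2[\bm T] \cdot(\text{something})$ must be unwound. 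I expect this last verification — tracking the precise value of $\bm 1_\K + [\bm P]$ through $\bb D$ and the sign twist so that it genuinely vanishes after inverting $[\bm A]$ (resp.\ $[1]$) — to be the main obstacle, since it requires matching the combinatorial identity $[\bm A] = 2[\bm T] + [1]$ in $\ggk \RES[1]$ against the relation $1 + 2YX + X$ in $\Z[X, Y^{(2)}]$ and confirming that the $Y \efun -1$ and $Y \efun 0$ specializations produce exactly $[\bm A]$ and $[1]$ respectively as the factor being inverted. Everything else is a routine diagram chase using Proposition~\ref{red:D:iso}, Lemma~\ref{gam:euler}, and the explicit descriptions in Remarks~\ref{expl:res} and~\ref{rem:poin}.
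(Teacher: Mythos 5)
Your overall architecture is the same as the paper's (decompose via $\bb D$, keep the $\RES[*]$ factor, collapse the $\Gamma[*]$ factor through an Euler characteristic times a power of a degree-one class, then normalize into the zeroth graded piece), but your sign choice $X \efun -[\bm A]$ on the $\Gamma$-side is wrong, and the error is fatal rather than cosmetic. The amalgamation is over $\ggk \Gamma^{c}[*]$, whose degree-one generator $[e]$ is sent to $[\bm T]$ by the embedding $\ggk \Gamma^{c}[*] \fun \ggk \RES[*]$ (because $1^\sharp = \vrv^{-1}(1) = \K^+$). Since your $\RES$-side map is the plain localization map, compatibility on the amalgamating subring forces the $\Gamma$-side map to send $[e]$ (whose image under $\mdl E_{\Gamma, g}$ is $X$) to $[\bm T]$; that is, the substitution must be $X \efun [\bm T]$ --- equivalently $X \efun [\bm A]$ or $X \efun -[1]$, since $[\bm T]$, $[\bm A] = 2[\bm T]+[1]$ and $-[1]$ all equal $-X$ under $\mdl E_{\K}$. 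Your choice sends $[e]$ to $-[\bm A] = -[\bm T] \neq [\bm T]$, so the two maps disagree on $\ggk \Gamma^{c}[*]$ and the universal property of the tensor product cannot be invoked at all. The involution $\tau$ of Remark~\ref{rem:poin} is a red herring here: it encodes the twist needed when the $\RES$ factor is identified with $\Z[X]$ via $\mdl E_{\K}$, whereas you keep the $\RES$ factor untwisted, so no compensating sign belongs on the $\Gamma$ factor. (Note also the internal inconsistency: your two substitutions $-[\bm A]$ and $-[1]$ are \emph{different} elements of $\ggk \RES[1]$, and the second one is accidentally the correct one.)

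The computation you flag as ``the main obstacle'' indeed fails with your sign. Writing $[(\RV^{\circ\circ}, \id)] = \bb D(\bm 1_{\K} \otimes 2[\bm H])$ (this is the $2YX$ of Remark~\ref{rem:poin}), your $g$-map sends $\bm 1_{\K} + [\bm P]$ to $[\bm A] + 2(-1)(-[\bm A]) - [1] = 3[\bm A] - [1]$ after multiplying through by $[\bm A]$ to compare in degree one, and $\mdl E_{\K}$ evaluates this to $-4X \neq 0$. With $X \efun [\bm T]$ one gets instead $[\bm A] - 2[\bm T] - [1] = 0$ identically, which is exactly the paper's verification $\bb E_{g, \leq n}(\bm 1_{\K} + [\bm P]) = [\bm A]^n - 2[\bm T][\bm A]^{n-1} - [1][\bm A]^{n-1} = 0$. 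A minor packaging remark: the paper does not invoke the tensor-product universal property into the localization in one shot; it defines graded pieces by $x \otimes y \efun \mdl E_{g, j}(y)\, x\, [\bm T]^{j}$, checks compatibility on the generators $[e]^k$ of $\gsk \Gamma^{c}[*]$, and then passes to the colimit along multiplication by $[\bm A]$, resp.\ $[1]$. That difference is immaterial; the substance of the gap in your argument is the sign.
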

\begin{proof}
We first define, for each $n$, a homomorphism
\[
\bb E_{g, n}: \ggk \RV[n] \fun \ggk \RES[n]
\]
as follows. By Proposition~\ref{red:D:iso}, there is an isomorphism
\[
\textstyle \bb D_n : \bigoplus_{i + j = n} \ggk \RES[i] \otimes \ggk \Gamma[j] \to^{\sim} \ggk \RV[n].
\]
Let the group homomorphism $\mdl E_{g, j} : \ggk \Gamma[j] \fun \Z$ be defined as in Lemma~\ref{gam:euler}, using $\chi_{\Gamma, g}$. Let
\[
E_{g}^{i, j}: \ggk \RES[i] \otimes \ggk \Gamma[j] \fun \ggk \RES[i {+} j]
\]
be the group homomorphism determined by $x \otimes y \efun \mdl E_{g, j}(y) x [\bm T]^{j}$. Let
\[
\textstyle E_{g, n} = \sum_{i + j = n} E_{g}^{i, j} \dand \bb E_{g, n} = E_{g, n} \circ \bb D_n^{-1}.
\]

Note that, due to the presence of the tensor $\otimes_{\ggk \Gamma^{c}[*]}$ and the replacement of $y$ with $\mdl E_{g, j}(y) [\bm T]^{j}$, there is this issue of compatibility between the various components of $E_{g, n}$. In our setting, this is easily resolved since all definable bijections are allowed in $\Gamma[*]$ and hence $\gsk \Gamma^c[*]$ is generated by isomorphism classes of the form $[e]^k$. In the setting of \cite{hrushovski:kazhdan:integration:vf}, however, one has to pass to a quotient ring to achieve compatibility (see Remark~\ref{why:glz} and also \cite[\S~2.5]{hru:loe:lef}).

Now, it is straightforward to check the equality
\[
\bb E_{g, n}(x)\bb E_{g, m}(y) = \bb E_{g, n+m}(xy).
\]

The group homomorphisms $\tau_{m, k} : \ggk \RES[m] \fun \ggk \RES[m{+}k]$ given by $x \efun x [\bm A]^k$ determine a colimit system and the group homomorphisms
\[
\textstyle\bb E_{g, \leq n} \coloneqq \sum_{m \leq n} \tau_{m, n-m} \circ \bb E_{g, m} : \ggk \RV[{\leq} n] \fun \ggk \RES[n]
\]
determine a homomorphism of colimit systems. Hence we have a ring homomorphism:
\[
\colim{n} \bb E_{g, \leq n} : \ggk \RV[*] \fun \colim{\tau_{n, k}} \ggk \RES[n].
\]
For all $n \geq 1$ we have
\[
\bb E_{g, \leq n}(\bm 1_{\K} + [\bm P]) = [\bm A]^n - 2[\bm T][\bm A]^{n-1} - [1] [\bm A]^{n-1} = 0.
\]
This yields the desired homomorphism $\bb E_{\K, g}$ since the colimit in question can be embedded into the zeroth graded piece of $\ggk \RES[*][[\bm A]^{-1}]$.

The construction of $\bb E_{\K, b}$ is completely analogous, with $[\bm A]$ replaced by $[1]$ and $\chi_{\Gamma, g}$ by $\chi_{\Gamma, b}$.
\end{proof}

Since the zeroth graded pieces of both $\ggk \RES[*][[\bm A]^{-1}]$ and $\ggk \RES[*][[1]^{-1}]$ are canonically isomorphic to $\Z$, the homomorphisms $\bb E_{\K, g}$, $\bb E_{\K, b}$ are just the homomorphisms $\bb E_{\Gamma, g}$, $\bb E_{\Gamma, b}$ in Remark~\ref{rem:poin}, more precisely, $\bb E_{\K, g} = \bb E_{\Gamma, g}$ and $\bb E_{\K, b} = \bb E_{\Gamma, b}$.

\section{Generalized Euler characteristic}

From here on, our discussion will be of an increasingly formal nature. Many statements are exact copies of those in \cite{Yin:special:trans, Yin:int:acvf, Yin:int:expan:acvf} and often the same proofs work, provided that the auxiliary results are replaced by the corresponding ones obtained above. For the reader's convenience, we will write down all the details.

\subsection{Special bijections}

Our first task is to connect $\gsk \VF_*$ with $\gsk \RV[*]$, more precisely, to establish a surjective homomorphism $\gsk \RV[*] \fun \gsk \VF_*$. Notice the direction of the arrow. The main instrument in this endeavor  is special bijections.

\begin{conv}\label{conv:can}
We reiterate \cite[Convention~2.32]{Yin:int:expan:acvf} here, with a different terminology, since this trivial-looking convention is actually quite crucial for understanding the discussion below, especially the parts that involve special bijections. For any set $A$, let
\[
\can(A) = \{(a, \rv(a), t) : (a, t) \in A \text{ and } a \in \pr_{\VF}(A)\}.
\]
The natural bijection $\can : A \fun \can(A)$ is called the \emph{regularization} of $A$. We shall tacitly substitute $\can(A)$ for $A$  if it is necessary or is just more convenient. Whether this substitution has been performed or not should be clear in context (or rather, it is always performed).
\end{conv}

\begin{defn}[Special bijections]\label{defn:special:bijection}
Let $A$ be a (regularized) definable set whose first coordinate is a $\VF$-coordinate (of course nothing is special about the first $\VF$-coordinate, we choose it simply for notational ease). Let $C \sub \RVH(A)$ be an $\RV$-pullback (see Definition~\ref{defn:disc}) and
\[
\lambda: \pr_{>1}(C \cap A) \fun \VF
\]
a definable function whose graph is contained in $C$. Recall Notation~\ref{nota:tor}. Let
\[
\textstyle C^{\sharp} = \bigcup_{x \in \pr_{>1} (C)} \MM_{\abvrv(\pr_1(x_{\RV}))} \times x \dand \RVH(A)^{\sharp} = C^{\sharp} \uplus (\RVH(A) \mi C),
\]
where $x_{\RV} = \pr_{\RV}(x)$. The \emph{centripetal transformation $\eta : A \fun \RVH(A)^{\sharp}$ with respect to $\lambda$} is defined by
\[
\begin{cases}
  \eta (a, x) = (a - \lambda(x), x), & \text{on } C \cap A,\\
  \eta = \id, & \text{on } A \mi C.
\end{cases}
\]
Note that $\eta$ is injective. The inverse of $\eta$ is naturally called the \emph{centrifugal transformation with respect to $\lambda$}. The
function $\lambda$ is referred to as the \emph{focus} of $\eta$ and the $\RV$-pullback $C$ as the \emph{locus} of $\lambda$ (or $\eta$).

A \emph{special bijection} $T$ on $A$ is an alternating composition of centripetal transformations and regularizations. By Convention~\ref{conv:can}, we shall only display the centripetal transformations in such a composition. The \emph{length} of such a special bijection $T$, denoted by $\lh(T)$, is the number of centripetal transformations in $T$. The range of $T$ is sometimes denoted by $A^{\flat}$.
\end{defn}

For functions between sets that have only one $\VF$-coordinate, composing with special bijections on the right and inverses of special bijections on the left obviously preserves dtdp.

\begin{lem}\label{inverse:special:dim:1}
Let $T$ be a special bijection on $A \sub \VF \times \RV^m$ such that $A^{\flat}$ is an $\RV$-pullback. Then there is a definable function $\epsilon : \pr_{\RV} (A^{\flat}) \fun \VF$ such that, for every $\RV$-polydisc $\gp = t^\sharp \times s \sub A^{\flat}$,
$(T^{-1}(\gp))_{\VF} = t^\sharp + \epsilon(s)$.
\end{lem}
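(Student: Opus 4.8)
The plan is to induct on $\lh(T)$, the length of the special bijection. When $\lh(T) = 0$, the map $T$ is just a regularization (or the identity on the $\VF$-coordinate), $A = A^\flat$ is already an $\RV$-pullback, and we may take $\epsilon$ to be the constant function $0$: indeed for $\gp = t^\sharp \times s \sub A$ we have $(T^{-1}(\gp))_{\VF} = t^\sharp = t^\sharp + 0$. This base case also fixes the shape of the statement: the point is that a special bijection, when it lands in an $\RV$-pullback, must move each $\RV$-polydisc back onto a \emph{translate} of an $\RV$-disc, i.e.\ the image discs are genuine discs of the same valuative radius, only re-centered by a function $\epsilon$ of the $\RV$-parameters $s$.

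For the inductive step, write $T = T' \circ \eta$, where $\eta : A \fun \RVH(A)^\sharp$ is the first centripetal transformation, with focus $\lambda$ and locus $C$, and $T'$ is a special bijection on $\RVH(A)^\sharp$ with $\lh(T') = \lh(T) - 1$ and the same range $A^\flat$. Apply the inductive hypothesis to $T'$ to get $\epsilon' : \pr_{\RV}(A^\flat) \fun \VF$ such that for every $\RV$-polydisc $\gp = t^\sharp \times s \sub A^\flat$ we have $((T')^{-1}(\gp))_{\VF} = t^\sharp + \epsilon'(s)$. Now I must chase this through $\eta^{-1}$. On the part of $\RVH(A)^\sharp$ lying outside $C^\sharp$, $\eta$ is the identity, so nothing changes there. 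On $C^\sharp$, by construction $C^\sharp = \bigcup_{x} \MM_{\abvrv(\pr_1(x_{\RV}))} \times x$, and $\eta^{-1}(a, x) = (a + \lambda(x), x)$; so $\eta^{-1}$ takes the disc $t^\sharp + \epsilon'(s)$ (sitting over the $\RV$-coordinates $s$) to $t^\sharp + \epsilon'(s) + \lambda(s')$, where $s'$ is the appropriate subtuple of $s$ determining the value of $\lambda$ — here one uses that $\lambda$ is a function of the $\RV$-coordinates alone (its graph is contained in the $\RV$-pullback $C$), so $\lambda$ is constant on each $\RV$-polydisc and descends to a function of $s$. Crucially, adding $\lambda(s')$ only re-centers the disc and does not change its radius, so the image is still $t^\sharp$ translated by a definable function of $s$. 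Setting $\epsilon(s) = \epsilon'(s) + \lambda(s')$ on the relevant part of $\pr_{\RV}(A^\flat)$ and $\epsilon(s) = \epsilon'(s)$ elsewhere gives the required function, which is definable as a piecewise combination of definable functions.

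The main technical point to get right — and the step I expect to be the real obstacle — is bookkeeping of the coordinates under the regularizations that are interleaved with the centripetal transformations in Convention~\ref{conv:can}. Each regularization inserts a new $\RV$-coordinate recording $\rv$ of the current $\VF$-coordinate, so the $\RV$-polydisc $\gp = t^\sharp \times s \sub A^\flat$ in the conclusion lives in a product with more $\RV$-factors than $A$ had, and one must verify that under $\eta^{-1}$ (and its composites) the extra coordinates are determined by, and consistent with, the value of $\epsilon'(s)$ — precisely because on a polydisc inside an $\RV$-pullback the value of $\rv$ on the $\VF$-coordinate is already pinned down. This is the standard compatibility between regularization and the $\MM_{\abvrv(\cdot)}$-structure of $C^\sharp$, and it is exactly what makes $(T^{-1}(\gp))_{\VF}$ come out as a single translate $t^\sharp + \epsilon(s)$ rather than a more complicated set; once this is handled, the induction closes. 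A clean way to organize it is to prove simultaneously, by the same induction, that $T^{-1}(\gp)$ is itself an $\RV$-polydisc (with the same $\VF$-radius as $t^\sharp$) whose $\RV$-coordinates are a definable function of $(t,s)$, which makes the translation statement a formal consequence.
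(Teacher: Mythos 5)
Your proof is correct and follows essentially the same route as the paper's: both unwind the composition of centripetal transformations (the paper peels off the last one and transports its focus value back, you peel off the first and add its focus value to the inductively obtained $\epsilon'$), the key point in either direction being that each focus value is a function of $\RV$-coordinates that are preserved by all the other steps, so $\epsilon(s)$ is just the accumulated sum of translations. One small terminological correction to your closing remark: the auxiliary statement to carry through the induction should be that $T^{-1}(\gp)$ is an \emph{open} polydisc (as the paper states), not an $\RV$-polydisc, since $t^\sharp + \epsilon(s)$ is an $\RV$-disc only when $\abval(\epsilon(s)) \geq \abvrv(t)$.
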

\begin{proof}
It is clear that $\gp$ is the image of an open polydisc $\ga \times r \sub A$.  Let $T'$ be $T$ with the last centripetal transformation deleted. Then $T'(\ga \times r)$ is also an open polydisc $\ga' \times r'$. The range of the focus map of $\eta_n$ contains a point in the smallest closed disc containing $\ga'$. This point can be transported back by the previous focus maps to a point in the smallest closed disc containing $\ga$. The lemma follows easily from this observation.
\end{proof}

Note that, since $\dom(\epsilon) \sub \RV^l$ for some $l$, by Corollary~\ref{function:rv:to:vf:finite:image}, $\ran(\epsilon)$ is actually finite.

A definable set $A$ is called a \emph{deformed $\RV$-pullback} if there is a special bijection $T$ on $A$ such that $A^{\flat}$ is an $\RV$-pullback.

\begin{lem}\label{simplex:with:hole:rvproduct}
Every definable set $A \sub \VF \times \RV^m$ is a deformed $\RV$-pullback.
\end{lem}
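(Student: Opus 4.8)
The plan is to induct on $m$, the number of $\RV$-coordinates, following the pattern of the analogous statement in the $V$-minimal case (cf.\ \cite{Yin:int:acvf}). The base case $m=0$ is where the real work lies: we must show that every definable $A\sub\VF$ is a deformed $\RV$-pullback. For the inductive step, given $A\sub\VF\times\RV^{m+1}$, we first apply the inductive hypothesis fiberwise. More precisely, regard the last $\RV$-coordinate as a parameter: for each $t\in\pr_{m+1}(A_{\RV})$ (abusing notation for the projection to the last $\RV$-coordinate) the fiber $A_t\sub\VF\times\RV^m$ is a deformed $\RV$-pullback, and by compactness plus the usual partitioning argument we can arrange a single special bijection, with focus maps now also depending on that extra coordinate, that works uniformly. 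Since a special bijection only ever modifies the $\VF$-coordinate and carries $\RV$-coordinates along as passive parameters, appending the extra coordinate $t$ causes no difficulty; the composed map is again a special bijection on $A$ and its range is an $\RV$-pullback. (One uses here that an $\RV$-pullback in $\VF\times\RV^{m+1}$ is exactly something all of whose fibers over the last coordinate are $\RV$-pullbacks with the fibration being over a definable subset of $\RV$.)

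So the crux is $m=0$: let $A\sub\VF$ be definable. By HNF (Remark~\ref{rem:HNF}), $A$ is a finite disjoint union of pairwise disconnected $\vv$-intervals, and since the statement is plainly additive over a finite definable partition (a special bijection can be run separately on each piece and the pieces reassembled), we may assume $A$ is a single $\vv$-interval with end-discs $\ga$, $\gb$. The strategy is: pick a definable focus and perform a centripetal transformation that "straightens" one end of the interval. If $\ga$ (say) is a point $a\in\VF(\mdl S)$ — after possibly a translation we may take it to be $0$ — then $A$ lies in $\OO$-many $\RV$-discs accumulating at $0$; the point is to contract so that the $\RV$-boundary disappears. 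Concretely, $\partial_{\RV}A$ is finite by Lemma~\ref{RV:bou:dim} (the $n=1$ case is exactly HNF), so there are finitely many $\RV$-discs $t^\sharp$ meeting $A$ in a proper nonempty subset; over each such $t$ we can choose, by definable choice / the HNF description of $A\cap t^\sharp$, a definable point $\lambda(t)\in t^\sharp$ lying "below" $A\cap t^\sharp$, and the resulting centripetal transformation with focus $\lambda$ removes those boundary discs at the cost of introducing new ones deeper inside — but deeper by a controlled amount, so the process terminates. This is precisely the role of Corollary~\ref{open:disc:def:point} and Lemma~\ref{clo:disc:bary}: whenever a disc meets $A$ properly it contains a definable closed disc, hence a definable point, which we can use as the focus value.

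The main obstacle — and the reason the argument is not entirely formal — is proving termination of the iterated contraction, i.e.\ that finitely many centripetal transformations suffice to reach an honest $\RV$-pullback. The descent is controlled by a combination of two quantities: the (finite, by Lemma~\ref{RV:bou:dim} and HNF) size of the $\RV$-boundary, and the radii of the discs involved, which strictly increase under each centripetal step when one "zooms in" on a proper sub-disc. One packages this into a well-founded rank — essentially the number of "nested levels" of the $\vv$-interval as recorded in its HNF, analogous to the RV-complexity measures used in \cite{hrushovski:kazhdan:integration:vf, Yin:int:acvf} — and checks that each contraction strictly decreases it. The remaining cases for the shape of $A$ (open disc, closed disc, half thin annulus, and $\vv$-intervals with $\RV$-disc or disc end-discs rather than points) are handled the same way: if an end-disc is not already an $\RV$-disc or a point, it contains a definable closed disc by Lemma~\ref{clo:disc:bary}, giving a focus value to straighten that end; after finitely many such steps both ends are $\RV$-discs or points and the interval is a finite union of $\RV$-discs, i.e.\ an $\RV$-pullback. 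I expect the write-up to spend most of its length setting up this rank and verifying the decrease; everything else is bookkeeping.
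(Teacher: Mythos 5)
Your overall strategy is the paper's: reduce by compactness to finitely many $\RV$-coordinates treated as parameters, use HNF to cut $A\sub\VF$ into pairwise disconnected $\vv$-intervals, reduce to a single $\vv$-interval inside one $\RV$-disc, and then straighten the two ends by centripetal transformations whose foci are definable points supplied by Lemma~\ref{clo:disc:bary} and Corollary~\ref{open:disc:def:point}. The paper's own proof says exactly this and leaves the case analysis on the end-discs as an exercise.

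The one place your plan goes wrong is the claimed need for a well-founded rank to prove termination of the iteration. First, the rank you sketch is not a termination argument: radii live in $\absG$, which is densely ordered, so ``the radii strictly increase at each step'' gives no descent, and a $\vv$-interval has only two end-discs, so there is no ``nesting depth'' in its HNF to induct on. Second, no such argument is needed, because the process stops after a bounded number of steps per end if the focus is chosen correctly. Concretely: after one translation by a definable point of the ambient $\RV$-disc (which exists by Corollary~\ref{open:disc:def:point} as soon as $A$ is a proper nonempty subset of it), the set sits in some $\MM_\gamma$ and only finitely many $\RV$-discs are bad; inside a bad $\RV$-disc $s^\sharp$ the trace of $A$ is a $\vv$-interval one of whose end-discs $\gd$ is an end-disc of $A$. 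Take the focus to be a definable point $c$ of the \emph{smallest closed disc containing} $\gd$ (Lemma~\ref{clo:disc:bary}). If $\gd$ is closed, $\gd-c$ is a closed disc about $0$, hence an $\RV$-pullback; if $\gd$ is open and $c\notin\gd$, then $\gd-c$ is at distance exactly $\rad(\gd)$ from $0$ and so is precisely an $\RV$-disc. In either case membership in the translated $\vv$-interval is then decided by $\rv$ alone, so that fiber becomes an $\RV$-pullback after this single further step. Writing the cases out is tedious but there is no descent to control; if you instead pick foci inside $\gd$ itself and iterate, you really can fail to terminate, so the choice of focus is the whole content of the exercise.
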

\begin{proof}
By compactness and HNF  this is immediately reduced to the situation where $A \sub \VF$ is contained in an $\RV$-disc and is a $\vv$-interval with end-discs $\ga$, $\gb$. This may be further divided into several cases according to whether $\ga$, $\gb$ are open or closed discs and whether the ends of $A$ are open or closed. In each of these cases, Lemma~\ref{clo:disc:bary} is applied in much the same way as its counterpart is applied in the proof of \cite[Lemma~4.26]{Yin:QE:ACVF:min}. It is a tedious exercise and is left to the reader.
\end{proof}

Here is an analogue of \cite[Theorem~5.4]{Yin:special:trans} (see also \cite[Theorem~4.25]{Yin:int:expan:acvf}):

\begin{thm}\label{special:term:constant:disc}
Let $F(x) = F(x_1, \ldots, x_n)$ be an $\lan{T}{}{}$-term. Let $u \in \RV^n$ and $R : u^\sharp \fun A$ be a special bijection.  Then there is a special bijection $T : A \fun A^\flat$ such that $F \circ R^{-1} \circ T^{-1}$ is $\rv$-contractible. In a commutative diagram,
\[
\bfig
  \square(0,0)/`->`->`->/<1500,400>[A^\flat`\VF`\rv(A^\flat)`\RV_0;
  `\rv`\rv`(F \circ R^{-1} \circ T^{-1})_{\downarrow}]
  \morphism(0,400)<500,0>[A^\flat`A; T^{-1}]
  \morphism(500,400)<500,0>[A`u^\sharp; R^{-1}]
  \morphism(1000,400)<500,0>[u^\sharp`\VF; F]
 \efig
\]
\end{thm}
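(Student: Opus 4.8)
The plan is to adapt the proof of \cite[Theorem~5.4]{Yin:special:trans}, feeding the structure theory of \S\ref{def:VF} in where its $V$-minimal analogues were used. First I would observe that, since $R$ is itself a special bijection, it is enough to construct a single special bijection $S$ on $u^\sharp$ that extends $R$ (i.e.\ whose first $\lh(R)$ centripetal transformations are those of $R$) and for which $F \circ S^{-1}$ is $\rv$-contractible; then $T \coloneqq S \circ R^{-1}$ is as required. So I discard $R$ and build $S$ directly, as a finite alternating sequence of centripetal transformations and regularizations, arguing by induction on $n = \lh(u)$, the case $n = 0$ being trivial.

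For the base case $n = 1$, $u^\sharp$ is a single open disc not containing $0$, so $\rv$-contractibility on it just means $\rv \circ F$ is constant — and after blowing up and regularizing it means $\rv$ of the function is constant on each $\RV$-disc of the resulting $\RV$-pullback. By Corollary~\ref{mono} and Lemma~\ref{fun:suba:fun}, $F\rest u^\sharp$ splits over finitely many pairwise disconnected $\vv$-subintervals into continuous monotone pieces; on any open subdisc inside such a piece $\rv\circ F$ is already constant by Lemma~\ref{open:rv:cons}, so the only obstruction is concentrated over finitely many closed subdiscs — those where $\rv\circ F$ genuinely varies, which by Lemma~\ref{Ocon} and the piecewise $\KKK$-linearity of $\vv\circ F$ on $\rv_\gamma$-cosets (\cite[Theorem~B, Proposition~5.8]{Dries:tcon:97}) form a finite definable family $\gc_1,\ldots,\gc_k$ of a common radius, carrying definable centers $d_1,\ldots,d_k$ by Lemma~\ref{clo:disc:bary}. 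A centripetal transformation whose focus sends each point near $\gc_j$ to $d_j$, followed by a regularization, makes $\rv\circ F$ contractible on the complementary annular part — this is where Remark~\ref{contr:uni} and the $\rv$-affine behaviour of Lemma~\ref{rv:lin} enter — and inside each rescaled copy of $\gc_j$ the same procedure recurses. The recursion terminates because the total number of branchings is bounded uniformly in the parameters by \omin-minimal cell decomposition applied to the fixed term $F$ together with its derivatives; this is the point where power-boundedness is indispensable, and it is the exact analogue of the degree bound for a polynomial in \cite[Theorem~5.4]{Yin:special:trans}.

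For the inductive step $n > 1$, I would peel off the first $\VF$-coordinate: viewing $F(x_1,x')$ with $x' = (x_2,\ldots,x_n)$ as a family of one-variable terms parametrized by $x'\in u_{>1}^\sharp$, I run the $n=1$ construction uniformly in $x'$. The branching subdiscs of the $x_1$-slice then vary with $x'$ as definable functions, which is precisely the datum of a focus for a centripetal transformation on $u^\sharp$ acting in the $x_1$-coordinate; after finitely many such transformations and regularizations, $F$ composed with their inverse becomes, fibrewise, $\rv$-contractible in the $x_1$-coordinate. Contracting $x_1$ away and appealing to Lemma~\ref{simplex:with:hole:rvproduct} (in the $x_1$-variable) together with Corollary~\ref{part:rv:cons} and Remark~\ref{rem:LT:com} — the latter allowing the now merely \emph{definable}, no longer term-defined, residual function to be treated in the remaining $n-1$ $\VF$-coordinates — reduces the problem to the inductive hypothesis in dimension $n-1$. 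Composing all the special bijections so obtained gives $T$, and a concluding regularization records the leftover $\rv$-data, so that the contraction $(F\circ R^{-1}\circ T^{-1})_{\downarrow}$ is well defined on $\rv(A^\flat)$.

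The step I expect to be the main obstacle is the one-variable core in the base case: making precise the ``valuational Newton diagram'' of an arbitrary $\lan{T}{}{}$-term — the exact finite family of closed subdiscs on which $\rv\circ F$ fails to contract and the finitely many scales that the regularizations must record — and, above all, verifying that the resulting refinement process terminates. Both of these rest on uniform finiteness statements for the fixed term $F$ that are a manifestation of power-boundedness. A secondary delicate point is ensuring that the centripetal transformations produced in the inductive step are genuinely \emph{uniform} in the outer $\VF$-coordinates, so that they assemble into genuine foci rather than merely pointwise data.
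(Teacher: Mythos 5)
There is a genuine gap, and it sits exactly where you predicted: the termination of the one-variable recursion in the base case. As you set it up, each pass isolates finitely many closed subdiscs where $\rv \circ F$ still varies, recenters them, rescales, and recurses; but after rescaling you are facing a fresh definable function on a fresh disc, and nothing in ``\omin-minimal cell decomposition applied to $F$ and its derivatives'' bounds the depth of this process -- cell decomposition of $F$ at the original scale says nothing about its behaviour at the ever finer scales the recursion descends to. (In the ACVF antecedent the degree of the polynomial supplies the decreasing measure; there is no analogous measure here, and power-boundedness does not by itself produce one.) The paper sidesteps the recursion entirely: Corollary~\ref{part:rv:cons} (whose proof goes through \LT-type-definability and immediate automorphisms, Lemma~\ref{atom:self}, plus compactness -- not a Newton-diagram computation) already yields a \emph{finite} \LT-definable partition $B_i$ of $u^\sharp$ such that $\rv\circ F$ is constant on every open disc contained in any $B_i$; then Lemma~\ref{simplex:with:hole:rvproduct} turns each $B_i$ into an $\RV$-pullback by a special bijection, and since preimages of $\RV$-polydiscs under special bijections are open polydiscs, the base case is done in two finite steps. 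You invoke Corollary~\ref{part:rv:cons} only in the inductive step; invoking it in the base case is what makes the theorem provable without the ``valuational Newton diagram'' you are trying to build.

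The secondary issue you flag -- uniformity of the slice-wise special bijections in the outer coordinates -- is also a real gap rather than a routine verification, and the mechanism that closes it is absent from your sketch. The paper's inductive step (which slices the other way: fix $x_1=a$, contract in $x_2,\dots,x_n$) uses compactness to encode the slice-wise contractions, loci and foci by quantifier-free formulas $\psi_i(a)$, $\theta(a)$, and then applies the theorem \emph{a second time}, to the top \LT-terms of $\psi_i$ and $\theta$ in the variable $x_1$, to obtain a special bijection on $v_1^\sharp$ after which this data is constant on each $\RV$-polydisc; only then do the fibrewise special bijections glue into genuine centripetal transformations (whose loci must be $\RV$-pullbacks and whose foci must be single definable functions of matching lengths across the fibre). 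Without this second application of the induction hypothesis, the ``pointwise data'' you produce in each slice need not assemble into a focus at all. So the architecture you want is: base case via Corollary~\ref{part:rv:cons} plus Lemma~\ref{simplex:with:hole:rvproduct}, inductive step via a double application of the induction hypothesis mediated by compactness -- not a direct scale-by-scale analysis of the term $F$.
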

\begin{proof}
First observe that if the assertion holds for one $\lan{T}{}{}$-term then it holds simultaneously for any finite number of $\lan{T}{}{}$-terms, since $\rv$-contractibility is preserved by further special bijections on $A^\flat$. We do induction on $n$. For the base case $n=1$, by Corollary~\ref{part:rv:cons} and Remark~\ref{rem:LT:com}, there is a definable finite partition $B_i$ of $u^\sharp$ such that, for all $i$, if $\ga \sub B_i$ is an open disc then $\rv \rest F(\ga)$ is constant. By consecutive applications of Lemma~\ref{simplex:with:hole:rvproduct}, we obtain a special bijection $T$ on $A$ such that each $(T \circ R) (B_i)$ is an $\RV$-pullback. Clearly $T$ is as required.

For the inductive step, we may concentrate on a single $\RV$-polydisc $\gp = v^\sharp \times (v, r) \sub A$. Let $\phi(x, y)$ be a quantifier-free formula that defines the function $\rv \circ f$. Recall Convention~\ref{topterm}. Let $G_{i}(x)$ enumerate the top $\lan{T}{}{}$-terms of $\phi$. For  $a \in v_1^\sharp$, write $G_{i,a} = G_{i}(a, x_2, \ldots, x_n)$. By the inductive hypothesis, there is a special bijection $R_{a}$ on $(v_2, \ldots, v_n)^\sharp$ such that every $G_{i,a} \circ R_a^{-1}$ is $\rv$-contractible. Let $U_{k, a}$ enumerate the loci of the components of $R_{a}$ and $\lambda_{k, a}$ the corresponding focus maps. By compactness,
\begin{itemize}
  \item for each $i$, there is a quantifier-free formula $\psi_i$ such that $\psi_i(a)$ defines $(G_{i,a} \circ R_a^{-1})_{\downarrow}$,
  \item there is a quantifier-free formula $\theta$ such that $\theta(a)$ determines the sequence $\rv(U_{k, a})$ and the $\VF$-coordinates targeted by $\lambda_{k, a}$.
\end{itemize}
Let $H_{j}(x_1)$ enumerate the top $\lan{T}{}{}$-terms of the formulas $\psi_i$, $\theta$. Applying the inductive hypothesis again, we obtain a special bijection $T_1$ on $v_1^\sharp$ such that every $H_{j} \circ T_1^{-1}$ is $\rv$-contractible. This means that, for every $\RV$-polydisc $\gq \sub T_1(v_1^\sharp)$ and all $a_1, a_2 \in T_1^{-1}(\gq)$,
\begin{itemize}
  \item the formulas $\psi_i(a_1)$, $\psi_i(a_2)$ define the same $\rv$-contraction,
  \item the special bijections $R_{a_1}$, $R_{a_2}$ may be glued together in the obvious sense to form one special bijection on $\{a_1, a_2\} \times (v_2, \ldots, v_n)^\sharp$.
\end{itemize}
Consequently, $T_1$ and $R_{a}$ naturally induce a special bijection $T$ on $\gp$ such that every $G_{i} \circ T^{-1}$ is $\rv$-contractible. This implies that $F \circ R^{-1} \circ T^{-1}$ is $\rv$-contractible and hence $T$ is as required.
\end{proof}

\begin{cor}\label{special:bi:term:constant}
Let $A \sub \VF^n$ be a definable set and $f : A \fun \RV^m$ a definable function. Then there is a special bijection $T$ on $A$ such that $A^\flat$ is an $\RV$-pullback and the function $f \circ T^{-1}$ is $\rv$-contractible.
\end{cor}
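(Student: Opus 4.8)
The plan is to derive this from Theorem~\ref{special:term:constant:disc} by first replacing the arbitrary definable function $f$ with finitely many $\lan{T}{}{}$-terms. By quantifier elimination (Theorem~\ref{theos:qe}) the graph of $f$ is defined by a quantifier-free $\lan{T}{RV}{}$-formula, which by Convention~\ref{topterm} may be chosen so that every $\lan{T}{}{}$-term in it occurs inside an instance of $\rv$; enumerating these top $\lan{T}{}{}$-terms as $F_1(x), \dots, F_k(x)$, the truth value of the formula at a pair $(a,b)$ depends only on $\big(\rv F_1(a), \dots, \rv F_k(a)\big)$ and $b$, and since the formula defines a function there is a fixed definable map $g : \RV_0^k \fun \RV^m$ with $f(a) = g\big(\rv F_1(a), \dots, \rv F_k(a)\big)$ for all $a \in A$. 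Applying the same reduction to the formula defining $A \sub \VF^n$ and enlarging the list $F_1, \dots, F_k$, I may also assume that membership of a point in $A$ is determined by the values $\rv F_1(a), \dots, \rv F_k(a)$. It then suffices to produce a special bijection $\hat T$ on $\VF^n$ such that $\hat T(\VF^n)$ is an $\RV$-pullback and every $F_i \circ \hat T^{-1}$ is $\rv$-contractible; that a finite family of terms can be straightened simultaneously is the opening observation in the proof of Theorem~\ref{special:term:constant:disc}.

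To obtain $\hat T$ I would run the argument of that proof essentially verbatim, with $\VF^n$ in place of the deformed $\RV$-pullback occurring there: one inducts on the number of $\VF$-coordinates; in the inductive step one localizes to a single $\RV$-polydisc $u^\sharp$, where $R = \id$ makes Theorem~\ref{special:term:constant:disc} directly applicable to $F_1, \dots, F_k$, and the special bijections obtained over the various $u^\sharp$ are amalgamated. The one genuinely new point compared with the cited proof is that $\VF^n$ is not a deformed image of a single polydisc but is the disjoint union of the fibres $u^\sharp$ of $\rv : \VF^n \fun \RV_0^n$; since the construction on each $u^\sharp$ is uniform in $u$, this is handled by an extra application of compactness (together with HNF in the base case, exactly as in Lemma~\ref{simplex:with:hole:rvproduct}).

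With $\hat T$ in hand, set $T = \hat T \rest A$; since a special bijection restricts to a special bijection on any definable subset of its domain, $T$ is a special bijection on $A$. Now $f \circ \hat T^{-1} = g \circ \big(\rv F_1 \circ \hat T^{-1}, \dots, \rv F_k \circ \hat T^{-1}\big)$ and each $\rv F_i \circ \hat T^{-1}$ is $\rv$-contractible, so $f \circ \hat T^{-1}$ is $\rv$-contractible; restricting to $\hat T(A)$ shows $f \circ T^{-1}$ is $\rv$-contractible. Moreover, membership in $A$ is computed from the values $\rv F_i$, hence is constant on each $\RV$-polydisc of the $\RV$-pullback $\hat T(\VF^n)$; therefore $A^\flat = T(A) = \hat T(A)$ is a union of such polydiscs, i.e. an $\RV$-pullback. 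The step I expect to be the main obstacle is the second one: pushing the single-polydisc statement of Theorem~\ref{special:term:constant:disc} up to all of $\VF^n$ while keeping the foci assembled into one definable special bijection. This, however, is the same bookkeeping already internal to that theorem's proof; the only genuinely new ingredient is to include among $F_1, \dots, F_k$ the terms cutting out $A$, which is precisely what forces $A^\flat$ to be a genuine $\RV$-pullback rather than merely a deformed one.
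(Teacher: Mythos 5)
Your proposal is correct and follows essentially the same route as the paper: reduce $f$ (and membership in $A$) to the $\rv$-values of the finitely many top $\lan{T}{}{}$-terms of a quantifier-free defining formula, apply Theorem~\ref{special:term:constant:disc} to straighten those terms, and observe that constancy of the $\rv F_i$ on each $\RV$-polydisc forces both that $A^\flat$ is an $\RV$-pullback and that $f\circ T^{-1}$ is $\rv$-contractible. The only cosmetic difference is the order of operations --- the paper invokes compactness at the outset to confine $A$ to a single $\RV$-polydisc and then applies the theorem there, whereas you apply the theorem polydisc-by-polydisc and amalgamate by compactness afterwards; these are the same argument.
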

\begin{proof}
By compactness, we may assume that $A$ is contained in an $\RV$-polydisc $\gp$. Let $\phi$ be a quantifier-free formula that defines $f$. Let $F_i(x, y)$ enumerate the top $\lan{T}{}{}$-terms of $\phi$. For $s \in \RV^{m}$, let $F_{i,  s} = F_{i}(x, s)$. By Theorem~\ref{special:term:constant:disc}, there is a special bijection $T$ on $\gp$ such that each function $F_{i, s} \circ T^{-1}$ is contractible. This means that, for each $\RV$-polydisc $\gq \sub T(\gp)$,
\begin{itemize}
  \item either $T^{-1}(\gq) \sub A$ or $T^{-1}(\gq) \cap A = \0$,
  \item if $T^{-1}(\gq) \sub A$ then $(f \circ T^{-1})(\gq)$ is a singleton.
\end{itemize}
So $T \rest A$ is as required.
\end{proof}

\begin{defn}[Lifting maps]\label{def:L}
Let $U$ be a set in $\RV$ and $f : U \fun \RV^k$ a function. Let $U_f$ stand for the set $\bigcup \{f(u)^\sharp \times u: u \in U\}$. The \emph{$k$th lifting map}
\[
\mathbb{L}_k: \RV[k] \fun \VF[k]
\]
 is given by $(U,f) \efun U_f$.
The map $\mathbb{L}_{\leq k}: \RV[{\leq} k] \fun \VF[k]$ is given by $\bigoplus_{i} \bm U_i \efun \biguplus_{i} \bb L_i \bm U_i$.
Set $\mathbb{L} = \bigcup_k \mathbb{L}_{\leq k}$.
\end{defn}

\begin{cor}\label{all:subsets:rvproduct}
Every definable set $A \sub \VF^n \times \RV^m$ is a deformed $\RV$-pullback. In particular, if $A \in \VF_*$ then there are a $\bm U \in \RV[{\leq} n]$ and a special bijection from $A$ onto $\mathbb{L}_{{\leq} n}(\bm U)$.
\end{cor}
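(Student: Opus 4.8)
The plan is to prove the two assertions of Corollary~\ref{all:subsets:rvproduct} in turn, reducing the first to Corollary~\ref{special:bi:term:constant} and Lemma~\ref{simplex:with:hole:rvproduct}, and then extracting the second as a bookkeeping consequence.

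\textbf{Reduction to one $\VF$-coordinate at a time.} First I would argue that it suffices to treat the case $n=1$, i.e.\ the case $A\sub\VF\times\RV^m$, which is exactly Lemma~\ref{simplex:with:hole:rvproduct}. To see this, proceed by induction on $n$. Given $A\sub\VF^n\times\RV^m$ with $n>1$, regard the first $n-1$ $\VF$-coordinates together with the $\RV$-coordinates as ``parameters'' and view $A$ as a family of subsets of $\VF$ fibered over $\pr_{>1}(A)\sub\VF^{n-1}\times\RV^m$. For each fiber $A_a$ ($a\in\pr_{>1}(A)$) the case $n=1$ produces a special bijection making it a deformed $\RV$-pullback; by compactness (the foci and loci involved are uniformly definable in a quantifier-free way) these can be assembled into a single special bijection $T_1$ on $A$, performed in the first $\VF$-coordinate, after which $T_1(A)$ is, fiberwise over the last $n-1$ $\VF$-coordinates, an $\RV$-pullback in the first coordinate. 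The technical point here is that ``$\RV$-pullback in one coordinate uniformly over a definable base'' means that $T_1(A)$ can be written, after regularization, as a set whose first coordinate ranges over full $\RV$-discs; one then applies the inductive hypothesis to the remaining $\VF^{n-1}\times\RV^{m'}$-part (with $m'$ enlarged to absorb the new $\RV$-coordinate created by regularization), obtaining a special bijection $T_2$ in the coordinates $2,\dots,n$, and $T_2\circ T_1$ is the desired special bijection on $A$. One has to check that composing a special bijection in coordinates $\geq 2$ after one in coordinate $1$ is again a special bijection (it is, since special bijections in disjoint coordinate blocks commute up to regularization), and that the property of being an $\RV$-pullback is preserved in the already-treated coordinate; both are routine given Definition~\ref{defn:special:bijection} and Convention~\ref{conv:can}.

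\textbf{The base case and its packaging.} The base case $n=1$ is precisely Lemma~\ref{simplex:with:hole:rvproduct} together with Corollary~\ref{special:bi:term:constant}: by Lemma~\ref{simplex:with:hole:rvproduct} there is a special bijection $T$ on $A\sub\VF\times\RV^m$ with $A^\flat$ an $\RV$-pullback; this already shows $A$ is a deformed $\RV$-pullback. (Corollary~\ref{special:bi:term:constant} is the version with an auxiliary function $f:A\fun\RV^k$ to be $\rv$-contracted; it is not strictly needed for the statement as phrased, but it is the tool that makes the inductive gluing go through, since at each stage one needs the defining data of the remaining part to become $\rv$-contractible so that the fiberwise special bijections glue.) I would phrase the inductive step so that at the $i$-th stage one applies Corollary~\ref{special:bi:term:constant} to a quantifier-free formula defining the slice of $A$ in coordinates $\geq i$ as a function of the $\RV$-data, exactly as in the proof of Corollary~\ref{special:bi:term:constant} itself.

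\textbf{From deformed $\RV$-pullback to a lift of an $\RV$-object.} For the ``in particular'' clause, suppose now $A\in\VF_*$, so $A\sub\VF^n\times\RV^m$ has finite $\RV$-fibers. By the first part there is a special bijection $T:A\fun A^\flat$ with $A^\flat$ an $\RV$-pullback; since special bijections preserve both $\VF$-dimension and finiteness of $\RV$-fibers (the centripetal transformations are bijections, regularization only duplicates existing $\RV$-data), $A^\flat\in\VF[n]$ and $A^\flat$ is an $\RV$-pullback of $\VF$-dimension $\le n$ with finite $\RV$-fibers. An $\RV$-pullback $B$ of $\VF$-dimension $\le n$ is, by definition of $\RVH$ and Definition~\ref{defn:disc}, of the form $\bigcup\{t^\sharp\times s:(t,s)\in W\}$ for a definable $W\sub\RVV^{n}\times\RVV^{m'}$ (after reindexing the $\VF$-coordinates, using that $B=\RVH(B)$); writing $f=\pr_{\le n}:W\fun\RVV^n$, which is finite-to-one exactly because the $\RV$-fibers of $B$ are finite, and after a harmless partition making the image land in $\RV^n$ rather than $\RVV^n$ (absorbing zero coordinates into bookkeeping, cf.\ Notation~\ref{0coor}), we get $\bm U=(W,f)\in\RV[{\le}n]$ with $\bb L_{\le n}(\bm U)=B=A^\flat$. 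Hence $T:A\fun\bb L_{\le n}(\bm U)$ is the required special bijection.

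\textbf{Main obstacle.} The main obstacle is the inductive gluing in the first part: making precise, in the presence of regularizations, that fiberwise special bijections (obtained from the $n=1$ case or from Corollary~\ref{special:bi:term:constant}) can be amalgamated into one honest special bijection on the whole set, and that the ``$\RV$-pullback'' property achieved in already-processed coordinates is not destroyed when one works in the next coordinate. Both ingredients are implicitly used in the proofs of Theorem~\ref{special:term:constant:disc} and Corollary~\ref{special:bi:term:constant}, so the argument here is a matter of quoting and lightly adapting those rather than inventing anything new; the remaining verifications (preservation of $\VF$-dimension and of finiteness of $\RV$-fibers under special bijections, and the normal form of an $\RV$-pullback) are straightforward from the definitions.
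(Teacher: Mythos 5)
Your proposal is correct and follows essentially the same route as the paper: the paper disposes of the first assertion by reducing to $A\sub\VF^n$ via compactness and quoting Corollary~\ref{special:bi:term:constant} directly (your induction on $n$ with base case Lemma~\ref{simplex:with:hole:rvproduct} is just a re-derivation of that corollary's content), and the second assertion is exactly your bookkeeping argument, with the preservation of finite $\RV$-fibers supplied by Lemma~\ref{RV:fiber:dim:same} rather than by your slightly informal parenthetical.
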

\begin{proof}
For the first assertion, by compactness, we may assume $A \sub \VF^n$. Then it is a special case of Corollary~\ref{special:bi:term:constant}. The second assertion follows from Lemma~\ref{RV:fiber:dim:same}.
\end{proof}

\begin{defn}[Lifts]\label{def:lift}
Let $F: (U, f) \fun (V, g)$ be an $\RV[k]$-morphism. Then $F$ induces a definable finite-to-finite correspondence $F^\dag \sub f(U) \times g(V)$. Since $F^\dag$ can be decomposed into finitely many definable bijections, for simplicity, we assume that $F^\dag$ is itself a bijection. Let $F^{\sharp} : f(U)^\sharp \fun g(V)^\sharp$ be a definable bijection that $\rv$-contracts to $F^\dag$. Then $F^\sharp$ is called a \emph{lift} of $F$. By Convention~\ref{conv:can}, we shall think of $F^\sharp$ as a definable bijection $\bb L(U, f) \fun \bb L(V, g)$ that $\rv$-contracts to $F^\dag$.
\end{defn}

\begin{lem}\label{simul:special:dim:1}
Let $f : A \fun B$ be a definable bijection between two sets that have exactly one $\VF$-coordinate each. Then there are special bijections $T_A : A \fun A^{\flat}$, $T_B : B \fun B^{\flat}$ such that $A^{\flat}$, $B^{\flat}$ are $\RV$-pullbacks and $f^{\flat}_{\downarrow}$ is bijective in
the commutative diagram
\[
\bfig
  \square(0,0)/->`->`->`->/<600,400>[A`A^{\flat}`B`B^{\flat};
  T_A`f``T_B]
 \square(600,0)/->`->`->`->/<600,400>[A^{\flat}`\rv(A^{\flat})`B^{\flat} `\rv(B^{\flat});  \rv`f^{\flat}`f^{\flat}_{\downarrow}`\rv]
 \efig
\]
Thus, if $A, B \in \VF_*$ then $f^{\flat}$ is a lift of $f^{\flat}_{\downarrow}$, where the latter is regarded as an $\RV[1]$-morphism between $\rv(A^{\flat})_{1}$ and $\rv(B^{\flat})_1$ (recall Notation~\ref{0coor}).
\end{lem}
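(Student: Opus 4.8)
The plan is to build $T_A$ and $T_B$ simultaneously, in tandem, so that at each stage the partial special bijections we have constructed on the two sides are matched up by $f$ (or rather by the induced bijection on the surviving pieces). First I would apply Corollary~\ref{all:subsets:rvproduct} (or directly Corollary~\ref{special:bi:term:constant}) to produce a special bijection $T_A$ on $A$ with $A^\flat$ an $\RV$-pullback; simultaneously, using Lemma~\ref{open:pro}, I may refine the partition underlying $T_A$ so that the composite bijection $f \circ T_A^{-1}$, which still has exactly one $\VF$-coordinate in source and target, has dtdp on every $\VF$-fiber (recall that composing with special bijections and their inverses preserves dtdp). Now transport the focus data forward: each centripetal transformation $\eta$ occurring in $T_A$ has a focus $\lambda$ defined on a piece of $\RVH(A)$, and since $f$ has dtdp, the image under $f$ of the locus of $\lambda$ together with the $f$-image of the point picked out by $\lambda$ (inside the smallest closed disc containing the relevant open disc) gives a well-defined focus on the corresponding piece of $B$. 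This is exactly the mechanism used in the proof of Lemma~\ref{inverse:special:dim:1}. Iterating, $T_A$ produces, via $f$, a special bijection $T_B$ on $B$; by construction $f^\flat \coloneqq T_B \circ f \circ T_A^{-1}$ carries the $\RV$-pullback $A^\flat$ bijectively onto $B^\flat$.

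The second point is that $B^\flat$ is an $\RV$-pullback and $f^\flat$ is $\rv$-contractible. For $\rv$-contractibility, apply Corollary~\ref{special:bi:term:constant} once more, this time to $f^\flat$ (a definable function $A^\flat \to \RV^{\bullet}$ after composing with $\rv$): there is a further special bijection $S$ on $A^\flat$ such that $(A^\flat)^{\flat\flat}$ is an $\RV$-pullback and $f^\flat \circ S^{-1}$ is $\rv$-contractible. Because $f^\flat$ has dtdp, the corresponding special bijection on $B^\flat$ keeps $B^\flat$ (or its refinement) an $\RV$-pullback as well; absorb $S$ into $T_A$ and its $f^\flat$-image into $T_B$. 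After this absorption, $f^\flat$ descends to a definable map $f^\flat_\downarrow : \rv(A^\flat) \to \rv(B^\flat)$ with $(\rv \rest B^\flat) \circ f^\flat = f^\flat_\downarrow \circ (\rv \rest A^\flat)$. Symmetry — running the same argument with $f^{-1}$ in place of $f$ and refining — shows that $f^\flat_\downarrow$ is injective, hence (since $f^\flat$ is onto $B^\flat$ and both are $\RV$-pullbacks, so every $\RV$-polydisc upstairs maps onto an $\RV$-polydisc downstairs) bijective; this is where dtdp of $f^\flat$ is doing the essential work, guaranteeing that distinct $\RV$-polydiscs have distinct images and that images of $\RV$-polydiscs are again $\RV$-polydiscs rather than being split.

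For the last sentence, once $A,B \in \VF_*$ the $\RV$-fibers are finite, so $\rv(A^\flat)$ and $\rv(B^\flat)$ — paired with the projection to the single $\VF$-coordinate's $\rv$-image — are genuine objects $\rv(A^\flat)_1$, $\rv(B^\flat)_1$ of $\RV[1]$ in the sense of Notation~\ref{0coor}, $f^\flat_\downarrow$ is an $\RV[1]$-morphism between them, and the defining identity $(\rv \rest B^\flat) \circ f^\flat = f^\flat_\downarrow \circ (\rv \rest A^\flat)$ says precisely that $f^\flat$ is a lift of $f^\flat_\downarrow$ in the sense of Definition~\ref{def:lift} (after the harmless regularization bookkeeping of Convention~\ref{conv:can}). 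The main obstacle I anticipate is the careful tandem bookkeeping in the first paragraph: keeping the centripetal transformations on the two sides in lockstep so that the focus maps genuinely correspond under $f$, and ensuring that successive refinements (forced by the repeated use of Corollary~\ref{special:bi:term:constant}) on one side can always be matched by refinements on the other without destroying the $\RV$-pullback property already achieved — this is routine given dtdp and Lemma~\ref{inverse:special:dim:1}, but it is where the bulk of the (omitted) verification lies, and it is why the one-$\VF$-coordinate hypothesis is indispensable.
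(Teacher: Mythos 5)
There is a genuine gap, and it sits exactly where you park the ``routine bookkeeping.'' First, the transport of focus maps goes in the wrong direction. A focus $\lambda$ of a centripetal transformation on $A$ has its graph in the locus $C \sub \RVH(A)$, not in $A$: typically $\lambda(x)$ is the definable point of the \emph{smallest closed disc} containing the open-disc fibre $A_x$ supplied by Lemma~\ref{clo:disc:bary}, and that point lies outside $A_x$ (indeed it must, whenever $A_x$ contains no definable point, since otherwise $A_x - \lambda(x)$ would not become an $\RV$-pullback in one step). So ``the $f$-image of the point picked out by $\lambda$'' is simply undefined, and dtdp cannot rescue this. The paper's proof avoids the problem by running the construction in the opposite direction: it fixes $T_B$ \emph{first}, so that $(T_B\circ f)^{-1}$ is $\rv$-contractible, and then builds $T_A$ step by step by \emph{pulling back} each focus point $(\lambda(t),t)$ through $f_k^{-1}$; this is legitimate because at that stage the locus of $\eta_{k+1}$ is contained in the current set $B_k$ (an $\RV$-pullback), so $f_k^{-1}$ is defined at the focus points.

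Second, and more seriously, your proposal never confronts the actual content of the lemma: making $f^\flat$ and $(f^\flat)^{-1}$ $\rv$-contractible \emph{simultaneously}. Applying Corollary~\ref{special:bi:term:constant} to $f^\flat$ costs you special bijections on $A^\flat$; ``running the same argument with $f^{-1}$'' costs you further special bijections on $B^\flat$, and those destroy the forward contractibility you just achieved (a centripetal transformation on the target composed with a contractible map is in general no longer contractible). Your appeal to symmetry is therefore circular; reconciling the two requirements is precisely what the paper's tandem construction of $T_A$ against a \emph{fixed} $T_B$ accomplishes, by verifying at each stage $k$ that both $f_k$ and $T_{A,k}\circ(T_B\circ f)^{-1}$ remain contractible. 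Relatedly, your claim that surjectivity plus dtdp between $\RV$-pullbacks forces every $\RV$-polydisc upstairs to map \emph{onto} an $\RV$-polydisc downstairs is unjustified: the paper's own construction produces, at intermediate stages, polydiscs $\gq\neq\gq_t$ whose images are proper open subdiscs of $\RV$-polydiscs, which is consistent with forward contractibility but not with injectivity of the contraction. Bijectivity of $f^\flat_\downarrow$ comes only from having contractibility in \emph{both} directions at the final stage, which your argument does not secure.
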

\begin{proof}
By Corollaries~\ref{special:bi:term:constant}, \ref{all:subsets:rvproduct}, and Lemma~\ref{open:pro}, we may assume that $A$, $B$ are $\RV$-pullbacks, $f$ is $\rv$-contractible and has dtdp, and there is a special bijection $T_B: B \fun B^{\flat}$ such that $(T_B \circ f)^{-1}$ is $\rv$-contractible. Let $T_B = \eta_{n} \circ \ldots \circ \eta_{1}$, where each $\eta_{i}$ is a centripetal transformation (and regularization maps are not displayed). Then it is enough to construct a special bijection $T_A = \zeta_{n} \circ \ldots \circ \zeta_{1}$ on $A$ such that, for each $i$, both $f_i \coloneqq T_{B, i} \circ f \circ T_{A, i}^{-1}$ and $T_{A, i} \circ (T_B \circ f)^{-1}$ are $\rv$-contractible, where $T_{B, i} = \eta_{i} \circ \ldots \circ \eta_{1}$ and $T_{A, i} = \zeta_{i} \circ \ldots \circ \zeta_{1}$.

To that end, suppose that $\zeta_i$ has been constructed for each $i \leq k < n$. Let $A_{k} = T_{A, k}(A)$ and $B_k = T_{B, k}(B)$. Let $D \sub B_k$ be the locus of $\eta_{k+1}$ and $\lambda$ the corresponding focus map. Since $f_k$ is $\rv$-contractible and has dtdp, each $\RV$-polydisc $\gp \sub B_k$ is a union of disjoint sets of the form $f_k(\gq)$, where $\gq \sub A_k$ is an $\RV$-polydisc. For each $t = (t_1, t_{\tilde 1}) \in \dom(\lambda)$, let $O_{t}$ be the set of those $\RV$-polydiscs $\gq \sub A_k$ such that $f_k(\gq) \sub t^\sharp_1 \times t$. Let
\begin{itemize}
  \item $\gq_{t} \in O_{t}$ be the $\RV$-polydisc with $(\lambda(t), t) \in \go_{ t} \coloneqq f_k(\gq_t)$,
  \item $C = \bigcup_{t \in \dom(\lambda)} \gq_{t} \sub A_k$ and $a_{t} = f_k^{-1}(\lambda( t),
t) \in \gq_{t}$,
  \item $\kappa : \pr_{>1} (C) \fun \VF$  the corresponding focus
map given by $\pr_{>1} (\gq_{t}) \efun \pr_1(a_{t})$,
  \item $\zeta_{k+1}$  the centripetal transformation determined by $C$ and $\kappa$.
\end{itemize}
For each $t \in \dom(\lambda)$,  $f_{k+1}$ restricts to a bijection between the
$\RV$-pullbacks $\zeta_{k+1}(\gq_{t})$ and
$\eta_{k+1}(\go_{t})$ that is $\rv$-contractible in
both ways and, for any $\gq \in O_{t}$ with $\gq \neq \gq_{t}$, $f_{k+1}(\gq)$ is an open polydisc contained in an $\RV$-polydisc. So $f_{k+1}$ is $\rv$-contractible.

On the other hand, it is clear that, for any $\RV$-polydisc $\gp \sub B^{\flat}$, $T_{A, k} \circ (T_B \circ f)^{-1}(\gp)$ does not contain any $a_{t}$ and hence, by the construction of $T_{A, k}$, $T_{A, k+1} \circ (T_B \circ f)^{-1}$ is $\rv$-contractible.
\end{proof}

\begin{hyp}\label{hyp:point}
The following lemma is used directly only once in Corollary~\ref{RV:lift}. It should have been presented right after Definition~\ref{defn:corr:cont}. We place it here because this is the first place in this paper, in fact, one of the only two places, the other being Lemma~\ref{blowup:same:RV:coa}, where we need to assume that every definable $\RV$-disc contains a definable point. The easiest way to guarantee this is to assume that $\mdl S$ is $\VF$-generated, which, together with Hypothesis~\ref{hyp:gam}, implies that it is a model of $\TCVF$ and is indeed an elementary substructure (so every definable set contains a definable point). This assumption will be in effect throughout the rest of the paper.
\end{hyp}

%Since $\K$ and $\Gamma$ are orthogonal to each other (see Lemma~\ref{gk:ortho}), we may assume that $f$ is $\vrv$-contractible.

\begin{lem}\label{RVlift}
Every definable bijection $f : U \fun V$ between two subsets of $\RV^k$ can be lifted, that is, there is a definable bijection $f^{\sharp} : U^\sharp \fun V^\sharp$ that $\rv$-contracts to $f$.
\end{lem}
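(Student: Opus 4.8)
The plan is to reduce the problem to the one-variable case handled by Lemma~\ref{simul:special:dim:1} and then to patch the one-dimensional lifts together coordinate by coordinate. More precisely, I would argue by induction on $k$. The base case $k=1$ is exactly (a special case of) Lemma~\ref{simul:special:dim:1}: viewing $U, V \sub \RV^1$ as objects $U_1$, $V_1$ and applying the lemma to $f$ produces special bijections on $U^\sharp$ and $V^\sharp$ after which $f$ is already $\rv$-contractible; but since $U, V$ are themselves $\RV$-pullbacks of the trivial kind (each $t^\sharp$ is a single $\RV$-disc), the special bijections collapse to identities and one gets the lift directly. (One should check that the centripetal transformations are unnecessary here precisely because $\partial_{\RV}U^\sharp = \0$; alternatively one can simply invoke Lemma~\ref{simul:special:dim:1} and observe that a special bijection on an $\RV$-pullback of this shape, followed by its inverse, gives nothing new.)

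For the inductive step, suppose $k > 1$ and write points of $\RV^k$ as $(t_1, t')$ with $t' \in \RV^{k-1}$. Project $f$ to the last $k-1$ coordinates: for each $t_1 \in \pr_1(U)$ we get a definable bijection $f_{t_1}$ from a $t_1$-definable subset of $\RV^{k-1}$ onto its image, and simultaneously the induced map $\bar f : \pr_1(U) \fun \RV$ on first coordinates need not be a function, but it is a finite-to-finite correspondence; after a definable finite partition of $U$ (which does not affect liftability, since we can lift each piece separately and take disjoint unions) we may assume it is a bijection $\bar f : \pr_1(U) \fun \pr_1(V) \sub \RV^1$. By the base case, $\bar f$ lifts to $\bar f^\sharp : \pr_1(U)^\sharp \fun \pr_1(V)^\sharp$. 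By the inductive hypothesis, each $f_{t_1}$ lifts; the issue is to do this uniformly in $t_1$. Here I would use compactness together with Lemma~\ref{RV:no:point} (or rather Lemma~\ref{ima:par:red}) and the fact that $\mdl S$ is now $\VF$-generated (Hypothesis~\ref{hyp:point}), so that for each $a_1 \in \pr_1(U)^\sharp$ with $\rv(a_1) = t_1$ the fiber-lift can be chosen $a_1$-definably; compactness then yields a single definable family $g : \pr_1(U)^\sharp \times \RV^{k-1} \fun \RV^{k-1}$ such that for each $a_1$, $g(a_1, \cdot)$ restricts to a lift of $f_{\rv(a_1)}$. Define $f^\sharp(a_1, a') = (\bar f^\sharp(a_1), g(a_1, a'))$. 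One checks this is a definable bijection $U^\sharp \fun V^\sharp$ and that it $\rv$-contracts to $f$: both coordinates $\rv$-contract correctly by construction, and compatibility of the two pieces is automatic because $\bar f^\sharp$ was chosen to lift the first-coordinate correspondence of $f$ while $g$ lifts the fiberwise maps over matching first coordinates.

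The step I expect to be the main obstacle is the uniform fiberwise lifting --- getting a single definable $g$ rather than a $t_1$-by-$t_1$ collection. The subtlety is that the inductive hypothesis only gives, for each parameter $t_1$, \emph{some} lift, and a priori there is no canonical choice, so one needs definable choice over the $\RV$-sort (available since $\RV$ carries a weakly \omin-minimal structure with the exchange property, Remark~\ref{rem:RV:weako}, and $\mdl S$ is an elementary substructure by Hypothesis~\ref{hyp:point}, so every nonempty definable set has a definable point) combined with compactness to glue the locally-defined lifts into a global definable family. Once that is in place the verification that $f^\sharp$ is a bijection and $\rv$-contracts to $f$ is routine bookkeeping. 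Alternatively --- and this may actually be cleaner --- one can avoid the fiberwise induction entirely: observe that $U \sub \RV^k$ and note that, via Remark~\ref{rem:RV:weako} and cell decomposition in the $\RV$-sort, $f$ is built from finitely many pieces on each of which $f$ is, up to multiplicative translation, an \omin-minimal map between subsets of $(\K^+)^k$ together with a $\Gamma$-part; the $\K$-part lifts trivially because $\K \sub \RV$ is already a $\VF$-sort-like structure via the residue section afforded by $\VF$-generation, and the $\Gamma$-part lifts by Lemma~\ref{gam:pulback:mono}, which says $\vrv$-contractions exist. I would present whichever of these two routes turns out to require the least new machinery; the first (induction on $k$ plus Lemma~\ref{simul:special:dim:1}) is the more self-contained.
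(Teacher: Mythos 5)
Your base case is where the proposal breaks down. Lemma~\ref{simul:special:dim:1} takes as \emph{input} a definable bijection between two sets each having one $\VF$-coordinate and then massages it by special bijections until it becomes $\rv$-contractible; for $k=1$ you have no bijection $U^\sharp \fun V^\sharp$ to feed it --- producing one is precisely the problem. The real difficulty, which the proposal never confronts, is that definable choice only yields a definable map $U^\sharp \fun V^\sharp$ lying over $f$ (for $a \in u^\sharp$ the polydisc $f(u)^\sharp$ contains an $a$-definable point by Hypothesis~\ref{hyp:point}, and compactness glues these choices), and there is no reason for this map to restrict to a bijection $u^\sharp \fun f(u)^\sharp$ on each fiber; it can easily fail to be injective or surjective. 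The paper's proof is organized entirely around repairing this: it inducts on $\dim_{\RV}(U)$ rather than on $k$ (first reducing to $k = \dim_{\RV}(U)$ by injective coordinate projections), observes via Lemma~\ref{RV:bou:dim} that the locus where surjectivity fails has strictly smaller $\RV$-dimension, corrects the map there by the inductive hypothesis, and then runs the same argument on a definable right inverse to force injectivity. Without some version of this correction step your construction does not produce a bijection, whichever coordinate you treat first.

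The inductive step has a further gap: the first-coordinate relation $\bar f$ induced by a bijection $f : U \fun V$ need not be finite-to-finite, and no definable finite partition of $U$ will make it a bijection. For instance, if $U = t_0 \times W$ with $W$ infinite and $f(t_0, w) = (w, t_0)$ is the coordinate swap, then $\pr_1(U)$ is a single point while $\pr_1(V) = W$, so every piece of a finite partition still has a one-to-infinite first-coordinate correspondence. The ``triangular'' shape you need is therefore not available in general; the paper's reduction to $\dim_{\RV}$ many coordinates via injective projections is what takes its place. Finally, the sketched alternative misreads Lemma~\ref{gam:pulback:mono}: that lemma says $\Gamma[k]$-morphisms are $\vrv$-contractions of maps already present upstairs, i.e., it is about descending, not lifting, and lifting the ``$\K$-part'' is not trivial --- it is the same problem over again.
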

\begin{proof}
We do induction on $n = \dim_{\RV}(U) = \dim_{\RV}(V)$. If $n=0$ then $U$ is finite and hence, for every $u \in U$, the $\RV$-polydisc $u^\sharp$ contains a definable point, similarly for $V$, in which case how to construct an $f^{\sharp}$ as desired is obvious.

For the inductive step, by weak \omin-minimality in the $\RV$-sort, there are definable finite partitions $U_i$, $V_i$ of $U$, $V$ and injective coordinate projections
\[
\pi_i : U_i \fun \RV^{k_i}, \quad \pi'_i : V_i \fun \RV^{k_i},
\]
where $\dim_{\RV}(U_i) = \dim_{\RV}(V_i) = k_i$; the obvious bijection $\pi_i(U_i) \fun \pi'_i(V_i)$ induced by $f$ is denoted by $f_i$. Observe that if every $f_i$ can be lifted as desired then, by the construction in the base case above, $F$ can be lifted as desired as well. Therefore, without loss of generality, we may assume $k = n$. For $u \in U$ and $a \in u^\sharp$, the $\RV$-polydisc $f(u)^\sharp$ contains an $a$-definable point and hence, by compactness, there is a definable function $f^{\sharp} : U^\sharp \fun V^\sharp$ that $\rv$-contracts to $f$. By Lemma~\ref{RV:bou:dim}, $\dim_{\RV}(\partial_{\RV}f^{\sharp}(U^\sharp)) < n$ and hence, by the inductive hypothesis, we may assume that $f^{\sharp}$ is surjective. Then there is a definable function $g : V^\sharp \fun U^\sharp$ such that $f^{\sharp}(g(b)) = b$ for all $b \in V^\sharp$. By Lemma~\ref{RV:bou:dim} and the inductive hypothesis again, we may further assume that $g$ is also a surjection, which just means that $f^{\sharp}$ is a bijection as desired.
\end{proof}

The following corollary is an analogue of \cite[Proposition~6.1]{hrushovski:kazhdan:integration:vf}.

\begin{cor}\label{RV:lift}
For every $\RV[k]$-morphism $F : (U, f) \fun (V, g)$ there is a $\VF[k]$-morphism $F^\sharp$ that lifts $F$.
\end{cor}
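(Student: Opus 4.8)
The plan is to reduce the lifting of an arbitrary $\RV[k]$-morphism $F\colon (U,f)\fun(V,g)$ to the already-established lifting statements for bijections between subsets of $\RV^k$ (Lemma~\ref{RVlift}) and for bijections between sets with one $\VF$-coordinate (Lemma~\ref{simul:special:dim:1}), and ultimately to the combinatorial bookkeeping of how $\bb L$ acts on objects. First I would recall that, by Definition~\ref{def:lift}, $F$ induces a definable finite-to-finite correspondence $F^\dag\sub f(U)\times g(V)$, and since such a correspondence decomposes into finitely many definable bijections, and the objects of $\RV[k]$ and $\VF[k]$ are closed under finite disjoint unions (Remark~\ref{fintoone}, and the formation of $U_f$ is compatible with partitioning $U$), I may assume $F^\dag$ is itself a definable bijection $f(U)\fun g(V)$ between two subsets of $\RV^k$.

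Next I would apply Lemma~\ref{RVlift} to $F^\dag$ to obtain a definable bijection $(F^\dag)^\sharp\colon f(U)^\sharp\fun g(V)^\sharp$ that $\rv$-contracts to $F^\dag$; this is precisely a lift of $F$ in the sense of Definition~\ref{def:lift}. The remaining point is to promote this to an honest $\VF[k]$-morphism $\bb L(U,f)\fun\bb L(V,g)$, i.e.\ to a definable bijection between $U_f=\bigcup\{f(u)^\sharp\times u:u\in U\}$ and $V_g=\bigcup\{g(v)^\sharp\times v:v\in V\}$. Here one uses that $F$ itself is a bijection $U\fun V$ covering $F^\dag$ in the appropriate sense: for $u\in U$ with $F(u)=v$, the fibre of $U_f$ over $u$ is $f(u)^\sharp$ and the fibre of $V_g$ over $v$ is $g(v)^\sharp$, and $(F^\dag)^\sharp$ restricted to $f(u)^\sharp$ lands in $g(v)^\sharp$ (since $(F^\dag)^\sharp$ $\rv$-contracts to $F^\dag$ which carries $f(u)$ to $g(v)$). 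Assembling these fibrewise bijections over the bijection $F\colon U\fun V$ by compactness yields the desired definable bijection $F^\sharp\colon U_f\fun V_g$. One checks it is a morphism of $\VF[k]$: both source and target have $\VF$-dimension $\le k$ and finite $\RV$-fibres by construction (the $\RV$-fibres of $U_f$ over its $\VF$-coordinates are finite because $f$ is finite-to-one, cf.\ Lemma~\ref{RV:fiber:dim:same}), and $F^\sharp$ is a definable bijection.

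The main obstacle is the gluing step: ensuring that the fibrewise lifts, which a priori depend on parameters $u$ (or $v$), can be chosen uniformly so as to patch into a single global definable bijection. This is exactly what Lemma~\ref{RVlift} was designed to handle — its inductive proof already carries out the compactness-and-definable-choice argument needed to go from pointwise existence of lifts to a global definable lift, and this is precisely where Hypothesis~\ref{hyp:point} (every definable $\RV$-disc contains a definable point) is invoked. So in practice the work has been front-loaded into Lemma~\ref{RVlift}, and the proof of the corollary amounts to checking that applying it to $F^\dag$, together with the defining formula of $U_f$, produces a bijection that respects the extra bookkeeping coordinate recording the point of $U$; this is a routine verification. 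I would close by remarking that this is the analogue of \cite[Proposition~6.1]{hrushovski:kazhdan:integration:vf}.
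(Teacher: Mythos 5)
Your proposal is correct and follows the same route as the paper: reduce to the case where the induced correspondence $F^\dag$ is a bijection, apply Lemma~\ref{RVlift} to lift it, and observe that the bookkeeping coordinates of $U_f$ and $V_g$ are carried along by $F$ itself. The paper dismisses the assembly step as immediate (indeed no compactness is needed there, since the glued map is just $(a,u)\efun((F^\dag)^\sharp(a),F(u))$), whereas you spell it out, but the substance is identical.
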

\begin{proof}
As in Definition~\ref{def:lift}, we may assume that the finite-to-finite correspondence $F^\dag$ is actually a bijection.  Then this is immediate by Lemma~\ref{RVlift}.
\end{proof}

\begin{cor}\label{L:sur:c}
The lifting map $\bb L_{\leq k}$ induces a surjective homomorphism, which is sometimes simply denoted by $\bb L$, between the Grothendieck semigroups
\[
\gsk \RV[{\leq} k] \epi \gsk \VF[k].
\]
\end{cor}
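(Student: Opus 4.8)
The plan is to deduce Corollary~\ref{L:sur:c} from the three ingredients already established: the lifting maps $\bb L_k$ (Definition~\ref{def:L}), the essential surjectivity of $\bb L$ on isomorphism classes (Corollary~\ref{all:subsets:rvproduct}), and the fact that $\bb L$ respects isomorphisms (Corollary~\ref{RV:lift}). First I would verify that $\bb L_{\leq k}$ descends to a well-defined map $\gsk \RV[{\leq} k] \fun \gsk \VF[k]$ on Grothendieck semigroups. For this one checks three things. (i) \emph{The image lands in $\VF[k]$}: given $(U,f) \in \RV[i]$ with $i \leq k$, the set $U_f = \bigcup\{f(u)^\sharp \times u : u \in U\}$ has $\VF$-dimension $\leq i \leq k$ (each fiber $f(u)^\sharp \sub \RV^i$ is an $\RV$-polydisc contributing $\VF$-dimension $i$) and every $\RV$-fiber $(U_f)_a$ is finite since $f$ is finite-to-one; hence $U_f \in \VF[k]$. (ii) \emph{Isomorphic objects go to isomorphic objects}: this is exactly Corollary~\ref{RV:lift} — an $\RV[i]$-morphism $F : (U,f) \fun (V,g)$ lifts to a $\VF[i]$-morphism $F^\sharp : \bb L_i(U,f) \fun \bb L_i(V,g)$, and a $\VF[i]$-morphism is in particular a $\VF[k]$-morphism. (iii) \emph{The scissor relation is respected}: if $\bm U = (U,f)$ and $V \sub U$ with $(V, f\rest V)$ a subobject, then $\bb L_i(V, f\rest V) = V_{f\rest V} = U_f \cap (\VF^i \times V)$ is a definable subset of $\bb L_i \bm U$ with complement $\bb L_i(U \mi V, f\rest{U\mi V})$; so $[\bb L_i \bm U] = [\bb L_i(V,\cdot)] + [\bb L_i(U\mi V,\cdot)]$ in $\gsk \VF[k]$. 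Since $\gsk \RV[{\leq}k]$ is generated by isomorphism classes subject only to these relations, (i)–(iii) give a well-defined semigroup homomorphism, which on $\bigoplus_i \bm U_i$ is $\sum_i [\bb L_i \bm U_i]$ by the definition of $\bb L_{\leq k}$.

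Next I would prove surjectivity. Let $A \in \VF[k]$, so $\dim_{\VF}(A) \leq k$ and all $\RV$-fibers of $A$ are finite. By Corollary~\ref{all:subsets:rvproduct}, there are $\bm U \in \RV[{\leq} n]$ (where $A \sub \VF^n \times \RV^m$) and a special bijection $T$ from $A$ onto $\bb L_{\leq n}(\bm U)$. A special bijection is in particular a definable bijection, hence a $\VF[k]$-morphism provided both source and target lie in $\VF[k]$; the source $A$ does by hypothesis, and the target $\bb L_{\leq n}(\bm U)$ does by Lemma~\ref{RV:fiber:dim:same} (which forces the $\RV$-fiber dimension of $\bb L_{\leq n}(\bm U)$ to equal that of $A$, namely $0$) together with $\dim_{\VF}(\bb L_{\leq n}(\bm U)) = \dim_{\VF}(A) \leq k$. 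It remains to see that $\bm U$ may be taken in $\RV[{\leq}k]$ rather than merely $\RV[{\leq}n]$: since $\dim_{\VF}(A) \leq k$, each $\VF$-fiber of $\bb L_{\leq n}(\bm U)$ has $\VF$-dimension $\leq k$, so each component $\bm U_i = (U_i, f_i)$ with $f_i$ injective has $\dim_{\RV}$-image contained (after a coordinate projection furnished by Lemma~\ref{altVFdim} or weak \omin-minimality) in $\RV^{j}$ with $j \leq k$; replacing $f_i$ by that projection changes $\bm U$ to an isomorphic object in $\RV[{\leq}k]$ without changing $[\bb L \bm U]$. Thus $[A] = [\bb L_{\leq k}\bm U']$ for some $\bm U' \in \RV[{\leq}k]$, giving surjectivity onto isomorphism classes and hence onto $\gsk \VF[k]$.

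The routine parts are the three verifications (i)–(iii); the only step requiring a little care is the dimension bookkeeping at the end of the surjectivity argument — making sure that a $\VF[k]$-object is hit by an object of $\RV[{\leq}k]$ and not just of $\RV[{\leq}n]$ — but this is a direct consequence of Lemma~\ref{altVFdim} and Lemma~\ref{RV:fiber:dim:same} and presents no genuine obstacle. (This is why Hypothesis~\ref{hyp:point} is invoked upstream: Corollary~\ref{RV:lift} rests on Lemma~\ref{RVlift}, which needs definable points in $\RV$-discs.) I expect the entire proof to be short, essentially an assembly of Corollaries~\ref{all:subsets:rvproduct} and~\ref{RV:lift} with the elementary observation that lifting maps are additive over definable partitions.
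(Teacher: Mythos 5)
Your proposal is correct and follows the same route as the paper: Corollary~\ref{RV:lift} to see that $\bb L_{\leq k}$ descends to isomorphism classes (the additivity checks being routine), and Lemma~\ref{altVFdim} together with Corollary~\ref{all:subsets:rvproduct} for surjectivity. The only difference is presentational — the paper first uses Lemma~\ref{altVFdim} to inject a $\VF[k]$-object into $\VF^k\times\RV^l$ before contracting, whereas you contract first and fix the grading afterwards, but the content is identical.
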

\begin{proof}
By Corollary~\ref{RV:lift}, every $\RV[k]$-isomorphism can be lifted. So $\bb L_{\leq k}$ induces a map on the isomorphism classes, which is easily seen to be a semigroup homomorphism. By Lemma~\ref{altVFdim} and Corollary~\ref{all:subsets:rvproduct}, this  homomorphism is surjective.
\end{proof}

\subsection{$2$-cells}

The remaining object of this section is to identify the kernels of the semigroup homomorphisms $\bb L$ in Corollary~\ref{L:sur:c} and thereby complete the construction of the universal additive invariant. We begin with a discussion of the issue of $2$-cells, as in \cite[\S~4]{Yin:int:acvf}.

The notion of a $2$-cell, which corresponds to that of a bicell in \cite{cluckers:loeser:constructible:motivic:functions}, may look strange and is, perhaps, only of technical
interest. It arises when we try to prove some analogue of Fubini's
theorem, such as Lemma~\ref{contraction:perm:pair:isp} below. The
difficulty is that, although the interaction between $\rv$-contractions and special bijections for definable sets of
$\VF$-dimension $1$ is in a sense ``functorial'' (see Lemma~\ref{simul:special:dim:1}), we are unable to extend the construction to higher $\VF$-dimensions. This is the concern of \cite[Question~7.9]{hrushovski:kazhdan:integration:vf}. It has
also occurred in \cite{cluckers:loeser:constructible:motivic:functions} and actually may be
traced back to the construction of the \omin-minimal Euler characteristic in \cite{dries:1998}; see
\cite[Section~1.7]{cluckers:loeser:constructible:motivic:functions}.

Anyway, in this situation, a natural strategy for $\rv$-contracting the isomorphism class of a
definable set of higher $\VF$-dimension is to apply the result for
$\VF$-dimension $1$ parametrically and proceed with one $\VF$-coordinate at a time. As in the classical theory of integration, this strategy requires some form of Fubini's theorem: for a well-behaved integration (or additive invariant in our case), an integral should yield the same value
when it is evaluated along different orders of the variables. By induction, this problem is immediately reduced to the case of two variables. A $2$-cell is a definable
subset of $\VF^2$ with certain symmetrical (or ``linear'' in the sense described in Remark~\ref{2cell:linear} below) internal structure that satisfies this Fubini-type requirement. Now the idea is that, if we can find a definable partition for every definable set such that each piece is a $2$-cell indexed by some $\RV$-sort parameters, then, by compactness, every definable
set satisfies the Fubini-type requirement. This kind of
partition is achieved in Lemma~\ref{decom:into:2:units}.

\begin{lem}\label{bijection:dim:1:decom:RV}
Let $f : A \fun B$ be a definable bijection between two subsets of $\VF$. Then there is a special bijection $T$ on $A$ such that $A^\flat$ is an $\RV$-pullback and, for each $\RV$-polydisc $\gp \sub A^\flat$, $f \rest T^{-1}(\gp)$ is $\rv$-affine.
\end{lem}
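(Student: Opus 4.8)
The plan is to reduce, by means of Lemma~\ref{fun:suba:fun} and the calculus of special bijections, to a situation in which $f$ is $\lan{T}{}{}$-definable on small pieces, and then to feed each open disc cut out by an $\RV$-pullback structure into Lemma~\ref{rv:lin}: a definable bijection between atomic open discs is automatically $\rv$-affine. The work is thus to arrange that the relevant discs --- and their $f$-images --- are atomic, and that this survives the special bijection used to create the $\RV$-pullback structure.

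First I would set up a preliminary partition. By Lemma~\ref{fun:suba:fun} there are a definable finite partition $A = \biguplus_i A_i$ and $\lan{T}{}{}$-definable functions $f_i$ with $f \rest A_i = f_i \rest A_i$. Fix $i$, put $X_i \coloneqq \dom(f_i)$, an $\lan{T}{}{}$-definable set containing $A_i$, and for $a \in X_i$ let $D_a \sub X_i$ be the $\lan{T}{}{}$-type-definable set containing $a$. By Lemma~\ref{atom:self} every open disc $\ga \sub D_a$ is $\code\ga$-atomic, so by Lemma~\ref{atom:type} (applied with $\la \mdl S, \code\ga\ra$ in place of $\mdl S$) $f_i \rest \ga$ is a definable bijection onto an open disc $f_i(\ga)$; and since $\ga$ generates a complete $\code\ga$-type and $f_i \rest \ga$ is $\code\ga$-definable, $f_i(\ga)$ is $\code\ga$-atomic as well. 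Lemma~\ref{rv:lin}, applied over $\la\mdl S, \code\ga\ra$, then shows that $f_i \rest \ga$ is $\rv$-affine. As this holds for every open disc inside the $\lan{T}{}{}$-type-definable set $D_a$, the compactness argument in the proof of Corollary~\ref{part:rv:cons} (cf.\ Remark~\ref{rem:LT:com}) yields an $\lan{T}{}{}$-definable finite partition of $X_i$ on each piece of which $f_i \rest \ga$ is $\rv$-affine for every open disc $\ga$. Intersecting with $A_i$ and collecting over $i$, I obtain a definable finite partition $A = \biguplus_j A_j$ such that $f \rest \ga$ is $\rv$-affine for every open disc $\ga \sub A_j$.

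Next I would produce the special bijection. Apply Corollary~\ref{special:bi:term:constant} to the definable function $A \fun \RV$ sending a point to the index $j$ of the part $A_j$ containing it (viewed as an element of $\RV$): this gives a special bijection $T$ on $A$ such that $A^\flat$ is an $\RV$-pullback and this index function, post-composed with $T^{-1}$, is $\rv$-contractible. Hence the index function is constant on every $\RV$-polydisc $\gp$ of $A^\flat$, so $T^{-1}(\gp)$ lies inside a single part $A_j$. By Lemma~\ref{inverse:special:dim:1} the $\VF$-part of $T^{-1}(\gp)$ is an open disc $\ga$ (a translate of $\pr_1(\gp)$) and $T^{-1}\rest\gp$ acts on the $\VF$-coordinate as a translation, so $T^{-1}\rest\gp$ is $\rv$-affine of slope $1$. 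Since $\ga \sub A_j$, the map $f \rest \ga$ is $\rv$-affine by the previous step, and therefore $f \circ T^{-1} \rest \gp = (f \rest \ga)\circ(T^{-1}\rest\gp)$ is $\rv$-affine, of the same slope. This is exactly the assertion.

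The main obstacle is the second step: transporting $\rv$-affineness from the $\lan{T}{}{}$-type-definable, genuinely atomic regime --- where Lemma~\ref{rv:lin} applies at face value --- to honest definable pieces, in a way that is not destroyed by the final special bijection. The ingredients that make this work are Lemma~\ref{atom:self} (open discs inside $\lan{T}{}{}$-type-definable sets are atomic over their own names), Lemma~\ref{atom:type} (an atomic open disc is not mapped by a definable injection onto a closed disc or a half thin annulus, so Lemma~\ref{rv:lin} is genuinely applicable), and the device of building the partition-index function into the special bijection so that each disc of the $\RV$-pullback structure is confined to a single part. Everything else is routine bookkeeping with regularizations and auxiliary $\RV$-coordinates.
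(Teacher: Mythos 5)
Your overall route is sound and genuinely different in organization from the paper's, although both rest on the same pillars (Lemma~\ref{rv:lin}, atomicity, and Corollary~\ref{special:bi:term:constant}). The paper does not partition $A$ at all: it uses Lemma~\ref{rv:lin} and compactness to produce, for all but finitely many $a \in A$, the least $\delta_a \in \abs{\Gamma}$ such that $f \rest \go(a, \delta_a)$ is $\rv$-affine, applies Corollary~\ref{special:bi:term:constant} to the resulting $\abs{\Gamma}$-valued function $a \efun \delta_a$, and then argues \emph{a posteriori}, via Lemma~\ref{one:atomic} and Lemma~\ref{rv:lin} again, that the contracted value on each polydisc $\gp$ is at most $\rad(\gp)$, so the affineness disc around each point swallows $T^{-1}(\gp)$. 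You instead manufacture a finite partition of $A$ up front on whose pieces \emph{every} open disc is an affineness disc (by the compactness argument of Corollary~\ref{part:rv:cons} over \LT-type-definable sets, cf.\ Remark~\ref{rem:LT:com}), and then contract the finite index function. Both work; your version trades the paper's final radius comparison for heavier lifting in the partition step, and is closer in spirit to the proofs of Corollary~\ref{part:rv:cons} and Lemma~\ref{open:pro}.

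The one step you must repair is the assertion that $f_i \rest \ga$ is ``a definable bijection onto an open disc'' for every open disc $\ga \sub D_a$: injectivity is a \emph{hypothesis} of Lemma~\ref{atom:type} (and of the definition of $\rv$-affine), not a conclusion, and $f_i$ is only known to agree with the injective $f$ on $A_i$, whereas $D_a$ is the \LT-type-definable set containing $a$ inside $X_i = \dom(f_i)$, whose discs need not meet $A_i$ at all. The fix is routine: by Corollary~\ref{mono} the monotonicity decomposition of $f_i$ is definable over $\mdl S$, so an atomic disc $\ga$ lies in a single piece of it and $f_i \rest \ga$ is either constant or injective; run the compactness argument with the property ``$f_i \rest \ga$ is constant or $\rv$-affine,'' which is a single first-order condition in the parameter $\code{\ga}$. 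On the final pieces $A_j \sub A_i$ the constant alternative is excluded because $f$ is a bijection, so you recover $\rv$-affineness exactly where the lemma requires it. With that adjustment the proof is correct.
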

\begin{proof}
By Lemma~\ref{rv:lin} and compactness, for all but finitely many $a \in A$ there is an $a$-definable $\delta_a \in \abs{\Gamma}$ such that $f \rest \go(a, \delta_a)$ is $\rv$-affine. Without loss of generality, we may assume that, for all $a \in A$, $\delta_a$ exists and is the least element that satisfies this condition. Let $g : A \fun |\Gamma|$ be the definable function given by $a \efun \delta_a$. By Corollary~\ref{special:bi:term:constant}, there is a special bijection $T$ on $A$ such that $A^\flat$ is an $\RV$-pullback and, for all $\RV$-polydisc $\gp \sub A^\flat$, $(g \circ T^{-1}) \rest \gp$ is constant. By Lemmas~\ref{one:atomic} and \ref{rv:lin},  we must have $(g \circ T^{-1})(\gp) \leq \rad(\gp)$, for otherwise  the choice of $\delta_a$ is violated for some $a \in T^{-1}(\gp)$. So $T$ is as required.
\end{proof}

\begin{lem}\label{bijection:rv:one:one}
Let $A \sub \VF^2$ be a definable set such that $\ga_1 \coloneqq \pr_1(A)$ and $\ga_2 \coloneqq \pr_2(A)$ are open discs. Suppose that there is a definable bijection $f : \ga_1 \fun \ga_2$ that has dtdp and, for each $a \in \ga_1$, there is a $t_a \in \RVV$ with $A_a = t_a^\sharp + f(a)$. Then there is a special bijection $T$ on $\ga_1$ such that $\ga_1^\flat$ is an $\RV$-pullback and, for each $\RV$-polydisc $\gp \sub \ga_1^\flat$, $\rv$ is constant on the set
\[
\{a - f^{-1}(b) : a \in T^{-1}(\gp) \text{ and } b \in A_a \}.
\]
\end{lem}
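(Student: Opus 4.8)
The plan is to isolate a purely pointwise phenomenon and then let one application of Corollary~\ref{special:bi:term:constant} do the rest; in particular, no atomicity of $\ga_1$ or $\ga_2$ and no appeal to Lemma~\ref{rv:lin} should be needed.

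First I would prove the pointwise statement: for \emph{every} $a \in \ga_1$ the map $\rv$ is already constant on $S_a \coloneqq \{\, a - f^{-1}(b) : b \in A_a \,\} = a - f^{-1}(A_a)$, and I would denote its value by $v(a) \in \RVV$. If $t_a = 0$ then, as $\rv$ restricted to $\VF^{\times}$ omits $0$, we have $0^\sharp = \rv^{-1}(0) = \{0\}$, so the fibre $A_a$ is the singleton $\{f(a)\}$, whence $S_a = \{0\}$ and $v(a) = 0$. If $t_a \neq 0$ then $A_a = t_a^\sharp + f(a)$ is an open disc whose center $f(a)$ is not in $A_a$ (since $t_a \neq 0$ forces $0 \notin t_a^\sharp$); because $f$ has dtdp so does $f^{-1}$, hence $D_a \coloneqq f^{-1}(A_a)$ is an open disc, and $a = f^{-1}(f(a)) \notin D_a$ as $f(a) \notin A_a$. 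A routine ultrametric estimate now finishes it: for any $c, c' \in D_a$ one has $\abval((a - c) - (a - c')) = \abval(c - c')$, which exceeds $\rad(D_a)$, while $\abval(a - c) = \abval(a - c') \le \rad(D_a)$; therefore $\rv(a - c) = \rv(a - c')$. Since the common value of $\rv$ on $S_a$ plainly does not depend on the choice of $b \in A_a$, the map $v \colon \ga_1 \fun \RVV$ so defined is definable.

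Next I would apply Corollary~\ref{special:bi:term:constant} to $v$, obtaining a special bijection $T$ on $\ga_1$ such that $\ga_1^\flat$ is an $\RV$-pullback and $v \circ T^{-1}$ is $\rv$-contractible. Any $\RV$-polydisc $\gp \sub \ga_1^\flat$ has a single $\VF$-coordinate, which is an $\RV$-disc, and its remaining (book-keeping) coordinates form one point, so $\rv$ sends $\gp$ to a point; hence $v \circ T^{-1}$ is constant on $\gp$, i.e.\ $v$ is constant on $T^{-1}(\gp)$. Combined with the pointwise statement, this gives that $\rv$ is constant on $\bigcup_{a \in T^{-1}(\gp)} S_a = \{\, a - f^{-1}(b) : a \in T^{-1}(\gp),\ b \in A_a \,\}$, which is the assertion.

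The only step with genuine content is the pointwise claim, where the care needed is confined to the two degenerate situations (the singleton fibre when $t_a = 0$, and checking $a \notin D_a$) --- exactly the points at which the hypotheses ``$f$ has dtdp'' and ``$A_a = t_a^\sharp + f(a)$'' enter. Once that is in place, the passage to $\ga_1^\flat$ is a formal use of the contraction machinery, so I do not anticipate a serious obstacle.
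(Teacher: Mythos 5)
Your proposal is correct and follows essentially the same route as the paper: establish that $\rv$ is already constant on each fibre set $a - f^{-1}(A_a)$ (using $f(a)\notin A_a$ when $t_a\neq 0$, dtdp for $f^{-1}$, and the ultrametric estimate), package this as a definable function $\ga_1\fun\RVV$, and then invoke Corollary~\ref{special:bi:term:constant}. Your write-up merely makes explicit the valuation computation that the paper leaves to the phrase ``in either case, the function $b\efun\rv(a-b)$ is constant.''
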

\begin{proof}
For each $a \in \ga_1$, let $\gb_a$ be the smallest closed disc that contains $A_a$. Since $A_a - f(a) = t_a^\sharp$, we have $f(a) \in \gb_a$ but $f(a) \notin A_a$ if $t_a \neq 0$. Hence $a \notin f^{-1}(A_a)$ if $t_a \neq 0$ and $\{a\} = f^{-1}(A_a)$ if $t_a = 0$. Since $f^{-1}(A_a)$ is a disc or a point, in either case, the function on $f^{-1}(A_a)$ given by $b \efun \rv(a - b)$ is constant. The function $h : \ga_1 \fun \RVV$ given by $a \efun \rv(a - f^{-1}(A_a))$ is definable. Now we apply Corollary~\ref{special:bi:term:constant} as in the proof of Lemma~\ref{bijection:dim:1:decom:RV}. The lemma follows.
\end{proof}

\begin{defn}\label{defn:balance}
Let $A$, $\ga_1$, $\ga_2$, and $f$ be as in Lemma~\ref{bijection:rv:one:one}. We say that $f$ is \emph{balanced in $A$} if $f$ is actually $\rv$-affine and there are $t_1, t_2 \in \RVV$, called the \emph{paradigms} of $f$, such that, for every $a \in \ga_1$,
\[
A_a = t_2^\sharp + f(a) \dand f^{-1}(A_a) = a - t_1^\sharp.
\]
\end{defn}

\begin{rem}\label{2cell:linear}
Suppose that $f$ is balanced in $A$ with paradigms $t_1$, $t_2$.
If one of the paradigms is $0$ then the other one must be $0$. In this case $A$ is just the (graph of the) bijection $f$ itself.

Assume that $t_1$, $t_2$ are nonzero. Let $\gB_1$, $\gB_2$ be, respectively, the sets of closed subdiscs of $\ga_1$, $\ga_2$ of radii $\abs{\vrv(t_1)}$, $\abs{\vrv(t_2)}$. Let $a_1 \in \gb_1 \in \gB_1$ and $\go_1$ be the maximal open subdisc of $\gb_1$ containing $a_1$. Let $\gb_2 \in \gB_2$ be the smallest closed disc containing the open disc $\go_2 \coloneqq A_{a_1}$. Then, for all $a_2 \in \go_2$, we have
\[
\go_2 = t_2^\sharp + f(\go_1) = A_{a_1} \dand A_{a_2} = f^{-1}(\go_2) + t_1^\sharp = \go_1.
\]
This internal symmetry of $A$ is illustrated by the following diagram:
\[
\bfig
  \dtriangle(0,0)|amb|/.``<-/<600,250>[\go_1`f^{-1}(\go_2)`\go_2; \pm t_1^\sharp`\times`f^{-1}]
  \ptriangle(600,0)|amb|/->``./<600,250>[\go_1`f(\go_1)`\go_2; f`` \pm t_2^\sharp]
 \efig
\]
Since $f$ is $\rv$-affine, we see that its slope must be $-t_2/t_1$ (recall Definition~\ref{rvaffine}).

If we think of $\gb_1$, $\gb_2$ as $\tor(\code {\go_1})$, $\tor(\code {\go_2})$ then the set $A \cap (\gb_1 \times \gb_2)$ may be thought of as the ``line'' in $\tor(\code {\go_1}) \times \tor(\code {\go_2})$ given by the equation
\[
x_2 = - \tfrac{t_2}{t_1}(x_1 - \code{\go_1}) + (\code{\go_2} - t_2).
\]
Thus, by Lemma~\ref{simul:special:dim:1}, the obvious bijection between $\pr_1(A) \times t_2^\sharp$ and $t_1^\sharp \times \pr_2(A)$ is the lift of an $\RV[{\leq}2]$-morphism modulo special bijections; see Lemma~\ref{2:unit:contracted} below for details. The slope of $f$ will play a more important role when  volume forms are introduced into the categories (in a sequel).
\end{rem}

\begin{defn}[$2$-cell]\label{def:units}
We say that a set $A$ is a \emph{$1$-cell} if it is either an open disc contained in a single $\RV$-disc or a point in $\VF$. We say
that $A$ is a \emph{$2$-cell} if
\begin{enumerate}
 \item $A$ is a subset of $\VF^2$ contained in a single $\RV$-polydisc and $\pr_1(A)$ is a $1$-cell,
 \item there is a function $\epsilon : \pr_1 (A) \fun \VF$ and a $t \in \RV$ such that, for every $a \in \pr_1(A)$, $A_a = t^\sharp + \epsilon(a)$,
 \item one of the following three possibilities occurs:
  \begin{enumerate}
   \item $\epsilon$ is constant,
   \item $\epsilon$ is injective, has dtdp, and $\rad(\epsilon(\pr_1(A))) \geq \abs{\vrv(t)}$,\label{2cell:3b}
   \item $\epsilon$ is balanced in $A$.
  \end{enumerate}
\end{enumerate}
The function $\epsilon$ is called the \emph{positioning function} of $A$ and the element $t$ the \emph{paradigm} of $A$.

More generally, a set $A$ with exactly one $\VF$-coordinate is a \emph{$1$-cell} if, for each $t \in \pr_{>1}(A)$, $A_t$ is a $1$-cell in the above sense; the parameterized version of the notion of a $2$-cell is formulated in the same way.
\end{defn}

A $2$-cell is definable if all the relevant ingredients are definable. Naturally we will only be concerned with definable $2$-cells. Notice that Corollary~\ref{all:subsets:rvproduct} implies that for every definable set $A$ with exactly one $\VF$-coordinate there is a
definable function $\pi: A \fun \RV^l$ such that every fiber $A_s$ is a $1$-cell. This should be understood as $1$-cell decomposition and the next lemma as $2$-cell decomposition.

\begin{lem}[$2$-cell decomposition]\label{decom:into:2:units}
For every definable set $A \sub \VF^2$ there is a definable function $\pi: A \fun \RV^m$ such that every fiber $A_s$ is an $s$-definable $2$-cell.
\end{lem}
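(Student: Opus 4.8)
The plan is to reduce $2$-cell decomposition to the $1$-cell decomposition already available (Corollary~\ref{all:subsets:rvproduct}) together with the fine structure theory for definable bijections of $\VF$-dimension $1$ developed in Lemmas~\ref{bijection:dim:1:decom:RV}, \ref{bijection:rv:one:one}, and \ref{simul:special:dim:1}. First I would apply $1$-cell decomposition to $A$ viewed as a set with one distinguished $\VF$-coordinate, say $\pr_1$: there is a definable $\pi_1 : A \fun \RV^{l}$ such that each fiber $A_s$, viewed as a subset of $\VF \times \VF$ fibered over its first coordinate, has the property that $\pr_1(A_s)$ is a $1$-cell (an open disc inside a single $\RV$-disc, or a point). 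By compactness it suffices to treat a single such fiber, so we may assume $\ga_1 \coloneqq \pr_1(A)$ is a $1$-cell contained in one $\RV$-disc and $A \sub \ga_1 \times \VF$ is contained in a single $\RV$-polydisc. The case $\ga_1$ a point is trivial (then $A$ is essentially a subset of $\VF$, handled by $1$-cell decomposition in the second coordinate), so assume $\ga_1$ is an open disc.

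Next, for each $a \in \ga_1$ the fiber $A_a$ is a definable subset of a single $\RV$-disc; applying $1$-cell decomposition again to $A$ as a family over $\ga_1$, and refining $\pi_1$, we may assume each $A_a$ is a $1$-cell, i.e. an open disc (or point) inside a single $\RV$-disc. Thus there is a definable $\epsilon : \ga_1 \fun \VF$ and, after further partitioning via a definable map into $\RV$, a fixed $t \in \RV$ with $A_a = t^\sharp + \epsilon(a)$ for all $a \in \ga_1$ — this is condition (2) of Definition~\ref{def:units}. It remains to arrange that $\epsilon$ falls into one of the three cases (3a)--(3c). Here I would invoke monotonicity/HNF-type control on $\epsilon$: by Corollary~\ref{mono} and Lemma~\ref{open:pro}, after a special bijection $T$ on $\ga_1$ making $\ga_1^\flat$ an $\RV$-pullback and passing to $\RV$-polydiscs $\gp \sub \ga_1^\flat$ (which pull back to open subdiscs of $\ga_1$), the restriction $\epsilon \circ T^{-1} \rest \gp$ is either constant, or injective with dtdp. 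In the constant case we are in (3a). In the injective-with-dtdp case, we compare $\rad(\epsilon(\text{subdisc}))$ with $\abs{\vrv(t)}$: if it is $\geq \abs{\vrv(t)}$ we are in (3b); if it is $< \abs{\vrv(t)}$, then $\epsilon$ maps the subdisc into a closed disc smaller than the $t$-shift, and applying Lemmas~\ref{bijection:rv:one:one} and \ref{rv:lin} we can — after one more special bijection (which for sets with one $\VF$-coordinate is absorbed into a definable parameterization by $\RV$, recording foci in $\RV$) — force $\epsilon$ to be $\rv$-affine and balanced in the relevant piece, landing in (3c). Throughout, each special bijection used is on a set with a single $\VF$-coordinate, so by Lemma~\ref{inverse:special:dim:1} and Corollary~\ref{function:rv:to:vf:finite:image} its effect is encoded by finitely many $\RV$-parameters and can be folded into the index map $\pi$; assembling all these partitions and applying compactness twice yields a single definable $\pi : A \fun \RV^m$ with every $A_s$ an $s$-definable $2$-cell.

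The main obstacle I expect is the bookkeeping in the last step: the special bijections one applies to straighten $\epsilon$ are performed fiberwise (one open subdisc of $\ga_1$ at a time, with parameters varying), and one must check that the data defining them — the loci, foci, and the resulting reindexing — can be collected uniformly into one definable function into some $\RV^m$ without destroying the $2$-cell structure already achieved on the other fibers. This is exactly the kind of "parametric special bijection glued along an $\RV$-base" argument that appears in the proofs of Lemma~\ref{simul:special:dim:1} and Theorem~\ref{special:term:constant:disc}, and I would mimic the compactness-plus-gluing device used there: first show that locally (over each point of the $\RV$-base) a good $\pi$ exists, then use compactness to get finitely many definable pieces, then take the common refinement. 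A secondary subtlety is verifying that case (3b)'s inequality $\rad(\epsilon(\pr_1(A))) \geq \abs{\vrv(t)}$ and case (3c)'s balancedness are genuinely exhaustive once (3a) fails — this is where Lemma~\ref{rv:lin} (every definable bijection between atomic open discs is $\rv$-affine) and the dichotomy in Lemma~\ref{bijection:rv:one:one} do the real work, and one should be careful that "balanced" requires both paradigms, which Remark~\ref{2cell:linear} confirms is automatic once one paradigm is nonzero and $\epsilon$ is $\rv$-affine with the right radius relation.
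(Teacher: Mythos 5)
Your proposal is correct and follows essentially the same route as the paper's proof: fiberwise $1$-cell decomposition via special bijections, extraction of the positioning function through Lemma~\ref{inverse:special:dim:1}, compactness gluing over an $\RV$-indexed base, and the constant / injective-with-dtdp / balanced trichotomy obtained from Corollary~\ref{uni:fun:decom}, Lemmas~\ref{open:pro} and \ref{bijection:dim:1:decom:RV} (equivalently \ref{rv:lin}), and Lemma~\ref{bijection:rv:one:one}. The only cosmetic difference is that you decompose the base $\pr_1(A)$ first and the fibers second, whereas the paper works fiberwise first and only straightens the base at the end.
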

\begin{proof}
By compactness, we may assume that $A$ is contained in a single
$\RV$-polydisc. For each $a \in \pr_1 (A)$, by Corollary~\ref{all:subsets:rvproduct}, there is an $a$-definable special bijection $T_a$ on $A_a$ such that $A_a^\flat$ is an $\RV$-pullback. By Lemma~\ref{inverse:special:dim:1}, there is an $a$-definable function $\epsilon_a : (A_a^\flat)_{\RV} \fun \VF$ such that, for every $(t, s) \in (A_a^\flat)_{\RV}$, we have
\[
T_a^{-1}(t^\sharp \times (t, s)) =
t^\sharp + \epsilon_a(t,  s).
\]
By compactness, we may glue these functions together, that is, there is a definable set $C \sub \pr_1(A) \times \RV^l$ and a definable function $\epsilon : C \fun \VF$ such that, for every $a \in \pr_1(A)$, $C_a = (A_a^\flat)_{\RV}$ and $\epsilon \rest C_a = \epsilon_a$.
For $(t,  s) \in C_{\RV}$, write $\epsilon_{(t, s)} = \epsilon \rest C_{(t, s)}$. By Corollary~\ref{uni:fun:decom} and compactness, we are reduced to the case
that each $\epsilon_{(t, s)}$ is either
constant or injective. If no $\epsilon_{(t, s)}$ is injective then we can finish by applying
Corollary~\ref{all:subsets:rvproduct} to each $C_{(t, s)}$ and then compactness.

Suppose that some $\epsilon_{(t, s)}$ is injective. Then, by Lemmas~\ref{open:pro} and \ref{bijection:dim:1:decom:RV}, we are reduced to the case that $C_{(t, s)}$ is an open disc and
$\epsilon_{(t, s)}$ is $\rv$-affine and has dtdp. Write $\gb_{(t, s)} = \ran (\epsilon_{(t, s)})$. If $\rad(\gb_{(t, s)}) \geq \abvrv(t)$ then $\epsilon_{(t, s)}$ satisfies the condition (\ref{2cell:3b}) in Definition~\ref{def:units}. So let us suppose $\rad(\gb_{(t, s)}) < \abvrv(t)$. Then
\[
\textstyle \gb_{(t, s)} = \bigcup_{a \in C_{(t, s)}} (t^\sharp + \epsilon_{(t, s)}(a)).
\]
By Lemma~\ref{bijection:rv:one:one}, we are further reduced to the case that there is an $r \in \RV$ such that, for every $a \in C_{(t, s)}$,
\[
\rv(a - \epsilon_{(t, s)}^{-1}(t^\sharp + \epsilon_{(t, s)}(a))) = r \quad \text{and hence} \quad \epsilon_{(t, s)}^{-1}(t^\sharp + \epsilon_{(t, s)}(a)) = a - r^\sharp.
\]
So, in this case, $\epsilon_{(t, s)}$ is balanced. Now we are
done by compactness.
\end{proof}

To extend Lemma~\ref{simul:special:dim:1} to all definable bijections, we need not only $2$-cell decomposition but also the following notions.

Let $A \sub \VF^{n} \times \RV^{m}$, $B \sub \VF^{n} \times \RV^{m'}$, and $f : A \fun B$ be a definable bijection.

\begin{defn}\label{rela:unary}
We say that $f$ is \emph{relatively unary} or, more precisely, \emph{relatively unary in the $i$th $\VF$-coordinate}, if $(\pr_{\tilde{i}} \circ f)(x) = \pr_{\tilde{i}}(x)$ for all $x \in A$,  where $i \in [n]$.  If $f \rest A_y$ is also a special bijection for every $y \in \pr_{\tilde{i}} (A)$ then we say that $f$ is \emph{relatively special in the $i$th $\VF$-coordinate}.
\end{defn}

Obviously the inverse of a relatively unary bijection is a relatively unary bijection. Also note that every special bijection on $A$ is a composition of relatively special bijections.

Choose an $i \in [n]$. By Corollary~\ref{all:subsets:rvproduct} and compactness, there is a bijection $T_i$ on $A$, relatively special in the $i$th $\VF$-coordinate, such that $T_i(A_a)$ is an $\RV$-pullback for every $a \in \pr_{\tilde i}(A)$. Note that $T_i$ is not necessarily a special bijection on $A$, since the special bijections in the $i$th $\VF$-coordinate for distinct $a, a' \in \pr_{\tilde i}(A)$ with $\rv(a) = \rv(a')$ may not even be of the same length. Let
\[
\textstyle A_i = \bigcup_{a \in \pr_{\tilde i}(A)} a \times  (T_i(A_a))_{\RV} \sub \VF^{n-1} \times \RV^{m_i}.
\]
Write $\hat T_i : A \fun A_i$ for the function naturally induced by $T_i$. For any $j \in [n{-}1]$, we repeat the above procedure on $A_i$ with respect to the $j$th $\VF$-coordinate and thereby obtain a set $A_{j} \sub \VF^{n-2} \times \RV^{m_j}$ and a function $\hat T_{j} : A_i \fun A_{j}$. The relatively special bijection on $T_i(A)$ induced by $\hat T_{j}$ is denoted by $T_j$. Continuing thus, we obtain a sequence of  bijections $T_{\sigma(1)}, \ldots, T_{\sigma(n)}$ and a corresponding function $\hat T_{\sigma} : A \fun \RV^{l}$, where $\sigma$ is the permutation of $[n]$ in question. The composition $T_{\sigma(n)} \circ \ldots \circ T_{\sigma(1)}$, which is referred to as the \emph{lift} of $\hat T_{\sigma}$, is denoted by $T_{\sigma}$.

%If $A \in \VF_*$ and $\dim_{\VF}(A) = k$ then $\hat T_{\sigma}(A)_{\leq n} \in \RV[{\leq} k]$ (this is guaranteed by Lemma~\ref{RV:fiber:dim:same}),  where all the occurrences of $0 \in \RVV$ in the new $\RV$-coordinates of $\RV^{l}$ are deleted in the projections and we also shuffle the relevant $\RV$-coordinates  to the first $k$ positions. There is the obvious bijection $T_{\sigma} : A \fun \mathbb{L} \hat T_{\sigma}(A)_{\leq n}$ determined by $\hat T_{\sigma}$, which may be be thought of as the ``composition'' $T_{\sigma(n)} \circ \ldots \circ T_{\sigma(1)}$.

\begin{defn}\label{defn:standard:contraction}
Suppose that there is a $k \in 0 \cup [m]$ such that  $(A_a)_{\leq k} \in \RV[k]$  for every $a \in A_{\VF}$. In particular, if $k=0$ then $A \in \VF_*$. By Lemma~\ref{RV:fiber:dim:same}, $\hat T_{\sigma}(A)_{\leq n+k}$ is an object of $\RV[{\leq} l{+}k]$, where $\dim_{\VF}(A) = l$. The function $\hat T_{\sigma}$ --- or the object $\hat T_{\sigma}(A)_{\leq n+k}$ --- is referred to as a \emph{standard contraction} of the set $A$ with the \emph{head start} $k$.
\end{defn}

The head start of a standard contraction is usually implicit. In fact, it is always $0$ except in Lemma~\ref{isp:VF:fiberwise:contract}, and can be circumvented even there. This seemingly needless gadget only serves to make the above definition more streamlined: If $A \in \VF_*$ then the intermediate steps of a standard contraction of $A$ may or may not result in objects of $\VF_*$ and hence the definition cannot be formulated entirely within $\VF_*$.

\begin{rem}\label{special:dim:1:RV:iso}
In Lemma~\ref{simul:special:dim:1}, clearly $\rv(A^{\flat})$, $\rv(B^{\flat})$ are standard contractions of $A$, $B$. Indeed, if $A, B \in \VF_*$ then $[\rv(A^{\flat})]_{\leq 1} = [\rv(B^{\flat})]_{\leq 1}$.
\end{rem}

\begin{lem}\label{bijection:partitioned:unary}
There is a definable finite partition $A_i$ of $A$  such that each $f \rest A_i$ is a composition of relatively unary bijections.
\end{lem}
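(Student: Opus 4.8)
The plan is to induct on $n$, the common number of $\VF$-coordinates of $A$ and $B$, using compactness throughout to convert "there is a definable finite partition of $A$ with property $(\ast)$" into "every point of $A$ has a definable neighbourhood with property $(\ast)$" and to make all the partitions produced below uniform in parameters. The base case $n=1$ is immediate: for $A,B\sub\VF\times\RV^{m},\RV^{m'}$ there is no $\VF$-coordinate other than the first, so the condition defining "relatively unary in the first $\VF$-coordinate" is vacuous, $f$ itself is such a bijection, and the trivial partition $\{A\}$ works.

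For the inductive step write $f(x_1,\dots,x_n,u)=(y_1,\dots,y_n,v)$ and consider the definable function $\phi\colon A\fun\VF$, $(x,u)\mapsto y_n$. Regarding $\phi$ as a family of univariate functions $x_n\mapsto\phi(x_1,\dots,x_n,u)$ parametrised by $(x_1,\dots,x_{n-1},u)$, Corollary~\ref{uni:fun:decom} and compactness yield a definable finite partition of $A$ on each piece of which this univariate function is either constant or injective. On an \emph{injective} piece $P$ the value $y_n$ together with $(x_1,\dots,x_{n-1},u)$ recovers $x_n$, so $g\colon(x,u)\mapsto(x_1,\dots,x_{n-1},y_n,(u,v))$ is a definable bijection from $P$ onto $A_1:=g(P)\sub\VF^{n}\times\RV^{m+m'}$ that fixes the $\VF$-coordinates $1,\dots,n-1$, i.e.\ is relatively unary in the $n$th $\VF$-coordinate. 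The composite $h:=f\circ g^{-1}\colon A_1\fun f(P)$ has $n$th $\VF$-coordinate equal to $y_n$ on both its source and its target, so for each fixed value of that coordinate $h$ restricts to a definable bijection between subsets of $\VF^{n-1}\times\RV^{(\cdot)}$; applying the inductive hypothesis to this family and using compactness gives a definable finite partition of $A_1$ on each piece of which $h$ is a composition of bijections relatively unary in coordinates among $1,\dots,n-1$ (and hence, preserving coordinate $n$, also relatively unary for the ambient $\VF^{n}$). Pulling this partition back through $g$ and writing $f\rest P'=(h\rest g(P'))\circ(g\rest P')$ on each resulting piece $P'$ then finishes the injective pieces.

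The genuine difficulty is the \emph{constant} pieces, where $y_n$ does not depend on $x_n$ at all --- the obstruction already visible for a coordinate transposition $(x_1,x_2)\mapsto(x_2,x_1)$, for which no coordinate of $f$ can be "installed into place" by a single bijection and one is forced to introduce auxiliary relatively unary \emph{shears} that genuinely recombine the data rather than copying a target coordinate. My plan for this case is to first apply a relatively unary bijection in the $n$th coordinate that replaces $x_n$ by a definable function of $(x_1,\dots,x_n,u)$ chosen to be injective in $x_n$ (so that the move is a bijection) and arranged so that, afterwards, some $\VF$-coordinate of the transported map depends non-trivially on the new $n$th coordinate, reducing to a configuration handled by the injective analysis and the inductive hypothesis. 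Producing such a preliminary relatively unary map in general is the technical heart of the argument; it rests on the univariate machinery already assembled --- monotonicity, Corollary~\ref{uni:fun:decom}, Lemma~\ref{open:pro}, Lemma~\ref{simul:special:dim:1} --- applied fiberwise together with compactness, and is carried out exactly as in the proofs of the corresponding statements in \cite{hrushovski:kazhdan:integration:vf} and \cite{Yin:int:acvf, Yin:int:expan:acvf}. Once this reduction is in place the induction closes and yields the required finite partition of $A$.
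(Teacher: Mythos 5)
Your base case and your treatment of the ``injective'' pieces are correct and coincide with the paper's argument in the special case where the fiber point can be recovered from the \emph{last} coordinate of the image. But the ``constant'' case is a genuine gap: you defer it entirely to external references, and the fix you sketch does not work as stated. After precomposing with a relatively unary map that makes the new $n$th coordinate ``visible'' in the image, the image coordinate that depends injectively on $x_n$ will in general be some $y_j$ with $j\neq n$ --- the transposition $(x_1,x_2)\mapsto(x_2,x_1)$ already illustrates this: any shear in $x_2$ makes $y_1$, never $y_2$, injective in $x_2$ --- so you are not back in the configuration your injective analysis handles (which used $y_n$ specifically), and the induction does not close.

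The paper avoids your dichotomy altogether. For each $a\in\pr_{<n}(A)$ the image fiber $f(A_a)$ is in $a$-definable bijection with a subset of $\VF\times\RV^m$, so by weak \omin-minimality (the exchange principle / Lemma~\ref{altVFdim} applied fiberwise) it admits a finite partition on each piece of which \emph{some} coordinate projection $\pr_{j\cup[m']}$ is injective. Setting $f_i(x)=(\pr_{<n}(x),\pr_{j_i}(f(x)),\pr_{[m']}(f(x)))$ then yields the relatively unary first factor uniformly, with no case split between constant and injective behaviour of $y_n$. The price is that the residual bijection $f\circ f_i^{-1}$ matches the $n$th source coordinate with the $j_i$th target coordinate; to run the induction one must observe that a transposition of two $\VF$-coordinates is itself a composition of relatively unary bijections (shears such as $(x,y)\mapsto(x+y,y)$, using the additive structure), after which one may assume $j_i=n$ and contract fiberwise exactly as in your injective case. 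If you rewrite your proof along these lines --- choose $j_i$ by weak \omin-minimality rather than insisting on $j_i=n$, and spell out the shear decomposition of the coordinate swap --- the argument is complete and is the paper's.
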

\begin{proof}
This is an easy consequence of weak \omin-minimality. In more detail, for each $a \in \pr_{< n}(A)$ there are an $a$-definable finite partition $A_{ai}$ of $A_a$ and  injective coordinate projections $\pi_i : f(A_{ai}) \fun \VF \times \RV^{m'}$. Therefore, by compactness, there are a definable finite partition $A_{i}$ of $A$, definable injections $f_i : A_i \fun \VF^{n} \times \RV^{m'}$, and $j_i \in [n]$ such that, for all $x \in A_i$,
\[
\pr_{< n}(x) = \pr_{< n}(f_i(x)) \dand \pr_{n \cup [m']}(f_i(x)) = \pr_{j_i \cup [m']}(f(x)).
\]
The claim now follows from compactness and an obvious induction on $n$.
\end{proof}

For the next two lemmas, let $12$ and $21$ denote the permutations of $[2]$.

\begin{lem}\label{2:unit:contracted}
Let $A \sub \VF^2$ be a definable $2$-cell. Then there are standard contractions $\hat T_{12}$, $\hat R_{21}$ of $A$ such that $[\hat T_{12}(A)]_{\leq 2} = [\hat R_{21}(A)]_{\leq 2}$.
\end{lem}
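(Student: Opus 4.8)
The plan is to analyze a $2$-cell according to which of the three cases in Definition~\ref{def:units}(3) its positioning function $\epsilon$ falls into, and in each case exhibit the two standard contractions (one in each coordinate order) and match their classes in $\gsk\RV[{\leq}2]$. Write $\ga_1=\pr_1(A)$, let $t$ be the paradigm of $A$, and $\epsilon:\ga_1\fun\VF$ the positioning function, so $A_a=t^\sharp+\epsilon(a)$ for all $a\in\ga_1$. Since $\ga_1$ is a $1$-cell, it is either a point or an open disc contained in a single $\RV$-disc; in the former case $A$ has only one $\VF$-coordinate with content and the statement is essentially trivial (both orders produce the same object up to the coordinate swap that Lemma~\ref{RV:fiber:dim:same} absorbs into a relabeling), so assume $\ga_1$ is an open disc.

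First I would handle case (a), $\epsilon$ constant. Then $A=\ga_1\times(t^\sharp+c)$ for a fixed $c$, i.e. $A$ is (up to an $\RV$-translation in the second coordinate, which is a special bijection by Definition~\ref{defn:special:bijection} with a constant focus) a product of two open discs lying in single $\RV$-discs. Contracting the first coordinate then the second, versus the second then the first, produces in each case, via Lemma~\ref{inverse:special:dim:1}, an object recording the two $\RV$-radii together with the ambient $\RV$-parameters; the two resulting objects of $\RV[{\leq}2]$ differ only by the order of two $\RV$-coordinates, hence have the same class. Case (b) is similar but requires Lemma~\ref{simul:special:dim:1} (and Remark~\ref{special:dim:1:RV:iso}): here $\epsilon$ is injective with dtdp and $\rad(\epsilon(\ga_1))\ge\abs{\vrv(t)}$, so the map $a\efun\epsilon(a)$ is, after special bijections, the lift of an $\RV[1]$-isomorphism, and the standard contraction in either order records that isomorphism's class together with the paradigm; invoking Lemma~\ref{simul:special:dim:1} on the $\VF$-fiber bijection $\epsilon$ shows the two classes agree.

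The main work is case (c), $\epsilon$ balanced in $A$ with paradigms $t_1,t_2$ (so $t=t_2$ and the slope of $\epsilon$ is $-t_2/t_1$, by Remark~\ref{2cell:linear}). This is exactly the ``line in a torsor'' picture of Remark~\ref{2cell:linear}: modulo the relevant special bijections, the obvious bijection between $\pr_1(A)\times t_2^\sharp$ and $t_1^\sharp\times\pr_2(A)$ is the lift of an $\RV[{\leq}2]$-morphism. Concretely I would first apply $2$-cell-decomposition-free reductions (Lemmas~\ref{open:pro}, \ref{bijection:dim:1:decom:RV}, \ref{bijection:rv:one:one}) are not needed since $A$ is already a $2$-cell; instead, using the internal symmetry diagram in Remark~\ref{2cell:linear}, decompose $\ga_1$ into the closed subdiscs $\gb_1\in\gB_1$ of radius $\abs{\vrv(t_1)}$ and run a standard contraction in the order $12$: contracting $x_1$ collapses each maximal open subdisc $\go_1\subset\gb_1$, after a centripetal transformation with focus tracking $\epsilon$, leaving the $\RV$-data $(\code{\go_1}\bmod t_1,\ \code{\go_2}\bmod t_2)$ plus the residue-field line; the order $21$ gives the transposed data. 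That the two resulting objects of $\RV[{\leq}2]$ are isomorphic is the substance, and it should follow from Lemma~\ref{simul:special:dim:1} applied to the $\rv$-affine bijection $\epsilon$ (which has dtdp by Lemma~\ref{rv:lin}/Lemma~\ref{open:pro}), together with the observation that a $\RV$-affine map with slope $-t_2/t_1$ contracts to an $\RV[1]$-isomorphism whose ``graph'' object is symmetric in the two factors up to the $\mgl$-type change of variables permitted in $\RV[{\leq}2]$.

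The expected main obstacle is bookkeeping: making precise the claim that the two standard contractions of the balanced $2$-cell yield \emph{isomorphic}, not merely equiclassed after further manipulation, objects of $\RV[{\leq}2]$, i.e. pinning down the $\RV[{\leq}2]$-morphism realizing the coordinate swap together with the slope datum, and checking it is genuinely definable and finite-to-one in the sense of Definition~\ref{defn:c:RV:cat}. This is where I would lean hardest on Lemma~\ref{simul:special:dim:1} and Remark~\ref{2cell:linear}, and where the proof in the source (Lemma~\ref{2:unit:contracted}) presumably does the careful diagram-chase; everything else is routine application of Corollary~\ref{all:subsets:rvproduct}, Lemma~\ref{inverse:special:dim:1}, and compactness.
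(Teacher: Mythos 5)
Your proposal follows essentially the same route as the paper: split by the form of the positioning function, dispose of the product cases directly, and reduce the balanced (and graph) case to Lemma~\ref{simul:special:dim:1} via the symmetry of Remark~\ref{2cell:linear}. The ``main obstacle'' you anticipate in the balanced case in fact dissolves: no decomposition of $\ga_1$ into closed subdiscs or diagram chase is needed, because one simply takes the relatively special bijections $T_2:(a,b)\efun(a,\,b-\epsilon(a))$ in coordinate $2$ and $R_1:(a,b)\efun(a-\epsilon^{-1}(b),\,b)$ in coordinate $1$, which send $A$ onto $\pr_1(A)\times t_2^\sharp\times t_2$ and $t_1^\sharp\times\pr_2(A)\times t_1$ respectively, whereupon Remark~\ref{special:dim:1:RV:iso} applied to the bijection $\epsilon:\pr_1(A)\fun\pr_2(A)$ gives equality of the two classes (the same device handles the $t=0$ graph sub-case). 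One small correction: in your case (b) with $t\neq 0$ the condition $\rad(\epsilon(\pr_1(A)))\geq\abs{\vrv(t)}$ forces all fibers $t^\sharp+\epsilon(a)$ to coincide, so $A$ is an open polydisc and Lemma~\ref{simul:special:dim:1} is not needed there; choosing the same special bijection on each factor in both orders already yields identical contractions.
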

\begin{proof}
Let $\epsilon$ be the positioning function of $A$ and $t \in \RV_0$ the paradigm of $A$. If $t = 0$ then $A$ is (the graph of) the function $\epsilon : \pr_1(A) \fun \pr_2(A)$, which is either a constant function or a bijection. In the former case, since $A$ is essentially just an open ball, the lemma simply follows from
Corollary~\ref{all:subsets:rvproduct}. In the latter case, there
are relatively special bijections $T_2$, $R_1$ on $A$ in the coordinates $2$, $1$ such that
\[
T_2(A) = \pr_1(A) \times 0 \times 0 \dand
R_1(A) = 0 \times \pr_2(A) \times 0.
\]
So the lemma follows from Remark~\ref{special:dim:1:RV:iso}. For the rest of the proof we assume $t \neq 0$.

If $\epsilon$ is not balanced in $A$ then $A = \pr_1(A) \times \pr_2(A)$ is an open polydisc. By Corollary~\ref{all:subsets:rvproduct}, there are special bijections $T_1$, $T_2$ on $\pr_1(A)$, $\pr_2(A)$ such that $\pr_1(A)^\flat$, $\pr_2(A)^\flat$ are $\RV$-pullbacks. In this case the standard contractions determined by $(T_1, T_2)$ and $(T_2, T_1)$ are essentially the same.

Suppose that $\epsilon$ is balanced in $A$. Let $r$ be the other paradigm of $\epsilon$. Recall that $\epsilon : \pr_1 (A) \fun \pr_2(A)$ is again a bijection. Let $T_2$ be the relatively special bijection on $A$ in the coordinate $2$ given by $(a, b) \efun (a, b - \epsilon(a))$ and $R_1$ the relatively special bijection on $A$ in the coordinate $1$ given by $(a, b) \efun (a - \epsilon^{-1}(b), b)$, where $(a, b) \in A$. Clearly
\[
T_2(A) = \pr_1(A) \times t^\sharp \times t \dand
R_1(A) = r^\sharp \times \pr_2(A) \times r.
\]
So, again, the lemma follows from Remark~\ref{special:dim:1:RV:iso}.
\end{proof}

\begin{lem}\label{subset:partitioned:2:unit:contracted}
Let $A \sub \VF^2 \times \RV^m$ be an object in $\VF_*$. Then there are a definable injection $f : A \fun \VF^2 \times \RV^l$, relatively unary in both coordinates, and standard contractions $\hat T_{12}$, $\hat R_{21}$ of $f(A)$ such that $[\hat T_{12}(f(A))]_{\leq 2} = [\hat R_{21}(f(A))]_{\leq 2}$.
\end{lem}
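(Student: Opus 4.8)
The plan is to reduce the two-dimensional situation to the one-dimensional case (Lemma~\ref{simul:special:dim:1}) applied parametrically, with $2$-cell decomposition (Lemma~\ref{decom:into:2:units}) and the single-$2$-cell statement (Lemma~\ref{2:unit:contracted}) supplying the necessary uniformity. First I would dispose of the $\RV$-parameters: since $A \sub \VF^2 \times \RV^m$ lies in $\VF_*$, every $\RV$-fiber $A_c$ (for $c \in A_{\VF}$, i.e.\ $c \in \pr_{\VF}(A) \sub \VF^2$) is finite, so by Lemma~\ref{RV:fiber:dim:same} and a routine regularization (Convention~\ref{conv:can}) we may absorb the $\RV$-coordinates into bookkeeping data that does not interfere with the $\VF$-coordinates; concretely, one applies Corollary~\ref{all:subsets:rvproduct} to pass, up to a relatively-unary definable injection, to a set of the form $\bigcup_s A_s \times s$ where each $A_s \sub \VF^2$ and the union is over finitely many $\RV$-values $s$. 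So without loss of generality the core of the argument is the case $A \sub \VF^2$.

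Next I would invoke $2$-cell decomposition (Lemma~\ref{decom:into:2:units}): there is a definable $\pi : A \fun \RV^{m}$ such that every fiber $A_s$ is an $s$-definable $2$-cell. Applying the single-$2$-cell result Lemma~\ref{2:unit:contracted} to each fiber $A_s$, we obtain, $s$-definably, standard contractions $\hat T_{12,s}$ and $\hat R_{21,s}$ of $A_s$ with $[\hat T_{12,s}(A_s)]_{\leq 2} = [\hat R_{21,s}(A_s)]_{\leq 2}$ in $\gsk \RV[{\leq}2]$. The point is then to glue these over $s$ by compactness: the relatively special bijections constituting $\hat T_{12,s}$ (first a relatively special bijection in the second $\VF$-coordinate, then one in the first, with their loci and focus maps all $s$-definably given) assemble into a single definable map which is relatively unary (indeed relatively special) in each $\VF$-coordinate in turn; the resulting composite $f : A \fun \VF^2 \times \RV^{l}$, relatively unary in both coordinates, is the desired injection, and the glued $\hat T_{12}$, $\hat R_{21}$ are standard contractions of $f(A)$ (Definition~\ref{defn:standard:contraction}) with $[\hat T_{12}(f(A))]_{\leq 2} = [\hat R_{21}(f(A))]_{\leq 2}$, the last equality obtained by summing the fiberwise identities over the finitely many pieces produced by compactness.

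The main obstacle is the gluing step: one must check that the $s$-definable special bijections produced in Lemma~\ref{2:unit:contracted} can be chosen uniformly enough that, over a definable finite partition of the $\RV$-parameter space, they have the same combinatorial shape (in particular the same lengths and the same pattern of loci), so that compactness genuinely yields a single definable $f$ rather than merely a fiberwise family. This is exactly the kind of bookkeeping already handled in the proof of Lemma~\ref{simul:special:dim:1} and in the setup of standard contractions preceding Definition~\ref{defn:standard:contraction}, and the argument there transfers: one partitions $\dom(\pi)$ so that on each piece the data describing $T_{12,s}$ and $R_{21,s}$ (loci, focus maps, the $\rv$-contracted bijection witnessing $[\hat T_{12,s}(A_s)]_{\leq2}=[\hat R_{21,s}(A_s)]_{\leq2}$) is given by fixed quantifier-free formulas, then takes finite disjoint unions. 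Everything else is formal, so I would keep those verifications brief and refer to the analogous passages above.
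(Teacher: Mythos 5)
Your proposal is correct and follows essentially the same route as the paper: $2$-cell decomposition (Lemma~\ref{decom:into:2:units}) followed by a fiberwise application of Lemma~\ref{2:unit:contracted} and gluing by compactness. The only (harmless) divergence is that in the paper the injection $f$ is just the map $(a,t) \mapsto (a,t,s)$ realizing the $2$-cell decomposition — trivially relatively unary in both coordinates since it fixes the $\VF$-coordinates — rather than an assembly of the special bijections themselves, and no preliminary reduction of the $\RV$-parameters is required.
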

\begin{proof}
By Lemma~\ref{decom:into:2:units} and compactness, there is a definable function $f: A \fun \VF^2 \times \RV^l$ such that $f(A)$ is a $2$-cell and, for each $(a, t) \in A$, $f(a, t) = (a, t, s)$ for some $s \in \RV^{l-m}$. By Lemma~\ref{2:unit:contracted} and compactness, there are standard contractions $\hat T_{12}$, $\hat R_{21}$ of $f(A)$ into $\RV^{k+l}$ such that the following diagram commutates
\[
\bfig
  \Vtriangle(0,0)/->`->`->/<400,400>[\hat T_{12}(f(A))`\hat R_{21}(f(A))`\RV^l; F`\pr_{> k}`\pr_{> k}]
\efig
\]
and $F$ is an $\RV[{\leq} 2]$-morphism  $\hat T_{12}(f(A))_{\leq 2} \fun \hat R_{21}(f(A))_{\leq 2}$.
\end{proof}

\subsection{Blowups and the main theorems}

The central notion for understanding the kernels of the semigroup homomorphisms $\bb L$ is that of a blowup:

\begin{defn}[Blowups]\label{defn:blowup:coa}
Let $\bm U = (U, f) \in \RV[k]$, where $k > 0$, such that, for some $j \leq k$, the restriction $\pr_{\tilde j} \rest f(U)$ is finite-to-one. Write $f = (f_1, \ldots, f_k)$. The \emph{elementary blowup} of $\bm U$ in the $j$th coordinate is the pair $\bm U^{\flat} = (U^{\flat}, f^{\flat})$, where $U^{\flat} = U \times \RV^{\circ \circ}_0$ and, for every $(t, s) \in U^{\flat}$,
\[
f^{\flat}_{i}(t, s) = f_{i}(t) \text{ for } i \neq j \dand f^{\flat}_{j}(t, s) = s f_{j}(t).
\]
Note that $\bm U^{\flat}$ is an object in $\RV[{\leq} k]$ (actually in $\RV[k{-}1] \oplus \RV[k]$) because $f^{\flat}_{j}(t, 0) = 0$.

Let $\bm V = (V, g) \in \RV[k]$ and $C \sub V$ be a definable set. Suppose that $F : \bm U \fun \bm C$ is an $\RV[k]$-morphism, where $\bm C = (C, g \rest C) \in \RV[k]$. Then
\[
\bm U^{\flat} \uplus (V \mi C, g \rest (V \mi C))
\]
is  a \emph{blowup of $\bm V$ via $F$}, denoted by $\bm V^{\flat}_F$. The subscript $F$ is usually dropped in context if there is no danger of confusion. The object $\bm C$ (or the set $C$) is referred to as the \emph{locus} of $\bm V^{\flat}_F$.

A \emph{blowup of length $n$} is a composition of $n$ blowups.
\end{defn}

\begin{rem}
In an elementary blowup, the condition that the coordinate of interest is definably dependent (the coordinate projection is finite-to-one) on the other ones is needed so that the resulting objects stay in $\RV[{\leq} k]$. In the setting of \cite{hrushovski:kazhdan:integration:vf}, this condition is also needed for matching blowups with special bijections, since, otherwise, we would not be able to use (a generalization of) Hensel's lemma to find enough centers of $\RV$-discs to construct focus maps. In our setting, Lemma~\ref{RVlift} plays the role of  Hensel's lemma, which is more powerful,  and hence ``algebraicity'' is no longer needed for this purpose (see Lemma~\ref{blowup:same:RV:coa}).
\end{rem}

If there is an elementary blowup of $(U, f) \in \RV[k]$ then, \textit{a posteriori}, $\dim_{\RV}(f(U)) < k$. Also, there is at most one elementary blowup of $(U, f)$ with respect to any coordinate of $f(U)$. We should have included the coordinate that is blown up as a part of the data. However, in context, either this is
clear or it does not need to be spelled out, and we shall suppress mentioning it below for notational ease.

\begin{lem}\label{blowup:equi:class:coa}
Let $\bm U, \bm V \in \RV[{\leq} k]$ such that $[\bm U] = [\bm V]$ in $\gsk \RV[{\leq} k]$. Let $\bm U_1$, $\bm V_1$ be blowups of $\bm U$, $\bm V$ of lengths $m$, $n$, respectively. Then there are blowups $\bm U_2$, $\bm V_2$ of $\bm U_1$, $\bm V_1$ of lengths $n$, $m$, respectively, such that $[\bm U_2] = [\bm V_2]$.
\end{lem}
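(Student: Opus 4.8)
The statement is the standard "blowups can be shuffled past equalities and commuted with one another" lemma, and the proof should follow the pattern of its analogue in \cite{hrushovski:kazhdan:integration:vf} (and the versions in \cite{Yin:int:acvf, Yin:int:expan:acvf}). The key structural facts I would isolate first are: (i) the collection of blowups of a fixed object is, up to isomorphism in $\RV[{\leq}k]$, closed under "parallel transport" along isomorphisms, i.e.\ if $G : \bm U \fun \bm V$ is an $\RV[{\leq}k]$-isomorphism and $\bm U^{\flat}$ is a blowup of $\bm U$ with locus $\bm C$, then applying $G$ to the locus produces a blowup $\bm V^{\flat}$ of $\bm V$ with $[\bm U^{\flat}] = [\bm V^{\flat}]$ in $\gsk \RV[{\leq}k]$ --- this is immediate from Definition~\ref{defn:blowup:coa}, since the elementary blowup construction $U \mapsto U \times \RV^{\circ\circ}_0$ with the twisted last coordinate is functorial in isomorphisms of the locus; and (ii) two elementary blowups performed on \emph{disjoint} loci (or more generally on loci that can be made disjoint after a common refinement) commute strictly, and two elementary blowups on overlapping loci commute after restricting to the intersection and the two differences. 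Fact (ii) is the real content and should be stated as an auxiliary sublemma: given $\bm W \in \RV[{\leq}k]$ and two elementary blowups of it, say $\bm W^{\flat_1}$ with locus $\bm C_1$ in coordinate $j_1$ and $\bm W^{\flat_2}$ with locus $\bm C_2$ in coordinate $j_2$, there are blowups of $\bm W^{\flat_1}$ and of $\bm W^{\flat_2}$, each of length $\leq 2$, that are isomorphic; one obtains this by partitioning the common part $\bm C_1 \cap \bm C_2$ off from $\bm C_1 \mi \bm C_2$ and $\bm C_2 \mi \bm C_1$, blowing up each piece in the appropriate coordinate, and checking the two resulting objects agree --- the twisted coordinates $s_1 f_{j_1}$ and $s_2 f_{j_2}$ live in independent $\RV^{\circ\circ}_0$ factors so the order is immaterial, and the only bookkeeping is the $\RV[{\leq}k]$-grading, which is handled exactly as in Definition~\ref{defn:blowup:coa} (an elementary blowup lands in $\RV[k{-}1]\oplus\RV[k]$).

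With those two facts in hand the lemma is a diagram chase. First I would reduce to the case where $\bm U$ and $\bm V$ are literally the same object: since $[\bm U]=[\bm V]$ in $\gsk \RV[{\leq}k]$, there are finite partitions $\bm U = \biguplus \bm U_i$, $\bm V = \biguplus \bm V_i$ and isomorphisms $\bm U_i \fun \bm V_i$; a blowup of $\bm U$ restricts to a blowup of each $\bm U_i$ (intersect the locus with $U_i$), a blowup of $\bm V$ likewise, and by fact (i) we may transport the $\bm V$-side blowups along the isomorphisms to the $\bm U$-side. So it suffices to show: given two blowups $\bm W_1, \bm W_1'$ of a single object $\bm W$ of lengths $m, n$, there are blowups $\bm W_2$ of $\bm W_1$ of length $n$ and $\bm W_2'$ of $\bm W_1'$ of length $m$ with $[\bm W_2]=[\bm W_2']$. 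This in turn follows from the sublemma (ii) by induction on $m+n$: decompose $\bm W_1$ as a length-$(m{-}1)$ blowup followed by one elementary blowup, similarly $\bm W_1'$, apply (ii) to commute the two outermost elementary blowups, and feed the rest to the inductive hypothesis, being careful that a blowup of a blowup is again a blowup (Definition~\ref{defn:blowup:coa} says a blowup of length $n$ is a composition of $n$ blowups, so compositions stack without fuss) and that lengths add up correctly.

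**Main obstacle.** The genuine difficulty is entirely in the auxiliary sublemma (ii) --- making the two elementary blowups on overlapping loci commute --- and within that, keeping the $\RV[{\leq}k]$-gradation straight. When the loci $\bm C_1$ and $\bm C_2$ overlap, blowing up $\bm C_1$ in coordinate $j_1$ changes the ambient object in a way that the "same" locus $\bm C_2$ must be re-identified inside $\bm W^{\flat_1}$; one must check that the copy of $\bm C_2 \cap \bm C_1$ sitting inside $\bm U^{\flat}$ (the blown-up part) is still a legitimate locus --- i.e.\ that the relevant coordinate projection is still finite-to-one --- so that a second elementary blowup is permitted there. This is where the hypothesis in Definition~\ref{defn:blowup:coa} that the blown-up coordinate is definably dependent on the others is used, and it has to be verified for the composite. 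I expect this to be a somewhat tedious but ultimately routine check, essentially identical to the corresponding argument in \cite[around Lemma~6.9]{hrushovski:kazhdan:integration:vf}; I would carry it out by writing the two loci as a disjoint union of three pieces $\bm C_1 \cap \bm C_2$, $\bm C_1 \mi \bm C_2$, $\bm C_2 \mi \bm C_1$, handling the pairwise-disjoint pieces by strict commutativity and the overlap piece by observing that on it we are performing two elementary blowups in two (possibly equal, possibly distinct) coordinates, which introduces two independent $\RV^{\circ\circ}_0$ factors, and the resulting object is manifestly independent of the order up to the obvious swap isomorphism. The rest of the proof --- the reduction to a single object and the induction on total length --- is formal.
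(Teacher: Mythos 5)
Your overall architecture coincides with the paper's --- transport an isomorphism $I:\bm U\fun\bm V$, split the two first-level loci into $C\cap I^{-1}(D)$, $C\mi I^{-1}(D)$ and $I^{-1}(D)\mi C$, and close the diagram by induction on $m+n$ --- but the step you yourself flag as the main obstacle is not a routine check: it fails. You propose to perform a \emph{second} elementary blowup on the copy of the overlap that already sits inside the blown-up part of $\bm W^{\flat_1}$, so that over the overlap each side carries two independent $\RV^{\circ\circ}_0$ factors. The finite-to-one condition of Definition~\ref{defn:blowup:coa} is genuinely violated there. If $j_1=j_2=j$ it always fails: $\pr_{\tilde j}\rest f^{\flat}(U^{\flat})$ has the infinite fibre $\{s f_{j}(t): s\in\RV^{\circ\circ}\}$ over each point of its image. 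If $j_1\neq j_2$, take $k=2$, $U=\K^+$ and $f(t)=(t,t^{-1})$: both coordinates admit elementary blowups, but after blowing up the first coordinate the overlap locus becomes $U\times\RV^{\circ\circ}_0$, whose $\RV[2]$-component has $\RV$-dimension $2$, so it cannot be the locus of \emph{any} blowup inside $\RV[{\leq}2]$ (no parametrizing object $(U',f')$ with $\pr_{\tilde j}\rest f'(U')$ finite-to-one can be definably bijective to a $2$-dimensional set), let alone one in the second coordinate. This $(U,f)$ with its two elementary blowups is a legitimate instance of the lemma with $m=n=1$, and your sublemma (ii) cannot process it.

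The repair --- and this is what the paper does --- is to never blow up the overlap twice. Over $A=C\cap I^{-1}(D)$ each side already carries one elementary blowup, in coordinates $j_1$ and $j_2$ respectively, and these are \emph{already isomorphic}: by Definition~\ref{defn:c:RV:cat} an $\RV[k]$-morphism is merely a definable bijection of underlying sets, with no requirement to intertwine the finite-to-one maps, and both elementary blowups of isomorphic loci have underlying set $(\text{locus})\times\RV^{\circ\circ}_0$ with matching $\RV[k]\oplus\RV[k{-}1]$ decompositions (according to whether the new coordinate is $0$). So the only new blowups needed at the top level are the ones with loci $I^{-1}(D)\mi C$ and $I(C)\mi D$, which are disjoint from the regions already blown up; these give isomorphic length-one blowups $\bm U^{\flat\flat}$, $\bm V^{\flat\flat}$ of $\bm U^{\flat}$, $\bm V^{\flat}$, after which the induction on $m+n$ closes. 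Note also that ``feed the rest to the inductive hypothesis'' conceals four separate applications of the hypothesis, arranged so that the lengths come out to exactly $n$ and $m$; that part is indeed formal, but it is where the exact length bookkeeping (which your ``length $\leq 2$'' already loosens) is actually used.
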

\begin{proof}
Fix an isomorphism $I: \bm U \fun \bm V$. We do induction on the sum $l = m + n$. For the base case $l = 1$, without loss of generality, we may assume $n = 0$. Let $C$ be the blowup locus of $\bm U_1$. Clearly $\bm V$ may be blown up by using the same elementary blowup as $\bm U_1$, where the blowup locus is changed to $I(C)$, and the resulting blowup is as required.

\[
\bfig
  \square(0,0)/.`=``./<500,900>[\bm U`\bm U^{\flat}`
  \bm V`\bm V^{\flat};1```1]
  \square(500,0)/.```./<1000,900>[\bm U^{\flat}`\bm U_1`
  \bm V^{\flat}`\bm V_1; m - 1```n - 1]
  \morphism(500,900)/./<500,-300>[\bm U^{\flat}`\bm U^{\flat\flat};1]
  \morphism(500,0)/./<500,300>[\bm V^{\flat}`
   \bm V^{\flat\flat};1]
  \morphism(1000,300)/=/<0,300>[\bm V^{\flat\flat}`
   \bm U^{\flat\flat};]
  \morphism(1500,900)/./<500,0>[\bm U_1`\bm U_1^{\flat};1]
  \morphism(1500,0)/./<500,0>[\bm V_1`\bm V_1^{\flat};1]
  \morphism(1000,600)/./<1000,0>[\bm U^{\flat\flat}`\bm U^{\flat3};
   m - 1]
  \morphism(1000,300)/./<1000,0>[\bm V^{\flat\flat}`\bm V^{\flat3};
   n - 1]
  \morphism(2000,900)/=/<0,-300>[\bm U_1^{\flat}`\bm U^{\flat3};]
  \morphism(2000,0)/=/<0,300>[\bm V_1^{\flat}`\bm V^{\flat3};]
  \morphism(2000,600)/./<1000,0>[\bm U^{\flat3}`\bm U^{\flat4};n - 1]
  \morphism(2000,300)/./<1000,0>[\bm V^{\flat3}`\bm V^{\flat4};m - 1]
  \morphism(3000,300)/=/<0,300>[\bm V^{\flat4}`\bm U^{\flat4};]
  \morphism(2000,900)/./<1000,0>[\bm U_1^{\flat}`\bm U_2;n - 1]
  \morphism(2000,0)/./<1000,0>[\bm V_1^{\flat}`\bm V_2;m - 1]
  \morphism(3000,0)/=/<0,300>[\bm V_2`\bm V^{\flat4};]
  \morphism(3000,900)/=/<0,-300>[\bm U_2`\bm U^{\flat4};]
\efig
\]

We proceed to the inductive step. How the isomorphic blowups are constructed is illustrated above. Write $\bm U = (U, f)$ and $\bm V = (V, g)$. Let $\bm U^{\flat}$, $\bm V^{\flat}$ be the first blowups in $\bm U_1$, $\bm V_1$ and $C$, $D$ their blowup loci, respectively. Let $\bm U'^{\flat}$, $\bm V'^{\flat}$ be the corresponding elementary blowups contained in $\bm U^{\flat}$, $\bm V^{\flat}$. If, say, $n = 0$, then by the argument in the base case $\bm V$ may be blown up to an object that is isomorphic to $\bm U^{\flat}$ and hence the inductive
hypothesis may be applied. So assume $m,n > 0$. Let $A = C \cap I^{-1}(D)$ and $B = I(C) \cap D$. Since $(A, f \rest A)$ and $(B, g \rest B)$ are isomorphic, the
blowups of $\bm U'$, $\bm V'$ with the loci $(A, f \rest A)$ and $(B, g \rest B)$ are isomorphic. Then, it is not hard to see that the blowup $\bm U^{\flat\flat}$ of $\bm U^{\flat}$ using the locus $I^{-1}(D) \mi C$ and its corresponding blowup of $\bm V'$ and the blowup $\bm V^{\flat\flat}$ of $\bm V^{\flat}$ using the locus $I(C) \mi D$ and its corresponding blowup of $\bm U'$ are isomorphic.

Applying the inductive hypothesis to the blowups $\bm U^{\flat\flat}$, $\bm U_1$ of $\bm U^{\flat}$, we obtain a blowup $\bm U^{\flat3}$ of $\bm U^{\flat\flat}$ of length $m - 1$ and a blowup $\bm U_1^{\flat}$ of $\bm U_1$ of length $1$ such that they are isomorphic. Similarly, we obtain a blowup $\bm V^{\flat3}$ of $\bm V^{\flat\flat}$ of length $n - 1$ and a blowup $\bm V_1^{\flat}$ of $\bm V_1$ of length $1$ such that they are isomorphic. Applying the inductive hypothesis again to the blowups $\bm U^{\flat3}$, $\bm V^{\flat3}$ of $\bm U^{\flat\flat}$, $\bm V^{\flat\flat}$, we obtain a blowup $\bm U^{\flat4}$ of $\bm U^{\flat3}$ of length $n - 1$ and a blowup $\bm V^{\flat4}$ of $\bm V^{\flat3}$ of length $m - 1$ such that they are isomorphic. Finally, applying the inductive hypothesis to the blowups
$\bm U^{\flat4}$, $\bm U_1^{\flat}$ of $\bm U^{\flat3}$, $\bm U_1^{\flat}$ and the blowups $\bm V^{\flat4}$, $\bm V_1^{\flat}$ of $\bm V^{\flat3}$, $\bm V_1^{\flat}$, we obtain a blowup $\bm U_2$ of $\bm U_1^{\flat}$ of length $n - 1$ and a blowup $\bm V_2$ of $\bm V_1^{\flat}$ of length $m - 1$ such that $\bm U^{\flat4}$, $\bm U_2$, $\bm V^{\flat4}$, and $\bm V_2$ are all isomorphic.  So $\bm U_2$, $\bm V_2$ are as desired.
\end{proof}

\begin{cor}\label{blowup:equi:class}
Let $[\bm U] = [\bm U']$ and $[\bm V] = [\bm V']$ in $\gsk \RV[{\leq} k]$. If there are isomorphic blowups of $\bm U$, $\bm V$ then there are isomorphic blowups of $\bm U'$, $\bm V'$.
\end{cor}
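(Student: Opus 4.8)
The plan is to read this off Lemma~\ref{blowup:equi:class:coa}, using length‑$0$ (i.e.\ trivial) blowups as padding, and then to observe that in the category $\RV[{\leq}k]$ equality of Grothendieck classes is the same thing as isomorphism. Concretely, suppose $\bm U_1$ and $\bm V_1$ are blowups of $\bm U$ and $\bm V$, of lengths $m$ and $m'$ say, with $\bm U_1\cong\bm V_1$. I would apply Lemma~\ref{blowup:equi:class:coa} to the pair $\bm U,\bm U'$ — allowed since $[\bm U]=[\bm U']$ — taking $\bm U_1$ as the blowup of $\bm U$ (length $m$) and $\bm U'$ itself as the blowup of $\bm U'$ (length $0$). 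The lemma then produces a blowup of $\bm U_1$ of length $0$, namely $\bm U_1$ itself, together with a blowup $\bm U'_1$ of $\bm U'$ of length $m$ satisfying $[\bm U_1]=[\bm U'_1]$. Running the same argument for $\bm V,\bm V'$ gives a blowup $\bm V'_1$ of $\bm V'$ with $[\bm V_1]=[\bm V'_1]$.

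Combining the two, and using $[\bm U_1]=[\bm V_1]$ (which holds because $\bm U_1\cong\bm V_1$), I get $[\bm U'_1]=[\bm V'_1]$ in $\gsk\RV[{\leq}k]$. To finish I would upgrade this to a genuine isomorphism: since $\RV[{\leq}k]$ is closed under disjoint unions and all of its morphisms are definable bijections, the scissor relation $[A\mi B]+[B]=[A]$ is already realized by the obvious definable bijection $(A\mi B)\uplus B\to A$, so the scissor congruence is trivial and $\gsk\RV[{\leq}k]$ is just the monoid of isomorphism classes under $\uplus$. In particular $[\bm U'_1]=[\bm V'_1]$ forces $\bm U'_1\cong\bm V'_1$, and these are blowups of $\bm U'$ and $\bm V'$, as required.

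I do not expect any real obstacle here: Lemma~\ref{blowup:equi:class:coa} carries all the genuine content, and the only point needing a word of comment is the harmless remark that equality in the Grothendieck semigroup of one of these set‑categories coincides with isomorphism. As an alternative that sidesteps even the lemma, one could instead fix isomorphisms $\bm U\to\bm U'$ and $\bm V\to\bm V'$ (they exist for the same reason) and transport the given blowups $\bm U_1,\bm V_1$ along them, pushing forward the locus and the structural morphism of each elementary blowup in turn; this directly yields blowups $\bm U'_1\cong\bm U_1\cong\bm V_1\cong\bm V'_1$ of $\bm U'$ and $\bm V'$. I would present the first route, since it makes the corollary's dependence on Lemma~\ref{blowup:equi:class:coa} explicit, and mention the second parenthetically.
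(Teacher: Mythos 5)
Your proposal is correct and is essentially the paper's intended argument: the corollary is meant to follow from Lemma~\ref{blowup:equi:class:coa} by taking the trivial (length-$0$) blowups of $\bm U'$ and $\bm V'$ as padding, exactly as you do, and the auxiliary observation that equality of classes in $\gsk \RV[{\leq}k]$ coincides with isomorphism is likewise what the paper tacitly uses (its proof of that lemma starts by fixing an isomorphism $I \colon \bm U \fun \bm V$ from the hypothesis $[\bm U]=[\bm V]$). Your parenthetical transport argument is also sound and is just the base case of that lemma's proof applied repeatedly.
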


\begin{defn}\label{defn:isp}
Let $\isp[k]$ be the set of pairs $(\bm U, \bm V)$ of objects of $\RV[{\leq} k]$ such that there exist isomorphic blowups $\bm U^{\flat}$, $\bm V^{\flat}$. Set $\isp[*] = \bigcup_{k} \isp[k]$.
\end{defn}

We will just write $\isp$ for all these sets when there is no danger of confusion. By Corollary~\ref{blowup:equi:class}, $\isp$ may be regarded as a binary relation on isomorphism classes.

\begin{lem}\label{isp:congruence:vol}
$\isp[k]$ is a semigroup congruence relation and $\isp[*]$ is a semiring congruence relation.
\end{lem}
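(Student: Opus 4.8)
The plan is to check the two defining clauses of a congruence: that $\isp[k]$ is an equivalence relation on objects of $\RV[{\leq}k]$, and that it is compatible with the disjoint-union operation $\uplus$; and then, for $\isp[*]$, that it is moreover compatible with cartesian product. All of the substantive combinatorics of blowups has already been packaged into Lemma~\ref{blowup:equi:class:coa} and Corollary~\ref{blowup:equi:class}, so what remains is organizational.

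Reflexivity and symmetry are immediate from Definition~\ref{defn:blowup:coa}: a blowup of length $0$ of $\bm U$ is $\bm U$, and the relation ``there exist isomorphic blowups'' is visibly symmetric. For transitivity, let $(\bm U, \bm V), (\bm V, \bm W) \in \isp[k]$ be witnessed by isomorphisms $\bm U^{\flat} \cong \bm V^{\flat}$ and $\bm V^{\flat\flat} \cong \bm W^{\flat\flat}$, where $\bm V^{\flat}$ and $\bm V^{\flat\flat}$ are, in general, different blowups of $\bm V$. I would apply Lemma~\ref{blowup:equi:class:coa} to these two blowups of $\bm V$ --- with both input objects of that lemma taken to be $\bm V$ itself, so that its hypothesis on Grothendieck classes is vacuous --- producing a further blowup $\bm P$ of $\bm V^{\flat}$ and a further blowup $\bm Q$ of $\bm V^{\flat\flat}$ with $[\bm P] = [\bm Q]$. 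Transporting $\bm P$ across the isomorphism $\bm V^{\flat} \cong \bm U^{\flat}$ --- the same move as in the base case of the proof of Lemma~\ref{blowup:equi:class:coa}, namely retain the elementary steps and replace each locus by its image under the isomorphism --- gives a blowup $\bm U'$ of $\bm U^{\flat}$, hence of $\bm U$, with $\bm U' \cong \bm P$; symmetrically $\bm Q$ transports to a blowup $\bm W'$ of $\bm W$ with $\bm W' \cong \bm Q$. Thus $[\bm U'] = [\bm P] = [\bm Q] = [\bm W']$, and Corollary~\ref{blowup:equi:class} applied with the seed pair $(\bm U', \bm U')$, which carries isomorphic blowups trivially, yields isomorphic blowups of $\bm U'$ and $\bm W'$; since a blowup of a blowup is a blowup, $(\bm U, \bm W) \in \isp$.

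Compatibility with $\uplus$ is the most transparent point. If $\bm U_1^{\flat} \cong \bm V_1^{\flat}$ and $\bm U_2^{\flat} \cong \bm V_2^{\flat}$ are witnessing blowups, then, because every elementary blowup step operates on a chosen sub-object and leaves the complementary piece intact, the disjoint union of blowups $\bm U_1^{\flat} \uplus \bm U_2^{\flat}$ is itself a blowup of $\bm U_1 \uplus \bm U_2$, and likewise on the $\bm V$-side; these are isomorphic, so $(\bm U_1 \uplus \bm U_2,\, \bm V_1 \uplus \bm V_2) \in \isp$. For the semiring statement one also needs compatibility with $\times$. The key observation is that an elementary blowup commutes with multiplication by a fixed object $\bm W$: if $\bm U^{\flat}$ is the elementary blowup of $\bm U$ in a suitable coordinate $j$, then $\bm U^{\flat} \times \bm W$ is, up to a reordering of coordinates, the elementary blowup of $\bm U \times \bm W$ in that same coordinate; gluing and iteration then turn any blowup of $\bm U_1$ into a blowup of $\bm U_1 \times \bm W$. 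Applied to $\bm U_1^{\flat} \cong \bm V_1^{\flat}$ this gives $(\bm U_1 \times \bm W,\, \bm V_1 \times \bm W) \in \isp$, and combining the two one-sided statements with transitivity and the commutativity of $\times$ yields $(\bm U_1 \times \bm U_2,\, \bm V_1 \times \bm V_2) \in \isp$. Since blowups and isomorphisms are preserved by the standard embedding $\RV[{\leq}k] \hookrightarrow \RV[{\leq}k+1]$, one has $\isp[k] \subseteq \isp[k+1]$, so $\isp[*]$ is a well-defined semiring congruence.

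Given Lemma~\ref{blowup:equi:class:coa}, there is no deep obstacle here; the one step I would actually write out is the verification that an elementary blowup commutes with cartesian product, and in particular that the finiteness hypothesis of Definition~\ref{defn:blowup:coa} --- that the coordinate being blown up depends finitely on the remaining ones --- survives the passage from $\bm U$ to $\bm U \times \bm W$, since adjoining the coordinates of $\bm W$ to both sides of the relevant projection keeps its fibers finite. That hypothesis is exactly what keeps the blown-up object inside $\RV[{\leq}k]$, so it is the only place where a genuine, if routine, check is required.
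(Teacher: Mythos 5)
Your proof is correct and follows essentially the same route as the paper: reflexivity and symmetry are immediate, transitivity is obtained by applying Lemma~\ref{blowup:equi:class:coa} to the two witnessing blowups of the middle object and transporting across the given isomorphisms, and compatibility with $\uplus$ and $\times$ is reduced to the one-sided statements combined with transitivity. The only difference is that the paper dismisses the compatibility checks as "easily checked," whereas you spell them out (including the survival of the finite-to-one condition under products), which is exactly the right detail to verify.
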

\begin{proof}
Clearly $\isp[k]$ is reflexive and symmetric. If $([\bm U_1], [\bm U_2])$, $([\bm U_2], [\bm U_3])$ are in
$\isp[k]$ then, by Lemma~\ref{blowup:equi:class:coa}, there are blowups $\bm U_1^{\flat}$ of $\bm U_1$, $\bm U_{2}^{\flat 1}$ and $\bm U_{2}^{\flat 2}$ of $\bm U_2$, and
$\bm U_3^{\flat}$ of $\bm U_3$ such that they are all isomorphic. So $\isp[k]$ is transitive and hence is an
equivalence relation. For any $[\bm W] \in \gsk \RV[l]$, the following are
easily checked:
\[
([\bm U_1 \uplus \bm W], [\bm U_2 \uplus \bm W])\in \isp,\quad
([\bm U_1 \times \bm W], [\bm U_2 \times \bm W])\in \isp.
\]
These yield the desired congruence relations.
\end{proof}

Let $\bm U = (U, f)$ be an object of $\RV[k]$ and $T$ a special bijection on $\bb L \bm U$. The set $(T(\mathbb{L} \bm U))_{\RV}$ is simply denoted by $U_{T}$ and the object $(U_{T})_{\leq k} \in \RV[{\leq} k]$ by $\bm U_{T}$.

\begin{lem}\label{special:to:blowup:coa}
The object $\bm U_T$ is isomorphic to a blowup of $\bm U$ of the same length as $T$.
\end{lem}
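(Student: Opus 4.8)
The plan is to induct on $\lh(T)$, the crux being a matching of a single centripetal transformation against a single elementary blowup, entirely parallel to the corresponding step in \cite{hrushovski:kazhdan:integration:vf}. The base case $\lh(T)=0$ is immediate: then $T$ is a composition of regularizations, so the projection $U_T \fun U$ discarding the recorded $\rv$-values is a definable bijection, whence $\bm U_T \cong \bm U$, a blowup of $\bm U$ of length $0$.

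Before the induction I would record two facts. First, any regularized set $A$ equals $\bb L \bm A$ for the object $\bm A$ obtained by projecting $A$ onto its $\RV$-coordinates (with the recorded coordinates as structure map), since after regularization each $\VF$-coordinate of $A$ ranges fibrewise over a full $\RV$-disc; applied to the terminal regularization of $T$ (and to every initial segment of $T$ ending with a regularization) this gives $T(\bb L \bm U) = \bb L(\bm U_T)$. Second, and principally, a single centripetal transformation matches an elementary blowup: let $\eta$ act on the first $\VF$-coordinate of $\bb L \bm B$ for some $\bm B = (B, g) \in \RV[{\leq} k]$, with locus $C$ and focus $\lambda$. Since $\bb L \bm B$ is a union of $\RV$-polydiscs it is its own $\RV$-hull, so the $\RV$-pullback $C \sub \RVH(\bb L \bm B)$ has the form $\bb L \bm C$ for a sub-object $\bm C = (B', g \rest B')$ of $\bm B$; on the polydisc over $b \in B'$ the transformation $\eta$ replaces the first $\VF$-coordinate $g_1(b)^\sharp$ by the open disc $\MM_{\abvrv(g_1(b))}$ about $0$, and since
\[
\MM_{\abvrv(g_1(b))} = \bigcup\nolimits_{s \in \RV^{\circ \circ}_0}\, (s\, g_1(b))^\sharp,
\]
recording $\rv$-values yields a definable bijection between the $\RV$-part of $(\can \circ\, \eta)(\bb L \bm B)$ and $\bm C^{\flat} \uplus (\bm B \mi \bm C)$, where $\bm C^{\flat}$ is the elementary blowup of $\bm C$ in the first coordinate: for fixed $b$ the recorded value $s\, g_1(b)$ and the blowup parameter $s$ determine one another. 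By Definition~\ref{defn:blowup:coa} the set $\bm C^{\flat} \uplus (\bm B \mi \bm C)$ is a blowup of $\bm B$, via $\id_{\bm C}$, of length $1$.

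The induction then proceeds by peeling off the last centripetal transformation: write $T = (r \circ \eta) \circ T'$, with $\eta$ the last centripetal transformation, $r$ the terminal regularization, and $T'$ the preceding segment (ending in a regularization), so $\lh(T') = \lh(T) - 1$. By the first fact $T'(\bb L \bm U) = \bb L(\bm U_{T'})$, and by the inductive hypothesis $\bm U_{T'} \cong \bm W$ for a blowup $\bm W$ of $\bm U$ of length $\lh(T) - 1$. Applying the second fact with $\bm B = \bm U_{T'}$ shows $\bm U_T$ is isomorphic to a length-$1$ blowup of $\bm U_{T'}$ with some locus $\bm C \sub \bm U_{T'}$; transporting $\bm C$ along $\bm U_{T'} \cong \bm W$ and extending the isomorphism over the blowup, exactly as in the base case of the proof of Lemma~\ref{blowup:equi:class:coa}, produces an isomorphic length-$1$ blowup of $\bm W$, and composing with the length-$(\lh(T) - 1)$ blowup of $\bm U$ producing $\bm W$ gives a blowup of $\bm U$ of length $\lh(T)$ isomorphic to $\bm U_T$.

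I expect the main obstacle to be the second fact: the computation itself is short, but one must keep the bookkeeping straight — the spurious $\RV^{\circ \circ}_0$-factor introduced by the elementary blowup against the recorded $\rv$-values produced by regularization — and verify that the resulting object genuinely lies in $\RV[{\leq} k]$ and meets the finite-to-one condition in the definition of an elementary blowup (which holds because $\bm B$ arises as the $\RV$-part of an iterated $\bb L$-image and hence already carries a finite-to-one structure map). This is a transcription of the analogous matching of special bijections with blowups in \cite{hrushovski:kazhdan:integration:vf}.
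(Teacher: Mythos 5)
Your argument follows the same route as the paper's (which compresses everything into three sentences): induct on $\lh(T)$, transport blowups along isomorphisms as in the base case of Lemma~\ref{blowup:equi:class:coa}, and match a single centripetal transformation against a single elementary blowup via the identity $\MM_{\abvrv(g_1(b))} = \bigcup_{s \in \RV^{\circ\circ}_0} (s\, g_1(b))^\sharp$; this is correct. The one justification that needs repair is the finite-to-one hypothesis of Definition~\ref{defn:blowup:coa}: what is required is that $\pr_{\tilde 1} \rest g(B')$ be finite-to-one, a condition on the \emph{set} $g(B')$ which does not follow from the structure map $g$ itself being finite-to-one (take $g = \id$ on a two-dimensional $B$). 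The condition does hold, but for a different reason: the focus map $\lambda$, restricted to a fibre over fixed values of the remaining $\VF$-coordinates, is a definable function from a set in $\RV$ into $\VF$ and hence has finite image by Corollary~\ref{function:rv:to:vf:finite:image}; since its values lie in the pairwise disjoint discs $g_1(b)^\sharp$, only finitely many values of $g_1(b)$ are compatible with each value of $g_{\tilde 1}(b)$.
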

\begin{proof}
By induction on the length $\lh (T)$ of $T$ and Lemma~\ref{blowup:equi:class:coa}, this is immediately reduced to the case $\lh (T) = 1$. For that case, let $\lambda$ be the focus map of $T$. Without loss of generality, we may assume that the locus of $\lambda$ is $\mathbb{L} \bm U$. Then it is clear how to construct an (elementary) blowup of $\bm U$ as desired.
\end{proof}

\begin{lem}\label{kernel:dim:1:coa}
Suppose that $[A] = [B]$ in $\gsk \VF[1]$ and $\bm U, \bm V \in \RV[{\leq} 1]$ are two standard contractions of $A$, $B$, respectively. Then $([\bm U], [\bm V]) \in \isp$.
\end{lem}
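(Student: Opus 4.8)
The plan is to argue by induction on the number $n$ of $\VF$-coordinates of $A$ (and of $B$, after padding both with dummy $\VF$-coordinates fixed to $0$ so that the counts agree, which changes nothing up to a harmless relabelling of standard contractions). For $n=0$ the categories $\VF[0]$ and $\RV[0]$ are equivalent, the standard contractions are $A$ and $B$ themselves, and $[A]=[B]$ forces $\bm U\cong\bm V$, so $(\bm U,\bm V)\in\isp$ trivially. The heart of the matter is $n=1$; the inductive step peels off one $\VF$-coordinate and reduces to it.

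\textbf{The case $n=1$.} Write the given standard contractions as $\bm U=\rv(S_A(A))_{\le 1}$, $\bm V=\rv(S_B(B))_{\le 1}$, where $S_A$, $S_B$ are the relatively special bijections with $\RV$-pullback ranges $\bb L\bm U$, $\bb L\bm V$ furnished by the standard-contraction procedure; modulo a routine fiberwise normalization (Corollary~\ref{special:bi:term:constant}) we may take $S_A$, $S_B$ to be honest special bijections. Transport the bijection $f$ witnessing $[A]=[B]$ to $g:=S_B\circ f\circ S_A^{-1}:\bb L\bm U\fun\bb L\bm V$ and apply Lemma~\ref{simul:special:dim:1}: there are special bijections $T$ on $\bb L\bm U$ and $T'$ on $\bb L\bm V$ with $\RV$-pullback ranges such that $g^{\flat}$ is a lift of an $\RV[1]$-isomorphism $\rv((\bb L\bm U)^{\flat})_1\fun\rv((\bb L\bm V)^{\flat})_1$. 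In the notation preceding Lemma~\ref{special:to:blowup:coa} this says $\bm U_T\cong\bm V_{T'}$. By Lemma~\ref{special:to:blowup:coa}, $\bm U_T$ is isomorphic to a blowup of $\bm U$ and $\bm V_{T'}$ to a blowup of $\bm V$; chaining these three isomorphisms exhibits isomorphic blowups of $\bm U$ and $\bm V$, i.e. $(\bm U,\bm V)\in\isp$. Taking $B=A$ and $f=\id$ shows in particular that any two standard contractions of a one-$\VF$-coordinate set are $\isp$-equivalent.

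\textbf{The inductive step.} Given $f:A\fun B$, first apply Lemma~\ref{bijection:partitioned:unary} to partition $A=\biguplus_j A_j$ so that each $f\rest A_j$ is a composition of relatively unary bijections. Since $\isp$ is a semiring congruence (Lemma~\ref{isp:congruence:vol}), since the $\isp$-equivalence of any two standard contractions of a fixed set is the identity-map case of the lemma and hence available by induction, and since a standard contraction of $A$ may be built as a disjoint sum of standard contractions of the $A_j$ obtained with a common permutation by gluing the fiberwise special bijections over disjoint loci, it suffices to treat a single relatively unary $f$, say relatively unary in the $i$-th $\VF$-coordinate. Apply the case $n=1$ to the restrictions $f\rest A_a$, $a\in\pr_{\tilde i}(A)$, and glue by compactness: this yields bijections $\hat T_i$ on $A$ and $\hat T_i'$ on $B$ relatively special in the $i$-th coordinate, together with a definable bijection $\bar f_i$ between the resulting $\RV$-reducts $A_i,B_i\in\VF[1]$, which have one fewer $\VF$-coordinate. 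Since $\hat T_i$ followed by any standard contraction of $A_i$ is a standard contraction of $A$ (the one whose permutation puts $i$ first), and likewise for $B$, the inductive hypothesis applied to $\bar f_i$ produces standard contractions $\bm U^{*}$ of $A$ and $\bm V^{*}$ of $B$ with $(\bm U^{*},\bm V^{*})\in\isp$. Finally $(\bm U,\bm U^{*})\in\isp$ and $(\bm V,\bm V^{*})\in\isp$ again by the $\isp$-equivalence of standard contractions, so transitivity of $\isp$ closes the argument; Corollary~\ref{blowup:equi:class} is invoked freely throughout to pass between $\isp$-related representatives.

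\textbf{Main obstacle.} The delicate part is the two layers of bookkeeping in the inductive step: making Lemma~\ref{simul:special:dim:1} uniform in the parameter $a$ so that the fiberwise special bijections and the fiberwise $\rv$-contracted isomorphisms glue to genuine definable objects (a routine but fiddly compactness argument), and --- more essentially --- verifying that the construction of a standard contraction is insensitive, up to $\isp$, both to the choice of permutation and to the intermediate special bijections. It is precisely to dispose of the latter that one runs the induction on $A$, $B$ and on the identity map simultaneously, and the blowup calculus of Lemmas~\ref{special:to:blowup:coa} and~\ref{blowup:equi:class:coa} is what converts ``different choices'' into ``isomorphic after further blowups''.
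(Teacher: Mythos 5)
Your ``case $n=1$'' is exactly the paper's proof: since $\bb L\bm U$ and $\bb L\bm V$ are definably bijective (each being bijective to $A$, resp.\ $B$, via the lift of the standard contraction) and each has at most one $\VF$-coordinate, Lemma~\ref{simul:special:dim:1} yields special bijections $T$, $R$ with $\bm U_T\cong\bm V_R$, and Lemma~\ref{special:to:blowup:coa} converts these into isomorphic blowups of $\bm U$ and $\bm V$. The entire inductive superstructure is unnecessary for the lemma as stated, precisely because $\bm U,\bm V\in\RV[{\leq}1]$ forces their lifts to live in one $\VF$-coordinate regardless of how $A$ and $B$ sit in $\VF^n\times\RV^m$; and had it been needed, your appeal to ``the $\isp$-equivalence of any two standard contractions of a fixed set'' as an instance of the induction would be a genuine gap, since comparing contractions along different permutations is the Fubini-type content of Lemmas~\ref{2:unit:contracted}--\ref{contraction:perm:pair:isp} (requiring $2$-cell decomposition) and is deliberately deferred in the paper to the proof of Proposition~\ref{kernel:L}, which uses the present lemma as its base case.
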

\begin{proof}
By Lemma~\ref{simul:special:dim:1}, there are special bijections $T$, $R$ on $\bb L \bm U$, $\bb L \bm V$ such that $\bm U_{T}$, $\bm V_{R}$ are isomorphic. So the assertion follows from Lemma~\ref{special:to:blowup:coa}.
\end{proof}

\begin{lem}\label{blowup:same:RV:coa}
Let $\bm U^{\flat}$ be a blowup of $\bm U = (U, f) \in \RV[{\leq} k]$ of length $l$. Then $\bb L \bm U^{\flat}$ is isomorphic to $\bb L \bm U$.
\end{lem}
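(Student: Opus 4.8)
The plan is to reduce to the case of a single \emph{elementary} blowup and then produce the isomorphism by hand (compare Lemma~\ref{special:to:blowup:coa}). I would induct on the length $l$, the case $l=0$ being trivial. For $l\geq 1$, write $\bm U^{\flat}$ as a single blowup of a length-$(l-1)$ blowup $\bm W$ of $\bm U$; by the inductive hypothesis $\bb L\bm W\cong\bb L\bm U$, so it suffices to treat a single blowup. By Definition~\ref{defn:blowup:coa} such a blowup has the form $\bm W^{\flat}=\bm V^{\flat}\uplus(W\mi C,\,h\rest(W\mi C))$, where $\bm W=(W,h)$, $F\colon\bm V\fun\bm C\coloneqq(C,h\rest C)$ is an $\RV[k]$-isomorphism, and $\bm V^{\flat}$ is an elementary blowup of $\bm V\in\RV[k]$. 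Since $\bb L$ is additive over disjoint unions and, by Corollary~\ref{RV:lift}, the isomorphism $F$ lifts to $\bb L\bm V\cong\bb L\bm C$, once we know the elementary case $\bb L\bm V^{\flat}\cong\bb L\bm V$ we obtain
\[ \bb L\bm W^{\flat}=\bb L\bm V^{\flat}\uplus\bb L(W\mi C,h\rest(W\mi C))\cong\bb L\bm C\uplus\bb L(W\mi C,h\rest(W\mi C))=\bb L\bm W . \]

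So suppose $\bm V^{\flat}=(V\times\RV^{\circ\circ}_0,g^{\flat})$ blows up the $j$th coordinate of $\bm V=(V,g)\in\RV[k]$. Because $\bb L$ and the elementary blowup are both additive over definable finite partitions of $V$, and because $\pr_{\tilde j}\rest g(V)$ is finite-to-one, I would first refine $V$ so that both $g$ and $\pr_{\tilde j}\rest g(V)$ are injective; then $g_j=\bar g\circ g_{\tilde j}$ for a definable function $\bar g$, and after regularization $t$ is recovered from $\rv(a)$ on $\bb L\bm V$. A direct computation then identifies $\bb L\bm V$ with the set of $a\in\VF^k$ such that $\rv(a_{\tilde j})\in P\coloneqq g_{\tilde j}(V)$ and $a_j$ lies in the $\RV$-disc $\bar g(\rv(a_{\tilde j}))^{\sharp}$; running the same computation with $g^{\flat}_j(t,s)=s\,g_j(t)$, and using that $\bigcup_{s\in\RV^{\circ\circ}_0}(s r)^{\sharp}$ is the open disc about $0$ of radius $\abvrv(r)$ while the extra coordinate $s$ is recovered from $a_j$, identifies $\bb L\bm V^{\flat}$ with the set of $a\in\VF^k$ such that $\rv(a_{\tilde j})\in P$ and $a_j\in D(a_{\tilde j})$, where $D(y)$ is the open disc about $0$ of the \emph{same} radius $\abvrv(\bar g(\rv(y)))$ as $\bar g(\rv(y))^{\sharp}$.

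The decisive point is to produce a definable centre relating these two discs. For $y\in P^{\sharp}$ the $\RV$-disc $\bar g(\rv(y))^{\sharp}$ is definable over $\la\mdl S,y\ra$, and since $y$ is a tuple of $\VF$-elements the substructure $\la\mdl S,y\ra$ is again $\VF$-generated with $\Gamma(\la\mdl S,y\ra)$ nontrivial, hence an elementary substructure of $\mmdl$; by Hypothesis~\ref{hyp:point} it contains an $\la\mdl S,y\ra$-definable point, and by compactness these assemble into an $\mdl S$-definable function $y\efun c(y)$ with $c(y)\in\bar g(\rv(y))^{\sharp}$ for all $y\in P^{\sharp}$. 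Since any representative of an $\RV$-class $r$ displays $r^{\sharp}$ as the translate by that representative of the open disc about $0$ of radius $\abvrv(r)$, we get $\bar g(\rv(y))^{\sharp}=c(y)+D(y)$; hence the $\mdl S$-definable map sending $a$ to $a-c(a_{\tilde j})$ in the $j$th coordinate and fixing the rest is a bijection $\bb L\bm V\fun\bb L\bm V^{\flat}$, with inverse $b\efun b+c(b_{\tilde j})$. Keeping track of the $\RV$-bookkeeping coordinates ($t$ on the $\bb L\bm V$ side, $(t,s)$ with $s=\rv(a_j-c(a_{\tilde j}))/\bar g(\rv(a_{\tilde j}))$ on the $\bb L\bm V^{\flat}$ side) shows this is a $\VF[k]$-isomorphism, which finishes the elementary case.

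The only genuine obstacle is this construction of the centre function $c$: one must notice that the relevant $\RV$-disc is named by the \emph{complementary} $\VF$-coordinates, over which $\mdl S$ stays $\VF$-generated and hence an elementary substructure, so that the standing Hypothesis~\ref{hyp:point} can be invoked uniformly. Everything else is the bookkeeping that makes precise the slogan that a blowup is merely a restatement of the additive translation from $1+\MM$ onto $\MM$.
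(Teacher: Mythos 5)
Your proof is correct and follows essentially the same route as the paper: reduce by induction and by lifting the isomorphism $F$ (Corollary~\ref{RV:lift}) to a single elementary blowup with injective projections, then use Hypothesis~\ref{hyp:point} plus compactness to get a definable choice of centre in each $\RV$-disc of the blown-up coordinate, so that additive translation by it carries $f_j(t)^\sharp$ onto $\bigcup_{s\in\RV^{\circ\circ}_0}(sf_j(t))^\sharp$. The paper compresses this by phrasing the centre function as a focus map with locus $f(U)^\sharp$, so that the resulting centripetal transformation is exactly your translation map; your write-up just makes that identification explicit.
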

\begin{proof}
By induction on $l$ this is immediately reduced to the case $l=1$. For that case, without loss of generality, we may assume that $\pr_{\tilde 1} \rest f(U)$ is injective and $\bm U^{\flat}$ is an elementary blowup in the first coordinate. So it is enough to show that there is a focus map into the first coordinate with locus $f(U)^\sharp$.  This is guaranteed by  Hypothesis~\ref{hyp:point}.
\end{proof}

\begin{lem}\label{isp:VF:fiberwise:contract}
Let $A'$, $A''$ be definable sets with $A'_{\VF} = A''_{\VF} \eqqcolon A \sub \VF^n$. Suppose that there is a $k \in \N$ such that, for  every $a \in A$, $([A'_a]_{\leq k}, [A''_a]_{\leq k}) \in \isp$. Let $\hat T_{\sigma}$, $\hat R_{\sigma}$ be respectively standard contractions of $A'$, $A''$. Then
\[
([\hat T_{\sigma}(A')]_{\leq n+k}, [\hat R_{\sigma}(A'')]_{\leq n+k}) \in \isp.
\]
\end{lem}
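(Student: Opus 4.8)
The proof is by induction on $n$, uniformly in $k$; apart from the trivial base case $n=0$ (where $A$ is a point, $A'_a = A'$, $A''_a = A''$, the standard contractions are identities, and the conclusion \emph{is} the hypothesis), the substance is the base case $n=1$, treated below, and the step from $n-1$ to $n$ for $n\geq 2$. For the latter, put $i = \sigma(1)$. By the construction preceding Definition~\ref{defn:standard:contraction}, $\hat T_\sigma$ factors as $\hat T_{\sigma'} \circ \hat T_i$, where $\hat T_i : A' \fun A'_i$ contracts the $i$th $\VF$-coordinate (so $A'_i$ has $n-1$ $\VF$-coordinates) and $\hat T_{\sigma'}$ is a standard contraction of $A'_i$, with head start $k+1$, along the permutation $\sigma'$ of $[n{-}1]$ induced by $\sigma$; likewise $\hat R_\sigma = \hat R_{\sigma'} \circ \hat R_i$. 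Since $T_i$ is relatively special in the $i$th coordinate, its restriction to each fiber $A'_b$, $b \in \pr_{\tilde i}(A)$, is a special bijection on a set with one $\VF$-coordinate, so $(A'_i)_b$ is a standard contraction of $A'_b$ with head start $k$; moreover $A'_b$ and $A''_b$ have a common $\VF$-projection, and by hypothesis $([A'_{(a,b)}]_{\leq k}, [A''_{(a,b)}]_{\leq k}) \in \isp$ for all $a$. Applying the case $n=1$ to $A'_b, A''_b$ gives $([(A'_i)_b]_{\leq 1+k}, [(A''_i)_b]_{\leq 1+k}) \in \isp$ for every $b$, so $A'_i, A''_i$ satisfy the hypothesis of the lemma with $(n{-}1, k{+}1)$ in place of $(n, k)$; the inductive hypothesis then yields the conclusion. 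Thus the lemma reduces to its case $n=1$.

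For $n=1$ we have $A', A'' \sub \VF \times \RV^{m'}$ with $A'_{\VF} = A''_{\VF} = A \sub \VF$ and $([A'_a]_{\leq k}, [A''_a]_{\leq k}) \in \isp$ for each $a \in A$. By the definition of $\isp$, Corollary~\ref{blowup:equi:class}, and compactness --- coding blowup loci by parameters and using saturation to bound the length of the blowups required fiberwise, and shrinking $A$ into finitely many definable pieces --- there are an $l_0 \in \N$ and, definably in $a$, blowups $\bm B'_a$ of $(A'_a)_{\leq k}$ and $\bm B''_a$ of $(A''_a)_{\leq k}$ of common length $l_0$, isomorphisms $I_a : \bm B'_a \fun \bm B''_a$, and blowup loci and blown-up coordinates uniform in $a$. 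Assembling over $a$ produces definable sets $\bm B' = \bigcup_a a \times \bm B'_a$ and $\bm B'' = \bigcup_a a \times \bm B''_a$, each with a single $\VF$-coordinate and carrying head start $k$, and a definable bijection $I = \bigcup_a I_a : \bm B' \fun \bm B''$. Running the argument of Lemma~\ref{kernel:dim:1:coa} on $I$ --- Lemma~\ref{simul:special:dim:1} followed by Lemma~\ref{special:to:blowup:coa}, which applies verbatim with the extra $\RV$-coordinates carried along since morphisms in $\RV[*]$ are arbitrary definable bijections --- shows that any two standard contractions of $\bm B'$ and $\bm B''$ with head start $k$ have $\isp$-equivalent classes.

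It now suffices to show that a standard contraction of $\bm B'$ is $\isp$-equivalent to one of $A'$, and symmetrically for $A''$; the congruence properties of $\isp$ (Lemma~\ref{isp:congruence:vol}) then conclude the case $n=1$. By induction on $l_0$ this reduces to a single elementary blowup, so we may assume there are a definable sub-object $\mathcal C \sub A'$ with fibers the blowup loci and a coordinate $j$ such that $\bm B'$ is $A'$ with $\mathcal C$ replaced by $\mathcal C \times \RV^{\circ\circ}_0$ in the $j$th $\RV$-coordinate. Since this touches only the $\RV$-part, a special bijection straightening $A'$ into an $\RV$-pullback --- chosen compatibly with the partition $A' = \mathcal C \uplus (A' \mi \mathcal C)$ by applying Corollary~\ref{all:subsets:rvproduct} to the two pieces and Lemma~\ref{blowup:equi:class:coa} to reconcile lengths --- also straightens $\bm B'$, with the $\RV^{\circ\circ}_0$-coordinate merely tagging along, and the resulting standard contraction of $\bm B'$ is an elementary blowup of the corresponding standard contraction of $A'$ (the finite-to-one condition on the $j$th coordinate being preserved by the bijections involved). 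Hence these two contractions are $\isp$-equivalent.

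The main obstacle is the case $n=1$, and within it the bookkeeping just sketched: arranging the fiberwise blowups, their loci, and the straightening special bijections to be uniform in the $\VF$-parameter and mutually compatible, and matching up the $\RV$-coordinates produced by regularizations on the two sides. Every substantive tool needed --- Corollary~\ref{all:subsets:rvproduct}, Lemmas~\ref{simul:special:dim:1}, \ref{special:to:blowup:coa}, \ref{blowup:same:RV:coa}, \ref{blowup:equi:class:coa}, \ref{isp:congruence:vol}, the argument of Lemma~\ref{kernel:dim:1:coa}, and compactness --- is already in place, so the proof is essentially a careful assembly of these.
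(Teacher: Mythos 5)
Your top-level architecture --- induction reducing everything to $n=1$, and uniformizing the fibrewise witnesses of $([A'_a]_{\leq k},[A''_a]_{\leq k})\in\isp$ by compactness --- is exactly the paper's. But in the case $n=1$ the paper takes a different and cleaner route than yours: it never materializes the blown-up sets $\bm B'$, $\bm B''$. Instead it applies Corollary~\ref{special:bi:term:constant} simultaneously to the formulas defining $A'$, $A''$ \emph{and} to the formula $\theta$ coding the witnessing data, producing a special bijection $F$ on $A\sub\VF$ under which the fibres $A'_a$, $A''_a$ and the witnessing data are constant on preimages of $\RV$-polydiscs. It then sets $B'=\bigcup_a F(a)\times A'_a$ (original $\RV$-fibres, straightened $\VF$-coordinate) and closes the chain $\hat T_{\sigma}(A')\sim_{\isp} B'_{\RV}\sim_{\isp} B''_{\RV}\sim_{\isp}\hat R_{\sigma}(A'')$: the outer equivalences come from Lemma~\ref{kernel:dim:1:coa} applied \emph{fibrewise over $t\in A'_{\RV}$}, where each fibre is a genuine subset of $\VF$ so that lemma applies as stated, and the middle one is read off the hypothesis directly, fibrewise over the polydiscs of $A^{\flat}$, since after $F$ the witnessing data is $\rv(\gp)$-definable. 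Transitivity of $\isp$ finishes.

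Your detour through the materialized blowups creates two obligations that this ordering avoids, and one of them is a genuine gap. First, $\bm B'$, $\bm B''$ have infinite $\RV$-fibres in general, so they are not objects of $\VF[1]$ and Lemma~\ref{kernel:dim:1:coa} does not apply ``verbatim''; this is repairable by fibering over the $\RV$-part and using compactness, but that is an argument you must make, not a citation. Second, and more seriously, the claim that ``the resulting standard contraction of $\bm B'$ is an elementary blowup of the corresponding standard contraction of $A'$'' is false as stated: an elementary blowup of $\hat T(\mathcal C)_{\leq 1+k}$ in the $j$th head-start coordinate requires $\pr_{\tilde j}$ to be finite-to-one on the image of the grading of the \emph{contracted} object, and this does not follow from the fibrewise condition that $\pr_{\tilde j}\rest\pr_{\leq k}(C_a)$ is finite-to-one for each $a$ --- when $C_a$ varies with $a$, fixing the contracted coordinate $u$ and $t_{\tilde j}$ can still leave infinitely many admissible $t_j$ coming from different fibres. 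So the blowup you need of the contracted object need not exist, and the parenthetical dismissal of this point hides the real difficulty. The $\isp$-equivalence you want there can likely still be proved, but only by again arguing fibrewise over the $\RV$-coordinates --- at which point you have essentially reconstructed the paper's proof, which chooses its intermediate objects precisely so that this commutation question never arises.
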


Note that the condition $([A'_a]_{\leq k}, [A''_a]_{\leq k}) \in \isp$ makes sense only  over the substructure $\mdl S \la a \ra$.

\begin{proof}
By induction on $n$ this is immediately reduced to the case $n=1$. So assume $A \sub \VF$. Let $\phi'$, $\phi''$ be quantifier-free formulas that define $A'$, $A''$, respectively. Let $\theta$ be a quantifier-free formula such that, for  every $a \in A$, $\theta(a)$ defines the necessary data (two blowups and an $\RV[*]$-morphism) that witness the condition $([A'_a]_{\leq k}, [A''_a]_{\leq k}) \in \isp$. Applying Corollary~\ref{special:bi:term:constant} to the top \LT-terms of $\phi'$, $\phi''$, and $\theta$, we obtain a special bijection $F: A \fun A^{\flat}$ such that $A^{\flat}$ is an $\RV$-pullback and, for all $\RV$-polydiscs $\gp \sub A^{\flat}$ and all $a_1, a_2 \in F^{-1}(\gp)$,
\begin{itemize}
 \item $A'_{a_1} = A'_{a_2}$ and $A''_{a_1} = A''_{a_2}$,
 \item $\theta(a_1)$ and $\theta(a_2)$ define the same data.
\end{itemize}
The second item implies that the data defined by $\theta$ over $F^{-1}(\gp)$ is actually $\rv(\gp)$-definable.

Let $B' = \bigcup_{a \in A} F(a) \times A'_a$, similarly for $B''$. Note that $B'$, $B''$ are obtained through special bijections on $A'$, $A''$. For all $t \in A'_{\RV}$, $B'_t$ is an $\RV$-pullback that is $t$-definably bijective to the $\RV$-pullback $T_{\sigma}(A')_t$. By Lemma~\ref{kernel:dim:1:coa}, we have, for all $t \in A'_{\RV}$
\[
([(B'_{\RV})_t]_1, [\hat T_{\sigma}(A')_t]_1) \in \isp
\]
and hence, by compactness,
\[
([B'_{\RV}]_{\leq k+1}, [\hat T_{\sigma}(A')]_{\leq k+1}) \in \isp.
\]
The same holds for $B''$ and $\hat R_{\sigma}(A'')$. On the other hand, by the second item above, for every $\RV$-polydisc $\gp \sub A^{\flat}$, we have $((B'_{\RV})_{\rv(\gp)}, (B''_{\RV})_{\rv(\gp)}) \in \isp$
and hence,  by compactness,
\[
([B'_{\RV}]_{\leq k+1}, [B''_{\RV}]_{\leq k+1}) \in \isp.
\]
Since $\isp$ is a congruence relation, the lemma follows.
\end{proof}

\begin{cor}\label{contraction:same:perm:isp}
Let $A', A'' \in \VF_*$ with exactly $n$ $\VF$-coordinates each and $f : A' \fun A''$ be a relatively unary bijection in the $i$th coordinate. Then for any permutation $\sigma$ of $[n]$ with $\sigma(1) = i$ and any standard contractions $\hat T_{\sigma}$, $\hat R_{\sigma}$ of $A'$, $A''$,
\[
([\hat T_{\sigma}(A')]_{\leq n}, [\hat R_{\sigma}(A'')]_{\leq n}) \in \isp.
\]
\end{cor}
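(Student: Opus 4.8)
The plan is to strip off the $i$th $\VF$-coordinate first and then apply Lemma~\ref{isp:VF:fiberwise:contract}. Write $\tilde i = [n] \mi \{i\}$. Since $f$ is relatively unary in the $i$th coordinate, $\pr_{\tilde i}(A') = \pr_{\tilde i}(A'') \eqqcolon B$, and for each $b \in B$ the restriction $f_b \coloneqq f \rest A'_b$ is a $b$-definable bijection between $A'_b$ and $A''_b$, two sets with exactly one $\VF$-coordinate; in particular $[A'_b] = [A''_b]$ over the substructure $\mdl S \la b \ra$. Because $\sigma(1) = i$, the construction of $\hat T_\sigma$ starts with a bijection $T_i$ on $A'$, relatively special in the $i$th coordinate, turning every fiber into an $\RV$-pullback; by construction the fiber $(A'_i)_b$ of the resulting set $A'_i$ over $b$ is the $\RV$-part of a standard contraction of $A'_b$. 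The first step $R_i$ of $\hat R_\sigma$ does the same for $A''$, so $(A''_i)_b$ is a standard contraction of $A''_b$.

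By Lemma~\ref{kernel:dim:1:coa}, applied over $\mdl S \la b \ra$ for each $b \in B$, we obtain $([(A'_i)_b], [(A''_i)_b]) \in \isp$. Using compactness to make the number of newly created $\RV$-coordinates uniform over the fibers (the gluing is exactly as in the proof of Lemma~\ref{decom:into:2:units}), we may fix a single $k \in \N$ large enough that $([(A'_i)_b]_{\leq k}, [(A''_i)_b]_{\leq k}) \in \isp$ for all $b \in B$. Now $\hat T_\sigma(A')$ is a standard contraction of $A'_i$ along the permutation of $[n{-}1]$ induced by $\sigma$ --- with head start equal to the number of $\RV$-coordinates of $A'_i$, which is absorbed into $k$ --- and likewise $\hat R_\sigma(A'')$ is a standard contraction of $A''_i$. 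Lemma~\ref{isp:VF:fiberwise:contract}, applied to $A'_i$, $A''_i$ over $B \sub \VF^{n-1}$ with this $k$, then gives
\[
([\hat T_\sigma(A')]_{\leq n}, [\hat R_\sigma(A'')]_{\leq n}) \in \isp,
\]
noting that $\hat T_\sigma(A')$, $\hat R_\sigma(A'')$ are objects of $\RV[{\leq} n]$ since $\dim_{\VF}(A'), \dim_{\VF}(A'') \leq n$, and that $\isp$ is compatible across the gradings.

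I expect the only genuinely delicate point to be the $\RV$-coordinate bookkeeping. After the first contraction step the number of fresh $\RV$-coordinates depends a priori on the fiber, so one must arrange $A'_i$, $A''_i$ and all of their fibers to carry these coordinates uniformly before Lemma~\ref{isp:VF:fiberwise:contract} applies literally; this is precisely the role of the ``head start'' in Definition~\ref{defn:standard:contraction} and the truncation parameter in Lemma~\ref{isp:VF:fiberwise:contract}, so no new difficulty arises, and the rest is a straightforward unwinding of the definition of a standard contraction combined with the two lemmas cited.
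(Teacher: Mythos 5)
Your proof is correct and is exactly the argument the paper intends: the paper's own proof is the one-line citation of Lemmas~\ref{kernel:dim:1:coa} and \ref{isp:VF:fiberwise:contract}, and you unpack it in the same way — apply Lemma~\ref{kernel:dim:1:coa} fiberwise over $\pr_{\tilde i}$ to the first (i.e.\ $i$th) contraction step, then feed the resulting $\isp$-congruent fibers into Lemma~\ref{isp:VF:fiberwise:contract} for the remaining $n-1$ coordinates. The only cosmetic remark is that the natural truncation parameter is $k=1$ (projection of the contracted fibers to their first coordinate is finite-to-one), which is what makes the final index come out as $(n-1)+1=n$; your ``large enough $k$'' phrasing slightly obscures this but does not affect the argument.
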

\begin{proof}
This is immediate by Lemmas~\ref{kernel:dim:1:coa} and \ref{isp:VF:fiberwise:contract}.
\end{proof}

The following lemma is essentially a version of Fubini's theorem (also see Theorem~\ref{semi:fubini} below).

\begin{lem}\label{contraction:perm:pair:isp}
Let $A \in \VF_*$ with exactly $n$ $\VF$-coordinates. Suppose that $i, j \in [n]$ are distinct and $\sigma_1$, $\sigma_2$ are permutations of $[n]$ such that
\[
\sigma_1(1) = \sigma_2(2) = i, \quad \sigma_1(2) = \sigma_2(1) = j, \quad \sigma_1
\rest \set{3, \ldots, n} = \sigma_2 \rest \set{3, \ldots, n}.
\]
Then, for any standard contractions $\hat T_{\sigma_1}$, $\hat T_{\sigma_2}$ of $A$,
\[
([\hat T_{\sigma_1}(A)]_{\leq n}, [\hat T_{\sigma_2}(A)]_{\leq n}) \in \isp.
\]
\end{lem}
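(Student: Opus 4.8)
The plan is to reduce, by fibering over the coordinates different from $i$ and $j$, to the two-variable case $n=2$, and then to invoke the $2$-cell machinery that is already in place (Lemmas~\ref{2:unit:contracted} and~\ref{subset:partitioned:2:unit:contracted}).

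First I would set up the reduction. Write $E = \{i,j\}$ and $\tilde E = [n] \mi E$. By hypothesis $\sigma_1$ and $\sigma_2$ contract the coordinates $i$ and $j$ as their first two steps, in the two opposite orders, and then contract the remaining coordinates in one and the same order, namely $\sigma' \coloneqq (\sigma_1(3), \dots, \sigma_1(n)) = (\sigma_2(3), \dots, \sigma_2(n))$. Accordingly, any standard contraction $\hat T_{\sigma_1}$ of $A$ factors as a standard contraction of $A$ in the coordinate $i$ and then the coordinate $j$, producing a set $A'$, followed by a standard contraction of $A'$ in the (re-indexed) order $\sigma'$; likewise $\hat T_{\sigma_2}$ factors through a set $A''$ obtained by contracting $j$ and then $i$. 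Because the relatively special bijections used in a standard contraction depend only on the data in the complementary coordinates, $A'$ and $A''$ have the same $\VF$-part $B \coloneqq \pr_{\tilde E}(A_{\VF}) \sub \VF^{n-2}$, and for every $b \in B$ the fibers $A'_b$ and $A''_b$ are standard contractions of the common fiber $A_b \sub \VF^2 \times \RV^m$, taken in the two orders $(1,2)$ and $(2,1)$ respectively. Granting the case $n=2$, applied over the base $\mdl S\la b\ra$, we get $([A'_b]_{\leq 2}, [A''_b]_{\leq 2}) \in \isp$ for every $b \in B$; then Lemma~\ref{isp:VF:fiberwise:contract}, applied to $A'$, $A''$ with head start $k=2$ and the permutation $\sigma'$, yields $([\hat T_{\sigma_1}(A)]_{\leq n}, [\hat T_{\sigma_2}(A)]_{\leq n}) \in \isp$, which is what is wanted.

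It remains to treat $n=2$. Let $A \sub \VF^2 \times \RV^m$ be an object of $\VF_*$ and let $\hat S_{12}$, $\hat S_{21}$ be arbitrary standard contractions of $A$. By Lemma~\ref{subset:partitioned:2:unit:contracted} there are a definable injection $f \colon A \fun \VF^2 \times \RV^l$, relatively unary in both $\VF$-coordinates, and standard contractions $\hat T_{12}$, $\hat R_{21}$ of $f(A)$ such that $[\hat T_{12}(f(A))]_{\leq 2} = [\hat R_{21}(f(A))]_{\leq 2}$. Applying Corollary~\ref{contraction:same:perm:isp} to $f$ — once viewed as relatively unary in the first coordinate, with the permutation $12$, once in the second, with the permutation $21$ — we obtain
\[
([\hat S_{12}(A)]_{\leq 2}, [\hat T_{12}(f(A))]_{\leq 2}) \in \isp \dand ([\hat S_{21}(A)]_{\leq 2}, [\hat R_{21}(f(A))]_{\leq 2}) \in \isp.
\]
Since $\isp$ is an equivalence relation (Lemma~\ref{isp:congruence:vol}), chaining these two memberships with the equality above gives $([\hat S_{12}(A)]_{\leq 2}, [\hat S_{21}(A)]_{\leq 2}) \in \isp$, completing the case $n=2$ and hence the lemma.

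The genuinely delicate content — the symmetric, or ``linear'', internal structure of a $2$-cell that makes the two orders of contraction agree up to isomorphism — is already isolated in Lemmas~\ref{2:unit:contracted} and~\ref{subset:partitioned:2:unit:contracted}, so the present lemma is essentially an exercise in assembly. The main point requiring care is the bookkeeping in the reduction: one must verify that an arbitrary standard contraction along $\sigma_1$ really does split off a standard contraction in the coordinates $i,j$ followed by a standard contraction along the \emph{literally common} tail permutation $\sigma'$, that this splitting is compatible with passing to $\VF$-fibers over $B$, and that the head-start parameter $k=2$ appearing in the invocation of Lemma~\ref{isp:VF:fiberwise:contract} is precisely the situation that the statement of that lemma is designed to accommodate.
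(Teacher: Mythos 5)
Your proof is correct and follows essentially the same route as the paper's: reduce by compactness and Lemma~\ref{isp:VF:fiberwise:contract} to the two-coordinate fiber case over $\pr_{\tilde E}(A)$, then settle that case by combining Lemma~\ref{subset:partitioned:2:unit:contracted} with Corollary~\ref{contraction:same:perm:isp} and the transitivity of $\isp$. The extra bookkeeping you spell out (the factorization through the common tail $\sigma'$ and the head start $k=2$) is exactly what the paper leaves implicit in its appeal to Lemma~\ref{isp:VF:fiberwise:contract}.
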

\begin{proof}
Let $ij$, $ji$ denote the permutations of $E \coloneqq \{i, j\}$. By compactness and
Lemma~\ref{isp:VF:fiberwise:contract}, it is enough to show
that, for any $a \in \pr_{\tilde E}(A)$ and any standard
contractions $\hat T_{ij}$, $\hat T_{ji}$ of $A_a$,
\[
([\hat T_{ij}(A_a)]_{\leq 2}, [\hat T_{ji}(A_a)]_{\leq 2})
\in \isp.
\]
To that end, fix an $a \in \pr_{\tilde E}(A)$. By Lemma~\ref{subset:partitioned:2:unit:contracted}, there are a definable bijection $f$ on $A_a$ that is relatively unary in both $\VF$-coordinates and standard contractions $\hat R_{ij}$, $\hat R_{ji}$ of $f(A_a)$ such that
\[
[\hat R_{ij}(f(A_a))]_{\leq 2} = [\hat R_{ji}(f(A_a))]_{\leq 2}.
\]
So the desired property follows from Corollary~\ref{contraction:same:perm:isp}.
\end{proof}

The following proposition is the culmination of the preceding technicalities; it identifies the
congruence relation $\isp$ with that induced by
$\bb L$.

\begin{prop}\label{kernel:L}
For $\bm U, \bm V \in \RV[{\leq} k]$,
\[
[\bb L \bm U] = [\bb L \bm V] \quad \text{if and only if} \quad ([\bm U], [\bm V]) \in \isp.
\]
\end{prop}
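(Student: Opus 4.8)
The plan is to prove the two implications separately; the backward one is routine, and the forward one carries the real content.

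For the backward direction, that $([\bm U],[\bm V]) \in \isp$ implies $[\bb L \bm U] = [\bb L \bm V]$: I would fix isomorphic blowups $\bm U^{\flat}$, $\bm V^{\flat}$ of $\bm U$, $\bm V$, invoke Lemma~\ref{blowup:same:RV:coa} to get $\bb L \bm U^{\flat} \cong \bb L \bm U$ and $\bb L \bm V^{\flat} \cong \bb L \bm V$, and invoke Corollary~\ref{RV:lift} (decomposing the $\RV[{\leq}k]$-isomorphism $\bm U^{\flat} \fun \bm V^{\flat}$ into its homogeneous pieces) to get $\bb L \bm U^{\flat} \cong \bb L \bm V^{\flat}$. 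Composing these three isomorphisms yields $[\bb L \bm U] = [\bb L \bm V]$.

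For the forward direction, the key is to produce a well-defined ``contraction class'' invariant $c$, defined on isomorphism classes of objects of $\VF_*$ with values in $\gsk \RV[*]/\isp$, and to identify $c(\bb L \bm U)$ with $[\bm U] \bmod \isp$. First I would establish that all standard contractions of a fixed $A \in \VF_*$ lie in one $\isp$-class: for a single $\VF$-coordinate this is Lemma~\ref{kernel:dim:1:coa} applied with $B = A$ and the identity bijection; for an arbitrary but fixed ordering of the $\VF$-coordinates it follows from Lemma~\ref{isp:VF:fiberwise:contract} with $A' = A'' = A$ (the fiberwise hypothesis being vacuous); and independence of the ordering is exactly the Fubini-type Lemma~\ref{contraction:perm:pair:isp}, applied fiberwise through Lemma~\ref{isp:VF:fiberwise:contract} to transpose two adjacent slots of the chosen order and then iterated, since adjacent transpositions generate the symmetric group. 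This defines $c$, and $c$ respects finite disjoint unions because $\isp$ is a semiring congruence (Lemma~\ref{isp:congruence:vol}). Next I would check that $c$ is a bijection invariant: given a definable bijection $A \fun B$, Lemma~\ref{bijection:partitioned:unary} together with additivity reduces to the case of a single relatively unary bijection, in the $i$th $\VF$-coordinate say, and then Corollary~\ref{contraction:same:perm:isp}, applied with any permutation $\sigma$ with $\sigma(1) = i$ --- legitimate by the order-independence just shown --- gives $c(A) = c(B)$; so $c$ depends only on the class in $\gsk \VF_*$. Finally, since $\bb L \bm U = \bigcup\{f(u)^\sharp \times u : u \in U\}$ already has every $\VF$-fibre equal to a single $\RV$-disc, hence an $\RV$-pullback, the standard contraction of $\bb L \bm U$ built from trivial (identity) special bijections returns, up to regularization, the object $\{(u, f(u)) : u \in U\}$ with finite-to-one map the projection onto the $f(u)$-coordinates, which is isomorphic to $(U,f) = \bm U$; thus $c(\bb L \bm U) \equiv [\bm U]$ and likewise $c(\bb L \bm V) \equiv [\bm V]$. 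If $[\bb L \bm U] = [\bb L \bm V]$ then $c(\bb L \bm U) = c(\bb L \bm V)$, whence $([\bm U],[\bm V]) \in \isp$.

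The main obstacle is not in this proposition but upstream, in Lemma~\ref{contraction:perm:pair:isp} and the $2$-cell decomposition (Lemmas~\ref{decom:into:2:units} and~\ref{subset:partitioned:2:unit:contracted}) it rests on, all of which are already available. Within the assembly above the only delicate point is organizational: arranging, at each relatively unary step of the reduction, a permutation compatible with Corollary~\ref{contraction:same:perm:isp}, which is precisely the reason one first proves order-independence of $c$.
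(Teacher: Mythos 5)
Your backward direction and the overall architecture of your forward direction (a contraction-class invariant $c$ with values in $\gsk \RV[*]/\isp$, shown to be additive, invariant under definable bijections, and computed on lifts) match the paper's strategy, which proves the stronger claim that standard contractions of sets with equal classes in $\gsk \VF_*$ are $\isp$-congruent. The genuine gap is in your first step, the order-independence of $c$. Lemma~\ref{contraction:perm:pair:isp} only permits transposing the \emph{first two} slots of the contraction order: its proof isolates the two coordinates in question by fibering over $\pr_{\tilde E}(A)$ and running the $2$-cell machinery on the $2$-dimensional fibres $A_a$, which are objects of $\VF_*$, and Lemma~\ref{isp:VF:fiberwise:contract} then only propagates the resulting congruence of $\RV$-fibres through the \emph{subsequent} contractions, not through prior ones. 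To transpose slots $m$ and $m+1$ for $m \geq 2$ you would first have to contract $\sigma(1), \ldots, \sigma(m-1)$, and the intermediate object has infinite $\RV$-fibres (positive head start in the sense of Definition~\ref{defn:standard:contraction}); Lemmas~\ref{decom:into:2:units}, \ref{subset:partitioned:2:unit:contracted} and \ref{contraction:perm:pair:isp} are stated, and proved, only for objects of $\VF_*$, so ``iterating adjacent transpositions'' is not available as you claim.

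The repair is to change the generating set: every permutation is reachable from every other by alternating (a) the transposition of the first two slots and (b) moves fixing the first slot, and move (b) must be handled by induction on the number of $\VF$-coordinates --- contract the first coordinate identically for both contractions, then apply the lower-dimensional claim to each fibre over the newly created $\RV$-coordinates (these fibres are lower-dimensional objects of $\VF_*$ over the enlarged parameter set) and glue by compactness. This is exactly how the paper organizes its proof: a single induction on $k$ in which Corollary~\ref{contraction:same:perm:isp} handles the relatively unary links of the chain produced by Lemma~\ref{bijection:partitioned:unary}, Lemma~\ref{contraction:perm:pair:isp} handles swaps of the first two slots, and the inductive hypothesis, applied ``in each fiber over the just contracted coordinate,'' absorbs all permutation changes beyond the first slot. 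Once order-independence is established this way, the rest of your assembly (additivity, bijection-invariance, and the identification of $c(\bb L \bm U)$ with $[\bm U]$ via the trivial contraction, together with Lemmas~\ref{special:to:blowup:coa} and \ref{blowup:same:RV:coa} for the converse) does go through.
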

\begin{proof}
The ``if'' direction simply follows from Lemma~\ref{blowup:same:RV:coa} and
Proposition~\ref{L:sur:c}.

For the ``only if'' direction, we show a stronger claim: if $[A] = [B]$ in $\gsk \VF_*$ and $\bm U, \bm V \in \RV[{\leq} k]$ are two standard contractions of $A$, $B$ then $([\bm U], [\bm V]) \in \isp$. We do induction on $k$. The base case $k = 1$ is of course Lemma~\ref{kernel:dim:1:coa}.

For the inductive step, suppose that $F : \bb L \bm U \fun \bb L \bm V$ is a definable bijection. By Lemma~\ref{bijection:partitioned:unary}, there is a partition of $\bb L \bm U$ into definable sets $A_1, \ldots, A_n$  such that each restriction $F_i = F \rest A_i$ is a composition of relatively unary bijections. Applying Corollary~\ref{special:bi:term:constant} as before, we obtain two special bijections
$T$, $R$ on $\bb L \bm U$, $\bb L \bm V$ such that $T(A_i)$, $(R \circ F)(A_i)$ is an $\RV$-pullback for each $i$. By Lemma~\ref{special:to:blowup:coa}, it is enough to show that, for each $i$, there are standard contractions $\hat T_{\sigma}$, $\hat R_{\tau}$ of $T(A_i)$, $(R \circ F)(A_i)$ such that
\[
([(\hat T_{\sigma} \circ T)(A_i)]_{\leq k}, [(\hat R_{\tau} \circ R \circ F)(A_i)]_{\leq k}) \in \isp.
\]
To that end, first note that each $(R \circ F \circ T^{-1}) \rest T(A_i)$ is a composition of relatively unary bijections, say
\[
T(A_i) = B_1 \to^{G_1} B_2 \cdots B_l \to^{G_l} B_{l+1} = (R \circ F)(A_i).
\]
For each $j \leq l - 2$, we can choose five standard contractions
\[
[U_j]_{\leq k}, \quad [U_{j+1}]_{\leq k}, \quad [U'_{j+1}]_{\leq k}, \quad [U''_{j+1}]_{\leq k}, \quad [U_{j+2}]_{\leq k}
\]
of $B_j$, $B_{j+1}$, $B_{j+1}$, $B_{j+1}$, $B_{j+2}$ with the permutations $\sigma_{j}$, $\sigma_{j+1}$, $\sigma'_{j+1}$, $\sigma''_{j+1}$, $\sigma_{j+2}$ of $[k]$, respectively, such that
\begin{itemize}
  \item $\sigma_{j+1}(1)$ and $\sigma_{j+1}(2)$ are the $\VF$-coordinates targeted by $G_{j}$ and $G_{j+1}$, respectively,
  \item $\sigma''_{j+1}(1)$ and $\sigma''_{j+1}(2)$ are the $\VF$-coordinates targeted by $G_{j+1}$ and $G_{j+2}$, respectively,
  \item $\sigma_{j} = \sigma_{j+1}$, $\sigma''_{j+1} =  \sigma_{j+2}$,  and $\sigma'_{j+1}(1) = \sigma''_{j+1}(1)$,
  \item the relation between $\sigma_{j+1}$ and $\sigma'_{j+1}$ is as described in Lemma~\ref{contraction:perm:pair:isp}.
\end{itemize}
By Corollary~\ref{contraction:same:perm:isp} and Lemma~\ref{contraction:perm:pair:isp}, all the adjacent pairs of these standard contractions are $\isp$-congruent, except $([U'_{j+1}]_{\leq k}, [U''_{j+1}]_{\leq k})$.  Since we can choose $[U'_{j+1}]_{\leq k}$, $[U''_{j+1}]_{\leq k}$ so that they start with the same contraction in the first targeted $\VF$-coordinate of $B_{j+1}$, the resulting sets from this step are the same. So, applying  the inductive hypothesis in each fiber over the just contracted coordinate, we see that this last pair is also $\isp$-congruent. This completes the ``only if'' direction.
\end{proof}

This proposition shows that the semiring congruence relation on $\gsk \RV[*]$ induced by $\bb L$ is generated by the pair $([1], \bm 1_{\K} + [(\RV^{\circ \circ}, \id)])$ and hence its corresponding ideal in the graded ring $\ggk \RV[*]$ is generated by the element $\bm 1_{\K} + [\bm P]$ (see Notation~\ref{nota:RV:short} and Remark~\ref{gam:res}).

\begin{thm}\label{main:prop}
For each $k \geq 0$ there is a canonical isomorphism of Grothendieck semigroups
\[
\textstyle \int_{+} : \gsk  \VF[k] \fun \gsk  \RV[{\leq} k] /  \isp
\]
such that
\[
\textstyle \int_{+} [A] = [\bm U]/  \isp \quad \text{if and only if} \quad  [A] = [\bb L\bm U].
\]
Putting these together, we obtain a canonical isomorphism of Grothendieck semirings
\[
\textstyle \int_{+} : \gsk \VF_* \fun \gsk  \RV[*] /  \isp.
\]
\end{thm}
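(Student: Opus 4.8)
The plan is to obtain $\int_+$ simply by inverting the homomorphism induced by the lifting map, once one knows that the congruence it cuts out is exactly $\isp$.

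Fix $k$. By Corollary~\ref{L:sur:c} the lifting map $\bb L_{\leq k}$ induces a surjective homomorphism of Grothendieck semigroups $\bb L \colon \gsk \RV[{\leq} k] \fun \gsk \VF[k]$, and by Lemma~\ref{isp:congruence:vol} the relation $\isp[k]$ --- which, by Corollary~\ref{blowup:equi:class}, is well defined on isomorphism classes --- is a semigroup congruence on $\gsk \RV[{\leq} k]$, so the quotient $\gsk \RV[{\leq} k]/\isp$ is a semigroup. Now Proposition~\ref{kernel:L} says precisely that, for $\bm U,\bm V \in \RV[{\leq} k]$, one has $[\bb L \bm U] = [\bb L \bm V]$ if and only if $([\bm U],[\bm V]) \in \isp$; that is, $\isp$ is exactly the kernel congruence of $\bb L$. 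Consequently $\bb L$ factors as an injective, still surjective, homomorphism $\gsk \RV[{\leq} k]/\isp \fun \gsk \VF[k]$, hence --- these being commutative monoids --- an isomorphism, whose inverse we call $\int_+$. Unwinding: given $[A] \in \gsk \VF[k]$, surjectivity provides $\bm U \in \RV[{\leq} k]$ with $[A] = [\bb L \bm U]$, and one sets $\int_+[A] = [\bm U]/\isp$; the ``if'' half of Proposition~\ref{kernel:L} (which uses Lemma~\ref{blowup:same:RV:coa}) shows this is independent of the choice of $\bm U$, and the displayed characterization $\int_+[A] = [\bm U]/\isp \Leftrightarrow [A] = [\bb L \bm U]$ holds by construction. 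That $\int_+$ is canonical just reflects that it depends on nothing beyond the lifting map and this well-definedness.

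Next one checks compatibility with the filtration by $\VF$-dimension. The transition maps $\gsk \VF[k] \hookrightarrow \gsk \VF[k{+}1]$ are inclusions of classes; the inclusion $\RV[{\leq} k] \hookrightarrow \RV[{\leq} k{+}1]$ induces $\gsk \RV[{\leq} k]/\isp \fun \gsk \RV[{\leq} k{+}1]/\isp$ (note $\isp[k] \sub \isp[k{+}1]$, since an elementary blowup of an object of $\RV[{\leq} k]$ stays in $\RV[{\leq} k]$); and the squares commute because $\bb L_{\leq k{+}1}$ restricts to $\bb L_{\leq k}$ on $\RV[{\leq} k]$. Passing to the colimit over $k$ --- using $\gsk \VF_* = \clim_k \gsk \VF[k]$, $\gsk \RV[*] = \clim_k \gsk \RV[{\leq} k]$, and hence $\gsk \RV[*]/\isp = \clim_k \gsk \RV[{\leq} k]/\isp[k]$, a directed colimit commuting with passage to a directed union of congruences --- the maps $\int_+$ glue to a semigroup isomorphism $\int_+ \colon \gsk \VF_* \fun \gsk \RV[*]/\isp$ with the same characterization.

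Finally, to see this is a semiring isomorphism it suffices to know that $\bb L$ respects the products given by cartesian product. From $\bb L_k(U,f) = \bigcup\{ f(u)^\sharp \times u : u \in U\}$ one gets, after an innocuous reordering of coordinates, a definable bijection $\bb L\bigl((U,f) \times (V,g)\bigr) \cong \bb L(U,f) \times \bb L(V,g)$, so $\bb L$ is a homomorphism of semirings; as $\isp[*]$ is a semiring congruence (Lemma~\ref{isp:congruence:vol}), the induced map, and therefore $\int_+$, is a semiring isomorphism. The only point needing care here is purely bookkeeping: Grothendieck semigroups are merely commutative monoids, so ``kernel'' must be read throughout as ``kernel congruence''. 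The substantive difficulty lies entirely in Proposition~\ref{kernel:L}; once that is in hand, the present theorem is a formal consequence of it together with Corollary~\ref{L:sur:c}.
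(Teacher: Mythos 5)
Your proposal is correct and follows exactly the paper's route: the paper's own proof is the one-line observation that the theorem is immediate from Corollary~\ref{L:sur:c} and Proposition~\ref{kernel:L}, and you have simply spelled out the routine verifications (kernel congruence, compatibility with the grading, multiplicativity) that the paper leaves implicit.
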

\begin{proof}
This is immediate by Corollary~\ref{L:sur:c} and Proposition~\ref{kernel:L}.
\end{proof}

\begin{thm}\label{thm:ring}
The Grothendieck semiring isomorphism $\int_+$ naturally induces a ring isomorphism:
\[
\textstyle \Xint{\textup{G}}  :  \ggk \VF_* \to \ggk \RV[*] / (\bm 1_{\K} + [\bm P]) \to^{\bb E_{\Gamma}} \Z^{(2)}[X],
\]
and two ring homomorphisms onto $\Z$:
\[
\textstyle \Xint{\textup{R}}^g, \Xint{\textup{R}}^b: \ggk \VF_* \to \ggk \RV[*] / (\bm 1_{\K} + [\bm P]) \two^{\bb E_{\Gamma, g}}_{\bb E_{\Gamma, b}} \Z.
\]
\end{thm}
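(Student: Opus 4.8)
The statement is a formal consequence of Theorem~\ref{main:prop}, the remark immediately following Proposition~\ref{kernel:L}, and the constructions in Remark~\ref{rem:poin}; the plan is to assemble these. First I would groupify the Grothendieck semiring isomorphism $\int_+ : \gsk \VF_* \fun \gsk \RV[*]/\isp$ supplied by Theorem~\ref{main:prop}. Since groupification is left adjoint to the inclusion of commutative rings into commutative semirings, it preserves colimits, and hence carries the quotient of $\gsk \RV[*]$ by the semiring congruence $\isp$ onto the quotient of $\ggk \RV[*]$ by the ideal generated by $\{[\bm U]-[\bm V]:(\bm U,\bm V)\in\isp\}$. By the remark following Proposition~\ref{kernel:L}, that congruence is generated by the single pair $([1],\,\bm 1_{\K}+[(\RV^{\circ\circ},\id)])$, and since $[\bm P]=[(\RV^{\circ\circ},\id)]-[1]$ the associated ideal is precisely $(\bm 1_{\K}+[\bm P])$. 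Thus $\int_+$ groupifies to a ring isomorphism $\int : \ggk \VF_* \to^{\sim} \ggk \RV[*]/(\bm 1_{\K}+[\bm P])$, which is canonical because $\int_+$ is.

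Next I would simply compose. By Remark~\ref{rem:poin} there is a canonical ring isomorphism $\bb E_{\Gamma} : \ggk \RV[*]/(\bm 1_{\K}+[\bm P]) \to^{\sim} \Z^{(2)}[X]$ (resting on the isomorphism $\bb D$ of Proposition~\ref{red:D:iso} and the computation of $\ggk \Gamma[*]$ in \cite{kage:fujita:2006}), and exactly two ring homomorphisms $\bb E_{\Gamma,g},\bb E_{\Gamma,b} : \ggk \RV[*]/(\bm 1_{\K}+[\bm P]) \fun \Z$, cut out by $Y\efun -1$ (equivalently $X\efun 1$) and $Y\efun 0$ (equivalently $X\efun -1$). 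Setting $\Xint{\textup{G}} = \bb E_{\Gamma}\circ\int$, $\Xint{\textup{R}}^g = \bb E_{\Gamma,g}\circ\int$ and $\Xint{\textup{R}}^b = \bb E_{\Gamma,b}\circ\int$ gives the three maps in the statement; the first is an isomorphism, being a composite of two, and the last two are surjective onto $\Z$ as unital ring homomorphisms to $\Z$. Multiplicativity is inherited from the semiring structure throughout, and naturality is that of $\int_+$.

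I do not expect any real obstacle here: every ingredient has already been established. The only mildly delicate point is the interchange of groupification with the quotient by the congruence $\isp$, i.e.\ that one obtains $\ggk \RV[*]/(\bm 1_{\K}+[\bm P])$ and nothing coarser --- but this is exactly what the remark after Proposition~\ref{kernel:L} asserts, and it is the same universal-mapping-property argument already used before Proposition~\ref{red:D:iso} for tensor products. Accordingly I would keep the written proof to a few lines, citing Theorem~\ref{main:prop}, the remark after Proposition~\ref{kernel:L}, and Remark~\ref{rem:poin}.
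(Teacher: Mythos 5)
Your proposal is correct and follows essentially the same route as the paper, whose proof is simply the observation that the statement is a combination of Theorem~\ref{main:prop} with Remark~\ref{rem:poin} (equivalently Proposition~\ref{prop:eu:retr:k}). Your extra care about the interchange of groupification with the quotient by $\isp$, via the identification of the congruence's generator with $\bm 1_{\K}+[\bm P]$ from the remark after Proposition~\ref{kernel:L}, is exactly the implicit content of the paper's one-line argument.
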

\begin{proof}
This is just a combination of Theorem~\ref{main:prop} and Remark~\ref{rem:poin} (or Proposition~\ref{prop:eu:retr:k}).
\end{proof}

Let $F$ be a definable set with $A \coloneqq F_{\VF} \sub \VF^n$. Then $F$ may be viewed as a representative of a \emph{definable} function $\bm F : A \fun \gsk \RV[*] / \isp$ given by $a \efun [F_a] / \isp$. Note that the class $[F_a]$ depends on the parameter $a$ and hence can only be guaranteed to lie in the semiring $\gsk \RV[*]$ constructed over $\mdl S \la a \ra$ instead of $\mdl S$, but we abuse the notation. Similarly, for distinct $a, a' \in A$, there is a priori no way to compare $[F_a]$ and $[F_{a'}]$ unless we work over the substructure $\mdl S \la a, a' \ra$; given another definable set $G$ with $A = G_{\VF}$, the corresponding definable function $\bm G$ is the same as $\bm F$ if $\bm G(a) = \bm F(a)$ over $\mdl S \la a \ra$ for all $a \in A$. The set of all such functions is denoted by $\fn_+(A)$, which is a semimodule over $\gsk  \RV[*] / \isp$. Let $E \sub [n]$ be a nonempty set. Then, for each $a \in \pr_{E}(A)$, the definable function in $\fn_+(A_a)$ represented by $F_a$ is denoted by $\bm F_a$.

Let $\bb L F = \bigcup_{a \in A} a \times F_a^\sharp$ and then set $\int_{+A} \bm F = \int_+ [\bb L F]$,
which, by Proposition~\ref{kernel:L} and compactness, does not depend on the representative $F$. Thus there is a canonical homomorphism of semimodules:
\[
\textstyle \int_{+A} : \fn_+(A) \fun \gsk \RV[*] /  \isp.
\]

\begin{thm}\label{semi:fubini}
For all $\bm F \in \fn_+(A)$ and all nonempty sets $E, E' \sub [n]$,
\[
\textstyle \int_{+ a \in \pr_{E}(A)} \int_{+ A_a} \bm F_a = \int_{+ a \in \pr_{E'}(A)} \int_{+ A_a} \bm F_a.
\]
\end{thm}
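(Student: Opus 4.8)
The plan is to deduce the two-sided identity from a single assertion plus the already-established well-definedness of $\int_+$. Namely, I would first reduce to proving
\[
\textstyle\int_{+A}\bm F \;=\; \int_{+a\in\pr_E(A)}\int_{+A_a}\bm F_a\qquad\text{for every nonempty }E\sub[n],
\]
where $\int_{+A}\bm F=\int_+[\bb L F]$ as defined just before the theorem. Granting this, both sides of the theorem equal $\int_{+A}\bm F$ and we are done; in particular nothing about how $E$ and $E'$ are related is needed. Fix once and for all a representative $F$ of $\bm F$ with $A=F_{\VF}\sub\VF^n$, and note $\bb L F=\bigcup_{a\in\pr_E(A)}a\times\bb L F_a$ after regrouping the coordinates in $E$.

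The second step is to realize the inner integral as an honest element of $\fn_+(\pr_E(A))$. For each $a\in\pr_E(A)$ one has $\int_{+A_a}\bm F_a=\int_+[\bb L F_a]$ computed over $\mdl S\la a\ra$, and by Corollary~\ref{all:subsets:rvproduct} over $\mdl S\la a\ra$ there are an object $\bm U_a\in\RV[*]$ and a special bijection $\bb L F_a\cong\bb L\bm U_a$. Applying this fibrewise and invoking compactness — exactly as in the proof of Lemma~\ref{isp:VF:fiberwise:contract} and the construction preceding Definition~\ref{defn:standard:contraction} — these choices assemble into a single $\mdl S$-definable procedure: there is an $\mdl S$-definable $G$ with $G_{\VF}=\pr_E(A)$ such that each fibre $G_a$ is an $\RV[*]$-object isomorphic to $\bm U_a$, together with a uniformly $\mdl S$-definable family of bijections $\bb L F_a\cong\bb L G_a=G_a^\sharp$. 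Because the procedure is uniform, $a\efun[G_a]/\isp$ is a well-defined member $\bm G$ of $\fn_+(\pr_E(A))$, and by Theorem~\ref{main:prop} applied over $\mdl S\la a\ra$ we get $\bm G(a)=[G_a]/\isp=\int_+[\bb L G_a]=\int_+[\bb L F_a]=\int_{+A_a}\bm F_a$; hence $\bm G$ represents the inner integral and the right-hand side is $\int_{+\pr_E(A)}\bm G=\int_+[\bb L G]$.

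The third step is to conclude $[\bb L F]=[\bb L G]$ in $\gsk\VF_*$. Gluing the uniformly definable fibrewise bijections $\bb L F_a\cong G_a^\sharp$ over $a\in\pr_E(A)$ gives an $\mdl S$-definable bijection
\[
\bb L F=\bigcup_{a\in\pr_E(A)}a\times\bb L F_a\;\longrightarrow\;\bigcup_{a\in\pr_E(A)}a\times G_a^\sharp=\bb L G,
\]
so that $\int_{+A}\bm F=\int_+[\bb L F]=\int_+[\bb L G]=\int_{+a\in\pr_E(A)}\int_{+A_a}\bm F_a$, which is what the reduction requires. It is worth emphasizing where the depth is concealed: the step $\int_+[\bb L F]=\int_+[\bb L G]$ uses only that $\int_+$ is a \emph{well-defined} function on isomorphism classes, which is Proposition~\ref{kernel:L} (into which Lemma~\ref{contraction:perm:pair:isp}, the Fubini statement for standard contractions, has already been fed); so at the level of this theorem the argument is soft.

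I expect the only genuine work to be the bookkeeping in the second step: verifying that the per-fibre invocations of Corollary~\ref{all:subsets:rvproduct} really do coalesce, by compactness, into a single $\mdl S$-definable representative $G$ and a single $\mdl S$-definable family of lifting bijections — i.e. that $a\efun\int_{+A_a}\bm F_a$ is a definable function in the precise sense of $\fn_+$, with the parameter-change consistency over $\mdl S\la a,a'\ra$ automatic from uniformity. Once this is in place, the rest is routine. (One could alternatively unwind the same reasoning one $\VF$-coordinate at a time and match the two iterated integrals to standard contractions of $\bb L F$ differing by a permutation, then apply Corollary~\ref{contraction:same:perm:isp} and Lemma~\ref{contraction:perm:pair:isp} directly; but for the statement as given the route above is shorter.)
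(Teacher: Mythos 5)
Your proposal is correct and is essentially the paper's own argument: the paper's proof is the one-line observation that both iterated integrals equal $\int_{+A}\bm F$, and your three steps are exactly the unwinding of why that is so (with the genuine content already absorbed into Proposition~\ref{kernel:L} and the compactness/gluing conventions set up just before the theorem). The only thing you spell out that the paper leaves implicit is the construction of the definable representative $G$ of the inner integral, which is the standard fibrewise-plus-compactness argument used throughout.
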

\begin{proof}
This is clear since both sides equal $\int_{+A} \bm F$.
\end{proof}

\providecommand{\bysame}{\leavevmode\hbox to3em{\hrulefill}\thinspace}
\providecommand{\MR}{\relax\ifhmode\unskip\space\fi MR }
% \MRhref is called by the amsart/book/proc definition of \MR.
\providecommand{\MRhref}[2]{%
  \href{http://www.ams.org/mathscinet-getitem?mr=#1}{#2}
}
\providecommand{\href}[2]{#2}

%%---------------------------------------------------------------------
%%Included for Gather Purpose only:
%%input "C:\localtexmf\bibtex\bib\mybib\MYbib.bib"
%\bibliographystyle{amsplain}
%\bibliography{../../mybib/MYbib}
%%---------------------------------------------------------------------

%%---------------------------------------------------------------------
%\bibliographystyle{amsplain}
%\bibliography{/Users/yimu/Dropbox/mybib/MYbib}
%%---------------------------------------------------------------------

\end{document}